\definecolor{gr}{rgb}{0.7, 1, 0.7}
\definecolor{rr}{rgb}{1, 0.7, 0.7}
\theoremstyle{plain} 
\newtheorem{theorem}{Theorem}[section]
\newtheorem{lemma}[theorem]{Lemma}
\newtheorem{corollary}[theorem]{Corollary}
\newtheorem{proposition}[theorem]{Proposition}
\theoremstyle{definition} 
\newtheorem{definition}[theorem]{Definition}
\theoremstyle{remark} 
\newtheorem{remark}[theorem]{Remark}
\renewcommand{\mathfrak}{\mathbf}
\renewcommand{\Im}{\,\mathrm{Im}\,}
\renewcommand{\Re}{\,\mathrm{Re}\,}
\newcommand{\ignore}[1]{}
\newcommand{\eps}{\epsilon}
\newcommand{\cA}{\mathcal{A}}
\newcommand{\bbC}{\mathbb{C}}
\newcommand{\bbH}{\mathbb{H}}
\newcommand{\bbN}{\mathbb{N}}
\newcommand{\bbR}{\mathbb{R}}
\newcommand{\bbZ}{\mathbb{Z}}
\newcommand{\bbQ}{\mathbb{Q}}
\newcommand{\bbT}{\mathbb{T}}
\newcommand{\bbD}{\mathbb{D}}
\newcommand{\mcE}{\mathbf{E}}
\newcommand{\bP}{\mathbf P}
\newcommand{\tl}{\tilde}
\newcommand{\NN}{\mathbb N}
\newcommand{\CC}{\mathbb C}
\newcommand{\RR}{\mathbb R}
\newcommand{\cR}{\mathcal R}
\newcommand{\cI}{\mathcal I}
\newcommand{\cU}{\mathcal U}
\newcommand{\cV}{\mathcal V}
\newcommand{\cW}{\mathcal W}
\newcommand{\cE}{\mathbf E}
\newcommand{\cH}{\mathcal H}
\newcommand{\kC}{\mathfrak C}
\newcommand{\ccm}{\mathbf W}
\newcommand{\kL}{\mathfrak L}
\newcommand{\dist}{\operatorname{dist}}
\newcommand{\diam}{\operatorname{diam}}
\newcommand{\hol}{\mathbf H}
\newcommand{\cren}{{\mathcal R}_{\text{cyl}}}
\newcommand{\hren}{{\hat{\mathcal R}}_{\text{cyl}}}
\renewcommand{\Im}{\operatorname{Im}}
\renewcommand{\mod}{\operatorname{mod}}
\begin{document}
\title[Critical circle maps with non-integer exponents]{Rigidity, universality, and hyperbolicity of renormalization for critical circle maps with non-integer exponents}

\author{Igors Gorbovickis}
\address{Uppsala University, Uppsala, Sweden}
\email{igors.gorbovickis@math.uu.se}

\author{Michael Yampolsky}
\address{University of Toronto, Toronto, Canada}
\email{yampol@math.toronto.edu}


\begin{abstract}
We construct a renormalization operator which acts on analytic circle maps whose critical exponent $\alpha$ is not necessarily an 
odd integer $2n+1$, $n\in\bbN$. When $\alpha=2n+1$, our definition generalizes
 cylinder renormalization of analytic
critical circle maps \cite{Ya3}. In the case when $\alpha$ is close to an odd integer, we prove hyperbolicity of renormalization 
for maps of bounded type. We use it to prove universality and $C^{1+\alpha}$-rigidity for such maps.
\end{abstract}
\date{\today}
\maketitle

\section{Introduction}
\label{sec:intro}
Renormalization theory of critical circle maps was developed in the early 1980's to explain  universality phenomena in smooth families 
of circle homeomorphisms with one critical point, the so-called \textit{critical circle maps}. 
By definition, a critical circle map is a $C^3$ homeomorphism of the circle $\bbT=\bbR/\bbZ$ with a single critical point of an odd integer order $\alpha>1$. A canonical example is given by the {\it Arnold's family} which is constructed as follows. For $\theta\in\bbR$ let
$$A_\theta(x)=x+\theta-\frac{1}{2\pi}\sin{2\pi x}.$$
The maps $A_\theta$ are analytic orientation-preserving  homeomorphisms $\bbR\to\bbR$ with critical points of cubic type at integer values of $x$.
Since $A_\theta(x+1)=A_\theta(x)+1$, it projects to a well-defined critical circle map $f_\theta$. 
Of course, values of $\theta$ which differ by an integer produce the same maps $f_\theta$, so it is natural to consider the
parametric family $\theta\mapsto f_\theta$ with $\theta\in\bbT$.

 To give an example of universality phenomena, let us describe the {\it golden-mean universality}. Let $g_\theta$ be a family of critical circle maps
with critical exponent $\alpha$, depending on a parameter $\theta\in\bbT$ which is smooth in $\theta$ and has the property $\partial \tilde g_\theta(x)/\partial\theta>0$ for all $x\in\bbR$, where $\tl g_\theta:\bbR\to\bbR$ is any lift of the family to a smooth family of homeomorphisms of the real line. The Arnold's family is an example of such a family with $\alpha=3$.
Fix $\rho_*=(\sqrt{5}-1)/2$, the inverse golden mean. This irrational number has a continued fraction expansion 
$$
\rho_*=\cfrac{1}{1+\cfrac{1}{1+\cfrac{1}{1+\dotsb}}}
$$
(further on we will abbreviate such expressions as $[1,1,1,\ldots]$
for typographical convenience). The $n$-th convergent of this continued fraction is a rational number 
$$p_n/q_n=\underbrace{[1,1,1,\ldots,1]}_n.$$
Suppose $\theta_*$ is a parameter for which the rotation number
$$\rho(f_{\theta_*})=\rho_*.$$
It is not hard to see that there is a sequence of closed intervals $I_n$ consisting of parameters $\theta$ for which $\rho(f_\theta)=p_n/q_n$,
which converges to the parameter $\theta_*$. The empirical observation in this case is that their lengths
 $$|I_n|\sim a\delta^{-n}$$
for some $a>0$ and $\delta>1$. Moreover, $\delta$ is a universal constant - its value is the same for all {families of critical circle maps $g_\theta$} with the same critical exponent
$\alpha$ (so $\delta=\delta(\alpha)$).

 In~\cite{ORSS} and~\cite{FKS} this  
universality phenomenon was translated into conjectural hyperbolicity of a renormalization transformation $\cR$, a certain non-linear operator acting on the space of commuting pairs, where commuting pairs are pairs of maps closely related to critical circle maps (see \S\ref{Comm_pairs_sec}, \S\ref{renorm_com_pairs_sec} for precise definitions). 
The conjectures were fully developed in the works of O.~Lanford \cite{Lan2,Lan3,Lan4} and became known as {\it Lanford's program}. 
 
Let us mention here the key steps of Lanford's program, while a detailed discussion can be found in  \cite{Ya3,Ya4}. The program consists of proving the following statements:

\begin{enumerate}
\item \textbf{Global horseshoe attractor.} The renormalization operator $\cR$ possesses a ``horseshoe'' attractor in the space of commuting pairs equipped with a $C^r$-metric. The action of $\cR$ on the attractor is topologically conjugated to the shift on the space of bi-infinite sequences of natural numbers. Moreover, the orbits of commuting pairs with the same irrational rotation number converge to the same orbit in the attractor.

\item \textbf{Hyperbolicity of the attractor.} There exists a neighborhood of the attractor that admits a structure of an infinite dimensional smooth manifold compatible with $C^r$-metric. The horseshoe attractor is a hyperbolic invariant set of the renormalization operator with respect to this manifold structure, and the unstable direction at each point of the attractor is one-dimensional.
\end{enumerate}

Another important consequence of Lanford's program is \textit{rigidity of critical circle mappings}. The question of rigidity in the context of circle homeomorphisms goes back Poincar\'e, who showed that every circle homeomorphism with an irrational rotation number is semi-conjugate to a rigid rotation. In the particular case of critical circle maps Yoccoz~\cite{YocDen} proved that any two such maps $f_1$, $f_2$ with the same irrational rotation number are topologically conjugated. This result can be thought of as an analog of the Denjoy's theorem for circle diffeomorphisms. Later Yoccoz showed that the conjugacy between critical circle maps $f_1$ and $f_2$ is quasisymmetric. Finally, Khanin and Teplinsky~\cite{Khanin_Teplinsky_2007} proved that exponential convergence of the orbits of $f_1$ and $f_2$ under renormalization implies $C^1$-smoothness of the conjugacy.
Furthermore, if $f_1$ and $f_2$ are of bounded combinatorial type (c.f. Definition~\ref{Bounded_type_def}), then according to~\cite{FM1}, this conjugacy is $C^{1+\beta}$-smooth, for some $\beta>0$.

A key breakthrough in the study of renormalization of one-dimensional dynamical systems in general and of critical circle maps in particular
was made by D.~Sullivan \cite{Sull-qc,Sullivan-bounds} who introduced 
methods of holomorphic dynamics and Teichm\"uller theory into the subject. Extending the ideas of Sullivan, McMullen~\cite{McM-ren2}, Lyubich~\cite{Lyubich-Hairiness}, de Faria~\cite{DeFar} and de Faria-de Melo~\cite{FM1,FM2},
the second author brought Lanford's program for analytic critical circle maps with $\alpha\in 2\bbN+1$ to a successful completion in a series of works \cite{Ya1,Yamp-towers,Ya3,Ya4, Ya6}. In particular,
in \cite{Ya3,Ya4} he introduced a new renormalization transformation, known as the {\it cylinder renormalization operator} $\cren$, and then showed that this operator has a hyperbolic horseshoe attractor with a one-dimensional unstable direction.

It is well-known, that successive renormalizations of a $C^3$-smooth map with a critical point of an odd order $\alpha>1$ converge to a certain space of analytic maps (cf. \cite{FM1}), and the above developments happened in the analytic realm. 
However, empirical observations of universality were made for families of differentiable homeomorphisms of the circle with the critical point at $0$ of the form
\begin{equation}
\label{sing1}
\psi\circ q_\alpha\circ\phi,
\end{equation}
where $\phi$ and $\psi$ are local $C^3$-diffeomorphisms, $\phi(0)=0$ and
\begin{equation}
\label{sing2}
q_\alpha(x)=x|x|^{\alpha-1}\text{ where }\alpha>1
\end{equation}
even when $\alpha$ is not an odd integer. It is clear that such maps cannot be analytic at zero, which does not allow to apply previously known analytic methods for maps of this type.

The above discussion naturally suggests the following definition: a homeomorphism $f\colon\bbT\to\bbT$ is called an \textit{analytic critical circle map with critical exponent} $\alpha$, if it is analytic everywhere except possibly the critical point, in the neighborhood of which $f$ can be represented in the form~(\ref{sing1}), where $\phi$ and $\psi$ are analytic.

In this paper we complete Lanford's program for analytic critical circle maps of bounded combinatorial type with values of the critical exponent sufficiently close to odd integers greater than $1$. In particular, we 
\begin{enumerate}
\item extend the definition of renormalization operator $\cren$ in a suitable fashion to a space of maps with critical points of the type~(\ref{sing1});
\item extend the hyperbolicity results of \cite{Ya3,Ya4} to maps with critical points of non-integer order, sufficiently close to odd integers greater than $1$ and with bounded combinatorics;
\item prove existence of a \textit{global} horseshoe attractor for renormalizations of such maps.
\end{enumerate}
\noindent As a corollary of the above results, we prove
\begin{enumerate}
\setcounter{enumi}{3}
\item universality for families of analytic critical circle maps with values of critical exponents sufficiently close to odd integers greater than $1$; 
\item $C^{1+\beta}$-rigidity for such maps of bounded combinatorial type.
\end{enumerate}

\noindent Let us mention, that a very different approach to  questions similar to (1)-(2) for renormalization of unimodal maps was suggested in \cite{SmCr}.

Before formulating our results, let us give a few useful definitions. 
Suppose, $\mathbf B$ is a complex Banach space whose elements are functions of the complex variable. Following the 
notation of \cite{Ya3}, let us say that
the {\it real slice} of $\mathbf B$ is the real Banach space $\mathbf B^\RR$ consisting of the real-symmetric elements of $\mathbf B$.
If $\mathbf X$ is a Banach manifold modelled on $\mathbf B$ with the atlas $\{\Psi_\gamma\}$
we shall say that $\mathbf X$ is {\it real-symmetric} if $\Psi_{\gamma_1}\circ\Psi_{\gamma_2}^{-1}(U)\subset \mathbf B^\RR$ for any pair of indices $\gamma_1$, $\gamma_2$ and any open set $U\subset\mathbf B^\RR$, for which the above composition is defined. The {\it real slice of $\mathbf X$} is then defined as the real 
Banach manifold $\mathbf X^\RR= \cup_\gamma\Psi_\gamma^{-1}(\mathbf B^\RR) \subset \mathbf X$ 
with an atlas $\{\Psi_\gamma\}$.
An operator $A$ defined on a {subset} $Y\subset\mathbf X$ is {\it real-symmetric} if $A(Y\cap\mathbf X^\RR)\subset \mathbf X^\RR$.

We prove the following theorems.
\begin{theorem}
\label{main1}
For every positive integer $k>0$, there exists a real-symmetric analytic Banach manifold $\mathfrak N^{2k+1}$ containing a disjoint union of sets $\mathfrak C^\alpha$, parameterized by $\alpha\in(2k+1-\eps,2k+1+\eps)$, $\eps=\eps(k)>0$, so that for each such $\alpha$, the set $\mathfrak C^\alpha\cap(\mathfrak N^{2k+1})^\bbR$ is non-empty and consists of analytic critical circle maps with critical exponent $\alpha$. 
The cylinder renormalization operator $\cren$ extends to a real-symmetric analytic operator on an open subset of $\mathfrak N^{2k+1}$, and 
if $f\in\mathfrak C^\alpha$, for some parameter $\alpha$, and $\cren(f)$ is defined, then $\cren(f)\in\mathfrak C^\alpha$.
\end{theorem}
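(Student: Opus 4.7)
The plan is to build $\mathfrak N^{2k+1}$ by parameterizing maps through the local factorization \eqref{sing1}--\eqref{sing2}. Near its critical point $c_f$, any $f\in\mathfrak C^\alpha$ is of the form $f=\psi_f\circ q_\alpha\circ\phi_f$ with $\phi_f,\psi_f$ analytic on fixed neighborhoods; after imposing normalizations such as $c_f=0$, $\phi_f(0)=0$, $\phi_f'(0)=1$, this factorization is unique. Away from $c_f$, the map $f$ itself extends analytically to a fixed complex neighborhood of $\bbT$ minus a small disk around $c_f$. I would take $\mathfrak N^{2k+1}$ to be the Banach manifold whose local charts send $f$ to a tuple $(\phi_f,\psi_f,f_{\text{ext}},\alpha)$, with $\phi_f,\psi_f,f_{\text{ext}}$ lying in standard Banach spaces of bounded holomorphic functions on fixed domains and $\alpha$ ranging over a complex disk about $2k+1$. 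Real-symmetry is built in by requiring each piece of data to commute with complex conjugation on its domain. The slice $\mathfrak C^\alpha$ is then the preimage of a fixed value $\alpha$ under the last coordinate, so the $\mathfrak C^\alpha$ are automatically disjoint.

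The first step is to verify the compatibility constraint, namely that $\psi_f\circ q_\alpha\circ\phi_f$ matches $f_{\text{ext}}$ on their overlap region; this equation cuts out $\mathfrak C^\alpha$ inside the $\alpha$-slice. For integer $\alpha=2k+1$, the function $q_\alpha$ is a polynomial, so $f$ is genuinely analytic and the construction reduces (up to an affine reparametrization) to the Banach manifold used in \cite{Ya3}, producing the required analytic critical circle maps of order $2k+1$ in $\mathfrak C^{2k+1}$. Continuity in the parameter then yields a non-empty real slice of $\mathfrak C^\alpha$ for all $\alpha$ in a small interval around $2k+1$.

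Next I would extend $\cren$. Recall that in \cite{Ya3}, $\cren(f)$ is produced by selecting a fundamental domain for a suitable iterate $f^q$ in a complex neighborhood, quotienting by the dynamics, and uniformizing the resulting Riemann surface to the cylinder $\bbC/\bbZ$. For the extended construction, the iterate $f^q$ again has, near the critical orbit, a factored form $\tilde\psi\circ q_\alpha\circ\tilde\phi$ with $\tilde\phi,\tilde\psi$ analytic and depending analytically on $(\phi_f,\psi_f,f_{\text{ext}})$, while away from the critical orbit $f^q$ is analytic in the usual sense. Hence the fundamental domain, the quotient, and its uniformization can be carried out verbatim, and the output is repackaged into a new tuple of the same form with the same exponent $\alpha$, giving $\cren(\mathfrak C^\alpha)\subset\mathfrak C^\alpha$.

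The main technical obstacle is verifying that the Riemann uniformization step depends \emph{holomorphically} on all the Banach chart coordinates, including the complex parameter $\alpha$. Holomorphic dependence on $(\phi_f,\psi_f,f_{\text{ext}})$ with $\alpha$ fixed is standard, via holomorphic motions of the relevant domains together with classical continuity of Riemann maps in their domains. For the $\alpha$-dependence, the key observation is that $q_\alpha(z)=z(z^2)^{(\alpha-1)/2}$ is jointly holomorphic in $(\alpha,z)$ on any simply-connected region avoiding $z=0$; hence the dynamically defined fundamental domains move holomorphically with $\alpha$, and the associated Riemann maps inherit joint holomorphicity. Real-symmetry of the resulting operator then follows automatically from the corresponding symmetry of the chart data, giving the extension of $\cren$ claimed by the theorem.
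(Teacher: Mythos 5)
Your construction founders on a claim that the paper itself flags as false. You assert that after normalizing $c_f=0$, $\phi_f(0)=0$, $\phi_f'(0)=1$, the factorization $f=\psi_f\circ q_\alpha\circ\phi_f$ is unique, and you use this to define a chart $f\mapsto(\phi_f,\psi_f,f_{\text{ext}},\alpha)$. This uniqueness fails precisely at $\alpha=2k+1$: if $h(w)=w\,\chi(w^{2k+1})$ with $\chi(0)=1$ analytic, then $h$ commutes with the rotation group of $q_{2k+1}$, and replacing $\phi_f$ by $h^{-1}\circ\phi_f$ and adjusting $\psi_f$ accordingly gives a genuinely different factorization of the same $f$ with the same normalizations. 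Since the interval of allowed $\alpha$ must contain the odd integer $2k+1$, the proposed chart is ill-defined at the very center of the construction. This is exactly the phenomenon the paper records just after Theorem~\ref{main2} when it notes that $i_\alpha\colon\mathfrak M^{2k+1}\to\mathfrak C^\alpha$ is \emph{not injective} when $\alpha$ is rational. Your proposal collapses the two manifolds $\mathfrak N^{2k+1}$ and $\mathfrak M^{2k+1}$ that the paper is careful to keep distinct.

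The paper's route to Theorem~\ref{main1} avoids decompositions entirely. It sets $\mathfrak N^{2k+1}=\mathfrak C_{\tilde r}$, the space of maps $f\colon\bbR/\bbZ\to\bbC/\bbZ$ whose lifts extend analytically to the fixed multi-sheeted domain $U_{\tilde r}$ (a restricted universal cover of a slit $r$-neighborhood of $[0,1]$, Definition~\ref{C_r_def}); this is a Banach manifold modeled on the single Banach space $\mathfrak A_{\tilde r}$ (Proposition~\ref{C_r_Banach_prop}). The elements of $\mathfrak N^{2k+1}$ are honest maps, and the exponent $\alpha$ is an intrinsic invariant of $f$ (its local singularity type at $0$) rather than a chart coordinate, so the $\mathfrak C^\alpha=\mathfrak C_{\tilde r}^\alpha$ are disjoint subsets of the same manifold and $\mathfrak C^\alpha\cap(\mathfrak N^{2k+1})^\bbR$ literally consists of critical circle maps. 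The operator $\cren$ is then defined on an open subset of $\mathfrak C_{\tilde r}$ via fundamental crescents of maps in $\mathfrak C_{\tilde r}$ (Lemma~\ref{U_lemma}, Definition~\ref{cren_def}), and preservation of $\mathfrak C^\alpha$ is Lemma~\ref{alpha_preserve_lemma} together with Proposition~\ref{cren_basic_proposition}~(iv). Your decomposition-tuple picture, once corrected to drop the uniqueness claim and the spurious $f_{\text{ext}}$ coordinate, is essentially the paper's $\mathfrak P_{U,t,h}$ and the operator $\mathcal R_{\mathfrak P}$, which live in the proof of Theorem~\ref{main2}, not Theorem~\ref{main1}.

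A secondary, related issue: even setting uniqueness aside, if $\mathfrak N^{2k+1}$ is taken to be the product Banach manifold of tuples $(\phi,\psi,f_{\text{ext}},\alpha)$ with the compatibility constraint cutting out $\mathfrak C^\alpha$, then the elements of $\mathfrak C^\alpha$ are tuples rather than maps, and distinct tuples can encode the same circle map. The statement of the theorem requires $\mathfrak C^\alpha\cap(\mathfrak N^{2k+1})^\bbR$ to consist of the critical circle maps themselves; a space of tuples only does so after a quotient, which would destroy the affine Banach structure you need. This is a second reason the intrinsic construction via $\mathfrak C_{\tilde r}$ is unavoidable here.
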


\noindent
Furthermore,

\begin{theorem}
\label{main2}
For every positive integer $k>0$, there exists a real-symmetric analytic Banach manifold $\mathfrak M^{2k+1}$, an open interval $I_k\subset\bbR$, such that $2k+1\in I_k$ and a family of real-symmetric analytic maps 
{small change here}
$i_\alpha\colon\mathfrak M^{2k+1}\to\mathfrak C^\alpha\subset\mathfrak N^{2k+1}$ that analytically depend on $\alpha\in I_k$. In addition to that, there exists a family of real-symmetric analytic operators $\mathcal R_\alpha$ that analytically depend on $\alpha\in I_k$ and are defined on a certain open subset of $\mathfrak M^{2k+1}$, so that the diagram
$$
\begin{CD}
\mathfrak M^{2k+1}  @>\mathcal R_\alpha>>  \mathfrak M^{2k+1} \\
@VV i_\alpha V  @VV i_\alpha V\\
\mathfrak C^\alpha @>\cren>> \mathfrak C^\alpha
\end{CD}
$$
commutes, whenever all maps are defined. Moreover, for any positive integer $B\ge 1$, the operator $\mathcal R_{2k+1}$ has a hyperbolic horseshoe attractor of type bounded by $B$ with a one-dimensional unstable direction.
\end{theorem}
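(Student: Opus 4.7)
The plan is to construct $\mathfrak M^{2k+1}$ as a Banach manifold of \emph{$\alpha$-independent factorization data}. Near its critical point, any map $f \in \mathfrak C^\alpha$ admits a representation $f = \psi \circ q_\alpha \circ \phi$ with real-symmetric analytic germs $\phi, \psi$ (suitably normalized, e.g.\ $\phi(0)=0$, $\phi'(0)=1$, to render the factorization unique). This pair, together with the extension of $f$ to the complement of a neighborhood of the critical point (a commuting pair of analytic diffeomorphisms, thus also independent of $\alpha$), comprises data living in a real-symmetric complex Banach space that does not depend on $\alpha$. The manifold $\mathfrak M^{2k+1}$ is built out of such normalized triples. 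For $\alpha = 2k+1$ the factor $q_\alpha(x)=x^{2k+1}$ is polynomial, so the factorization assembles into a genuine analytic map, and with the chosen normalization $i_{2k+1}$ becomes a real-symmetric analytic isomorphism of $\mathfrak M^{2k+1}$ onto $\mathfrak C^{2k+1}$.

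The inclusion $i_\alpha$ is defined by reinserting $q_\alpha$: send a normalized triple to the critical circle map whose local expression at the critical point is $\psi \circ q_\alpha \circ \phi$ and whose external piece is the recorded data. Real-symmetry follows from $q_\alpha(\bbR)\subset\bbR$; joint analyticity in $(F,\alpha)\in\mathfrak M^{2k+1}\times I_k$, for a small open interval $I_k\ni 2k+1$, follows from the analyticity of $q_\alpha$ in $\alpha$ on any region bounded away from $x=0$ (the factorization domain is chosen to lie on such a region). Setting $\mathcal R_\alpha := i_\alpha^{-1} \circ \cren \circ i_\alpha$ makes the diagram commute tautologically; the real content is to verify that $\cren \circ i_\alpha$ lands back in $i_\alpha(\mathfrak M^{2k+1})$ and that the composition is jointly analytic in $(F,\alpha)$.

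For this verification, one examines the cylinder renormalization procedure of \cite{Ya3}. It consists of two ingredients: forming a return map from iterates of $i_\alpha(F)$, and a Riemann uniformization of a fundamental cylinder adapted to that return map. Iterates of $i_\alpha(F)$ possess only a single surviving $q_\alpha$-singularity, since all other factors are analytic diffeomorphisms that chain with the $\phi$-ends and $\psi$-ends; thus the return map again has the form $\tilde\psi \circ q_\alpha \circ \tilde\phi$ with $\tilde\phi, \tilde\psi$ analytic in $(F,\alpha)$. The Riemann uniformization step is analytic in $(F,\alpha)$ because the fundamental cylinder can be chosen with boundary bounded away from the critical value (the only point where $q_\alpha$ fails to be analytic in $\alpha$), so the classical theorems on analytic dependence of Riemann maps on parameters apply.

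At $\alpha=2k+1$ the factorization collapses into a single analytic map and $i_{2k+1}$ is a real-symmetric analytic isomorphism, so $\mathcal R_{2k+1}$ is analytically conjugate to $\cren|_{\mathfrak C^{2k+1}}$; the hyperbolic horseshoe attractor of $\cren$ of bounded type $B$ with one-dimensional unstable direction, established in \cite{Ya3,Ya4}, therefore transfers verbatim to $\mathcal R_{2k+1}$. The principal obstacle throughout is the joint analytic dependence on $(F,\alpha)$: one must treat $\alpha$ as a complex parameter at every stage of the construction and, in particular, verify that the $f$-adapted fundamental cylinder can be chosen to vary analytically in $\alpha$ while remaining bounded away from the critical value. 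Carrying out this parametric version of the construction of \cite{Ya3}, and controlling the behavior of $\phi, \psi$ uniformly as $\alpha$ moves off the integer value, is the main technical content of the proof.
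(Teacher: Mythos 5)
The overall shape of your construction—a Banach manifold of factorization data $(\phi,\psi)$, with $i_\alpha$ given by reinserting $q_\alpha$, and $\mathcal R_\alpha$ built so as to intertwine $\cren$—is indeed what the paper does (their $\mathfrak M^{2k+1}$ is an open piece of the triple space $\mathfrak P^{2k+1}_{\tilde U,\tilde t,\tilde h}$, and $i_\alpha(\phi,\psi)=f_\tau$ with $\tau=(\alpha,\phi,\psi)$). However, there is a crucial error that breaks the second half of your argument: the claim that the normalization $\phi(0)=0$, $\phi'(0)=1$ renders the factorization unique, so that $i_{2k+1}$ is an isomorphism onto $\mathfrak C^{2k+1}$. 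This is false. For any analytic germ $u$ with $u(0)=0$, $u'(0)=1$, replacing $\phi$ by $\tilde\phi=(u\circ\phi^\alpha)^{1/\alpha}$ (which again has $\tilde\phi(0)=0$, $\tilde\phi'(0)=1$ and is genuinely analytic when $\alpha$ is an odd integer) and $\psi$ by $\psi\circ u^{-1}$ yields the same composed map $\psi\circ q_\alpha\circ\phi$, so the factorization has an infinite-dimensional redundancy. The paper states this outright: ``the maps $i_\alpha\colon\mathfrak M^{2k+1}\to\mathfrak C^\alpha$ are not injective when $\alpha$ is rational.''

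Two steps of your proof collapse as a result. First, $\mathcal R_\alpha:=i_\alpha^{-1}\circ\cren\circ i_\alpha$ is not a definition, since $i_\alpha^{-1}$ does not exist. The paper instead constructs $\mathcal R_{\mathfrak P}$ \emph{directly} on the space of triples, defining $\tilde\phi=\pi'_{C_{f_\tau}}(0)\cdot\phi\circ\pi^{-1}_{C^o_{f_\tau}}$ and solving for $\tilde\psi$ (Lemma~\ref{composition_lemma}); the semiconjugacy $\cren\circ i_\alpha=i_\alpha\circ\mathcal R_\alpha$ is then a theorem, not a tautology (Proposition~\ref{basic_R_P_prop}(ii)). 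Second, and more seriously, the hyperbolicity of $\cren$ on $\hat{\mathcal I}_B$ does \emph{not} transfer ``verbatim'' to $\mathcal R_{2k+1}$ along a non-injective semiconjugacy. This is precisely the technical heart of the paper's Theorem~\ref{hyperb_R_P_theorem}: one passes to the injective chart $\tau\mapsto(\phi,f_\tau)$, under which $\mathcal R_{\mathfrak P,3}$ becomes a skew product $\tl\cR(\phi,g)=(A_g\phi,\hren\,g)$ over $\hren$, and hyperbolicity is established by proving the fiber affine maps $A_g$ are eventually uniformly contracting (Lemma~\ref{Koebe_appl_lemma}, a Koebe-distortion estimate exploiting $|(\pi_{C_m}^{-1})'(0)|<1/17$ for deep renormalizations). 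Without this extra contraction argument in the $\phi$-direction, the claimed hyperbolic horseshoe for $\mathcal R_{2k+1}$ remains unproved.
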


{The manifolds $\mathfrak M^{2k+1}$ are ``large'': the image $i_{2k+1}(\mathfrak M^{2k+1})$ contains an open neighborhood of the renormalization horseshoe attractor in the 
space of critical cylinder maps with the exponent $2k+1$ constructed in} \cite{Ya3}. Furthermore, in a forthcoming
paper, we will demonstrate that renormalizations of smooth circle homeomorpshisms with a unique singularity of the form
(\ref{sing1})  converge to $i_\alpha({\mathfrak M}^{2k+1})$.

It is worth pointing out that the maps  $i_\alpha\colon\mathfrak M^{2k+1}\to\mathfrak C^\alpha$ are not injective when $\alpha$ is rational. 

\noindent
As a consequence of Theorem~\ref{main2}, we derive:

\begin{theorem}[\bf{ Renormalization hyperbolicity}]
\label{main3}
For any pair of integers $B, k\ge 1$ there exists $\varepsilon >0$, such that for any $\alpha\in (2k+1-\varepsilon,2k+1+\varepsilon)$, the operator $\mathcal R_\alpha$ has a hyperbolic horseshoe attractor of type bounded by $B$ with a one-dimensional unstable direction.
\end{theorem}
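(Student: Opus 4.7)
The plan is to treat Theorem~\ref{main3} as a persistence statement for a hyperbolic invariant set under an analytic one-parameter perturbation. By Theorem~\ref{main2} we already possess the target object at the special value $\alpha_0 := 2k+1$: the operator $\mathcal R_{\alpha_0}$ has a hyperbolic horseshoe attractor $\mathcal H_B \subset \mathfrak M^{2k+1}$ of type bounded by $B$, with a continuous splitting $T_x\mathfrak M^{2k+1} = E^u(x) \oplus E^s(x)$ for $x\in\mathcal H_B$, $\dim E^u = 1$, and $\mathcal R_{\alpha_0}|_{\mathcal H_B}$ conjugate to the shift on $\{1,\ldots,B\}^{\bbZ}$. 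Since the alphabet is finite, $\mathcal H_B$ is compact. The full family $\{\mathcal R_\alpha\}_{\alpha\in I_k}$ is analytic in $\alpha$, so on any neighborhood $U$ of $\mathcal H_B$ in $\mathfrak M^{2k+1}$ on which all operators are defined, the differences $\mathcal R_\alpha - \mathcal R_{\alpha_0}$ together with their derivatives tend to zero uniformly on $U$ as $\alpha\to\alpha_0$.

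First I would fix a chart-neighborhood $U$ of $\mathcal H_B$, pull the hyperbolic splitting back to a continuous splitting on $U$ (via any continuous extension, e.g.\ using a tubular neighborhood in the Banach model), and construct an invariant cone field for $D\mathcal R_{\alpha_0}$ on $U$: one-dimensional unstable cones on which $D\mathcal R_{\alpha_0}$ expands and transverse stable cones on which the adjoint contracts. This is possible exactly because the unstable direction is one-dimensional and the stable bundle has closed complementary subspace at every point, which gives the Banach-manifold analogue of the standard cone construction.

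Second, by continuity of $D\mathcal R_\alpha$ in $\alpha$, the same cone field is invariant with matching expansion/contraction constants for $\mathcal R_\alpha$ as long as $|\alpha-\alpha_0|<\varepsilon$ for sufficiently small $\varepsilon = \varepsilon(B,k)$. The Banach-space version of the Hirsch--Pugh--Shub stability theorem for compact hyperbolic sets (which has already been applied in this setting in the works \cite{Ya3,Ya4} for the cylinder renormalization of integer-exponent circle maps) then produces a compact invariant set $\mathcal H_B^\alpha \subset U$ for $\mathcal R_\alpha$, together with a H\"older continuous conjugacy $h_\alpha\colon\mathcal H_B \to \mathcal H_B^\alpha$ intertwining $\mathcal R_{\alpha_0}$ and $\mathcal R_\alpha$, and a continuous hyperbolic splitting $E^u_\alpha\oplus E^s_\alpha$ over $\mathcal H_B^\alpha$ with $\dim E^u_\alpha = 1$. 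Since $h_\alpha$ is a topological conjugacy, $\mathcal H_B^\alpha$ inherits the structure of a horseshoe of type bounded by $B$.

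Finally I would verify that $\mathcal H_B^\alpha$ is in fact the full horseshoe attractor of type bounded by $B$ for $\mathcal R_\alpha$, by combining the combinatorial description of renormalization orbits of bounded type (which characterizes membership via symbolic dynamics) with the local uniqueness provided by the persistence theorem. The main obstacle, as usual in the infinite-dimensional setting, is ensuring that the cone-invariance argument works with a non-compact stable direction: one must check that the contraction of $D\mathcal R_\alpha$ along stable cones is uniform, and that the inversion required to realize the unstable graph transform remains well-defined after perturbation. Both issues are controlled by the analyticity of the family $\mathcal R_\alpha$ in $\alpha$ (which upgrades uniform closeness on $U$ from $C^0$ to any $C^r$) and by the fact that the one-dimensionality of $E^u$ makes the graph transform act on a genuine Banach space of sections, so the standard fixed-point argument carries over without modification.
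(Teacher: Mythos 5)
Your proposal is correct and follows essentially the same route as the paper: the paper's proof of Theorem~\ref{main3} is a one-line reduction to Corollary~\ref{main3_extended}, which is itself proved by observing that $\mathcal R_{\mathfrak P,\alpha}$ is a real-symmetric analytic perturbation of $\mathcal R_{\mathfrak P,3}$ near the compact hyperbolic horseshoe $\check{\mathcal I}_B$ (from Theorem~\ref{hyperb_R_P_theorem}) and invoking structural stability of hyperbolic sets. You have fleshed out the cone-field and persistence argument that the paper invokes in a single sentence, but the underlying idea is identical.
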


{Hyperbolicity of the attractors in all of the above theorems is understood in the sense of Definition}~\ref{hyperb_def}, given in the next section.

In order to simplify the notation, we will provide proofs of Theorems~\ref{main1},~\ref{main2},~\ref{main3} only for the case when $k=1$. For $k>1$ the proofs are identical, so for the remaining part of the paper we assume that in the above theorems the parameter $k$ is always set to $k=1$.

The above theorems establish the local hyperbolic structure of the horseshoe attractor of renormalization. What follows next are the global results.

For every $\alpha>1$ we naturally define the space $\mathcal A^\alpha$ of analytic commuting pairs with the critical exponent $\alpha$, so that renormalizations of analytic critical circle maps with singularity of the form~(\ref{sing1}) belong to $\mathcal A^\alpha$ (c.f. Definitions~\ref{A_r_alpha_def},~\ref{A_alpha_def}). Our first result is the following:
\begin{theorem}[{\bf Global renormalization attractor}]
\label{main4}
For every $k,B\in\bbN$, there exists an open interval $J=J(k,B)\subset\bbR$, such that $2k+1\in J$, and for every $\alpha\in J$, the renormalization operator restricted to $\mathcal A^\alpha$ has a global horseshoe attractor $\mathcal I_B^\alpha\subset\mathcal A^\alpha$ of type bounded by $B$. Furthermore, renormalizations of any two commuting pairs $\zeta_1,\zeta_2\in\mathcal A^\alpha$ with the same irrational rotation number of type bounded by $B$, converge exponentially in $C^r$-metric, for every non-negative $r$.
\end{theorem}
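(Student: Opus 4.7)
The plan is to combine the local hyperbolic structure of the horseshoe attractor, already furnished by Theorem~\ref{main3}, with a global compactness/a-priori-bounds argument on the space $\mathcal{A}^\alpha$, and then to squeeze the exponential convergence out of the stable/unstable splitting along the attractor.

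\textbf{Step 1: Complex a priori bounds in $\mathcal{A}^\alpha$ for $\alpha$ near $2k+1$.} First I would prove that there exist a neighborhood $J\ni 2k+1$ and constants $n_0,\mu>0$ such that for every $\alpha\in J$ and every $\zeta\in\mathcal{A}^\alpha$ with irrational rotation number of type bounded by $B$, the pairs $\mathcal{R}^n\zeta$ for $n\ge n_0$ extend analytically to uniform complex neighborhoods of their domains with modulus at least $\mu$. In the case $\alpha=2k+1$ this is the standard Sullivan/de Faria/Yampolsky real and complex bounds. For $\alpha$ near $2k+1$ one decomposes a pair in $\mathcal{A}^\alpha$ in the form~(\ref{sing1}) and transports the known bounds from the integer-exponent case by an analytic perturbation argument, using the analytic dependence on $\alpha$ of the spaces involved in Theorems~\ref{main1}--\ref{main2}. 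The output is a precompact, $\mathcal{R}_\alpha$-absorbing subset $\mathcal{K}^\alpha\subset\mathcal{A}^\alpha$ on which $\mathcal{R}_\alpha$ is continuous.

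\textbf{Step 2: Landing in the Banach manifold.} The uniform complex bounds produce an extension map from a tail of any renormalization orbit into the analytic Banach manifold $i_\alpha(\mathfrak{M}^{2k+1})$ of Theorem~\ref{main2}, continuous with respect to the topology of $\mathcal{A}^\alpha$. Shrinking $J$ if necessary, I would then invoke Theorem~\ref{main3}: the hyperbolic horseshoe $\mathcal{H}_B^\alpha$ of $\mathcal{R}_\alpha$ on $\mathfrak{M}^{2k+1}$ has a neighborhood with a local product structure, and every point whose orbit stays in this neighborhood is accumulated by points on $\mathcal{H}_B^\alpha$ along local stable manifolds. Combining this with Step~1, every $\zeta\in\mathcal{A}^\alpha$ of rotation-type bounded by $B$ eventually enters this neighborhood and has its forward orbit shadowed by an orbit on the horseshoe. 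Setting $\mathcal{I}_B^\alpha$ to be the closure of the image in $\mathcal{A}^\alpha$ of $\mathcal{H}_B^\alpha$ then yields a global attractor of bounded type $B$, invariant under $\mathcal{R}_\alpha$ and topologically conjugate to the two-sided shift on $\{1,\dots,B\}^{\bbZ}$.

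\textbf{Step 3: Exponential convergence in $C^r$.} For two pairs $\zeta_1,\zeta_2\in\mathcal{A}^\alpha$ with the same irrational rotation number $\rho$ of type bounded by $B$, Steps~1--2 place $\mathcal{R}^n\zeta_1$ and $\mathcal{R}^n\zeta_2$, for all large $n$, in the local stable manifold of the same horseshoe orbit (the one coded by the continued fraction of $\rho$). The hyperbolic contraction on the stable leaves in the Banach norm of $\mathfrak{M}^{2k+1}$ then gives $\|\mathcal{R}^{n+m}\zeta_1-\mathcal{R}^{n+m}\zeta_2\|\le C\lambda^m$ for some $\lambda<1$ and every $m\ge 0$. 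Because elements of $\mathfrak{M}^{2k+1}$ are, by construction, analytic on definite complex neighborhoods of the real domains of the pairs, the Cauchy integral formula upgrades the Banach norm convergence to exponential convergence in $C^r$ for every $r\ge 0$, completing the proof.

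\textbf{Main obstacle.} The technical heart is Step~1: propagating the a priori complex bounds from $\alpha=2k+1$ to nearby non-integer $\alpha$. The classical bounds exploit the analyticity of $q_\alpha$ at the critical point, which fails for non-integer $\alpha$ since $q_\alpha(x)=x|x|^{\alpha-1}$ is then only $C^{\lfloor\alpha\rfloor}$ at $0$. One has to run the argument through the analytic factors $\phi,\psi$ in the decomposition~(\ref{sing1}), controlling them uniformly in $\alpha$, and this forces $J$ to be a (possibly small) interval around $2k+1$ rather than a fixed neighborhood independent of $B$. Everything downstream — compactness, landing in $\mathfrak{M}^{2k+1}$, and the hyperbolic contraction argument — is essentially a corollary of these bounds once Theorems~\ref{main2}--\ref{main3} are in hand.
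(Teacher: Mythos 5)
The broad skeleton of your proposal (complex bounds $\Rightarrow$ compactness of the renormalization tail, then hyperbolicity $\Rightarrow$ exponential convergence on stable leaves, then analyticity $\Rightarrow$ $C^r$) matches the paper, but there is a genuine gap at the critical transition in Step~2. Complex a priori bounds give you a precompact absorbing set in $\mathbf{H}^\alpha(\mu)$, but \emph{compactness alone does not force the renormalization orbit into a neighborhood of the hyperbolic horseshoe}. Your assertion that ``every $\zeta\in\mathcal{A}^\alpha$ of rotation-type bounded by $B$ eventually enters this neighborhood'' is exactly what needs to be proved, and nothing in Steps~1--2 delivers it: the orbit could in principle wander inside the compact set without ever approaching $\check{\mathcal{I}}_B^\alpha$, and local hyperbolicity (Theorem~\ref{main3}) says nothing about points that never come close to the horseshoe. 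Moreover, the ``local product structure'' shadowing argument you invoke would additionally require some form of local maximality of the horseshoe, which is not what Theorems~\ref{main2}--\ref{main3} give you.

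The paper closes this gap with an ingredient your plan does not mention: the \emph{global} convergence of renormalization for the exact odd-integer exponent, namely de~Faria--de~Melo's extension (Remark~\ref{global_attractor_remark}) of Theorem~\ref{Comm_pair_attractor_theorem} to all non-Epstein analytic commuting pairs with $\alpha=2k+1$. Proposition~\ref{iterate_into_W_B_proposition} then combines that global result for $\alpha=3$ with the sequential compactness of $\mathbf{H}^{3}(\mu)$ to obtain a \emph{uniform} integer $L$ such that $\mathcal{R}^L$ lands every bounded-type pair in the absorbing open set $\tilde{\mathcal{W}}_B$; sequential compactness of $\mathbf{H}^{\overline I}(\mu)$ (Lemma~\ref{H_mu_compactness_lemma_extended}) and continuity then propagate the same $L$ to $\alpha$ in a small interval $J_2$ around $2k+1$. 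Only after that uniform landing is established does the hyperbolic contraction of $\cR_{\mathfrak{P},\alpha}$ (Corollary~\ref{main3_extended}) take over, via the bridge maps $\Phi$ and $\Psi$ and the norm comparison Lemmas~\ref{Phi_norm_bound_lemma}--\ref{Psi_lemma}. Two secondary points: (i) the complex bounds (Theorem~\ref{complex bounds_extended}) hold for \emph{all} $\alpha>2$ by running the original proof \emph{mutatis mutandis}, so there is no need for the perturbation-from-$2k+1$ argument you sketch in Step~1 — the restriction to $\alpha$ near $2k+1$ comes entirely from the hyperbolicity and global-attractor transfer, not from the bounds; (ii) defining $\mathcal{I}_B^\alpha$ as ``the closure of the image of $\mathcal{H}_B^\alpha$'' gives only $\cR^N$-invariance, and you would still need an argument (the paper uses continuity plus total disconnectedness of $\Sigma_B$) to upgrade this to $\cR$-invariance and identify the action with the full shift.
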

In \S\ref{sec:global} we formulate and prove an expanded version of this theorem (c.f. Theorem~\ref{Global_attractor_theorem_expanded}).

We will say that $\{f_t\mid t\in(-1,1)\}$ is a $C^1$-smooth one-paremeter  family of analytic critical circle maps with critical exponent $\alpha$, if 
there exist neighborhoods $V_2\subset V_1\subset\bbC\slash\bbZ$, such that $\bbT\subset V_1$, $0\in V_2$ and for every $t$, the critical circle map $f_t$ extends to an analytic function in $V_1\setminus\{\Re z=0\}$ and to an analytic (multiple valued) function in $V_2\setminus\{0\}$, where it can be represented as $f_t=\psi_t\circ q_\alpha\circ\phi_t$, for some conformal maps $\phi_t$ and $\psi_t$ defined on $V_2$ and $q_\alpha(\phi_t(V_2))$ respectively. Furthermore $\phi_t$ and $\psi_t$ are required to be $C^1$-smooth in $t$ and $\phi_t(0)=0$.

As a first consequence of Theorem~\ref{main4}, we obtain a universality statement:
\begin{theorem}[{\bf Universality}]\label{main5}
We adopt the notation of Theorem \ref{main4}. 
Let $\rho$ be an irrational rotation number of type bounded by $B$ which is periodic under the Gauss map with period $p$: 
$$\rho=[r_0,r_1,\ldots,r_{p-1},r_0,\ldots,r_{p-1},\ldots] .$$ 
There exists a positive integer $N=N(k)$, with the property that for every $\alpha\in J(k,B)$, there exists $\delta=\delta(\rho,\alpha)>1$ such that the following holds. 
Suppose 
$\{f_t\mid t\in(-1,1)\}$ 
is a $C^1$-smooth one-paremeter  family of analytic critical circle maps with critical exponent $\alpha$ such that:
\begin{itemize}
\item for every $x\in\bbT$, the derivative $\frac{\partial }{\partial t}f_t(x)>0;$
\item the rotation number $\rho(f_0)=\rho$;
\end{itemize}
Let $I_m\to 0$ denote the sequence of closed intervals of parameters $t$ such that the first $Npm$ numbers in the continued fraction of $\rho(f_t)$ coincide with those of $\rho$. Then
$$|I_m|\sim a\delta^{-m},\text{ for some }a>0.$$ 
\end{theorem}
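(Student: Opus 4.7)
The plan is to reduce Theorem~\ref{main5} to a standard $\lambda$-lemma estimate for a $C^1$ curve crossing the stable manifold of a hyperbolic periodic orbit of $\mathcal R_\alpha$. First, I would locate this orbit. By Theorem~\ref{main4}, $\mathcal R_\alpha\colon\mathcal A^\alpha\to\mathcal A^\alpha$ has a global horseshoe attractor $\mathcal I_B^\alpha$; choose $N=N(k)$ so that a single application of $\mathcal R_\alpha$ corresponds to exactly $N$ Gauss-map steps on rotation numbers. Since $\rho=[r_0,\dots,r_{p-1},r_0,\dots]$ is $p$-periodic under the Gauss map, there is then a periodic orbit $\zeta^*_\alpha\in\mathcal I_B^\alpha$ of period $p$ with rotation number $\rho$. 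By Theorem~\ref{main3}, after shrinking the neighborhood of $2k+1$ if necessary, $\zeta^*_\alpha$ is hyperbolic with one-dimensional unstable direction; set $\delta=\delta(\rho,\alpha)>1$ to be the unstable eigenvalue of $D\mathcal R_\alpha^p$ at $\zeta^*_\alpha$.

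Next, I associate a $C^1$ curve in commuting-pair space. The explicit form $f_t=\psi_t\circ q_\alpha\circ\phi_t$, together with $C^1$-dependence of $\psi_t,\phi_t$ on $t$, guarantees that the commuting pair $\zeta_t\in\mathcal A^\alpha$ extracted from $f_t$ depends $C^1$-smoothly on $t$. The hypothesis $\partial_t f_t>0$ gives strict monotonicity of $t\mapsto\rho(f_t)$ on the set of irrational rotation numbers, so each $I_m$ is a nondegenerate closed interval collapsing to $0$. Moreover, $t\in I_m$ iff $\mathcal R_\alpha^{pm}(\zeta_t)$ is defined and lies in a fixed neighborhood $U$ of $\mathcal I_B^\alpha$ on which $\mathcal R_\alpha$ acts, so $I_m$ is the component of $t=0$ in the preimage $\{t:\mathcal R_\alpha^{pm}(\zeta_t)\in U\}$.

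The heart of the argument is transversality. Since $\rho(f_0)=\rho$ is Gauss-periodic, every iterate $\mathcal R_\alpha^{pm}(\zeta_0)$ has rotation number $\rho$, and the exponential-convergence statement of Theorem~\ref{main4} implies $\mathcal R_\alpha^{pm}(\zeta_0)\to\zeta^*_\alpha$ exponentially; hence $\zeta_0\in W^s(\zeta^*_\alpha)$. Using holonomy of the stable foliation in a neighborhood of $\zeta^*_\alpha$, the rotation number descends to a $C^1$ functional $\tilde\rho$ on a one-dimensional transversal to $W^s(\zeta^*_\alpha)$, whose derivative along the projection of $\dot\zeta_0:=\partial_t\zeta_t|_{t=0}$ agrees up to a positive factor with $\partial_t\rho(f_t)|_{t=0}>0$. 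Therefore $\dot\zeta_0$ has a nonzero component in the one-dimensional unstable direction, so $t\mapsto\zeta_t$ crosses $W^s(\zeta^*_\alpha)$ transversely at $t=0$. Applying the Palis $\lambda$-lemma to the hyperbolic periodic point $\zeta^*_\alpha$ of $\mathcal R_\alpha^p$ (with one-dimensional unstable direction and codimension-one stable manifold) then gives $|I_m|\sim a\delta^{-m}$, with $a>0$ determined by the transverse speed of $\zeta_t$ and the unstable-direction diameter of $U$.

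The main obstacle is the transversality step: in the real-analytic Banach-manifold setting, monotonicity of $\rho(f_t)$ does not directly imply non-tangency of $\dot\zeta_0$ to $W^s(\zeta^*_\alpha)$. One must genuinely upgrade monotonicity to a derivative statement for a rotation-number functional transverse to the stable foliation, which requires $C^1$-regularity of the stable holonomy of the hyperbolic periodic orbit (available from Theorem~\ref{main3}) and continuity of the rotation number in the ambient Banach topology. Once these ingredients are in place, the conclusion follows from the standard one-dimensional $\lambda$-lemma argument.
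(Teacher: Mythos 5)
Your proposal correctly identifies the structural skeleton of the argument: locate a hyperbolic periodic orbit of renormalization corresponding to the Gauss-periodic rotation number $\rho$, show that the family $\{\zeta_t\}$ (or rather its image in the renormalization space) crosses the local stable manifold of that orbit transversely at $t=0$, and extract the universality exponent $\delta$ from the unstable eigenvalue via the inclination/$\lambda$-lemma. This is indeed the paper's strategy. However, the transversality step — which you yourself flag as the main obstacle — is handled in a way that does not work, and you do not close the gap.

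The problem is the assertion $\partial_t\rho(f_t)\big|_{t=0}>0$, and more broadly the attempt to construct a $C^1$ ``rotation-number functional'' $\tilde\rho$ transverse to the stable foliation. The hypothesis $\frac{\partial}{\partial t}f_t(x)>0$ gives strict monotonicity of $t\mapsto\rho(f_t)$, but this function is a devil's staircase and one cannot assume a priori that it is differentiable at $t=0$ with nonzero derivative. Indeed, smooth dependence of the rotation number transverse to $W^s$ is essentially equivalent to the universality conclusion being proved, so invoking it to establish transversality is circular. Relying on ``$C^1$-regularity of the stable holonomy'' does not help: hyperbolicity gives you a continuous, and after work H\"older, stable lamination, but no reason for the composition with $\rho$ to be differentiable along a transversal before the theorem is established.

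The paper closes the gap with an expanding invariant cone field, not with a smooth rotation-number functional. Lemma~9.3 of \cite{Ya3} shows that the positivity condition $\inf_{x\in\bbT}\partial_t f_t(x)>0$ forces the tangent vector to the family of cylinder maps $\{f_{\tau_t}\}$ to lie in a cone field $\mathcal C$ on $T\mathfrak C_{\tl r}^\alpha$ that is invariant and uniformly expanded by $D\cren$. Pulling $\mathcal C$ back by the analytic map $\tau\mapsto f_\tau$ gives a cone field $\mathcal C'$ on $T\mathfrak P_{\tl U,\tl t,\tl h}^\alpha$ invariant and expanding under $D\mathcal R_{\mathfrak P,\alpha}$ (Proposition~\ref{basic_R_P_prop}). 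Since a vector in an expanding invariant cone cannot be tangent to the (contracting) stable direction, transversality of $t\mapsto\tau_t$ to $W^s_{\mathrm{loc}}(\Psi(\theta))$ follows without ever differentiating the rotation number. The cone-field argument is the missing ingredient in your proposal; once it is supplied, the rest of your outline (periodic orbit, stable manifold, $\lambda$-lemma, $\delta$ equal to the modulus of the unstable eigenvalue of $D\mathcal R_{\mathfrak P,\alpha}^p$ at $\Psi(\theta)$) matches the paper.

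A secondary point: the passage from the circle-map family $\{f_t\}$ to a curve in a space on which renormalization hyperbolicity is actually established requires more care than ``extract $\zeta_t\in\mathcal A^\alpha$.'' The paper first pre-renormalizes $l$ times and conjugates by $\phi_t$ to land in $\mathcal B^\alpha$, invokes complex bounds to get holomorphic pair extensions in $\hol^\alpha(\mu)$, uses Proposition~\ref{iterate_into_W_B_proposition} to enter $\mathcal W_B$, and then applies $\Psi$ to produce the curve $\tau_t$ in $\mathfrak P_{\tl U,\tl t,\tl h}^\alpha$ where the hyperbolicity from Corollary~\ref{main3_extended} lives. Your proposal skips this bookkeeping, which is necessary precisely because the hyperbolicity theorem is stated for the operator $\mathcal R_{\mathfrak P,\alpha}$ on $\mathfrak P$, not for $\mathcal R$ on $\mathcal A^\alpha$ directly.
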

Furthermore,  we prove the following rigidity result:
\begin{theorem}[{\bf Rigidity}]
\label{main6}
Every two analytic critical circle maps with the same irrational rotation number of type bounded by $B$ and with the same critical exponent $\alpha\in\cup_{k\in\bbN} J(k,B)$, are $C^{1+\beta}$-conjugate, where $\beta>0$ depends on $B$ and $\alpha$.
\end{theorem}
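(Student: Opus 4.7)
The plan is to follow the standard three-step route to rigidity and verify that the non-integer exponent case introduces no new difficulty at any stage. Step one is exponential convergence of renormalizations, which is supplied directly by Theorem~\ref{main4}: if $f_1,f_2$ are two analytic critical circle maps with the same critical exponent $\alpha\in J(k,B)$ and the same irrational rotation number of type bounded by $B$, then their associated commuting pairs $\zeta_1,\zeta_2$ lie in $\mathcal A^\alpha$, and so
\[
\dist_{C^r}\bigl(\mathcal R^n\zeta_1,\mathcal R^n\zeta_2\bigr)\le C\lambda^n
\]
for every $r\ge 0$, with $C>0$ and $\lambda\in(0,1)$ depending on $B$, $\alpha$, and $r$.

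Step two bootstraps exponential convergence to a $C^1$-conjugacy. Yoccoz's theorem produces a topological conjugacy $h$ between $f_1$ and $f_2$; its construction requires only $C^3$ smoothness with a unique critical point, so it applies to our maps unchanged. The argument of Khanin--Teplinsky~\cite{Khanin_Teplinsky_2007} then upgrades $h$ to a $C^1$ diffeomorphism, using only exponential convergence, Yoccoz's topological conjugacy, and uniform real a priori bounds on the dynamical partitions of critical circle maps. In our setting the uniform real bounds follow from Theorem~\ref{main4}: every renormalization of $\zeta_i$ eventually enters a neighborhood of the compact horseshoe attractor $\mathcal I_B^\alpha$, which provides the required uniform geometric control. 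With this input the Khanin--Teplinsky proof goes through for non-integer $\alpha$.

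Step three upgrades $C^1$ to $C^{1+\beta}$ in the bounded combinatorial type case via the technique of de Faria--de Melo~\cite{FM1}. For bounded type the dynamical partitions shrink geometrically, and exponential $C^r$-convergence of renormalizations translates into a H\"older estimate on the ratios of adjacent partition intervals of $f_1$ versus those of $f_2$. This in turn yields $|h'(x)-h'(y)|\le C|x-y|^\beta$ for some $\beta=\beta(B,\alpha)>0$. As in the previous step, only the exponential rate and the uniform real bounds enter the argument.

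The main obstacle is verifying that the Khanin--Teplinsky and de Faria--de Melo arguments transfer to commuting pairs with a non-analytic singularity of the form~(\ref{sing1}) at the critical point. The key observation is that the singularity is purely local: away from the critical point the pair is analytic, and the $C^r$-metric from Theorem~\ref{main4} controls precisely the quantities (ratios of lengths, derivatives on non-critical pieces, scaling factors) that enter the rigidity arguments. Since the compactness of $\mathcal I_B^\alpha$ supplies the uniform real bounds that those proofs require, the entire argument transfers with only cosmetic modifications.
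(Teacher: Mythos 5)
Your proof is correct in substance, and all the real ingredients are the same as the paper's: exponential convergence from Theorem~\ref{main4}, the de~Faria--de~Melo ratio criterion, and the locality of the singularity. However, you take a more roundabout route than the paper. The paper does not pass through a separate $C^1$ step at all: it uses the real a priori bounds only to guarantee the existence of the topological conjugacy $h$ (the dynamical partition intervals shrink to zero, so the combinatorial equivalence of orbits determines a unique conjugacy fixing $0$), and then applies Proposition~\ref{dFdM43_prop} directly. That proposition is a one-shot criterion: exponential decay of the ratio distortion
\[
\left|\frac{|I|}{|J|}-\frac{|h(I)|}{|h(J)|}\right|\le C\lambda^n
\]
for adjacent atoms of $\mathcal P_n(f_1)$ immediately yields $C^{1+\beta}$-smoothness, with no prior knowledge that $h$ is $C^1$. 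Verifying this ratio estimate is done via Theorem~\ref{Global_attractor_theorem_expanded} plus the Koebe distortion theorem, which translate the $\dist_r$-exponential convergence of the commuting pairs into exponential control of the scaling ratios. Your insertion of the Khanin--Teplinsky $C^1$ argument is therefore logically valid but unnecessary for the bounded-type case; it would be the right intermediate step if one wanted $C^1$-rigidity for unbounded combinatorics, but for the statement at hand de~Faria--de~Melo gets there in one move. Apart from this detour, your observation that the singularity is local and that the $\dist_r$-metric controls the needed length ratios is exactly the point the paper relies on, so your argument closes.
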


Having stated the main results, let us also highlight several conceptual issues that we handled in the paper. In the case of analytic critical circle maps (that is, when $k\in\NN$) there are two complementary approaches for defining renormalization. The ``classical'' one, going back to \cite{ORSS,FKS} is done in the language of commuting pairs (see Definition~\ref{commut_pair_def} below). In contrast, cylinder renormalization defined in \cite{Ya3} acts on analytic maps of an annulus, which restrict to critical circle maps (critical cylinder maps). Neither the classical definition of an analytic critical commuting pair, nor the definition of a critical cylinder map makes sense for the case when the singularity is of type (\ref{sing1}) with $\alpha\notin 2\NN+1$, since in this situation the iterates of our circle map cannot be analytically continued to a neighborhood of the origin. We finesse this difficulty in \S\ref{sec:global} for the case of commuting pairs and in \S\ref{sec:spaces} for critical cylinder maps. Moreover, we give a new proof of the existence of fundamental domains for cylinder renormalization which works for locally analytic maps and does not require any global structure.
Finally, 
in \S\ref{sec:renorm2} we introduce a new framework which bridges the gap between the two definitions of renormalization, and allows us to extend renormalization hyperbolicity results to $\alpha\notin 2\NN+1$.

The structure of the paper is as follows. In the next section \S\ref{sec:prelim} we recall the relevant facts of renormalization theory of critical circle maps. The reader can find a more detailed introduction in \cite{Ya3}. In \S\ref{sec:spaces} we introduce the functional spaces that are used in the construction of Banach manifolds from Theorems~\ref{main1} and \ref{main2}, and discuss their  properties. In \S\ref{sec:cres} we lay the ground for extending cylinder renormalization of \cite{Ya3} to maps with non odd-integer critical exponents, and generalize the results of \cite{Ya3} on the existence of fundamental crescents. In \S\ref{sec:renorm1}, we give a generalized definition of  cylinder renormalization. Then  in \S\ref{sec:renorm2} we construct the family of renormalizations $\cR_\alpha$ acting on $\mathfrak M^{2k+1}$. In \S\ref{sec:hyperb} we extend the renormalization hyperbolicity results of \cite{Ya3,Ya4} to a convenient setting. In \S\ref{sec:main_proofs} we complete the proof of renormalization hyperbolicity for $\alpha$ close to odd integers and derive Theorems \ref{main1}-\ref{main3}. Finally, in \S\ref{sec:global} we prove the global renormalization convergence, universality, and rigidity Theorems \ref{main4}-\ref{main6}.

\medskip
\noindent
{\bf Acknowledgment.} The authors would like to thank Mikhail Lyubich for very helpful suggestions.

\section{Preliminaries}
\label{sec:prelim}
\subsection{Critical circle maps}
To fix our ideas, we will always assume that the critical point of a critical circle map $f$ is placed at $0\in\bbT$.

Being a homeomorphism of the circle, a critical circle map has a well defined rotation number $\rho(f)$. It is useful to represent $\rho(f)$ as a finite or infinite continued fraction
$$\rho(f)=[r_0,r_1,r_2,\ldots]\text{ with }r_i\in\bbN.$$
Note that
an irrational number has a unique expansion as a continued fraction, so the following definition makes sense:
\begin{definition}\label{Bounded_type_def}
We will say that an irrational rotation number $\rho(f)=[r_0,r_1,r_2,\dots]$ (or $f$ itself) is of a 
type bounded by a positive constant $B$, if $\sup_{n\ge 0} r_n \le B$.
\end{definition}
For convenience of representing rational rotation numbers, let us add the symbol $\infty$ with the convention $1/\infty=0$. Then
$0=[\infty]$. For a non-zero rational rotation number $\rho(f)$ we will use the unique finite continued fraction expansion
 $$\rho=[r_0,r_1,\dots,r_m,\infty]$$ specified by the requirement $r_m>1$. 
For a rotation number $\rho(f)=[r_0,r_1,r_2,\dots]$ whose continued fraction contains at least $m+1$ terms, we denote by $p_m/q_m$ the 
$m$-th convergent $p_m/q_m= [r_0,\dots,r_{m-1}]$ of $\rho(f)$, written in the irreducible form.

If $f^{q_m}(0)\neq 0$, then we let
\begin{equation}\label{I_m_def}
I_m=[0,f^{q_m}(0)]
\end{equation}
denote the arc of the circle, which does not contain $f^{q_{m}+1}(0)$.
We note that $f^{q_m}(0)$ is a \textit{closest return of $0$}, that is, $I_m$ contains no iterates
$f^k(0)$ with $k<q_m$.

\subsection{Commuting pairs}\label{Comm_pairs_sec}
For two points $a,b\in\bbR$, by $[a,b]$ we will denote the closed interval with endpoints $a,b$ without specifying their order.

\begin{definition}\label{commut_pair_def}
Let $a, b\in\bbR$ be two real numbers, one of which is positive and another negative, and consider two intervals $I_\eta=[a,0]$ and $I_\xi=[b,0]$.
A commuting pair of class $C^r$, $r\geq 3$ ($C^\infty$, analytic) acting on the intervals $I_\eta$, $I_\xi$ is an ordered pair of maps $\zeta=(\eta,\xi)$, $\eta\colon I_\eta\to[a,b]$, $\xi\colon I_\xi\to[a,b]$ with the corresponding smoothness, such that the following properties hold:

\begin{enumerate}[(i)]
 \item $a=\xi(0)$ and $b=\eta(0)$;
 \item\label{com_pairs_ext_property} there exist $C^3$-smooth ($C^\infty$, analytic) extensions $\hat{\eta}\colon \hat I_\eta\to\bbR$ and $\hat{\xi}\colon \hat I_\xi\to\bbR$ of $\eta$ and $\xi$ respectively to some intervals $\hat I_\eta\Supset I_\eta$ and $\hat I_\xi\Supset I_\xi$, such that $\hat{\eta}$ and $\hat{\xi}$ are orientation-preserving homeomorphisms of $\hat I_\eta$ and $\hat I_\xi$ respectively, onto their images, and
$$
\hat{\eta}\circ \hat{\xi}=\hat{\xi}\circ\hat{\eta}
$$
where both compositions are defined;
 \item $\hat{\eta}'(x)\neq 0$ for all $x\in\hat I_\eta\setminus\{0\}$, and  $\hat{\xi}'(y)\neq 0$ for all $y\in\hat I_\xi\setminus\{0\}$; and $0$ is a cubic critical point for both maps;
 \item $\xi\circ\eta(0)\in I_\eta$.
\end{enumerate}
\end{definition}

\begin{definition}\label{ccp_def}
An analytic commuting pair $\zeta=(\eta,\xi)$ will be  called a \textit{critical commuting pair}.
\end{definition}

Given a commuting pair $\zeta=(\eta,\xi)$, we can identify the neighborhoods of the point $\eta(0)$ and $\xi\circ\eta(0)$ by the map $\hat{\xi}$ from property~\ref{com_pairs_ext_property} of Definition~\ref{commut_pair_def}. As a result of this identification, the interval $[\eta(0),\xi\circ\eta(0)]$ is glued into a smooth one dimensional compact manifold $M$ diffeomorphic to a circle. It aso follows from property~\ref{com_pairs_ext_property} of Definition~\ref{commut_pair_def} that the map
$$
f_\zeta(x)=\begin{cases}
\eta\circ\xi(x), &\text{if }x\in[\eta(0),0],\\
\eta(x), &\text{if }x\in[\xi\circ\eta(0),0]
\end{cases}
$$
projects to a smooth homeomorphism $F_\zeta$ of $M$ to itself that can have a unique critical point at the origin. 
Notice that there are many ways of putting an affine structure on the manifold $M$, which gives rise to the whole conjugacy class (the smoothness of the conjugacy is the same as that of the commuting pair) 
of critical circle maps that are conjugate to $F_\zeta$.

\subsection{Renormalization of commuting pairs}\label{renorm_com_pairs_sec}

\begin{definition}
The \textit{height} $\chi(\zeta)$ of a commuting pair $\zeta=(\eta,\xi)$ is the positive integer $r$, such that
$$
0\in [\eta^r(\xi(0)),\eta^{r+1}(\xi(0))).
$$
If no such $r$ exists, we set $\chi(\zeta)=\infty$.
\end{definition}

If $\chi(\zeta)=r\neq \infty$ and $\eta^r(\xi(0))\neq 0$, one can verify that
$$
\hat\zeta=(\eta^r\circ\xi,\eta)
$$
is a commuting pair acting on the intervals $I_\xi$, $[\eta^r(\xi(0)),0]$. It is known as the {\it pre-renormalization} of $\zeta$ and denoted by $p\mathcal R\zeta$.

\begin{definition}\label{renormalization_def}
If $\chi(\zeta)=r\neq \infty$ and $\eta^r(\xi(0))\neq 0$, then the renormalization $\mathcal R\zeta$ of a commuting pair $\zeta=(\eta,\xi)$ is defined to be the affine rescaling of the commuting pair $\hat\zeta=p\mathcal R\zeta$: 
$$
\mathcal R\zeta = (h\circ\eta^r\circ\xi\circ h^{-1},h\circ\eta\circ h^{-1}),
$$
where $h(x)=x/\eta(0)$. 
It is a commuting pair acting on the intervals $[0,1]$, $h([\eta^r(\xi(0)),0])$.
\end{definition}

\begin{definition}
For a commuting pair $\zeta$ define its \textit{rotation number} $\rho(\zeta)\in[0,1]$ to be equal to the continued fraction $[r_0,r_1,\dots]$, where $r_n=\chi(\mathcal R^n\zeta)$. Here, as before, $1/\infty$ is understood as~$0$.
\end{definition}
It is easy to see that the renormalization operator acts as a Gauss map on rotation numbers:
if $\rho(\zeta)=[r,r_1,r_2,\dots]$, then $\rho(\mathcal R\zeta)=[r_1,r_2,\dots]$.

The following proposition can be easily verified:
\begin{proposition}
For a commuting pair $\zeta$ the rotation number of the map $F_\zeta$
from \S~\ref{Comm_pairs_sec} 
is equal to $\rho(\zeta)$.
\end{proposition}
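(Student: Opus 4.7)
The plan is to verify digit by digit that the continued fraction expansions of $\rho(F_\zeta)$ and of $\rho(\zeta)=[r_0,r_1,\ldots]$ agree. Since $\rho(\zeta)$ satisfies the recursion $\rho(\zeta)=1/(\chi(\zeta)+\rho(\mathcal R\zeta))$, it suffices to establish two facts: (A) the first continued fraction digit of $\rho(F_\zeta)$ equals $\chi(\zeta)$ (with the convention $1/\infty=0$ for rational rotation numbers), and (B) $\rho(F_{\mathcal R\zeta})$ equals the image $\{1/\rho(F_\zeta)\}$ of $\rho(F_\zeta)$ under the Gauss map. Applied inductively to $\mathcal R^n\zeta$, these two facts give $\rho(F_\zeta)=\rho(\zeta)$.

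For step (A), I would lift $F_\zeta$ to a homeomorphism $\tilde F_\zeta\colon\bbR\to\bbR$ of the universal cover of $M$, parametrizing $M$ so that the critical point is at $0$. From the explicit piecewise definition of $f_\zeta$, the map $F_\zeta$ coincides with $\eta$ on the arc $[\xi\eta(0),0]$, so the iterates $F_\zeta^k(\xi(0))$ equal $\eta^k(\xi(0))$ for as long as the orbit remains in this arc. By the definition $\chi(\zeta)=r$, this holds exactly for $k=0,1,\ldots,r$, while $F_\zeta^{r+1}(\xi(0))$ is the first iterate to have crossed past $0\in M$. Using the commuting relation $\eta(\xi(0))=\xi(\eta(0))$, which ensures continuity of $F_\zeta$ across the seam of $M$ (the identification of $\eta(0)$ with $\xi\eta(0)$ via $\hat\xi$), one checks that this $(r+1)$-st iterate corresponds under the lift to exactly one revolution around $M$. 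Standard rotation-number estimates then yield the bounds $1/(r+1)\le\rho(F_\zeta)<1/r$, so the first digit of its continued fraction is $r=\chi(\zeta)$.

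For step (B), the key observation is that $p\mathcal R\zeta=(\eta^r\circ\xi,\eta)$ encodes the first-return map of $F_\zeta$ to the sub-arc of $M$ around the critical point bounded by $\xi(0)$ and $\eta^r\xi(0)$: the branch $\eta^r\circ\xi$ represents $F_\zeta^{r+1}$ on one side of $0$, while $\eta$ represents $F_\zeta$ itself on the other side. Since the affine rescaling $h(x)=x/\eta(0)$ is an orientation-preserving conjugacy, $\rho(F_{\mathcal R\zeta})=\rho(F_{p\mathcal R\zeta})$, and the classical Gauss-shift property of first-return constructions for circle homeomorphisms then gives $\rho(F_{p\mathcal R\zeta})=\{1/\rho(F_\zeta)\}$. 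The rational case, in which $\chi(\mathcal R^n\zeta)=\infty$ for some $n$, corresponds to $F_\zeta$ having a periodic orbit and is handled by the same orbit-tracking argument, with both continued fractions terminating at the same step. The main technical obstacle is step (A): carefully tracking the orbit of $\xi(0)$ across the seam of $M$ and verifying via the lift that $F_\zeta^{r+1}(\xi(0))$ makes exactly one revolution; once this is done, step (B) is essentially a reformulation of the standard renormalization picture for circle maps.
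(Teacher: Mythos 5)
The paper offers no proof of this proposition (it simply asserts it ``can be easily verified''), so there is nothing to compare against; I will evaluate your argument on its own. Your two-step plan --- match the first continued-fraction digit of $\rho(F_\zeta)$ with $\chi(\zeta)$, and show that renormalization of the pair corresponds to the Gauss shift for $F_\zeta$ via a first-return argument --- is the right strategy, and the structure of the induction is sound.

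However, there is a recurring imprecision that should be fixed before the argument is rigorous: $\xi(0)$ is not a point of $M$. Taking the convention $\eta(0)<0<\xi(0)$, the manifold $M$ is the quotient of $[\eta(0),\xi\eta(0)]$ by the identification $\hat\xi$ of neighborhoods of the two endpoints; since $\xi\eta(0)<\xi(0)$, the point $\xi(0)$ lies outside this interval, and the gluing does \emph{not} identify $0$ with $\xi(0)$. Consequently the expression $F_\zeta^k(\xi(0))$ is not defined. What is true is that the orbit of the critical point $0\in M$ satisfies $F_\zeta^k(0)=\eta^k(\xi(0))$ for $k=1,\dots,r+1$ (here the commutation relation enters as $\eta(0)\sim\xi\eta(0)=\eta\xi(0)$), so the quantities $\eta^k(\xi(0))$ you work with are the right ones, but the base point of the $F_\zeta$-orbit you should be lifting is $0$, not $\xi(0)$. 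The same slip reappears in step (B): the return sub-arc attached to $p\cR\zeta$ is bounded by $\eta^r\xi(0)$ and by $\eta(0)\sim\xi\eta(0)$ (equivalently, the renormalized manifold $M'$ is $[\eta^{r+1}\xi(0),\eta^r\xi(0)]$ with the obvious gluing), not by $\xi(0)$; and on the negative arc $[\eta(0),0]$ the branch $\eta^r\circ\xi$ coincides with $F_\zeta^{\,r}$, not $F_\zeta^{\,r+1}$.

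Two smaller points. First, the rotation number of $F_\zeta$ depends on a choice of orientation of $M$, which the paper leaves implicit. If one orients $M$ by the ambient order on $[\eta(0),\xi\eta(0)]$, the lift computation gives $\tilde F_\zeta^{\,r+1}(0)=\eta^{r+1}\xi(0)+rL$ with $L=|M|$, so the orbit advances by roughly $r$ revolutions in $r+1$ steps, and it is $1-\rho$ that lands in $\bigl(\tfrac{1}{r+1},\tfrac{1}{r}\bigr]$. Your ``exactly one revolution'' statement therefore presupposes the opposite orientation; this is indeed the orientation that makes the proposition true, but it should be stated. Second, the relevant inclusion is $\rho\in\bigl(\tfrac{1}{r+1},\tfrac{1}{r}\bigr]$, half-open on the other side from what you wrote: if $\rho=\tfrac{1}{r+1}$ the first digit of the canonical continued fraction would be $r+1$, not $r$. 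None of these issues is a gap in the idea --- once the base point, the orientation, and the boundary case are pinned down, the argument goes through.
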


\subsection{From critical circle maps to commuting pairs}
Consider a critical circle map $f$ with rotation number $\rho(f)=[r_0,r_1,\dots]$, whose continued fraction expansion contains at 
least $m+2$ terms.  As before, set $p_m/q_m=[r_0,\dots,r_{m-1}]$ and $p_{m+1}/q_{m+1}=[r_0,\dots,r_m]$ and let $I_m=[0,f^{q_m}]$ and 
$I_{m+1}=[0,f^{q_{m+1}}]$ be as in~(\ref{I_m_def}). 
At the cost of a minor abuse of the notation, identify these intervals with their lifts to $(-1,1)\subset\bbR$. The pair of maps
$$
\zeta_m=(f^{q_{m+1}}, f^{q_m})
$$
is a commuting pair acting on the intervals $I_m$, $I_{m+1}$.
Note that,
if for a given critical circle map $f$, the corresponding commuting pairs $\zeta_m$ 
and $\zeta_{m+1}$ 
are defined, then $\zeta_{m+1}$ is the pre-renormalization of $\zeta_m$. This motivates the following definition:
\begin{definition}\label{renorm_f_def}
For a critical circle map $f$ and a positive integer $m$ as above, we denote
$$
p\cR^m f\equiv \zeta_m,
$$
and we define $\mathcal R^m f$ to be the affine rescaling of the commuting pair $\zeta_m$:
\begin{equation*}
\mathcal R^m f = (h\circ f^{q_{m+1}}\circ h^{-1}, h\circ f^{q_m}\circ h^{-1}),
\end{equation*}
where $h(x)=x/f^{q_m}(0)$.
\end{definition}

\subsection{Epstein class}
Given an open (possibly unbounded) interval $J\subset\bbR$, let $\bbC_J= \bbC\setminus(\bbR\setminus J)$ denote the plane slit along two rays. We will say that a \textit{topological disk} is an open simply-connected (not necessarily bounded) region in $\bbC$.
\begin{definition}
For $a\in\bbR$, $a\neq 0$, let $I$ denote the interval $I=[0,a]$. The \textit{Epstein class} $\mcE_I$ consists of all maps $g\colon I\to\bbR$ such that $g$ is an orientation preserving homeomorphisms of $I$ onto its image $g(I)=J$, and there exists an open interval $\tl J\supset {J}$ with the property that 
$g$ extends to an
analytic three-fold branched covering map of a topological disk
 $G\supset I$ 
onto the double-slit plane ${\bbC}_{\tl J}$ with a single critical point at $0$.
Let $\mcE$ denote the union $\mcE=\bigcup_I\mcE_I$ over all non-degenerate intervals $I$.
\end{definition}

Any map $g\in\mcE$ in the Epstein class can be decomposed as
\begin{equation}
\label{Epstein_decomp}
g=Q_c\circ h,
\end{equation}
where $Q_c(z)=z^3+c$, and $h:I\to \bbR$ is an analytic map with $h(0)=0$ that extends to a biholomorphism between $G$ and the complex plane with six slits,
which triple covers ${\bbC}_{\tl J}$ under the cubic map $Q_c(z)$. In what follows, we will always assume that for an Epstein map $g\colon I\to \bbR$, and $J=g(I)$, the interval $\tl J\supset J$ is the maximal (possibly unbounded) open interval, such that $g$ extends to a triple branched covering of a topological disk $G$ onto $\bbC_{\tl J}$.

We say that two positive constants $\alpha$ and $\beta$ are {\it $K$-commensurable } for some $K>1$ if
$$K^{-1}\alpha\leq\beta\leq K\alpha .$$

\begin{definition}\label{E_I_s_def}
For any $s\in (0,1)$ and an interval $I=[0,a]\subset\bbR$, let $\mcE_{I,s}\subset\mcE_I$ be the set that consists of all $g\in\mcE_I$, such that 
both $|I|$ and $\dist(I,J)$ are $s^{-1}$-commensurable with $|J|$, the length of each component of $\tl J\setminus J$ is at least $s|J|$, and $g'(a)\ge s$.
\end{definition}

We recall that the distortion of a conformal map $f$ in a domain $U\subset\bbC$ is defined as 
$$
\operatorname{Distortion}_U(f)=\sup_{z,w\in U}\frac{|f'(z)|}{|f'(w)|}.
$$
The following lemma was proved in~\cite{Yamp-towers}:
\begin{lemma}[Lemma 2.13 \cite{Yamp-towers}]\label{E_s_bounded_distortion_lemma}
For any $s\in(0,1)$ and an interval $I=[0,a]\subset\bbR$, there exists a domain $O_{I,s}\supset I$, such that for any $g\in \mcE_{I,s}$, the univalent map $h$ in~(\ref{Epstein_decomp}) is well-defined in $O_{I,s}$ and has $K(s)$-bounded distortion in $O_{I,s}$.
\end{lemma}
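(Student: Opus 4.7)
The plan is to exploit the decomposition $g=Q_c\circ h$ in~(\ref{Epstein_decomp}) and reduce the claim to an application of Koebe's distortion theorem for the univalent map $h$. The strategy hinges on extracting a complex neighborhood $O_{I,s}\supset I$ that depends only on $I$ and $s$, sits inside $G$ for every $g\in\mcE_{I,s}$, and is contained in a conformal disk in $G$ of modulus bounded below by a constant $c(s)>0$. Once such $O_{I,s}$ is found, Koebe's quarter theorem and the standard distortion bounds for univalent functions immediately yield the $K(s)$-bounded distortion.

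First I would analyse the geometry of the image of $g$. By Definition~\ref{E_I_s_def}, each component of $\tl J\setminus J$ has length at least $s|J|$, and $|J|$ is $s^{-1}$-commensurable with both $|I|$ and $\dist(I,J)$. Consequently $\bbC_{\tl J}$ contains a definite round annulus around $J$ of modulus bounded below in terms of $s$. Pulling this annulus back by the cubic $Q_c$ yields a corresponding collar around each of the three preimages of $J$ inside the six-slit plane $\tl G=Q_c^{-1}(\bbC_{\tl J})$; in particular the preimage containing $h(I)$ lies at bounded hyperbolic distance (measured in $\tl G$) from the slits. The condition $g'(a)\geq s$, combined with $g'(a)=3h(a)^2h'(a)$ and the commensurability of $|h(I)|$ with $|J|^{1/3}$, forces a lower bound on $h'(a)$ and prevents any degeneration of $\tl G$ at the non-critical endpoint of $h(I)$.

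Next, transferring everything back to $G$ via the biholomorphism $h^{-1}:\tl G\to G$ is not circular, because the question is now only of the existence and uniform size of a neighborhood of $I$ in $G$. Using that $g$ is a three-fold branched covering with its unique branch point at $0\in\partial I$, one checks directly from the local model $g(z)=c+z^3(\text{unit})$ near $0$ that a definite Euclidean ``slit'' neighborhood of $I$ (depending only on $I$ and $s$) is contained in $G$ for every $g\in\mcE_{I,s}$. Taking $O_{I,s}$ to be a slight inward shrinking of this neighborhood, one obtains a fixed complex domain around $I$ that lies at hyperbolic distance bounded above by a constant $D(s)$ from $\partial G$. Koebe's distortion theorem applied to the univalent map $h$ on suitable conformal disks in $G$ containing $O_{I,s}$ then gives $|h'(z_1)/h'(z_2)|\leq K(s)$ for all $z_1,z_2\in O_{I,s}$.

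The main obstacle is the third step above: the domain $G$ is not given by an explicit formula, it is only known as the $g$-preimage of $\bbC_{\tl J}$. Establishing that a fixed Euclidean neighborhood of $I$ sits inside every such $G$ (uniformly in $g\in\mcE_{I,s}$) and that this neighborhood is uniformly far from $\partial G$ in the hyperbolic sense requires combining the external geometric data of $\tl J$ (via $Q_c$) with a careful local analysis near the cubic critical point at $0$, where Euclidean and hyperbolic distances scale differently. Once this uniform thickness of $G$ around $I$ is secured, the rest of the argument is a direct appeal to Koebe.
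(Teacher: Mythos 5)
The paper does not reprove this lemma; it imports it verbatim from \cite{Yamp-towers}, so there is no in-text proof to compare against. Judged on its own, your outline heads in a workable direction (analyze the explicit six-slit plane $\tl G=Q_c^{-1}(\bbC_{\tl J})$ and reduce to Koebe for the univalent factor $h$), but the step you flag as the hard one is exactly where all the content sits, and you leave it open. Two remarks you make along the way also need correcting. The ``cubic critical point at $0$'' belongs to $g=Q_c\circ h$, not to $h$: the point $0$ is an interior point of both $G$ and $\tl G$, and the slits of $\tl G$ terminate at the cube roots of $\inf\tl J-c$ and $\sup\tl J-c$, which lie at a definite distance (comparable to $|J|^{1/3}$, with constants depending only on $s$) from the origin; so no ``slit neighborhood'' and no special local analysis at $0$ is required for $h$. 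And ``hyperbolic distance from $\partial G$'' is infinite at interior points; the quantity you want is the hyperbolic diameter of $I$, and then of $O_{I,s}$, inside $G$.

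With those corrections, the gap you flag closes without circularity. Your estimates already give that every point of $h(I)=[0,|J|^{1/3}]$ lies at Euclidean distance at least $c_1(s)|J|^{1/3}$ from $\partial\tl G$, hence $h(I)$ has hyperbolic diameter at most $D(s)$ in $\tl G$. Since $h^{-1}\colon\tl G\to G$ is a conformal isomorphism and therefore a hyperbolic isometry, $I$ has hyperbolic diameter at most $D(s)$ in $G$. A length estimate then produces some $x_0\in I$ with $\dist(x_0,\partial G)\gtrsim |I|/D(s)$, and because the hyperbolic density of $G$ is comparable to $\dist(\cdot,\partial G)^{-1}$, any point within hyperbolic distance $D(s)$ of $x_0$ still satisfies $\dist(\cdot,\partial G)\geq e^{-CD(s)}\dist(x_0,\partial G)$. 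This gives a uniform lower bound $\dist(x,\partial G)\geq\lambda(s)|I|$ for all $x\in I$, so $O_{I,s}=N_{\lambda(s)|I|/2}(I)$ is a fixed domain compactly contained in every $G$ with a definite collar, and Koebe applied on it yields the $K(s)$-distortion of $h$. A softer alternative, and plausibly what the cited reference has in mind given that the companion compactness statement (Lemma~\ref{compactness}) is recorded right alongside, is to argue by contradiction from the sequential compactness of $\mcE_{I,s}$ in the Carath\'eodory topology. Either way, note that the hypothesis $g'(a)\geq s$ does not actually enter the distortion estimate itself; it is relevant to the compactness of the class, not to this particular bound.
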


We will often refer to the space $\mcE$ as {\it the} Epstein class,
and to each ${\mcE}_{I,s}$ as {\it an} Epstein class.

We say that a critical commuting pair $\zeta=(\eta,\xi)$
belongs to the (an) Epstein class if both of its maps do. 
Allowing some abuse of notation, we will denote the space of all commuting pairs from the Epstein class by $\mcE$. Similarly, for any $s\in(0,1)$, by $\mcE_s$ we will denote the space of all commuting pairs $(\eta,\xi)$, such that $\eta\in\mcE_{[0,1],s}$ and $\xi\in\mcE_{I,s}$, where $I=[0,\eta(0)]$. 
It immediately follows from the definitions that:
\begin{lemma} 
If a renormalizable commuting pair $\zeta$ is in the Epstein class, then the same is true for
 $\cR \zeta$.
\end{lemma}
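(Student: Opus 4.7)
Writing $\cR\zeta=(h\circ\eta^r\circ\xi\circ h^{-1},\,h\circ\eta\circ h^{-1})$ with the affine rescaling $h(x)=x/\eta(0)$, and observing that the Epstein decomposition $g=Q_c\circ h_g$ of~(\ref{Epstein_decomp}) is preserved under affine conjugation (the real intervals and the underlying topological disks get rescaled by $h$, and $Q_c$ is turned into $Q_{c'}$ for the appropriate $c'$), I would first reduce to showing that the pre-renormalization $p\cR\zeta=(\eta^r\circ\xi,\,\eta)$ has both components in $\mcE$. The second component is in $\mcE$ by assumption. For the first one, $\eta^r\circ\xi\colon I_\xi\to\RR$, I would first note that $0$ is the unique critical point inside $I_\xi$: the cubic critical point at $0$ comes from $\xi$, and by definition of the height $r=\chi(\zeta)$ the iterates $\eta^{j}(\xi(0))$ for $0\le j\le r-1$ all lie in $I_\eta\setminus\{0\}$, where $\eta$ is a local diffeomorphism.

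The main step is to produce the triple branched covering extension required by Definition of $\mcE$. The plan is to use the Epstein structure of $\eta$ to construct a univalent extension of $\eta^r$ on an appropriate complex neighborhood of $\xi(I_\xi)$. Writing $\eta=Q_{c_\eta}\circ h_\eta$ with $h_\eta$ biholomorphic between $G_\eta$ and a six-slit plane that triple covers $\bbC_{\tl J_\eta}$, each of the three inverse branches of $\eta$ defined on suitable connected real subsets of $\tl J_\eta\setminus\{\eta(0)\}$ extends biholomorphically from the slit plane $\bbC_{\tl J_\eta}$ onto a topological disk in $G_\eta$. Since none of the points $\eta^{j}(\xi(0))$, $j=0,\dots,r-1$, equals $0$, one can compose the appropriate real inverse branches to obtain a biholomorphic extension of the real inverse branch of $\eta^{r}$ from a slit plane $\bbC_{\tl L}$ (with $\tl L$ the maximal interval of univalent extendability along $\RR$) onto a topological disk $D\subset G_\eta$ containing $\xi(I_\xi)$ on its real axis. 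Inverting yields a biholomorphism $H\colon D\to \bbC_{\tl L}$ extending $\eta^r|_{\xi(I_\xi)}$.

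Finally, composing with $\xi\colon G_\xi\to \bbC_{\tl J_\xi}$ (the three-fold branched cover given by $\xi=Q_{c_\xi}\circ h_\xi$) and restricting to $\xi^{-1}(D)\subset G_\xi$, I obtain a three-fold branched analytic covering map $H\circ\xi\colon\xi^{-1}(D)\to \bbC_{\tl L}$, with a unique critical point at $0$ and restricting to $\eta^r\circ\xi$ on $I_\xi$. This realizes the Epstein structure for $\eta^r\circ\xi$, whence $p\cR\zeta\in\mcE$, and consequently $\cR\zeta\in\mcE$ after the harmless affine rescaling.

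\textbf{Expected main difficulty.} The substantive point is not merely extending $\eta^r$ univalently to some complex disk; it is ensuring that the image disk is precisely a \emph{slit plane}, as required by the Epstein class. This forces one to work with the maximal real interval $\tl L$ of univalent extension and to verify that the concatenation of inverse branches of $\eta$ along the real orbit indeed covers a slit plane, rather than producing a genuinely smaller topological disk. The combinatorics of the commuting pair (specifically, the fact that $\eta^{j}(\xi(0))$ are closest-return points and hence stay away from $0$) together with Definition~\ref{E_I_s_def}-style control on the intervals $\tl J_\eta\setminus J_\eta$ are what makes this possible.
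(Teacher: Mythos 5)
The paper offers no argument for this lemma (the text asserts that it ``immediately follows from the definitions''), so there is no authored proof to compare against. Your route --- pass to $p\cR\zeta=(\eta^r\circ\xi,\,\eta)$ since affine conjugation visibly preserves $\mcE$, take the second component for free, and realize $\eta^r\circ\xi$ as $H\circ\xi$ with $H$ a univalent extension of $\eta^r$ built by concatenating inverse branches of $\eta$ --- is the standard one and is correct in outline. Two imprecisions in the sketch should be fixed. First, an inverse branch of $\eta$ along a real interval $K$ lying on one side of the critical value $\eta(0)$ extends biholomorphically from $\bbC_K$, not from $\bbC_{\tl J_\eta}$; the slit through $\eta(0)$ must be removed before the branch becomes single-valued. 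Second --- and this is what actually resolves the ``main difficulty'' you raise rather than merely flag --- when you concatenate $r$ inverse branches of $\eta$, the topological disk produced at an intermediate step need not have its real trace contained in $\tl J_\eta\setminus\{\eta(0)\}$ on the correct side, so the next inverse branch is not a priori applicable to all of it; you must shrink the starting slit interval accordingly. This shrinking is harmless: a subinterval $K'\subset K$ still gives a slit plane $\bbC_{K'}$, and any simply connected domain $D$ with $D\cap\bbR=K'$ automatically satisfies $D\subset\bbC_{K'}$, so after finitely many restrictions the terminal domain of univalence of $\eta^{-r}$ is still of the form $\bbC_{\tl L}$, not a strictly smaller topological disk. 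That $\tl L$ is a nonempty open interval strictly containing $\eta^r\xi(I_\xi)$ comes from the closest-return combinatorics you invoke (all $\eta^j\xi(0)$, $0\le j\le r$, sit on a fixed side of $0$ and of $\eta(0)$). A final trim is needed at the last step so that $D\cap\bbR\subset\tl J_\xi$ before forming the triple cover $\xi^{-1}(D)$. With these clarifications made explicit, your argument closes.
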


The following lemma is a consequence of real a priori bounds (c.f. с\cite{FM1}).

\begin{lemma}\label{E_s_bounded_size_lemma}
For any $s\in(0,1)$, there exist positive constants $C>1$ and $\lambda<1$, such that if $\zeta\in\mcE_s$ is $k$-times renormalizable and $I_\zeta^k=I_\eta^k\cup I_\xi^k$ denotes the domain of the commuting pair $p\cR^k\zeta$, then
$$
|I_\zeta^k|<C\lambda^k. 
$$
\end{lemma}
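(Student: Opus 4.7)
The plan is standard: combine a one-step strict contraction of the pre-renormalization domain with the fact that renormalization preserves the Epstein class (the real a priori bounds), then iterate.

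First, I would establish the one-step estimate: for any $s \in (0, 1)$ there exists $\lambda_1 = \lambda_1(s) \in (0, 1)$ such that any renormalizable $\zeta=(\eta,\xi)\in\mcE_s$ satisfies $|\operatorname{dom}(p\cR\zeta)| \le \lambda_1\, |\operatorname{dom}(\zeta)|$. By Definition~\ref{renormalization_def}, $\operatorname{dom}(p\cR\zeta) = I_\xi \cup [\eta^r(\xi(0)), 0]$, with the second piece a proper subinterval of $I_\eta$. The commensurability conditions built into Definition~\ref{E_I_s_def} make $|I_\xi|$ and $|I_\eta|$ commensurable with a constant depending only on $s$; the bounded-distortion statement of Lemma~\ref{E_s_bounded_distortion_lemma} applied to $\eta$, together with the commuting pair axioms, force $|\eta^r(\xi(0))|$ to be bounded by a factor $1-\delta(s)<1$ times $|I_\eta|$. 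Adding the two intervals yields the strict contraction $\lambda_1 < 1$.

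Next, to iterate I would invoke the real a priori bounds of de Faria--de Melo (see \cite{FM1}): there exist a universal $s_* \in (0, 1)$ and $N_0 = N_0(s) \in \NN$ such that $\cR^n\zeta \in \mcE_{s_*}$ for every $n \ge N_0$ at which $\cR^n\zeta$ is defined. For $n < N_0$ the pair $\cR^n\zeta$ still lies in some $\mcE_{s_n}$ with $s_n \ge s_n(s) > 0$ (by the one-step preservation of the Epstein class, iterated finitely many times). Taking $\lambda := \max\{\lambda_1(s_*), \lambda_1(s_0), \lambda_1(s_1), \ldots, \lambda_1(s_{N_0-1})\}$, which is a maximum of finitely many numbers each strictly less than $1$, the one-step estimate applies uniformly at every iterate. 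Since $|I_\zeta^0| = |\operatorname{dom}(\zeta)|$ is a priori bounded on $\mcE_s$, iterating gives $|I_\zeta^{n+1}|/|I_\zeta^n| \le \lambda$ for all $n$, whence $|I_\zeta^k| \le C\lambda^k$ with $C = \sup_{\zeta\in\mcE_s}|\operatorname{dom}(\zeta)|$.

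The main obstacle is the quantitative separation $|\eta^r(\xi(0))| \le (1-\delta(s))\,|I_\eta|$ in the one-step contraction. Qualitatively, the closest-return point cannot coincide with the far endpoint of $I_\eta$ by the commuting pair axioms; making the distance uniform in $s$ genuinely uses the Epstein-class control on distortion from Lemma~\ref{E_s_bounded_distortion_lemma}, which propagates the commensurability constants through the iterate $\eta^r$.
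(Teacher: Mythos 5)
Your overall strategy — a one-step contraction on a fixed Epstein class, combined with the ``beau'' version of the real a priori bounds to force $\cR^n\zeta\in\mcE_{s_*}$ for $n\ge N_0(s)$, then iteration — is the standard way to unpack the paper's one-line reference to \cite{FM1}. Two points in the execution need attention.

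First, the mechanism behind the one-step contraction. You cite Lemma~\ref{E_s_bounded_distortion_lemma} as what ``propagates the commensurability constants through the iterate $\eta^r$,'' but no control on the iterate is needed: the orbit $\xi(0),\eta(\xi(0)),\eta^2(\xi(0)),\dots$ is monotonically decreasing inside $I_\eta$, so $\eta^r(\xi(0))\le\eta(\xi(0))$ for every $r\ge1$, and one only has to separate $\eta(\xi(0))$ from $|I_\eta|=1$. That comes from the commensurability conditions of Definition~\ref{E_I_s_def} applied to $\xi$, not from the distortion of the univalent factor of $\eta$: since $\xi(I_\xi)=[\eta(\xi(0)),1]$, one has $\dist(I_\xi,\xi(I_\xi))=\eta(\xi(0))$ and $|\xi(I_\xi)|=1-\eta(\xi(0))$, and $s^{-1}$-commensurability gives $\eta(\xi(0))\le 1/(1+s)$, while $|I_\xi|=|\eta(0)|\le s^{-1}$. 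These yield $|I_\zeta^1|/|I_\zeta^0|\le 1-s^2/(1+s)^2$. Lemma~\ref{E_s_bounded_distortion_lemma} controls the map $h$ in the decomposition $g=Q_c\circ h$ and is not what drives this estimate.

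Second, the treatment of the initial levels $n<N_0$ has a genuine gap. You assert that $\cR^n\zeta$ still lies in $\mcE_{s_n}$ with $s_n=s_n(s)>0$, by ``one-step preservation of the Epstein class.'' Renormalization does preserve $\mcE=\bigcup_{s'}\mcE_{s'}$, but a single renormalization can degrade the parameter arbitrarily badly: when the height $\chi(\cR^n\zeta)$ is large, the new interval $[\eta^r(\xi(0)),0]$ is tiny relative to its $\xi$-image, so the commensurability conditions only hold for $s'$ close to $0$, and the heights are not controlled by $s$ alone. Consequently $\max\{\lambda_1(s_*),\lambda_1(s_0),\dots,\lambda_1(s_{N_0-1})\}$ is not bounded away from $1$ uniformly over $\mcE_s$. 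Luckily this step is unnecessary: the domains $I_\zeta^n$ are nested, so $|I_\zeta^n|\le|I_\zeta^0|\le 1+s^{-1}$ for $n<N_0$; taking $\lambda=\lambda_*:=1-s_*^2/(1+s_*)^2$ and $C=(1+s^{-1})\lambda_*^{-N_0}$ gives the claim for all $k$. With these two repairs the proof is complete.
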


\subsection{Carath{\'e}odory convergence}
Consider 
the set of all triples $(U,u,f)$, where $U\subset \bbC$ is a topological
disk, $u\in U$ is a marked point in $U$, and $f:U\to \bbC$ is an analytic map. 
Recall (cf. \cite{McM-ren1}) that the \textit{Carath\'edory convergence} on 
this set
is defined in the following way:
$(U_n,u_n,f_n)\to (U,u,f)$, if and only if
\begin{itemize}
\item $u_n\to u\in  U$,
\item for any Hausdorff limit point $K$ of the sequence $\hat{\bbC}\setminus U_n$, $U$ is the component of 
$\hat{\bbC}\setminus K$ containing $u$, and
\item for all $n$ sufficiently large, $f_n$ converges to $f$ uniformly on compact subsets of $U$.
\end{itemize}

For a topological disk $U\subset \bbC$, $U\neq\bbC$ and $u\in U$ let $R_{(U,u)}:\bbD\to U$ denote
the inverse Riemann mapping with normalization $R_{(U,u)}(0)=u$, $R'_{(U,u)}(0)>0$.
By a classical result of Carath\'eodory, if neither $U$, nor $U_n$ are equal to $\bbC$, then the Carath\'edory convergence
$(U_n,u_n, f_n)\to (U,u,f)$ is equivalent to the simultaneous convergence $R_{(U_n,u_n)}\to R_{(U,u)}$ and $f_n\to f$ uniformly on compact subsets of $\bbD$ and $U$ respectively.

As before, for an Epstein map $g\in\mcE$, let $\tl J$ be the maximal (possibly unbounded) open interval, such that $g$ extends to a triple branched covering of a topological disk $G$ onto $\bbC_{\tl J}$. Notice that the pair $(g,\tl J)$ uniquely 
determines the topological disk $G$, hence we can associate to any $g\in{\mcE}$ the triple $(G,0,g)$.
In this way, the space $\mcE$ of all Epstein maps can be equipped with Carath\'eodory convergence.

Let us make a note of an important compactness property of $\mcE_{I,s}$:
\begin{lemma}[Lemma 2.12 \cite{Yamp-towers}]
\label{compactness}
For any $s\in (0,1)$ and an interval $I=[0,a]$, where $a\in\bbR$, $a\neq 0$, the set $\mcE_{I,s}$ is sequentially compact with respect to Carath\'eodory
convergence.
\end{lemma}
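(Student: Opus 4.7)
The plan is to use the Epstein decomposition $g_n = Q_{c_n} \circ h_n$ from (\ref{Epstein_decomp}) and to extract separate limits of the critical values $c_n$, the target slit-planes $\bbC_{\tl J_n}$, and the univalent factors $h_n$, then assemble them via Carath\'eodory convergence of Riemann maps. Given a sequence $g_n \in \mcE_{I,s}$, set $J_n = g_n(I)$, let $\tl J_n \supset J_n$ be the maximal open interval as in Definition \ref{E_I_s_def}, and $c_n = g_n(0) \in J_n$. The commensurability and slit-length conditions bound $|J_n|$, $\dist(I, J_n)$, and each component of $\tl J_n \setminus J_n$ above and below by constants depending only on $|I|$ and $s$; in particular $\{c_n\}$ is bounded. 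Passing to a subsequence, $c_n \to c$, the interval $J_n$ Hausdorff-converges to a closed interval $J$ of definite length, and $\tl J_n$ converges (as open subsets of the extended real line) to an open interval $\tl J \supset J$ whose complementary components in $\tl J$ have length at least $s|J|$. Consequently $\bbC_{\tl J_n} \to \bbC_{\tl J}$ Carath\'eodory, and the 6-slit preimages $\tl G_n := Q_{c_n}^{-1}(\bbC_{\tl J_n})$ converge Carath\'eodory to $\tl G := Q_c^{-1}(\bbC_{\tl J})$ with marked point $0$.

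By Lemma \ref{E_s_bounded_distortion_lemma}, each $h_n$ is univalent on the fixed domain $O_{I,s} \supset I$ with $K(s)$-bounded distortion, and $h_n(O_{I,s}) \subset \tl G_n$ is contained in a uniformly bounded set. Koebe's theorem together with the lower bound $g_n'(a) \geq s$ (which yields a lower bound on $h_n'(a)$, since $h_n(a)$ is bounded away from $0$) gives two-sided bounds on $h_n'(0)$. After a further subsequence, $h_n \to h$ uniformly on compact subsets of $O_{I,s}$, with $h$ univalent. To upgrade this to Carath\'eodory convergence of the triples $(G_n, 0, g_n)$, I pass to the normalized Riemann maps $\phi_n : \bbD \to G_n$ and $\tl\phi_n : \bbD \to \tl G_n$, which satisfy $h_n \circ \phi_n = \tl\phi_n$. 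The inverses $h_n^{-1}$ form a normal family on compact subsets of $\tl G$ (they are univalent, fix $0$, and have $(h_n^{-1})'(0)$ uniformly bounded), so after one more subsequence $h_n^{-1} \to H$ with $H$ univalent; setting $G := H(\tl G)$, the identity $\phi_n = h_n^{-1} \circ \tl\phi_n$ converges to $H \circ \tl\phi$, the Riemann map onto $G$. Thus $(G_n, 0) \to (G, 0)$ in the Carath\'eodory sense, and $g_n \to g := Q_c \circ H^{-1}$ uniformly on compact subsets of $G$; the limit $g$ is analytic on $G$ and a 3-fold branched cover of $G$ onto $\bbC_{\tl J}$ with a unique critical point at $0$.

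It remains to verify that $g \in \mcE_{I,s}$. The restriction $g|_I$ is the uniform limit of orientation-preserving homeomorphisms whose decompositions have uniformly bounded distortion away from $0$, hence is itself an orientation-preserving homeomorphism onto $J$. The commensurability bounds, the slit-length lower bound $s|J|$, and the inequality $g'(a) \geq s$ are closed conditions and pass to the limit. The main obstacle I anticipate is ruling out degeneration in the extracted limits: that $|J|$ does not collapse to zero, that $\tl J$ does not fill all of $\bbR$ (which would make $\bbC_{\tl J} = \bbC$ and destroy the branched-cover structure), and that the Riemann map data for $G_n$ does not degenerate to a constant. The commensurability and slit-length hypotheses of Definition \ref{E_I_s_def}, combined with the uniform distortion bound of Lemma \ref{E_s_bounded_distortion_lemma}, are precisely what prevent these degenerations, and this is exactly why $\mcE_{I,s}$ (rather than the full $\mcE_I$) is the right space on which to have sequential compactness.
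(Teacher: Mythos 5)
Your approach—decompose $g_n = Q_{c_n}\circ h_n$, extract scalar limits of $c_n$, $J_n$, $\tl J_n$, and run a Koebe--Montel normal-family argument on the univalent factors using Lemma~\ref{E_s_bounded_distortion_lemma}—is the natural one and is very likely how the lemma is proved in \cite{Yamp-towers}. Two corrections to the details: Definition~\ref{E_I_s_def} gives only a \emph{lower} bound $s|J|$ on the components of $\tl J\setminus J$, not an upper bound, so your claim that these are ``bounded above and below by constants'' is false (your passage to $\overline{\bbR}$ does handle unbounded or even all-of-$\bbR$ intervals, but note that $\tl J=\bbR$, i.e.\ $g$ a cubic polynomial with $G=\bbC$, is then a legitimate limit, not a degeneracy that ``destroys the branched-cover structure''); and the normality of $\{h_n^{-1}\}$ on \emph{all} compact subsets of $\tl G$ requires a Koebe-distortion propagation argument through $\tl G$, not merely the two-sided bound on $(h_n^{-1})'(0)$ that you cite.

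The genuine gap is that you never verify that $\tl J=\lim\tl J_n$ is the \emph{maximal} interval for the limit map $g$. The Carath\'eodory topology on $\mcE$ is defined via the canonical triple $(G,0,g)$ built from the maximal interval, so convergence in $\mcE$ means convergence of the canonical triples. If $g$ in fact extends to a $3$-fold branched cover of a larger topological disk $G'$ onto $\bbC_{\tl J'}$ with $\tl J'\supsetneq \tl J$, then the canonical triple for $g$ has domain $G'\supsetneq G$, while the Carath\'eodory kernel of $(G_n,0)$ that your argument produces is $G$; in that case $(G_n,0,g_n)\not\to(G',0,g)$, and your subsequence would fail to converge to $g$ in the topology on $\mcE$ even though $g$ belongs to $\mcE_{I,s}$ as a map. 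Showing this cannot happen—that whatever obstruction forced each $\tl J_n$ to terminate at its endpoints (extra critical points of analytic continuations, boundary singularities of $h_n$) survives in the Carath\'eodory limit and blocks extension of $g$ past $\tl J$—is where the real work is, and the proposal leaves it unaddressed; the degenerations you do flag at the end (collapse of $|J|$, filling of $\tl J$, collapse of the Riemann data) are the wrong worries.
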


The space of all commuting pairs from the Epstein class can be equipped with Carath\'eodory convergence defined as Carath\'eodory convergence of each of the maps forming the commuting pair. Now Lemma~\ref{compactness} implies the following statement:

\begin{proposition}\label{E_s_compactness_proposition}
 For any $s\in (0,1)$, the set $\mcE_s$ of critical commuting pairs is sequentially compact with respect to Carath\'eodory convergence.
\end{proposition}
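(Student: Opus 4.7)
The plan is to bootstrap the statement from Lemma~\ref{compactness} by extracting a convergent subsequence for the two coordinate maps in turn. Starting from an arbitrary sequence $\zeta_n=(\eta_n,\xi_n)\in\mcE_s$, we have $\eta_n\in\mcE_{[0,1],s}$, so Lemma~\ref{compactness} directly produces a subsequence (not relabeled) along which $\eta_n$ converges in the Carath\'eodory sense to some $\eta\in\mcE_{[0,1],s}$. In particular $\eta_n(0)\to\eta(0)$. The $s$-commensurability conditions of Definition~\ref{E_I_s_def} applied to $\eta_n\in\mcE_{[0,1],s}$ force $|\eta_n(0)|$ to be uniformly bounded above and uniformly bounded away from $0$, so $a:=\eta(0)$ is a nonzero real number and the intervals $I_n:=[0,\eta_n(0)]$ converge to $I:=[0,a]$.

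To deal with the fact that the domain of $\xi_n$ varies with $n$, I would conjugate by the affine maps $\Phi_n(x)=x/\eta_n(0)$, defining
$$
\tilde\xi_n(y)=\Phi_n\bigl(\xi_n(\Phi_n^{-1}(y))\bigr).
$$
Each $\tilde\xi_n$ lives in $\mcE_{[0,\mathrm{sign}(\eta_n(0))],s'}$ for some uniform $s'=s'(s)>0$, since affine rescaling preserves the triple-branched-cover structure of the Epstein class and the commensurability conditions are scale invariant. Passing to a further subsequence so that the sign of $\eta_n(0)$ is constant, a second application of Lemma~\ref{compactness} produces a Carath\'eodory limit $\tilde\xi$. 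Undoing the rescaling by $\xi(y):=a\tilde\xi(y/a)$ yields $\xi\in\mcE_{I,s}$, and $\xi_n\to\xi$ in the Carath\'eodory sense, because Carath\'eodory convergence is preserved under conjugation by affine maps whose coefficients converge to a nonzero limit.

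It remains to check that the pair $(\eta,\xi)$ is an element of $\mcE_s$. The Epstein conditions in Definition~\ref{E_I_s_def} (commensurability of lengths and distances, the lower bound $g'(a)\ge s$, and the bounded-distortion statement of Lemma~\ref{E_s_bounded_distortion_lemma}) are closed under Carath\'eodory convergence, so they survive in the limit. The commutation identity $\hat\eta\circ\hat\xi=\hat\xi\circ\hat\eta$ from property~\ref{com_pairs_ext_property} of Definition~\ref{commut_pair_def} also passes to the limit, because Carath\'eodory convergence of $\eta_n,\xi_n$ entails locally uniform convergence of their analytic extensions on the common domains supplied by Lemma~\ref{E_s_bounded_distortion_lemma}, and the commutation relation is a closed condition under locally uniform convergence. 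The remaining algebraic requirements of Definition~\ref{commut_pair_def}, such as $\xi\circ\eta(0)\in I_\eta$, are closed under pointwise limits.

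The main obstacle is precisely the moving domain of $\xi_n$: without the uniform lower bound on $|\eta_n(0)|$ coming from the $s$-commensurability built into $\mcE_{[0,1],s}$, the intervals $I_n$ could collapse and the rescaling step would break down. The technical bookkeeping lies in verifying that the constant $s$ transforms controllably under the affine conjugation by $\Phi_n$ and that all the closed conditions in Definitions~\ref{commut_pair_def} and~\ref{E_I_s_def} indeed close up in the Carath\'eodory limit.
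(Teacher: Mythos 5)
Your argument is essentially the paper's: apply Lemma~\ref{compactness} to $\eta_n\in\mcE_{[0,1],s}$, note $|\eta_n(0)|$ is uniformly $K(s)$-commensurable with $1$, apply Lemma~\ref{compactness} again to the affinely rescaled $\xi_n$, and observe the commuting-pair conditions close up in the Carath\'eodory limit. The paper's proof is terser and derives the commensurability of $|I|$ with $1$ from $\xi\in\mcE_{I,s}$ and $\xi(0)=1$ rather than from $\eta$'s class, then compresses the limit verification into ``the rest follows easily,'' but the substance is identical.
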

\begin{proof}
By definition, the set $\mcE_s$ consists of all critical commuting pairs $(\eta,\xi)$, such that $\eta\in\mcE_{[0,1],s}$ and $\xi\in\mcE_{I,s}$, for some interval $I$. According to Lemma~\ref{compactness}, the set $\mcE_{[0,1],s}$ is sequentially compact. Since $\xi(0)=1$, it follows from the definition of the class $\mcE_{I,s}$ that the number $|I|$ is $K(s)$-commensurable with $1$, where $K(s)$ is some constant that depends on $s$. This implies that the set of all maps $\xi$, such that $(\eta,\xi)\in\mcE_s$ is also sequentially compact. The rest of the proof follows easily.
\end{proof}

\subsection{Renormalization horseshoe}
The following theorem was proved by the second author in \cite{Yamp-towers}. It generalizes the results of de~Faria \cite{DeFar} and 
de~Faria \& de~Melo \cite{FM2} for pairs of a bounded type.
Consider the space of bi-infinite sequences 
$$\Sigma=\{(\ldots,r_{-k},\ldots,r_{-1},r_0,r_1,\ldots,r_k,\ldots)\text{ with }r_i\in\NN\}$$ 
equipped with the weak topology -- the coarsest topology in which the coordinate projections are continuous.
Denote by $\sigma:\Sigma\to\Sigma$ the right shift on this space:
$$
\sigma\colon (r_k)_{-\infty}^\infty\mapsto (r_{k+1})_{-\infty}^\infty.
$$
\begin{theorem}\label{Comm_pair_attractor_theorem}
There exists $s>0$ and an
$\mathcal R$-invariant set $\mathcal I\subset\mathbf E_s$ consisting of commuting pairs with irrational rotation numbers with the following properties. The action of $\mathcal R$ on $\mathcal I$ is topologically conjugate to the shift $\sigma\colon\Sigma\to\Sigma$:
$$
\iota\circ\mathcal R\circ \iota^{-1}=\sigma,
$$
and if $$\zeta=\iota^{-1}(\dots,r_{-k},\dots,r_{-1},r_0,r_1,\dots,r_k,\dots),$$ then 
$$\rho(\zeta)=[r_0,r_1,\dots,r_k,\dots].$$ The set $\mathcal I$ is sequentially pre-compact in the sense of Carath\'eodory convergence, its closure $\overline{\mathcal I}\subset\mathbf E$ is the attractor for renormalization. That is, for any analytic commuting pair $\zeta\in\mathbf E$ with irrational rotation number we have
$$
\mathcal R^n\zeta\to\overline{\mathcal I}
$$
in the sense of Carath\'eodory convergence. Moreover, for any two commuting pairs $\zeta,\zeta'\in\mathbf E$ with equal irrational rotation numbers $\rho(\zeta)=\rho(\zeta')$ we have
\begin{equation}\label{convergence_to_attractor_eq}
\dist(\mathcal R^n\zeta,\mathcal R^n\zeta')\to 0
\end{equation}
for the uniform distance between analytic extensions of commuting pairs on compact sets.
\end{theorem}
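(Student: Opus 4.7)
My approach would combine three ingredients: (i) real \textit{a priori} bounds, which are already encoded in Lemma~\ref{E_s_bounded_size_lemma} and Proposition~\ref{E_s_compactness_proposition}; (ii) Carath\'eodory compactness of $\mathbf E_s$; and (iii) an exponential contraction statement for $\mathcal R$ on pairs sharing a rotation number. The plan is to first use (i)--(ii) to build the invariant set $\mathcal I$ and the semiconjugacy, and then use (iii) to obtain both injectivity of $\iota$ and the convergence~(\ref{convergence_to_attractor_eq}).

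\textbf{Building $\mathcal I$.} By the classical real bounds for analytic critical commuting pairs with irrational rotation number (de~Faria--de~Melo), there exists a universal $s>0$ so that any $\zeta\in\mathbf E$ with irrational $\rho(\zeta)$ satisfies $\mathcal R^n\zeta\in\mathbf E_s$ for all $n$ large enough. Given a bi-infinite sequence $\mathbf r=(r_k)_{k\in\ZZ}\in\Sigma$, I would for each $N\ge 1$ pick any analytic commuting pair $\zeta_N\in\mathbf E$ whose rotation number has continued-fraction expansion starting with $r_{-N},r_{-N+1},\ldots,r_{-1},r_0,r_1,\ldots$, and then set $\tilde\zeta_N=\mathcal R^N\zeta_N$. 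By Proposition~\ref{E_s_compactness_proposition}, $\{\tilde\zeta_N\}$ has a Carath\'eodory limit point $\zeta_\infty(\mathbf r)$, which by construction satisfies $\rho(\zeta_\infty(\mathbf r))=[r_0,r_1,\ldots]$ and lies in $\mathbf E_s$, and which moreover is ``renormalizable in the past'' to any depth with prescribed heights $r_{-1},r_{-2},\ldots$. Defining $\mathcal I$ as the set of all such $\zeta_\infty(\mathbf r)$ and $\iota(\zeta_\infty(\mathbf r))=\mathbf r$, the relation $\iota\circ\mathcal R=\sigma\circ\iota$ is immediate from the construction, and the forward rotation-number identity $\rho(\zeta)=[r_0,r_1,\ldots]$ follows from the Gauss-map property of $\mathcal R$.

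\textbf{Well-definedness and convergence.} To show that $\zeta_\infty(\mathbf r)$ does not depend on the subsequence chosen, and that $\iota$ is a homeomorphism, I would invoke the exponential contraction: if $\zeta,\zeta'\in\mathbf E$ share an irrational rotation number, then in the Carath\'eodory topology
\[
\dist(\mathcal R^n\zeta,\mathcal R^n\zeta')\to 0.
\]
Granting this, two choices of $\{\zeta_N\}$ with the same prescribed finite combinatorics $r_{-N},\ldots,r_{-1},r_0,r_1,\ldots$ differ, after pulling the common index back to $0$, by applying $\mathcal R^N$ to two pairs with the same rotation number, and hence agree in the limit. The same estimate immediately gives the attractor statement $\mathcal R^n\zeta\to\overline{\mathcal I}$ for any $\zeta\in\mathbf E$ with irrational $\rho(\zeta)$: pick the unique $\zeta_\infty(\mathbf r)\in\mathcal I$ whose future itinerary matches that of $\zeta$ and apply contraction. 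Continuity of $\iota^{-1}$ then follows from the fact that Carath\'eodory limits of Epstein pairs are determined by finitely many heights up to arbitrarily small error, again via the contraction.

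\textbf{Main obstacle.} The serious content is the exponential contraction (\ref{convergence_to_attractor_eq}). This is \emph{not} a soft compactness fact; proving it requires the complex \textit{a priori} bounds of Sullivan and de~Faria, which provide uniform moduli of annuli around the postcritical set of high renormalizations, together with a Teichm\"uller / pull-back argument (as in Sullivan's work on unimodal renormalization and its adaptation to critical circle maps by de~Faria--de~Melo and the second author in \cite{Ya1,Yamp-towers}). The strategy is to represent $\mathcal R^n\zeta$ and $\mathcal R^n\zeta'$ as restrictions of quasiconformally conjugate holomorphic commuting pairs, bound the dilatation of the conjugacy by the modulus of a surrounding annulus via the Gr\"otzsch inequality, and then use the uniform complex bounds to show this dilatation tends to $1$ geometrically fast. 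Once this Sullivan-type rigidity step is in place, the remaining pieces of the theorem assemble from the compactness and combinatorial arguments outlined above; so I would expect to spend the bulk of the proof on establishing the complex bounds and the pull-back contraction, while the horseshoe structure is essentially a symbolic consequence.
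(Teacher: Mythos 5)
This statement is not proved in the paper; the authors recall it from \cite{Yamp-towers} with the remark that it generalizes de~Faria and de~Faria--de~Melo's bounded-type results. Your outline nevertheless captures the architecture of that proof faithfully: real bounds give entry into a Carath\'eodory-compact class $\mathbf E_s$, bi-infinite coding is built from deep renormalization preimages, and the genuine analytical content --- which you correctly flag as the bottleneck --- is the Sullivan-type pull-back/Teichm\"uller rigidity argument powered by the complex \textit{a priori} bounds of \cite{Ya1}, which produces the convergence~(\ref{convergence_to_attractor_eq}) and, as a byproduct, the exponential rate that makes everything else soft.

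Two small points deserve sharpening. First, as stated the theorem only asserts $\dist(\mathcal R^n\zeta,\mathcal R^n\zeta')\to 0$, not exponential contraction; the exponential rate is what the cited proof actually yields, so you are not wrong, but you should be careful not to smuggle the stronger rate into the statement you are targeting. Second, your well-definedness argument for $\zeta_\infty(\mathbf r)$ is slightly off as phrased: for $N<M$ the pairs $\mathcal R^N\zeta_N$ and $\mathcal R^N\zeta_M$ do \emph{not} share a rotation number (one equals $[r_0,r_1,\ldots]$, the other begins with $r_{-(M-N)},\ldots$), so you cannot directly apply the contraction to this pair. What one actually needs is a uniform contraction rate on a pre-compact family together with a telescoping/Cauchy estimate, or equivalently the shadowing-type argument that underlies the symbolic coding. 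This is exactly where the uniformity supplied by the complex bounds --- not just the qualitative convergence --- is used. Your ``Main obstacle'' paragraph implicitly contains the fix, so this is a phrasing issue rather than a conceptual one, but in a written proof it would need to be made precise.
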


\begin{remark}\label{global_attractor_remark}
In~\cite{FM2} de Faria and de Melo extended the result of Theorem~\ref{Comm_pair_attractor_theorem} by proving the convergence~(\ref{convergence_to_attractor_eq}) for all (not necessarily Epstein) critical commuting pairs $\zeta$, $\zeta'$ with equal irrational rotation numbers $\rho(\zeta)=\rho(\zeta')$. 
\end{remark}

\noindent
Furthermore, in  \cite{Ya3,Ya4} it was shown that the above attractor is hyperbolic in a suitable sense.
 More specifically, in \cite{Ya3} the second author constructed a real-symmetric Banach manifold $\ccm$ of {\it critical cylinder maps}, such that the 
elements of $\mathbf W^\bbR$ are analytic critical circle maps. Furthermore he constructed a continuous 
projection $\pi_{\mathbf W}$ from an open neighborhood of $\mathcal I$ in $\cE$ to 
$\ccm$,
which, on the image, semi-conjugates $\cR^k$  for a fixed $k\in\NN$ to a real-symmetric analytic operator $\hren\colon\cV\to\mathbf W$ called the {\it cylinder renormalization} and defined on an open set $\cV$ in $\mathbf W$:
$$\pi_{\mathbf W}\circ \cR^k=\hren\circ\pi_{\mathbf W}. $$

The projection $\pi_{\mathbf W}$ also satisfies the following properties:
\begin{proposition}
\label{attr-injective}
\begin{enumerate}[(i)]
\item\label{conf_equv_part} If $\pi_{\mathbf W}(\zeta_1)=\pi_{\mathbf W}(\zeta_2)$, for some $\zeta_1,\zeta_2$ from a neighborhood of $\mathcal I$ in $\mcE$, then the Epstein pairs $\cR\zeta_1$ and $\cR\zeta_2$ are conformally conjugate in a neighborhood of their intervals of definition.
\item Conversely, suppose $\zeta_1$ and $\zeta_2$ are conformally conjugate in some neighborhoods of their intervals of definition and and $\rho(\zeta_1)=\rho(\zeta_2)\not\in\bbQ$. Then there exists $n\ge 0$, such that $\pi_{\mathbf W}(\cR^n\zeta_1)=\pi_{\mathbf W}(\cR^n\zeta_2).$

\item The rotation number of the critical circle map $\pi_{\mathbf W}(\zeta)$ is equal to the image of $\rho(\zeta)$ under the Gauss map.
\end{enumerate}
\end{proposition}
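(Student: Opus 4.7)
The proof will follow directly from the construction of $\pi_{\mathbf W}$ and $\hren$ in \cite{Ya3}. Recall that, on a neighborhood of $\mathcal I$, the projection $\pi_{\mathbf W}(\zeta)$ is built by first forming the iterate $\cR\zeta$ (or, more precisely, the circle map $F_{\cR\zeta}$ associated with it), then choosing a suitable fundamental crescent $C$ for this map near the critical point, and identifying the two boundary arcs of $C$ by the dynamics to produce an analytic critical map of a cylinder. The semi-conjugacy relation $\pi_{\mathbf W}\circ\cR^k=\hren\circ\pi_{\mathbf W}$ together with the Carath\'eodory continuity of $\pi_{\mathbf W}$ on a neighborhood of $\mathcal I$ are built into this construction.

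For part (iii), the rotation number of the quotient cylinder map is determined by the combinatorics of $\cR\zeta$ on $C$: by construction the first-return map to the fundamental crescent represents one renormalization step, so $\rho(\pi_{\mathbf W}(\zeta))=\rho(\cR\zeta)$, which by the Gauss-map action on rotation numbers is the image of $\rho(\zeta)$ under the Gauss map. For part (i), observe that the data of $\pi_{\mathbf W}(\zeta)$ consists of a cylinder map together with the gluing map of the crescent boundaries. If $\pi_{\mathbf W}(\zeta_1)=\pi_{\mathbf W}(\zeta_2)$, then lifting the cylinder map to the universal cover and using the gluing data produces a conformal identification of complex one-sided neighborhoods $U_1\supset I_{\cR\zeta_1}$ and $U_2\supset I_{\cR\zeta_2}$ which intertwines $\cR\zeta_1$ and $\cR\zeta_2$. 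This conformal equivalence is precisely the conjugacy required.

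For part (ii), suppose $\zeta_1$ and $\zeta_2$ are conformally conjugate on some neighborhoods $V_1\supset I_{\zeta_1}$, $V_2\supset I_{\zeta_2}$ via a biholomorphism $\phi:V_1\to V_2$. Using the commuting-pair dynamics to spread $\phi$ across all iterates involved in forming $\cR^n\zeta_i$, and invoking Lemma~\ref{E_s_bounded_size_lemma} together with Carath\'eodory compactness (Proposition~\ref{E_s_compactness_proposition}) to control the geometry, one concludes that for all sufficiently large $n$ the conjugacy extends conformally to complex neighborhoods containing the fundamental crescents $C_i$ used in defining $\pi_{\mathbf W}(\cR^n\zeta_i)$. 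This extended conformal conjugacy descends to the cylinder quotients and identifies $\pi_{\mathbf W}(\cR^n\zeta_1)$ with $\pi_{\mathbf W}(\cR^n\zeta_2)$.

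The main obstacle is in (ii): one must ensure that the conformal conjugacy, which a priori exists only on a small neighborhood of the interval of definition, can be propagated under the dynamics to cover the fundamental crescent chosen in the definition of $\pi_{\mathbf W}$. The control that makes this possible is the combination of real a priori bounds, uniform Epstein-class estimates (Lemma~\ref{E_s_bounded_distortion_lemma}), and the fact that the fundamental crescents used in the construction of $\pi_{\mathbf W}$ have uniformly comparable geometry on a neighborhood of $\mathcal I$. Once one knows that the domains on which $\phi$ extends grow to swallow a fundamental crescent after finitely many renormalizations, the rest is formal.
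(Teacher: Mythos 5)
The paper states Proposition~\ref{attr-injective} without proof; it is presented as background material imported from \cite{Ya3}, so there is no in-paper argument against which to compare your proposal directly. Your reconstruction nonetheless captures the essential mechanism: part~(iii) follows because the first-return map to the fundamental crescent used in the construction of $\pi_{\mathbf W}$ represents exactly one renormalization step, hence $\rho(\pi_{\mathbf W}(\zeta))=\rho(\cR\zeta)$; part~(i) follows because if $\tilde\pi_{C_1}$, $\tilde\pi_{C_2}$ uniformize the two crescents onto the same cylinder map, then $\tilde\pi_{C_2}^{-1}\circ\tilde\pi_{C_1}$ is a biholomorphism of crescents conjugating the first-return maps, and hence the renormalized pairs; and part~(ii) follows by propagating the given conjugacy under renormalization, using exponential shrinking of the dynamical intervals (Lemma~\ref{E_s_bounded_size_lemma}) and the uniform geometry of the canonical crescents near $\mathcal I$ to ensure that, after finitely many steps, the rescaled conjugacy domain swallows the crescent, after which the quotient cylinder maps coincide.

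Two points of your sketch deserve tightening. In part~(i), the phrase ``using the gluing data'' is misleading: an element of $\mathbf W$ is just the cylinder map, not the pair (cylinder map, gluing); the conjugacy is produced by comparing the two uniformizing charts $\tilde\pi_{C_i}$, which are part of the construction of $\pi_{\mathbf W}(\zeta_i)$ but not recoverable from the image alone. In part~(ii), the conjugacy $\phi$ will in general not carry the canonical crescent of $\cR^n\zeta_1$ to the canonical crescent of $\cR^n\zeta_2$, so it is not immediate that the quotients are literally identified; one must additionally use the fact that the normalized uniformizing coordinate (conjugating $\eta$ to $z\mapsto z+1$, sending the base point to $0$, and real-symmetric) depends only on the dynamics and the base point, not on the particular fundamental crescent, so that $\phi$-conjugate dynamics yield the same cylinder map regardless of which crescents the two constructions select. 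With those clarifications your argument is a faithful account of what \cite{Ya3} establishes.
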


Finally, we have the following:
\begin{theorem}\cite{Ya4}\label{cren_hyperb_theorem}
Denote $\hat{\mathcal I}=\pi_{\mathbf W}(\mathcal I)$. Then the attractor $\hat{\mathcal I}\subset\cV$ is a uniformly hyperbolic set for $\hren$ with a complex one-dimensional unstable direction. 
\end{theorem}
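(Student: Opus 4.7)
The plan is to exhibit a continuous $D\hren$-invariant splitting $T_f\mathbf W = E^s_f \oplus E^u_f$ along $\hat{\mathcal I}$, with $\dim_{\bbC} E^u_f = 1$, together with uniform constants $\lambda > 1 > \mu > 0$ so that $\|D\hren(f)|_{E^u_f}\| \ge \lambda$ and $\|D\hren(f)|_{E^s_f}\| \le \mu$ for every $f \in \hat{\mathcal I}$. The strategy splits into three ingredients: (i) spectral compactness of $D\hren$, (ii) identification of a uniformly expanding complex one-dimensional direction coming from rotation-number variation, and (iii) uniform contraction on an invariant complementary subspace.

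For (i), the cylinder renormalization $\hren$ is built from a high iterate of $f$ restricted to a fundamental crescent and then lifted to a cylinder; by the complex a priori bounds for critical commuting pairs (de~Faria and the second author's earlier work), after enough renormalizations the image extends analytically to an annulus strictly larger than the one used to define $\mathbf W$. Consequently $\hren$ factors through a compact inclusion and, via Cauchy's integral formula, $D\hren(f)$ is a compact operator on $T_f\mathbf W$ whose spectrum outside any circle around the origin is finite with finite multiplicity. For (ii), consider analytic one-parameter deformations of $f \in \hat{\mathcal I}$ that vary the rotation number. Since $\hren$ acts on rotation numbers as the Gauss map, and since $\mathcal I$ and hence $\hat{\mathcal I}$ consists of bounded-type combinatorics on which the Gauss map is (eventually) uniformly expanding, the corresponding tangent direction is uniformly expanded; continuity and complex analyticity of $\hren$ then promote this to a continuous complex one-dimensional line field $E^u_f$ with uniform Lyapunov exponent.

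The main obstacle is (iii): constructing the invariant complement $E^s_f$ and certifying uniform exponential contraction on it. The guiding idea is that $E^s_f$ should consist of tangent vectors representing infinitesimally trivial deformations of the orbit, i.e., infinitesimal conformal conjugacies of $\{\hren^n(f)\}_{n\ge 0}$. By Proposition~\ref{attr-injective}, conformal conjugacy corresponds exactly to identification of $\pi_{\mathbf W}$-fibres, so one can parametrize $E^s_f$ by invariant Beltrami differentials in the Sullivan--McMullen--Lyubich formalism. Iterated pullback of such differentials through the renormalization tower contracts in $L^\infty$ by a definite factor at each step, because complex a priori bounds furnish uniform lower bounds on the moduli of the annuli appearing in every pullback, and Koebe distortion then forces geometric shrinking of the Beltrami coefficient. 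The delicate point is to convert the qualitative Carath\'eodory convergence in~(\ref{convergence_to_attractor_eq}) into a \emph{uniform} exponential rate independent of $f \in \hat{\mathcal I}$; this is exactly where the bounded-type hypothesis and the compactness of the attractor intervene, providing uniform complex bounds along the whole tower. Combining (i)--(iii) yields the invariant splitting, and the unstable subspace is exactly one-dimensional since the stable subspace constructed in (iii) has codimension one inside the spectral decomposition provided by (i).
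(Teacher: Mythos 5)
The paper quotes this theorem from \cite{Ya4} without reproducing a proof, so there is no in-paper argument to match; measured against the actual proof in \cite{Ya3,Ya4}, your sketch has the correct shape in its ingredient (i) (compactness of $D\hren$), but (ii) and (iii) contain genuine gaps. In (ii), the rotation number $\rho$ is not a smooth function of $f\in\mathbf W$ — it is a Cantor function along any monotone transversal to $\hat{\mathcal I}$ — so one cannot differentiate $\rho$ and ``pull back'' the expansion of the Gauss map to a tangent direction by the chain rule, and the unstable rate is a separate universal constant $\delta$, not inherited from the Gauss map's Lyapunov exponent. The actual mechanism is an invariant expanding cone field built from monotone one-parameter families (compare Lemma~9.3 of \cite{Ya3}, invoked later in this paper's universality proof).

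In (iii), identifying $E^s_f$ with ``infinitesimal conformal conjugacies'' mischaracterizes the stable subspace: by Theorem~\ref{hyperb_1_theorem} the stable manifold in the real slice consists of \emph{all} nearby critical circle maps with the same rotation number — a codimension-one set — whereas the conformal-conjugacy fibres of $\pi_{\mathbf W}$ described in Proposition~\ref{attr-injective} have infinite codimension. The Beltrami-differential formalism is the right tool, but the claim that ``Koebe distortion forces geometric shrinking of the Beltrami coefficient'' is not a valid derivation of uniform contraction; the required estimate is McMullen's dynamic inflexibility for towers (equivalently, the ``small orbits''/no-neutral-eigenvalues theorem in the Lyubich--Yampolsky formalism), proved via a Schwarz-lemma argument in Teichm\"uller metric, not by elementary distortion bounds plus compactness. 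You in fact flag this step as the ``delicate point'' but do not carry it out, and compactness of the attractor by itself does not yield a definite contraction factor. Likewise, the concluding ``dimension count'' for $\dim_\bbC E^u=1$ is circular: compactness of $D\hren$ gives finiteness of the spectrum outside any circle, but simplicity and uniqueness of the expanding eigenvalue require the codimension-one stable manifold or the cone field, neither of which is supplied by (i) alone.
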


It is important to note that the operator $\hren$ is not injective on $\cV$: the images under $\hren$ of two analytic maps which are conformally conjugate in a sufficiently large neighborhood (see Proposition~\ref{attr-injective} or~\cite{Ya3} for details) coincide. Uniform hyperbolicity of $\hat{\mathcal I}$ is understood in the sense of Definition~\ref{hyperb_def} below. 

Before proceeding to Definition~\ref{hyperb_def}, we need to introduce the following notation: let $U$ be an open set in a smooth Banach manifold $\mathbf M$. Consider a $C^\infty$-smooth (not necessarily injective) mapping $f\colon U\to\mathbf M$ that possesses a forward invariant set $\Lambda\subset U$. Let $\hat{\Lambda}$ be the subset of the direct product ${\Lambda}^\bbN$ that consists of all histories in ${\Lambda}$, i.e.
$$
\hat{{\Lambda}}=\{(x_i)_{i\le 0}\colon x_i\in {\Lambda}; f(x_i)=x_{i+1}\}.
$$
The metric on ${{\Lambda}}^\bbN$ is defined by
$$
d((x_i), (y_i))=\sum_{i\le 0}2^i\|x_i-y_i\|.
$$
The restriction $f|_{\Lambda}$ lifts to a homeomorphism $\hat f\colon\hat{{\Lambda}}\to\hat{{\Lambda}}$ defined by $\hat f((x_i))=(x_{i+1})$. The natural projection from $\hat{{\Lambda}}$ to ${\Lambda}$ sends $(x_i)$ to $x_0$, and the pullback under this projection of the tangent bundle $T_{{\Lambda}}\mathbf M$ is a tangent bundle on $\hat{{\Lambda}}$. We denote this tangent bundle by $T_{\hat{{\Lambda}}}$. Explicitly, an element of $T_{\hat{{\Lambda}}}$ is of the form $((x_i),v)$, where $(x_i)\in\hat{{\Lambda}}$ and $v\in T_{x_0}\mathbf M$. The differential $Df$ naturally lifts to a map $D\hat f\colon T_{\hat{{\Lambda}}}\to T_{\hat{{\Lambda}}}$. 

\begin{definition}\label{hyperb_def}
We say that the map $f$ is hyperbolic on ${\Lambda}$, if there exists a continuous splitting of the tangent bundle $T_{\hat{{\Lambda}}}=E^s\oplus E^u$, such that $D\hat f(E^{u/s})=E^{u/s}$, and there exist constants $c>0$ and $\lambda>1$, such that for all $n\ge 1$,
$$
\|D\hat f^n(v)\|\ge c\lambda^n\|v\|,\qquad v\in E^u,
$$
$$
\|D\hat f^n(v)\|\le c^{-1}\lambda^{-n}\|v\|,\qquad v\in E^s.
$$
\end{definition}
Note that the fiber of the stable bundle $E^s$ at a point $(x_i)\subset\hat{\Lambda}$ depends only on the point $x_0\subset\Lambda$.

We also notice that the set $\hat{\mathcal I}$ is not compact in $\mathbf W$ and its closure is not contained in $\cV$. In particular, this means that a priori, the existence of local stable/unstable manifolds is not guaranteed at every point of $\hat{\mathcal I}$. However, the following was shown in~\cite{Ya4}:

\begin{theorem}\label{hyperb_1_theorem}
For any $f\in\hat{\mathcal I}$ there exists a local stable manifold $W^s(f)\subset\cV$ passing through the point $f$. Moreover, $W^s(f)\cap\mathbf W^\bbR$ consists of all critical circle maps $g$ that are sufficiently close to $f$ and such that $\rho(g)=\rho(f)$.
\end{theorem}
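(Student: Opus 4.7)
The plan is to apply a pointwise Hadamard--Perron/graph-transform construction at each $f\in\hat{\mathcal I}$, using the uniform hyperbolic estimates of Theorem~\ref{cren_hyperb_theorem} along the forward orbit of $f$; then to identify the real slice $W^s(f)\cap\mathbf W^\bbR$ with critical circle maps sharing the rotation number $\rho(f)$, by transporting Theorem~\ref{Comm_pair_attractor_theorem} through the semi-conjugacy $\pi_{\mathbf W}\circ\cR^k=\hren\circ\pi_{\mathbf W}$.

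To construct $W^s(f)$, fix $f\in\hat{\mathcal I}$ and observe that $\hren^n f\in\hat{\mathcal I}$ for every $n\ge 0$, so along this forward orbit the splitting $T_{\hat{\mathcal I}}=E^u\oplus E^s$ and the expansion/contraction constants of Definition~\ref{hyperb_def} are available uniformly. In a Banach chart around $f$ identifying a neighborhood with $E^u_f\oplus E^s_f$, I would define $W^s(f)$ as the graph of an analytic function $\varphi\colon U\cap E^s_f\to E^u_f$ with $\varphi(0)=0$ and $D\varphi(0)=0$, obtained as the fixed point of the graph transform acting on a ball of Lipschitz graphs through $f$. Contraction of the graph transform is guaranteed by the uniform hyperbolicity constants and the one-dimensionality of $E^u$; analyticity of $\varphi$ is inherited from analyticity of $\hren$ via a normal-families argument applied to the successive iterates of the transform. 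By construction, $W^s(f)$ is $\hren$-forward invariant, tangent to $E^s_f$ at $f$, and every point of $W^s(f)$ converges to the forward orbit of $f$ exponentially fast in $\mathbf W$.

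To identify $W^s(f)\cap\mathbf W^\bbR$, for the $\supseteq$ direction let $g\in\mathbf W^\bbR$ be a critical circle map close to $f$ with $\rho(g)=\rho(f)$. Choose commuting pairs $\zeta,\xi\in\mathbf E$ with $\pi_{\mathbf W}(\zeta)=f$ and $\pi_{\mathbf W}(\xi)=g$, adjusting by a bounded number of renormalizations so that $\rho(\zeta)=\rho(\xi)$; this is possible because Proposition~\ref{attr-injective}(iii) relates rotation numbers via the Gauss map, and by hypothesis $\rho(g)=\rho(f)$. Theorem~\ref{Comm_pair_attractor_theorem} then gives $\dist(\cR^n\zeta,\cR^n\xi)\to 0$, and pushing forward by $\pi_{\mathbf W}$ every $k$ steps yields $\hren^n g\to\hren^n f$ in $\mathbf W$. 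Since $E^u_f$ is one-dimensional and expanding, any nonzero unstable component of $g-f$ in the local chart would force divergence of $\{\hren^n g\}$ from $\{\hren^n f\}$; consequently $g\in W^s(f)$. Conversely, if $g\in W^s(f)\cap\mathbf W^\bbR$, then $\hren^n g\to\hren^n f$ exponentially in $\mathbf W$; uniform convergence of the underlying circle homeomorphisms, together with Proposition~\ref{attr-injective}(iii) and continuity of the rotation number on $\mathbf W^\bbR$, forces $\rho(g)=\rho(f)$.

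The principal obstacle is that $\hat{\mathcal I}$ is not compact in $\mathbf W$ and its closure is not contained in $\cV$, so one cannot simply invoke classical stable-manifold theorems for uniformly hyperbolic compact sets. The analysis must instead be carried out pointwise, relying only on the data along the forward orbit of $f$ inside $\hat{\mathcal I}$, where Definition~\ref{hyperb_def} supplies uniform constants. Accordingly, the radius of $W^s(f)$ may shrink as $f$ approaches $\overline{\hat{\mathcal I}}\setminus\hat{\mathcal I}$, but for each individual $f\in\hat{\mathcal I}$ the uniform estimates along $\{\hren^n f\}$ produce a definite-size local stable manifold and ensure the graph-transform fixed point exists.
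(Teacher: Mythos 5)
The paper does not itself prove this statement; it cites it from \cite{Ya4}, with the sentence immediately preceding the theorem explicitly flagging that $\hat{\mathcal I}$ is not compact in $\mathbf W$ and that $\overline{\hat{\mathcal I}}\not\subset\cV$, so that ``a priori, the existence of local stable/unstable manifolds is not guaranteed at every point of $\hat{\mathcal I}$.'' Your proposal correctly identifies this as the central obstruction, but does not actually overcome it. The closing paragraph asserts that ``for each individual $f\in\hat{\mathcal I}$ the uniform estimates along $\{\hren^n f\}$ produce a definite-size local stable manifold,'' yet Definition~\ref{hyperb_def} supplies uniform bounds only on the \emph{linearizations} $D\hat f^n$ along the orbit; a graph-transform construction additionally needs uniform-size neighborhoods of each $\hren^n f$ in which $\hren$ is defined, together with uniform control of the nonlinear remainder there. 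When $\rho(f)$ has unbounded partial quotients, the forward orbit $\{\hren^n f\}$ accumulates on $\overline{\hat{\mathcal I}}\setminus\cV$ and hence approaches $\partial\cV$ along a subsequence, so these uniform neighborhoods are not available for free. This is exactly the point the paper warns about, and your write-up provides no mechanism to compensate --- for instance, lifting the analysis to the compact space $\hol(\mu)$ of holomorphic commuting pair extensions (Lemma~\ref{bounds compactness}), where the fundamental-crescent data controlling $\hren$ is uniform; that is the sort of input a complete pointwise argument would require.

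There is a second, smaller gap in the identification of $W^s(f)\cap\mathbf W^\bbR$. You write ``Choose commuting pairs $\zeta,\xi\in\mathbf E$ with $\pi_{\mathbf W}(\zeta)=f$ and $\pi_{\mathbf W}(\xi)=g$,'' but $\pi_{\mathbf W}$ is a projection from a neighborhood of $\mathcal I$ in $\mcE$ into $\mathbf W$, not a surjection onto a neighborhood of $f$; a nearby critical circle map $g\in\mathbf W^\bbR$ has no reason to lie in its image. The correct route is to pass from $g$ to its first-return critical commuting pair, invoke Theorem~\ref{Comm_pair_attractor_theorem} (together with Remark~\ref{global_attractor_remark}, since that pair need not be Epstein) to get $\cR^n$-convergence of the pair of $g$ to the pair of $f$, and then use the fact that $\cren$ is realized as $\cR^N$ followed by passage to the fundamental-crescent coordinate (Lemma~\ref{U_lemma}) to transfer this to $\cren^n g\to\hren^n f$. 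With that repair, the remainder of the real-slice argument --- vanishing unstable component forced by one-dimensional expansion, and the converse via matching of symbolic codes --- is sound.
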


\subsection{Holomorphic commuting pairs and complex {\it a priori} bounds}\label{holo_pairs_sec}
De~Faria \cite{DeFar} introduced holomorphic commuting pairs to apply 
Sullivan's Riemann surface laminations technique to the renormalization of 
critical circle maps. They are suitably defined holomorphic extensions 
of critical commuting pairs which replace Douady-Hubbard polynomial-like maps \cite{DH85}.
A critical commuting pair $\zeta=(\eta|_{I_\eta},\xi|_{I_\xi})$  extends to a {\it
holomorphic commuting
pair} $\cH$ if there exist 
three simply-connected $\RR$-symmetric domains $D, U, V\subset\bbC$ whose intersections with the real line are denoted by $I_U=U\cap\bbR$, $I_V=V\cap\bbR$, $I_D=D\cap\bbR$, and a simply connected $\bbR$-symmetric Jordan domain $\Delta$, such that

\begin{itemize}
\item $\bar D,\; \bar U,\; \bar V\subset \Delta$;
 $\bar U\cap \bar V=\{ 0\}\subset D$; the sets
  $U\setminus D$,  $V\setminus D$, $D\setminus U$, and $D\setminus V$ 
  are nonempty, connected, and simply-connected; 
  $I_\eta\subset I_U\cup\{0\}$, $I_\xi\subset I_V\cup\{0\}$;
\item  the sets $U\cap\bbH, V\cap\bbH, D\cap\bbH$ are Jordan domains;

\item the maps $\eta$ and $\xi$ have analytic extensions to $U$ and $V$ respectively, so that $\eta$ is a conformal diffeomorphism of $U$ onto $(\Delta\setminus\bbR)\cup\eta(I_U)$, and $\xi$ is a conformal diffeomorphism of $V$ onto $(\Delta\setminus\bbR)\cup\xi(I_V)$;

\item the maps $\eta\colon U\to\Delta$ and $\xi\colon V\to\Delta$ can be further extended to analytic maps $\hat{\eta}\colon U\cup D\to\Delta$ and $\hat{\xi}\colon V\cup D\to\Delta$, so that the map $\nu=\hat{\eta}\circ\hat{\xi}=\hat{\xi}\circ\hat{\eta}$ is defined in $D$ and is a three-fold branched covering of $D$ onto $(\Delta\setminus \bbR)\cup\nu(I_D)$ with a unique critical point at zero.
\end{itemize}
\begin{figure}[ht]
\centerline{\includegraphics[width=0.9\textwidth]{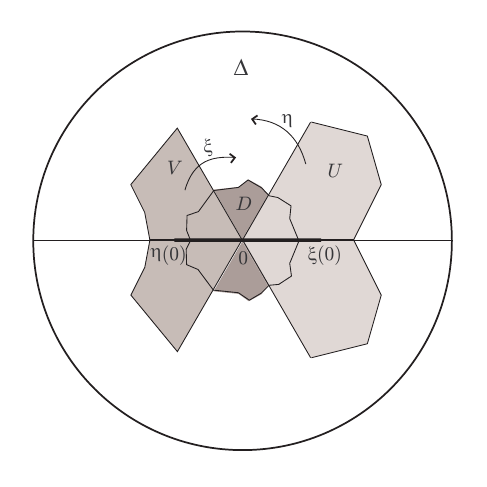}}
\caption{\label{fig:holpair}A holomorphic commuting pair.}
\end{figure}
\noindent
We shall identify a holomorphic pair $\cH$ with a triple of maps $\cH=(\eta,\xi, \nu)$, where $\eta\colon U\to\Delta$, $\xi\colon V\to\Delta$ and $\nu\colon D\to\Delta$ (c.f. Figure~\ref{fig:holpair}). We shall also call $\zeta$ the {\it commuting pair underlying $\cH$}, and write $\zeta\equiv \zeta_\cH$. When no confusion is possible, we will use the same letters $\eta$ and $\xi$ to denote both the maps of the commuting pair $\zeta_\cH$ and their analytic extensions to the corresponding domains $U$ and $V$.

The sets $\Omega_\cH=D\cup U\cup V$ and $\Delta\equiv\Delta_\cH$ will be called \textit{the domain} and \textit{the range} of a holomorphic pair $\cH$. We will sometimes write $\Omega$ instead of $\Omega_\cH$, when this does not cause any confusion. 

We can associate to a holomorphic pair $\cH$  a piecewise defined map $S_\cH\colon\Omega\to\Delta$:
\begin{equation*}
S_\cH(z)=\begin{cases}
\eta(z),&\text{ if } z\in U,\\
\xi(z),&\text{ if } z\in V,\\
\nu(z),&\text{ if } z\in\Omega\setminus(U\cup V).
\end{cases}
\end{equation*}
De~Faria \cite{DeFar} calls $S_\cH$ the {\it shadow} of the holomorphic pair $\cH$.

If $\zeta_\cH$ is renormalizable with height $r$, then the pre-renormalization of the corresponding holomorphic pair $\cH$ is defined in an obvious fashion,
as a holomorphic commuting pair $p\cR(\cH)=(\eta^r\circ \xi,\eta,\eta^r\circ\nu)$ with range $\Delta$ and the following domains $U',V',D'$:
\begin{itemize}
\item $V'=U$;
\item $U'= (\xi^{-1}\circ\eta^{-(r-1)})(\tl U)$, where $\tl U= (U\setminus\bbR)\cup(0,(\eta^{r-1}\circ\xi)(0))$;
\item $D'=\hat{\eta}^{-1}(U')$, where $\hat{\eta}$ is the analytic extension of $\eta$ to the domain $D\cup U$, and $\hat{\eta}^{-1}$ denotes the full preimage.
\end{itemize} 
The renormalization $\cR(\cH)$ is, as usual, the linear rescaling of $p\cR(\cH)$ sending $\eta(0)$ to $1$. Clearly, $\cH$ is renormalizable if and only if $\zeta_\cH$ is renormalizable and we have
$$
\zeta_{\cR(\cH)}=\cR(\zeta_\cH).
$$

We can naturally view a holomorphic pair $\cH$ as three triples
$$(U,\xi(0),\eta),\;(V,\eta(0),\xi),\;(D,0,\nu).$$
We say that a sequence of holomorphic pairs converges in the sense of Carath{\'e}odory convergence, if the corresponding triples do.
We denote the space of triples equipped with this notion of convergence by $\hol$.

We let the modulus of a holomorphic commuting pair $\cH$, which we denote by $\mod(\cH)$ to be the modulus of the largest annulus $A\subset \Delta$,
which separates $\bbC\setminus\Delta$ from $\overline\Omega$.

\begin{definition}\label{H_mu_def}
For $\mu\in(0,1)$ let $\hol(\mu)\subset\hol$ denote the space of holomorphic commuting pairs
${\cH}:\Omega_{{\cH}}\to \Delta_{\cH}$, with the following properties:
\begin{enumerate}[(i)]
\item\label{hol_mu_1} $\mod (\cH)\ge\mu$;
\item\label{hol_mu_2} {$|I_\eta|=1$, $|I_\xi|\ge\mu$} and $|\eta^{-1}(0)|\ge\mu$; 
\item $\dist(\eta(0),\partial V_\cH)/\diam V_\cH\ge\mu$ and $\dist(\xi(0),\partial U_\cH)/\diam U_\cH\ge\mu$;
\item\label{hol_mu_4} {the domains $\Delta_\cH$, $U_\cH\cap\bbH$, $V_\cH\cap\bbH$ and $D_\cH\cap\bbH$ are $(1/\mu)$-quasidisks.}
\item\label{hol_mu_5} $\diam(\Delta_{\cH})\le 1/\mu$;
\end{enumerate}
\end{definition}

\begin{lemma}[Lemma 2.17 \cite{Yamp-towers}]
\label{bounds compactness}
For each $\mu\in(0,1)$ the space $\hol(\mu)$ is sequentially compact.
\end{lemma}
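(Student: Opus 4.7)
My plan is to apply the standard normal-families and quasidisk machinery to extract a Carathéodory-convergent subsequence from a given sequence $\cH_n \in \hol(\mu)$, and then verify that the limit inherits all the defining properties of $\hol(\mu)$ from Definition \ref{H_mu_def}.

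First, I would set up uniform a priori bounds. By condition (v), $\diam(\Delta_{\cH_n}) \leq 1/\mu$, and since $\Delta_{\cH_n}$ contains $0$, all ranges lie in a fixed disk; the domains $U_n, V_n, D_n \subset \Delta_n$ inherit the same bound. The marked points of the triples $(U_n,\xi_n(0),\eta_n)$, $(V_n,\eta_n(0),\xi_n)$, $(D_n,0,\nu_n)$ are then uniformly bounded, and by condition (ii) we have $|\xi_n(0)|=1$ and $|\eta_n(0)| \geq \mu$, so after passing to a subsequence the marked points converge without colliding.

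Next I would extract Carathéodory limits of the four domains. Real-symmetry reduces this to extracting limits of the upper halves, which by condition (iv) are $(1/\mu)$-quasidisks. Appropriately normalized inverse Riemann maps to these domains form a family that is equicontinuous up to $\partial \bbD$ (a standard consequence of the uniform quasidisk bound via reflection), so after passing to subsequences they converge uniformly on $\bar\bbD$, and the limit domains are again $(1/\mu)$-quasidisks. Reflecting across $\bbR$ recovers the full domains, and by the classical theorem of Carathéodory this gives Carathéodory convergence of the three defining triples together with $\Delta_n$. Applying Montel's theorem to $\eta_n, \xi_n, \nu_n$ (uniformly bounded in modulus by $1/\mu$ plus a constant) yields a further subsequence along which the maps converge locally uniformly on the corresponding limit domains to maps $\eta$, $\xi$, $\nu$.

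Finally I would check that the limit $\cH = (\eta,\xi,\nu)$ lies in $\hol(\mu)$. The commutation identity $\nu = \eta \circ \xi = \xi \circ \eta$ passes to the local-uniform limit. By Hurwitz's theorem, $\eta$ and $\xi$ remain univalent onto their images, and $\nu$ remains a three-fold branched cover with a unique critical point at $0$; the geometric incidence relations among $\bar U, \bar V, \bar D, \Delta$ in the definition of holomorphic pair pass to the Carathéodory limit together with the domains themselves. The bound $\mod(\cH) \geq \mu$ is preserved because an annulus of modulus $\geq \mu$ separating $\overline{\Omega_n}$ from $\bbC\setminus\Delta_n$ has, along a subsequence, a Carathéodory limit that separates $\overline{\Omega}$ from $\bbC\setminus\Delta$ and has modulus $\geq \mu$ by semicontinuity of extremal length. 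The remaining conditions (ii), (iii), (v) are closed conditions on continuously varying real-valued data.

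The main obstacle is ruling out degeneration of the domains in the Carathéodory limit: one must ensure that $U, V, D, \Delta$ are neither a single point, nor a half-plane, nor all of $\bbC$, and that the limiting maps are non-constant. This is precisely why the quasidisk condition (iv) and the interval-length lower bounds in (ii) and (iii) are built into the definition of $\hol(\mu)$; they combine to give simultaneous upper bounds on diameter and lower bounds on inner radius of each domain at the relevant marked point, exactly the data needed to force subsequential limits into the category of admissible holomorphic commuting pairs.
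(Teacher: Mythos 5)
Your proposal follows the standard normal-families strategy for this kind of compactness statement: extract Carathéodory limits of the domains using the quasidisk and diameter bounds of Definition~\ref{H_mu_def}, extract locally uniform limits of the maps via Montel, and then verify that each defining condition of $\hol(\mu)$ is closed under this mode of convergence. The paper itself does not reprove this lemma but cites \cite{Yamp-towers}; your outline is in line with the argument given there, and most of the verification steps you list (modulus semicontinuity, quasidisk limits remain quasidisks, closedness of the remaining normalizations, non-degeneration of $U$, $V$, $\Delta$ from conditions (ii), (iii), (v)) are correct.

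The one place where the write-up overclaims is the closing sentence, where you assert that conditions (ii)--(iv) provide ``lower bounds on inner radius of each domain at the relevant marked point.'' That is true for $U$ and $V$ (condition (iii) combined with $|I_\eta|=1$, $|I_\xi|\ge\mu$) and for $\Delta$ (via $I_\eta\subset\Delta$ and condition (i)), but Definition~\ref{H_mu_def} gives \emph{no} direct lower bound on $\dist(0,\partial D_\cH)$ or $\diam D_\cH$; conditions (ii) and (iii) make no mention of $D$, and the quasidisk bound (iv) by itself does not prevent $D_n$ from shrinking to $\{0\}$. If $D_n\to\{0\}$, the triple $(D_n,0,\nu_n)$ has no Carathéodory limit, so this degeneration must be excluded before you can speak of ``the Carathéodory limit of the three defining triples.'' To fill this in, you need an argument using the covering structure of $\nu$: since $\nu\colon D\to(\Delta\setminus\bbR)\cup\nu(I_D)$ is a proper degree-$3$ branched cover with critical value $\nu(0)=\xi(\eta(0))\in I_\eta$, the image has diameter at least $|I_\eta|=1$ while the critical value is kept a definite distance from $\partial\Delta$ by the modulus condition~(i); pulling back an annulus around $\nu(0)$ by the degree-$3$ cover bounds $\mod\bigl(D\setminus\{0\}\bigr)$ from below, which combined with $D\subset\Delta$ and (v) forces $\diam D$ to stay bounded away from $0$. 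Once this step is supplied, the rest of your outline is complete and your conclusion is correct.
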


\noindent
We say that a real commuting pair $\zeta=(\eta,\xi)$ with
an irrational rotation number has
{\it complex {\rm a priori} bounds}, if there exists $\mu>0$ such that all renormalizations of $\zeta=(\eta,\xi)$ extend to 
holomorphic commuting pairs in $\hol(\mu)$.
The existense of complex {\it a priori} bounds is a key analytic issue 
 of renormalization theory. 
{Before proceeding with the theorem on complex bounds for critical circle maps, we need to give the following definition:}

\begin{definition}\label{A_r_com_pair_def}
For a set $S\subset\bbC$, and $r>0$, we let $N_r(S)$ stand for the $r$-neighborhood of $S$ in $\bbC$. 
For each $r>0$ we introduce a class $\cA_r$ consisting of pairs   
$(\eta,\xi)$ such that the following holds:
\begin{itemize}
\item $\eta$, $\xi$ are real-symmetric analytic maps defined in the domains 
$$N_r([0,1])\text{ and }N_{r|\eta(0)|}([0,\eta(0)])$$
respectively, and continuous up to the boundary of the corresponding domains;
\item the pair $$\zeta\equiv (\eta|_{[0,1]},\xi|_{[0,\eta(0)]})$$
is a critical commuting pair.
\end{itemize}
\end{definition}

\noindent
For simplicity, if $\zeta$ is as above, we will write $\zeta\in\cA_r$. But it is important to note that viewing our critical commuting pair $\zeta$ as an element of $\cA_r$ imposes restrictions on where we are allowed to iterate it. Specifically, we view such $\zeta$ as undefined at any point $z\notin N_r([0,\xi(0)])\cup N_r([0,\eta(0)])$ (even if $\zeta$ can be analytically continued to $z$). Similarly, when we talk about iterates of $\zeta\in\cA_r$ we iterate the restrictions $\eta|_{N_r([0,\xi(0)])}$ and $\xi|_{N_r([0,\eta(0)]}$.
In particular, we say that the first and second elements of $p\cR\zeta=(\eta^r\circ\xi,\eta)$ are defined in the maximal domains, where the corresponding iterates are defined in the above sense.

For a domain $\Omega\subset\bbC$, we denote by $\mathfrak D(\Omega)$ the complex Banach space of analytic functions in $\Omega$, continuous up to the boundary of $\Omega$ and equipped with the sup norm. Consider the mapping $i\colon\cA_r\to\mathfrak D(N_r([0,1]))\times\mathfrak D(N_r([0,1]))$ defined by 
\begin{equation}\label{norm_inclusion_eq}
i(\eta,\xi)=(\eta, h\circ\xi\circ h^{-1}),\quad\text{ where }\quad h(z)=z/\eta(0).
\end{equation}
The map $i$ is injective, since $\eta(0)$ completely determins the rescaling $h$. Hence, this map induces a metric on $\cA_r$ from the direct product of sup-norms on $\mathfrak D(N_r([0,1]))\times\mathfrak D(N_r([0,1]))$. This metric on $\cA_r$ will be denoted by $\dist_r(\cdot,\cdot)$.

\begin{theorem}[{\bf Complex bounds}]
\label{complex bounds}There exists a universal constant $\mu>0$  such that
the following holds. For every positive real number $r>0$ and every pre-compact family $S\subset\mathcal A_r$ of critical commuting pairs, there exists $N=N(r, S)\in\bbN$ such that if $\zeta\in S$ is
a $2n$ times renormalizable commuting pair, where $n\geq N$, then
$p\cR^n\zeta$ restricts to a holomorphic commuting pair $\cH_n:\Omega_n\to\Delta_n$ with $\Delta_n\subset N_r(I_\eta)\cup N_r(I_\xi)$. 
Furthermore, the range $\Delta_n$ is a Euclidean disk,  and the appropriate affine rescaling of $\cH_n$ is in $\hol(\mu)$.
\end{theorem}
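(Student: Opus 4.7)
My plan is to reduce to the known complex bounds for Epstein commuting pairs by a Koebe-distortion / compactness argument, exploiting the shrinking of renormalization intervals to convert the fixed complex neighborhood $N_r(\cdot)$ guaranteed by membership in $\cA_r$ into an arbitrarily large ``Koebe space'' after affine rescaling.

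First I would establish real \textit{a priori} bounds for $\zeta\in S\subset\cA_r$ (these hold for any critical commuting pair, Epstein or not, by the classical arguments of {\'S}wi{\c a}tek and de~Faria--de~Melo): the intervals $I_\eta^n,\,I_\xi^n$ of $p\cR^n\zeta$ are $K$-commensurable for a universal $K$ and shrink at a uniform exponential rate. In particular, given any $\eps>0$, there is $N=N(\eps,S)$ such that for $n\ge N$ one has $|I_\eta^n\cup I_\xi^n|<\eps$, so the $r$-neighborhood $N_r(I_\eta^n\cup I_\xi^n)$ has inradius $\gg|I_\eta^n|$.

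Next, the key dynamical observation: $p\cR^n\zeta=(\eta^m\circ\xi,\eta)$ for some $m$, in which $\eta^m$ is a composition of univalent branches of $\eta$ and the only passage through the cubic critical point enters through a single $\xi$ factor. Because $\zeta\in\cA_r$ and the orbits of these intermediate compositions lie in the ever smaller intervals $I_\eta^k,I_\xi^k$, each univalent branch extends to a complex neighborhood of modulus tending to infinity relative to the image length. By the Koebe distortion theorem, the iterated composition $\eta^m$ (pulled back to a neighborhood of $\xi(I_\eta^n)$) has distortion tending to $1$. Combined with the Epstein-type factorization~(\ref{Epstein_decomp}) of the single cubic factor coming from $\xi$, this shows that after the affine rescaling to unit size, any Carath\'eodory subsequential limit of $\cR^{n_k}\zeta_{k}$ (with $\zeta_k\in S$, $n_k\to\infty$) is a critical commuting pair lying in $\mcE_s$ for a universal $s>0$. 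Using Proposition~\ref{E_s_compactness_proposition}, the family $\{\cR^n\zeta : n\ge N,\;\zeta\in S\}$ is therefore eventually pre-compact in an $\mcE_s$-neighborhood in the sense of Carath\'eodory.

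Now I would invoke the complex bounds on Epstein critical commuting pairs (de~Faria--de~Melo \cite{FM1}, Yampolsky \cite{Yamp-towers}): each $\zeta_*\in\overline{\mcE_s}$ admits a holomorphic commuting pair extension in $\hol(\mu_0)$ for a universal $\mu_0$, with range a Euclidean disk. By lower semicontinuity of $\mod(\cdot)$ and Carath\'eodory continuity of the extension data (domains as filled triples $(U,u,f)$), these bounds persist, possibly with a slightly smaller universal $\mu<\mu_0$, for every $\cR^n\zeta$ with $n\ge N(r,S)$ provided $N$ is chosen large enough. Thus the affine rescaling of $p\cR^n\zeta$ lies in $\hol(\mu)$ and its range $\tilde\Delta_n$ is a Euclidean disk. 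Undoing the rescaling multiplies diameters by $|\eta^{q_{n+1}}(0)|\to 0$, so the unrescaled range $\Delta_n$ is a Euclidean disk with $\diam\Delta_n\le |\eta^{q_{n+1}}(0)|/\mu$, which for $n$ large is much smaller than $r$, giving $\Delta_n\subset N_r(I_\eta)\cup N_r(I_\xi)$ as required.

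The main obstacle is the second step: rigorously controlling the emergence of Epstein structure for commuting pairs that are only a priori in $\cA_r$, not Epstein to begin with. This requires uniform Koebe estimates for the many univalent intermediate branches of $\eta$ and $\xi$ on compact families in $\cA_r$, together with a careful check that the fixed $r$-wide complex collar persists through the composition once the real intervals become sufficiently small. Everything else is a soft compactness/continuity argument assembled on top of this deterministic distortion estimate.
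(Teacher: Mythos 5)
Note first that the paper does not supply its own proof of this statement: it is quoted from \cite{Ya1} (Epstein case) and \cite{FM2} (non-Epstein case), with a one-sentence remark that the uniformity over a pre-compact family $S\subset\cA_r$ is evident from those proofs. Your attempt is therefore a from-scratch sketch, and there is a genuine hole exactly at the step you yourself flag as the ``main obstacle.''

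Your ``key dynamical observation'' -- that $p\cR^n\zeta=(\eta^m\circ\xi,\eta)$ with $\eta^m$ a composition of univalent branches and only one cubic pass, through a single $\xi$ factor -- is where the argument breaks. If $(\eta,\xi)$ denotes the original pair, the claim is false: the components of $p\cR^n\zeta$ are, up to rescaling, $f^{q_{n+1}}$ and $f^{q_n}$, and the orbit passes close to the critical point on the order of $n$ times (once per closest return), so the compositions are emphatically not a single critical factor followed by well-separated univalent branches. If instead $(\eta,\xi)=p\cR^{n-1}\zeta$, then $\eta^m\circ\xi$ is indeed a single critical pass of the renormalized pair, but now the ``univalent branches'' are iterates of $\eta=f^{q_n}$, whose analytic extension and Koebe space are precisely what a complex-bounds induction must control. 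Either way, the assertion that ``each univalent branch extends to a complex neighborhood of modulus tending to infinity relative to the image length'' presupposes the conclusion; the cumulative distortion across the many near-critical returns, and the sector/angle estimates near the critical point used in \cite{Ya1,FM2} to control it, cannot be replaced by a one-step Koebe argument. Two further problems: you invoke the Epstein factorization~(\ref{Epstein_decomp}) for the $\xi$-factor, but a pair in $\cA_r$ need not be Epstein, so that factorization is not available a priori; and the assertion that every Carath\'eodory subsequential limit of $\cR^{n_k}\zeta_k$ lies in some $\mcE_s$ with universal $s$ is not a soft compactness fact -- it is essentially a restatement of the theorem, and some quantitative modulus estimate must be supplied before compactness can be invoked. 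The soft outer framework you describe (real bounds give exponential shrinking; the fixed $r$-collar of $\cA_r$ therefore grows relative to the renormalization scale; pre-compactness promotes single-pair estimates to uniform ones over $S$) is the correct way to explain the uniformity claim the paper adds, but it sits on top of, rather than substitutes for, the hard single-pair estimate.
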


Theorem~\ref{complex bounds} was first proved in~\cite{Ya1} for critical commuting pairs from the Epstein class. This proof was later adapted by de~Faria and de~Melo \cite{FM2} 
to the case of non-Epstein critical commuting pairs.
We note that both in~\cite{Ya1} and in~\cite{FM2} this theorem was formulated for a single critical circle map (or commuting pair) with an irrational rotation number, however the uniformity of the estimates in a pre-compact family stated above is evident from the proofs.

\section{The functional spaces}
\label{sec:spaces}

\subsection{The spaces of generalized analytic critical cylinder maps $\mathfrak C_r$ and $\mathfrak C_r^\alpha$}

According to the standard terminology, an analytic critical circle map is a critical circle map which is analytic at every point of the circle including the critical point. This implies that the critical exponent is an odd integer not smaller than~$3$. In this section we generalize the class of analytic critical circle maps to include maps with other critical exponents.

For a fixed positive real number $0<r<1/2$, let $A_r\subset\bbC$ be the $r$-neighborhood of the interval $[0,1]$ with the points $1$ and $0$ removed:
$$
A_r=\{z\in\bbC\mid\dist(z,[0,1])<r\}\setminus\{0,1\}.
$$
The universal cover $\tl A_r$ of $A_r$ can be identified with the space of all pairs $(z,\gamma)$, such that $z\in A_r$ and $\gamma$ is a homotopy class of paths in $A_r$ which begin at $\frac{1}{2}$ and end at $z$. 
We define the surface $U_r$ as the subset of the universal cover $\tl A_r$, consisting of all pairs $(z,\gamma)\in\tl A_r$, such that $\gamma$ has a representative that enters the disks $\bbD_r(0)$ or $\bbD_r(1)$ no more than once and 
lies either entirely to the left or entirely to the right from the vertical line $\Re z=1/2$. 
The surface $U_r$ inherits a complex analytic structure as well as the Euclidean distance from $A_r$ via the projection of $U_r$ onto the first coordinate. If $f$ is an analytic function on $U_r$, then $f$ can be viewed as a multiple-valued analytic function on $A_r$. We will write $f(z,\gamma)$ for the value of $f$ at $(z,\gamma)\in U_r$, and we will write $f(z)$ when we view $f$ as a multiple-valued function. If $\gamma$ has a representative that lies in $A_r\setminus((-\infty,0)\cup(1,+\infty))$, we will shorten the notation for $f(z,\gamma)$ by writing $f(z,\gamma)=f(z)$, provided that this does not cause any confusion. 
By $\overline U_r$ we denote the completion of $U_r$ with respect to the Euclidean distance.

\begin{definition}
For a positive real number $r>0$, let $\mathfrak A_r$ denote the 
space of all functions $f\colon\overline U_r\to\bbC$ that are analytic on $U_r$, continuous on $\overline U_r$, 
and satisfy the identity $f(0)=f(1)$. Equipped with the sup-norm, the space $\mathfrak A_r$ is a complex Banach space. 
\end{definition}

Let $\pi\colon\bbC\to\bbC\slash\bbZ$ be the natural projection. 
Every map $f\colon\bbR\slash\bbZ\to\bbC\slash\bbZ$ naturally lifts to a (non-uniquely defined) map $\tilde f\colon [0,1]\to\bbC$, such that 
\begin{equation}\label{proj_rel}
\pi\circ\tilde f = f\circ\pi.
\end{equation}

\begin{definition}\label{C_r_def}
Given a positive real number $r>0$, we define the set of generalized cylinder maps $\mathfrak C_r$ to be the space of all continuous maps $f\colon\bbR\slash\bbZ\to\bbC\slash\bbZ$, such that any lift of $f$ to a map of $[0,1]\to\bbC$ has an analytic extension $\tilde f\colon U_r\to\bbC$ that is continuous on $\overline U_r$, 
and $\tilde f(1)=\tilde f(0)+1$.
\end{definition}

Let $V_r\subset\bbC\slash\bbZ$ be an equatorial neighborhood defined by
\begin{equation}
\label{v-annulus}
V_r\equiv \{z\in\bbC\colon|\Im z|<r\}/\bbZ\subset\bbC\slash\bbZ.
\end{equation}
Let  $f\in\mathfrak C_r$, and let $\tl f:U_r\to\bbC$ be as in Definition~\ref{C_r_def}. 
Pushing $\tl f$ down by the covering map  $\pi:\bbC\to \bbC/\bbZ$ we obtain a multiple-valued 
map from $V_r$ to $\bbC/\bbZ$,
which restricts to $f$ on the circle. In what follows, we will identify $f$ with this particular multiple-valued analytic continuation.
Whenever we iterate the multivalued map $f$, we will specify the choice of the branches.

\begin{proposition}\label{C_r_Banach_prop}
For any positive real number $r>0$, the set $\mathfrak C_r$ has a structure of an affine complex Banach manifold modeled on $\mathfrak A_r$.
\end{proposition}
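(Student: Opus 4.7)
\medskip\noindent
\textbf{Proof plan.} The plan is to realize $\mathfrak C_r$ as an affine space modeled on $\mathfrak A_r$, where the action is pointwise addition of lifts. The key observation is that if $f_1,f_2\in\mathfrak C_r$ and $\tilde f_1,\tilde f_2\colon U_r\to\bbC$ are respective lifts in the sense of Definition~\ref{C_r_def}, then $\tilde f_1-\tilde f_2\in\mathfrak A_r$: it is analytic on $U_r$, continuous on $\overline U_r$, and
$$
(\tilde f_1-\tilde f_2)(1)-(\tilde f_1-\tilde f_2)(0)=1-1=0,
$$
thanks to the normalization $\tilde f_i(1)=\tilde f_i(0)+1$. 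Conversely, given any lift $\tilde f_0$ of $f_0\in\mathfrak C_r$ and any $h\in\mathfrak A_r$, the function $\tilde f_0+h$ satisfies the same normalization and so descends to an element of $\mathfrak C_r$. This reciprocity is what makes the affine structure possible.

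Next I would build the charts. Equip $\mathfrak C_r$ with the natural topology induced from $C^0(\bbR/\bbZ,\bbC/\bbZ)$. Fix $f_0\in\mathfrak C_r$ together with one specific lift $\tilde f_0$. For every $g\in\mathfrak C_r$ sufficiently close to $f_0$, there is a unique lift $\tilde g$ of $g$ with $\|\tilde g-\tilde f_0\|_\infty<1/2$; this lift inherits analyticity on $U_r$ and continuity on $\overline U_r$ from $g$. Define
$$
\Psi_{f_0}\colon g\longmapsto \tilde g-\tilde f_0\in\mathfrak A_r.
$$
The observation above shows $\Psi_{f_0}$ lands in $\mathfrak A_r$, is a homeomorphism onto an open neighborhood of the origin, and the converse direction shows surjectivity onto a small ball.

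Finally I would check that the transition maps are affine. Given two base points $f_0,f_0'$ with chosen lifts $\tilde f_0,\tilde f_0'$, and any $g$ in the overlap of the chart domains, the two selected lifts $\tilde g$ and $\tilde g'$ of $g$ differ by an integer constant $k\in\bbZ$ (constant on each connected component of the overlap). Therefore
$$
\Psi_{f_0'}(g)-\Psi_{f_0}(g)=(\tilde g'-\tilde g)-(\tilde f_0'-\tilde f_0)=k-(\tilde f_0'-\tilde f_0),
$$
a constant element of $\mathfrak A_r$ independent of $g$. The transition is therefore a translation in $\mathfrak A_r$, which is precisely the condition for an affine Banach manifold structure. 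No serious analytic obstacle arises: the entire proof reduces to carefully tracking the $\bbZ$-ambiguity in the choice of lift, which is the only genuinely subtle point because both the circle and its target live on quotients by $\bbZ$.
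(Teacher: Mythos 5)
Your proof is correct and takes essentially the same route as the paper: define local charts by choosing, for each $g$ near a base point $f_0$, the continuous branch of the lift $\tilde g$ and setting $\sigma_{\tilde f_0}(g)=\tilde g-\tilde f_0\in\mathfrak A_r$, then observe that the transitions differ by a constant. Your version is somewhat more explicit than the paper's (in particular you spell out why the difference of lifts lands in $\mathfrak A_r$ via the normalization $\tilde f(1)=\tilde f(0)+1$ and why the transition constants are locally constant integers plus a fixed element of $\mathfrak A_r$), but the underlying argument is the same.
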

\begin{proof}
For any map $\tilde f\colon U_r\to\bbC$ which is a lift of a map $f\in\mathfrak C_r$ via the relation~(\ref{proj_rel}), and for any map $g\in\mathfrak C_r$ from a sufficiently small neighborhood of $f$, let $\tilde g\colon U_r\to\bbC$ be the unique lift of $g$ via the same relation~(\ref{proj_rel}), such that the mapping $g\mapsto\tilde g$ is continuous in the uniform metric and takes $f$ to $\tilde f$. Then for any such $\tilde f$ we can consider a local chart
$\sigma_{\tilde f}\colon(\mathfrak C_r, f)\to\mathfrak A_r$ defined in a small open neighborhood of $f$ in $\mathfrak C_r$ by the relation
$$
\sigma_{\tilde f}\colon g\mapsto\tilde g-\tilde f.
$$
It is obvious that the transition maps $\sigma_{\tilde f_1}\circ\sigma_{\tilde f_2}^{-1}$ are affine, hence $\mathfrak C_r$ is an affine complex Banach manifold modeled on $\mathfrak A_r$.
\end{proof}

We denote $\mathfrak C_r^\bbR$ the real slice of the Banach manifold $\mathfrak C_r$. It consists of all
 maps from $\mathfrak C_r$ which map the circle $\bbR\slash\bbZ$ to itself.

\begin{definition}
Let $U\subset\bbC$ be a simply connected neighborhood of a point $z_0$, and consider a possibly multiple-valued analytic map $f\colon U\setminus \{z_0\}\to \bbC$. We say that $f$ has a critical exponent $\alpha\in\bbC$ at $z_0$, if in a neighborhood of $z_0$ the map $f$ can be represented as 
$$
f(z)=\psi((\phi(z))^\alpha),
$$
where $\phi$, $\psi$ are locally conformal maps, $\phi(z_0)=0$, and $z\mapsto z^\alpha$ is an appropriate branch of the power map.
\end{definition}

\begin{definition} 
Given a positive real number $r>0$ and a complex number $\alpha\in\bbC$, $\alpha\neq 1$,
we define the set of generalized critical cylinder maps $\mathfrak C_r^\alpha\subset\mathfrak C_r$ to be the set of all maps $f\in\mathfrak C_r$, whose lift to the interval $[0,1]$ has an analytic extension $\tilde f\colon U_r\to\bbC\slash\bbZ$ with critical  exponent $\alpha$ at $0$ and $1$.
\end{definition}

\begin{definition}\label{C_hat3_def}
When $\alpha=3$ (or any other odd integer greater than $3$), we define $\hat{\mathfrak C}_r^\alpha\subset\mathfrak C_r^\alpha$ to be the proper subset of $\mathfrak C_r^\alpha$ that consists of all maps $f\in\mathfrak C_r^\alpha$ which are analytic in $V_r$. 
In other words, all branches of the map $f$ in the disk of radius $r$ around zero coincide. 
\end{definition}
\begin{remark}
For any $\alpha\in 2\bbN+1$, the set $\hat{\mathfrak C}_r^\alpha$ is a submanifold of $\mathfrak C_r$ (c.f.~\cite{Ya3}).
\end{remark}

\subsection{The spaces of critical triples $\mathfrak P_{U,t,h}$ and $\mathfrak P_{U,t,h}^\alpha$}
In what follows, we will find it convenient when perturbing an analytic critical circle map $f$ to a map with a critical exponent $\alpha\neq 2n+1$ to first decompose $f$ into the form (\ref{sing1}) and then perturb each of the terms in the decomposition separately. To
lay the ground for this discussion, we introduce a suitable space of decompositions (\ref{sing1}) below. We note that the main difficulty here is to define these decompositions globally in a neighborhood of the circle (rather than just in a neighborhood of the critical point), so that the space of decompositions is invariant under renormalizations.

Given a positive real number $\alpha>1$, the function
$$
p_{\alpha+}\colon\bbC\setminus\bbR^-\to\bbC
$$
is defined as the branch of the map $z\mapsto z^\alpha$ which maps positive reals to positive reals. Similarly we define the function
$$
p_{\alpha-}\colon\bbC\setminus\bbR^+\to\bbC
$$
as the branch of the map $z\mapsto -(-z)^\alpha$ which maps negative reals to negative reals.
\begin{remark}
Functions $p_{\alpha+}$ and $p_{\alpha-}$ can be defined for any $\alpha\in\bbC$ by means of analytic continuation in $\alpha$-coordinate.
\end{remark}

Given a positive real number $t>0$, let $I_t^+$ denote the straight line segment connecting the points $\frac{1}{2}+it$ and $\frac{1}{2}-it$. Similarly, let $I_t^-$ denote the straight line segment connecting the points $-\frac{1}{2}+it$ and $-\frac{1}{2}-it$.

Consider a simply connected domain $U$ which contains both $I_t^+$ and $I_t^-$ as well as the interval $[-\frac{1}{2},\frac{1}{2}]$. 
For a complex number $\alpha\neq 0$ and a univalent analytic function $\phi\colon U\to\bbC$, let us assume 
that $\phi(I_t^+)$ and $\phi(I_t^-)$ lie in the domains of the functions $p_{\alpha+}$ and $p_{\alpha-}$ respectively. 
We define a closed curve $\gamma_{\phi,\alpha,t}\subset\bbC$ 
which consists of four pieces: $p_{\alpha+}(\phi(I_t^+))$, $p_{\alpha-}(\phi(I_t^-))$ and the two straight line segments $\ell_1$, $\ell_2$ connecting 
$p_{\alpha+}(\phi(\frac{1}{2}+it))$ with $p_{\alpha-}(\phi(-\frac{1}{2}+it))$ and $p_{\alpha+}(\phi(\frac{1}{2}-it))$ with $p_{\alpha-}(\phi(-\frac{1}{2}-it))$ 
respectively.

Let us make the following simple observation:
\begin{proposition}
\label{prop:quasidisk}
The curve $\gamma_{\phi,\alpha,t}$ is simple, closed and bounds a quasidisk if and only if the following four conditions hold:
\begin{itemize}
\item the curves $p_{\alpha+}(\phi(I_t^+))$, $p_{\alpha-}(\phi(I_t^-))$ are simple and disjoint;
\item the line segments $\ell_1$ and $\ell_2$ are disjoint;
\item the segments $\ell_1$, $\ell_2$ do not instersect with the curves $p_{\alpha+}(\phi(I_t^+))$, $p_{\alpha-}(\phi(I_t^-))$ except at the end points, and
\item the curves $p_{\alpha+}(\phi(I_t^+))$, $p_{\alpha-}(\phi(I_t^-))$ are not tangent to $\ell_1$, $\ell_2$ at their intersection points.
\end{itemize}

\end{proposition}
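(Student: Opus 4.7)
The proposition is essentially an unpacking of what it means for a curve assembled from four smooth pieces joined at four corners to be a simple Jordan curve bounding a quasidisk, so my plan is to verify the two implications by inspecting the topology of the assembly and the local geometry at the four joining points.

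For the ``only if'' direction, suppose $\gamma_{\phi,\alpha,t}$ is a simple closed Jordan curve bounding a quasidisk. The first three bulleted conditions are immediate: if $p_{\alpha+}(\phi(I_t^+))$ or $p_{\alpha-}(\phi(I_t^-))$ failed to be simple, or if the two curves met away from their endpoints, or if $\ell_1, \ell_2$ crossed each other or met an image curve in an interior point, then $\gamma_{\phi,\alpha,t}$ would have a self-intersection. For the non-tangency condition, I would use the Ahlfors three-point characterization of quasicircles: a tangential meeting of a smooth arc with a line segment at a corner produces either a zero-angle outward cusp or a $\pi$-angle inward cusp, and in either case one can choose three nearby points on $\gamma_{\phi,\alpha,t}$ violating $|a-c|+|c-b| \le C|a-b|$, contradicting the quasicircle condition.

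For the ``if'' direction, closedness is by construction (the four pieces are joined cyclically at the four points $p_{\alpha\pm}(\phi(\pm\tfrac{1}{2}\pm it))$). Simplicity follows by checking that no two pieces of $\gamma_{\phi,\alpha,t}$ share an interior point: the two analytic arcs are simple and disjoint by (a), the two line segments are disjoint by (b), and interior crossings between a line segment and an analytic arc are ruled out by (c). Thus $\gamma_{\phi,\alpha,t}$ is a Jordan curve that is piecewise smooth, with exactly four corners where the smooth analytic arcs meet the straight segments. By condition (d), at each of these corners the two incident smooth arcs meet at an angle strictly between $0$ and $\pi$, i.e.\ there are no cusps.

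The main step is then to conclude that such a piecewise smooth Jordan curve bounds a quasidisk. This is a standard fact and can be proved by verifying Ahlfors' three-point condition locally at each point of $\gamma_{\phi,\alpha,t}$: away from the four corners the arcs are real analytic and trivially satisfy the estimate, while at each corner the non-tangency from (d) produces a uniform wedge-like behavior for which the three-point bound follows from elementary trigonometry. A compactness argument across the finitely many corners and the compact smooth arcs between them yields a uniform constant $C$, and hence $\gamma_{\phi,\alpha,t}$ is a quasicircle bounding a quasidisk. The only slightly delicate point will be the uniformity of the three-point bound near the corners, which is the technical heart of the argument; everything else is routine verification of definitions.
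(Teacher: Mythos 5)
The paper presents this proposition as a ``simple observation'' and gives no proof at all, so there is no paper argument to compare your proof against; you are supplying the missing content, which is the right instinct for a claim used to establish that $\mathfrak B_{U,t}^\alpha$ is open in Proposition~\ref{Banach_B_UAlphaSRMu}.

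Your ``if'' direction is correct and well organized: conditions (a)--(c) of the statement guarantee that $\gamma_{\phi,\alpha,t}$ is a Jordan curve, and the four pieces are real-analytic arcs (note $I_t^\pm$ avoid $\phi^{-1}(0)=0$, so $p_{\alpha\pm}\circ\phi$ is conformal along them) joined at four corners; the non-tangency in (d) forces each corner angle to lie strictly between $0$ and $2\pi$, and a piecewise $C^1$ Jordan curve with finitely many non-cuspidal corners satisfies Ahlfors' three-point condition by the local wedge estimate you describe. This is the direction the paper actually uses, and your argument for it is sound.

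Your ``only if'' direction, however, has a gap at the tangency step. Tangency of the analytic arc with the line segment at a corner means they share a tangent \emph{line} there, and this splits into two sub-cases that your proof treats as if they were both cusps. If the incoming and outgoing tangent rays are opposite (the boundary doubles back), the curve genuinely has a cusp and the three-point bound fails, as you say. But if the incoming and outgoing tangent rays coincide, the junction is $C^1$: the interior angle is $\pi$, the boundary is locally smooth, and the three-point condition is \emph{not} violated --- so the curve can still be a quasicircle while the non-tangency condition fails. Your phrase ``a $\pi$-angle inward cusp'' conflates a flat ($C^1$) junction with a cusp; the former is benign. To make the ``only if'' direction airtight one would either have to rule out the $C^1$ sub-case by a geometric argument specific to the configuration (orientation of $\ell_1,\ell_2$ relative to the arcs), or acknowledge that the equivalence holds only modulo this degenerate boundary case. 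Since the proposition is only invoked to give an \emph{open} sufficient condition, the weaker one-directional statement is all that is needed downstream, but as a standalone ``if and only if'' the cusp dichotomy needs to be handled honestly.
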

\begin{definition}
Suppose $\gamma_{\phi,\alpha,t}$ is simple, closed and bounds a quasidisk. Then we denote this quasidisk by $Q_{\phi,\alpha,t}$ (see Figure \ref{pic-quasidisk}).
\end{definition}

\begin{figure}[ht]
\centerline{\includegraphics[width=1.1\textwidth]{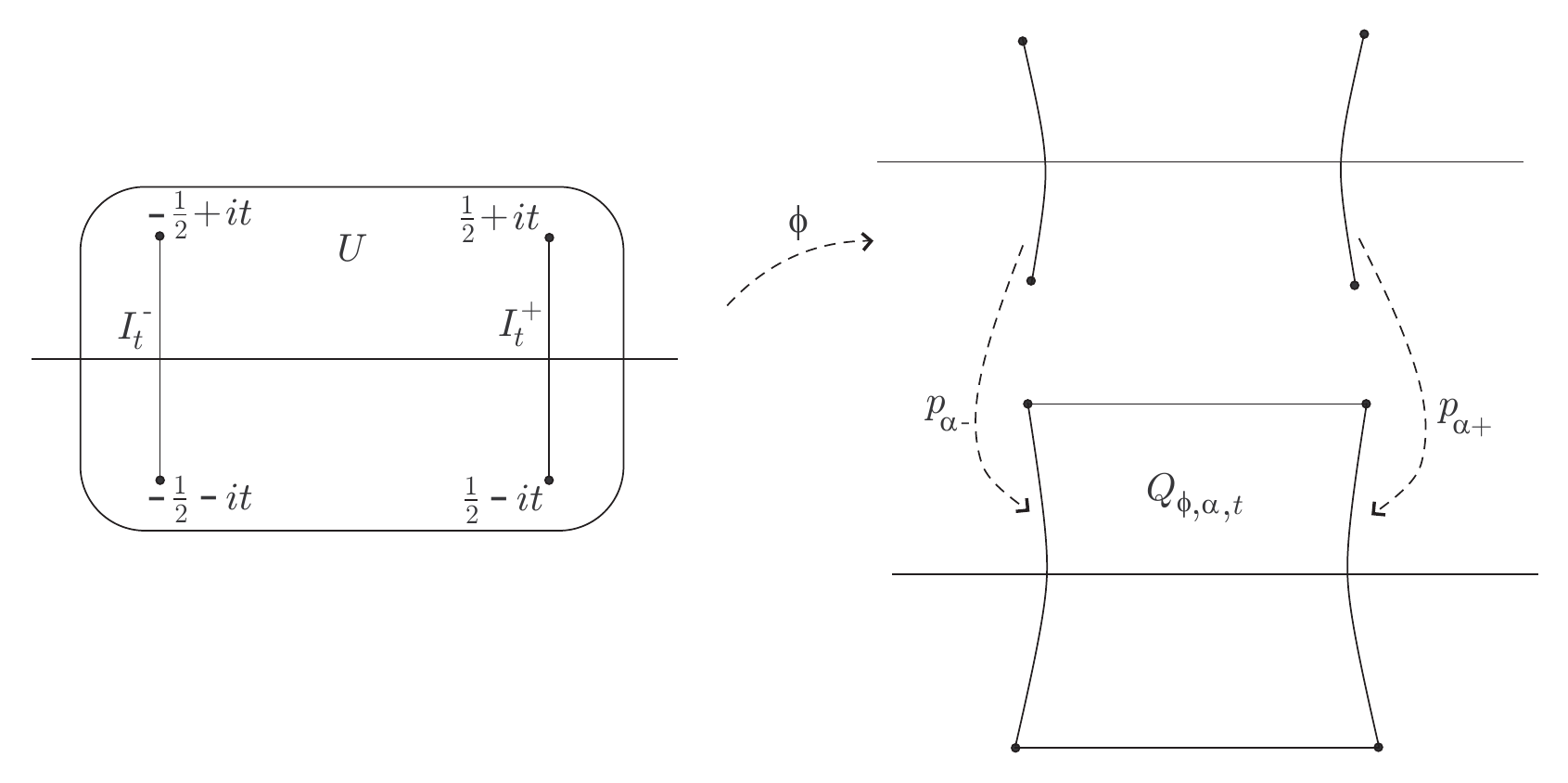}}
\caption{\label{pic-quasidisk}A domain $Q_{\phi,\alpha,t}$.}
\end{figure}

\begin{definition}\label{B_def}
For a positive real number $t>0$, a complex number $\alpha\neq 0$ and a Jordan domain $U\subset\bbC$ that contains the set $I_t^+\cup I_t^-\cup [-\frac{1}{2},\frac{1}{2}]$, by $\mathfrak B_{U,t}^\alpha$ we denote the set of all analytic maps $\phi\colon U\to\bbC$ that satisfy the following properties:

\begin{enumerate}[(i)]
 \item the map $\phi$ continuously extends to the closure $\overline U$, and $\phi(0)=0$, $\phi'(0)=1$;
 \item the map $\phi$ is univalent in a neighborhood of $I_t^+\cup I_t^-\cup[-\frac{1}{2},\frac{1}{2}]$;
 \item\label{B_def_item3} the quasidisk $Q_{\phi,\alpha,t}$ as well as the curves $p_{\alpha-}(\phi((-\frac{1}{2},0]))$ and $p_{\alpha+}(\phi([0,\frac{1}{2})))$ are defined, and $p_{\alpha-}(\phi((-\frac{1}{2},0])), p_{\alpha+}(\phi([0,\frac{1}{2}))) \subset Q_{\phi,\alpha,t}$;
 \item\label{B_def_item4} every straight line through the origin intersects the boundary of the quasidisk $Q_{\phi,\alpha,t}$ transversally at exactly two points.
\end{enumerate}
\end{definition}

Let $U\subset\bbC$ be a simply connected domain containing the origin. By $\mathfrak A_U$ we denote the space of all bounded analytic functions $\phi\colon U\to\bbC$ that are continuous up to the boundary, and satisfy the properties $\phi(0)=0$, and $\phi'(0)=1$. 
We note that $\mathfrak A_U$ equipped with the sup-norm forms an affine complex Banach space
 (the space $\mathfrak A_U-\mathrm{id}$ is a complex Banach space). 

The next statement follows from obvious continuity considerations:
\begin{proposition}\label{Banach_B_UAlphaSRMu}
For every $t$, $\alpha$ and $U$ as in Definition~\ref{B_def}, the set $\mathfrak B_{U,t}^\alpha$ is an open subset of the affine Banach space $\mathfrak A_U$, hence it is a complex Banach manifold, modeled on $\mathfrak A_U$.
\end{proposition}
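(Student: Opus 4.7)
The plan is to verify that $\mathfrak B_{U,t}^\alpha$ is an open subset of the affine complex Banach space $\mathfrak A_U$; the Banach manifold structure modeled on $\mathfrak A_U$ then follows automatically, since any open subset of an affine Banach space carries such a structure. Property (i) of Definition~\ref{B_def} is built into the ambient space $\mathfrak A_U$, so the task reduces to showing that each of the conditions (ii), (iii), (iv) defines an open subset; intersecting the three open neighborhoods of a given $\phi_0 \in \mathfrak B_{U,t}^\alpha$ then yields an open neighborhood contained in $\mathfrak B_{U,t}^\alpha$.

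The key analytic input is that by Cauchy's integral formula, sup-norm convergence $\phi_n \to \phi_0$ on $U$ implies $C^k$-convergence on every compact $K' \Subset U$, for every $k \in \bbN$. I would first handle (ii): fixing an open neighborhood $N \subset U$ of the compact set $K := I_t^+ \cup I_t^- \cup [-\tfrac12,\tfrac12]$ on which $\phi_0$ is univalent, and a smaller $N' \Subset N$ still containing $K$, the derivative $|\phi_0'|$ is bounded below on $\overline{N'}$, so a standard Hurwitz-type argument applied to the function $(z,w)\mapsto \phi(z)-\phi(w)$ on $\overline{N'}\times \overline{N'}\setminus\{z=w\}$ shows that univalence on $N'$ persists for all $\phi$ sufficiently close to $\phi_0$.

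Next I would turn to (iii). The arcs $p_{\alpha\pm}(\phi(I_t^\pm))$, the straight line segments $\ell_1,\ell_2$, and the half-open arcs $p_{\alpha\pm}(\phi([0,\pm\tfrac12)))$ all depend continuously on $\phi$ in the $C^1$-sense on a neighborhood of $K$, and each of the four conditions in Proposition~\ref{prop:quasidisk} (simplicity, pairwise disjointness of the two outer arcs, no secondary intersections with the segments, and non-tangency at shared endpoints) is manifestly $C^1$-open. Hence $\gamma_{\phi,\alpha,t}$ continues to bound a quasidisk $Q_{\phi,\alpha,t}$ that varies Hausdorff-continuously with $\phi$; since the compactly contained arcs $p_{\alpha\pm}(\phi_0([0,\pm\tfrac12)))$ sit at positive distance from $\partial Q_{\phi_0,\alpha,t}$ (the origin being interior to $Q_{\phi_0,\alpha,t}$ by (iv)), the containment persists under perturbation. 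Condition (iv) is open by the same principles: transversality with each real line through $0$ is $C^1$-stable, and the intersection number is locally constant; the family of lines is parameterized by the compact set $\mathbb{RP}^1$, so a single $\varepsilon>0$ works uniformly.

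There is no real obstacle here — the statement itself notes that openness follows from ``obvious continuity considerations'' — and the only mild point deserving attention is the extraction of uniform bounds across the compact family of lines appearing in (iv), which is immediate from compactness of $\mathbb{RP}^1$ together with the fact that $\partial Q_{\phi_0,\alpha,t}$ is a Jordan quasidisk boundary staying at a positive distance from the origin.
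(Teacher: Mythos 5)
Your argument is correct and takes essentially the same approach as the paper, which simply asserts that each of the conditions in Definition~\ref{B_def} defines an open subset of $\mathfrak A_U$ (invoking Proposition~\ref{prop:quasidisk} for the existence of the quasidisk) and concludes by intersection; you fill in the routine continuity details. One small inaccuracy worth flagging: the half-open arcs $p_{\alpha\pm}(\phi_0([0,\pm\tfrac12)))$ do \emph{not} lie at positive distance from $\partial Q_{\phi_0,\alpha,t}$ --- as $z\to\pm\tfrac12$ they limit onto the corner points $p_{\alpha\pm}(\phi_0(\pm\tfrac12))$ of the boundary. The containment in (iii) is nonetheless an open condition, but near those corners the reason is not positive distance; rather, the arc and the adjacent boundary piece $p_{\alpha\pm}(\phi(I_t^\pm))$ are both images under $p_{\alpha\pm}\circ\phi$ of two segments meeting transversally at $\pm\tfrac12$, and since $p_{\alpha\pm}\circ\phi$ is a local $C^1$-diffeomorphism there, the side of the boundary on which the arc emanates is $C^1$-stable under perturbation of $\phi$; away from the corners your positive-distance argument applies.
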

\begin{proof}
It follows from Proposition~\ref{prop:quasidisk} that the quasidisk $Q_{\phi,\alpha,t}$ exists for all maps $\phi$ from a certain open subset of $\mathfrak A_U$. All other properties of Definition~\ref{B_def} also define open subsets in $\mathfrak A_U$. The set $\mathfrak B_{U,t}^\alpha$ is the intersection of these open subsets, hence it is also open.
\end{proof}

\begin{definition}
We define the set $\mathfrak B_{U,t}$ as the union of all pairs $(\alpha, \phi)$, such that $\alpha\in\bbC$, $\alpha\neq 0$, and $\phi\in \mathfrak B_{U,t}^\alpha$.
\end{definition}

\noindent
Similarly to Proposition~\ref{Banach_B_UAlphaSRMu}, considerations of continuity yield the following proposition:
\begin{proposition}\label{Banach_B_U_lemma}
The set $\mathfrak B_{U,t}$ is an open subset of the direct product of Banach spaces $\bbC\times\mathfrak A_U$, hence it is a complex Banach manifold, modeled on $\bbC\times\mathfrak A_U$.
\end{proposition}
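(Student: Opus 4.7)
The plan is to mirror the argument of Proposition \ref{Banach_B_UAlphaSRMu}, now treating $\alpha$ as an additional parameter varying in $\bbC \setminus \{0\}$. Since $\bbC \times \mathfrak A_U$ carries an affine complex Banach space structure, it suffices to verify that $\mathfrak B_{U,t}$ is an open subset. I would then check each of the conditions (i)--(iv) of Definition \ref{B_def} separately, together with the preliminary requirement that the quasidisk $Q_{\phi,\alpha,t}$ be defined, and confirm that each one cuts out an open subset of $\bbC \times \mathfrak A_U$. The set $\mathfrak B_{U,t}$ is the intersection of these open subsets, and hence is itself open.

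The key input is that the branched power maps $p_{\alpha+}$ and $p_{\alpha-}$ depend holomorphically on $\alpha$ away from $\alpha = 0$: on their respective slit-plane domains they can be written as $p_{\alpha\pm}(z) = \exp(\alpha \log_\pm z)$, where $\log_\pm$ are fixed branches of the logarithm, so the dependence on $\alpha$ is not merely continuous but real-analytic, uniformly on compact subsets of the domain. Combined with the fact that evaluation $\phi \mapsto \phi|_K$ on a compact set $K \subset U$ is continuous (in fact analytic) from $\mathfrak A_U$ to the space of continuous functions on $K$, one obtains that the four arcs $p_{\alpha+}(\phi(I_t^+))$, $p_{\alpha-}(\phi(I_t^-))$, $\ell_1$, $\ell_2$ which together make up $\gamma_{\phi,\alpha,t}$ depend jointly continuously on $(\alpha, \phi)$ in the $C^1$-topology on parametrized compact curves.

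With this in place, each condition of Proposition \ref{prop:quasidisk} --- simpleness of the two power arcs, disjointness of $\ell_1$ and $\ell_2$, non-tangential intersection at the four corners --- is a classical open condition under joint $C^1$-perturbations of the constituent arcs, so the subset of $\bbC \times \mathfrak A_U$ for which $Q_{\phi,\alpha,t}$ is defined is open. Condition (i) is built into the definition of $\mathfrak A_U$; condition (ii) is open by Hurwitz's theorem applied on a slightly larger neighborhood of $I_t^+ \cup I_t^- \cup [-\tfrac{1}{2},\tfrac{1}{2}]$; condition (iii) is open because the curves $p_{\alpha\pm}(\phi([0,\pm \tfrac{1}{2})))$ depend continuously on $(\alpha,\phi)$ while the domain $Q_{\phi,\alpha,t}$ varies continuously in the Hausdorff metric and contains them compactly away from the boundary; condition (iv) is again the transversality of finitely many boundary intersections, which persists under small joint perturbations.

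The main technical nuisance will be bookkeeping around the point $\alpha = 0$ (excluded) and ensuring that the $C^1$ dependence of $\gamma_{\phi,\alpha,t}$ really holds up to the common endpoints of the four arcs, since this is what makes the transversality conditions in (iv) and in Proposition \ref{prop:quasidisk} open. This is handled by noting that near each corner the parametrizations of the adjacent arcs extend real-analytically to a small neighborhood of the corner parameter, so joint continuity of their tangent directions in $(\alpha,\phi)$ follows. Once all conditions are seen to be open, the affine Banach manifold structure of $\bbC \times \mathfrak A_U$ restricts to $\mathfrak B_{U,t}$, completing the proof.
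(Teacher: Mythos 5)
Your proposal is correct and takes essentially the same route as the paper, which dispatches this with a one-line reference to ``considerations of continuity'' in the style of Proposition~\ref{Banach_B_UAlphaSRMu}. You have simply filled in the details that the paper leaves implicit — holomorphic dependence of $p_{\alpha\pm}$ on $\alpha$, $C^1$-stability of the boundary arcs, openness of each condition of Definition~\ref{B_def} — and these elaborations are accurate and consistent with what the paper intends.
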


For a positive real number $h>0$, we denote  by $\mathfrak U_h$ the set of all $1$-periodic functions that are defined and analytic in the strip $\{z\in\bbC\colon |\Im z|<h\}$ and continuous up to the boundary of this strip. The set $\mathfrak U_h$ equipped with the uniform norm is a complex Banach space.
As before, for a positive real number $h$, we  denote by $V_h$ the $h$-neighborhood of the equatorial circle in $\bbC\slash\bbZ$ defined in~(\ref{v-annulus}).

\begin{definition}\label{D_h_def}
For a fixed real number $h>0$, we denote  by $\mathfrak D_h$ the set of all analytic functions $\psi\colon V_h\to\bbC\slash\bbZ$ such that the following properties hold:
\begin{itemize}
\item $\psi$ continuously extends to the boundary of $V_h$;
\item the image of the unit circle under $\psi$ is homotopic to the equatorial circle  $\bbT\subset \bbC\slash\bbZ$.
\item $\psi$ is univalent in a neighborhood of $\bbT\subset\bbC\slash\bbZ$. 
\end{itemize}
\end{definition}

\noindent
We immediately notice that 
\begin{lemma}\label{D_h_Banach_lemma}
Each set $\mathfrak D_h$ has a structure of a  complex Banach manifold modeled on $\mathfrak U_h$.
\end{lemma}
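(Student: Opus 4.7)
The plan is to mirror the strategy of Proposition~\ref{C_r_Banach_prop}: build charts on $\mathfrak D_h$ by lifting to the universal cover of $\bbC/\bbZ$ and subtracting a fixed reference lift.

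First, I would note that any $\psi\in\mathfrak D_h$ lifts to an analytic map $\tilde\psi\colon\{z\in\bbC\colon|\Im z|<h\}\to\bbC$, continuous up to the boundary, satisfying $\tilde\psi(z+1)=\tilde\psi(z)+1$. This is because the image $\psi(\bbT)$ is homotopic to $\bbT$ inside $\bbC/\bbZ$, so $\psi_*$ is an isomorphism on fundamental groups and $\psi$ lifts to a map of universal covers. The lift $\tilde\psi$ is unique up to an additive integer constant.

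For each $\psi_0\in\mathfrak D_h$ I would fix one such lift $\tilde\psi_0$. For every $\psi$ admitting a lift $\tilde\psi$ sufficiently close (in sup norm over the strip) to $\tilde\psi_0$, set
$$
\sigma_{\tilde\psi_0}(\psi)=\tilde\psi-\tilde\psi_0.
$$
The difference is $1$-periodic by construction and continuous up to the boundary of the strip, hence belongs to $\mathfrak U_h$. Injectivity is immediate. For surjectivity onto an open ball, I would check that for any $f\in\mathfrak U_h$ of sufficiently small norm, $\tilde\psi_0+f$ descends to an element of $\mathfrak D_h$: continuity up to $\partial V_h$ is automatic, the homotopy condition on $\psi(\bbT)$ is preserved under small $C^0$-perturbations, and univalence in a neighborhood of $\bbT$ follows from the fact that univalence of $\tilde\psi_0$ on an $\bbZ$-invariant open neighborhood of $\bbR$ with compact quotient is an open condition in the uniform topology (a short normal-families argument).

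The transition maps are then affine: for two reference lifts $\tilde\psi_0,\tilde\psi_1$ of (possibly distinct) base points, the difference $\tilde\psi_0-\tilde\psi_1$ is itself $1$-periodic and hence lies in $\mathfrak U_h$, and on the overlap
$$
\sigma_{\tilde\psi_1}\circ\sigma_{\tilde\psi_0}^{-1}(f)=f+(\tilde\psi_0-\tilde\psi_1).
$$
Thus the atlas $\{\sigma_{\tilde\psi_0}\}$ endows $\mathfrak D_h$ with the structure of an affine complex Banach manifold modeled on $\mathfrak U_h$. The only mildly nontrivial step I anticipate is the openness of the univalence condition under perturbation; everything else is bookkeeping with lifts.
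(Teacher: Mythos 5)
Your proposal is correct and follows essentially the same route as the paper's proof: the paper parameterizes lifts by the affine correspondence $g\mapsto(z\mapsto g(z)+z)$, observes this is an affine covering of the space of maps satisfying the first two properties of Definition~\ref{D_h_def}, and then notes that the univalence condition cuts out an open subset; your version re-expresses the same structure via local charts $\sigma_{\tilde\psi_0}(\psi)=\tilde\psi-\tilde\psi_0$ with affine transition maps. The one cosmetic point is that the openness of local univalence near $\bbT$ is more naturally a Rouch\'e/compactness argument (on the compact quotient annulus) than a normal-families argument, but the conclusion is the same.
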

\begin{proof}
For a map $g\in\mathfrak U_h$, the map 
$$\tilde f(z)=g(z)+z$$
is a lift of some $f\in\mathfrak D_h$ via the relation~(\ref{proj_rel}). The constructed correspondence $g\mapsto f$ is an affine covering map from $\mathfrak U_h$ onto the space of all analytic functions $\psi\colon V_h\to\bbC\slash\bbZ$ that satisfy the first two properties of Definition~\ref{D_h_def}. The last property of Definition~\ref{D_h_def} defines $\mathfrak D_h$ as an open subset of the former space, hence $\mathfrak D_h$ is an affine complex Banach manifold modeled on $\mathfrak U_h$.
\end{proof}

\begin{definition}
Given a neighborhood of the origin $U\subset\bbC$ with Jordan boundary 
and positive real numbers $t,h>0$, by $\mathfrak P_{U,t,h}$ we denote  $$\mathfrak P_{U,t,h}=\mathfrak B_{U,t}\times\mathfrak D_h.$$
\end{definition}

\noindent
In other words, $\mathfrak P_{U,t,h}$ is the set of all triples $(\alpha,\phi,\psi)$, such that $\alpha\in\bbC$, $\alpha\neq 0$, $\phi\in \mathfrak B_{U,t}^\alpha$ and $\psi\in\mathfrak D_h$.

\begin{definition}\label{P_U_alpha_s_r_mu_def}
For a fixed $\alpha\in\bbC$, such that $\alpha\neq 0$, we denote by $\mathfrak P_{U,t,h}^\alpha\subset \mathfrak P_{U,t,h}$ the set
of all elements from $\mathfrak P_{U,t,h}$ whose first coordinate is equal to $\alpha$. 
The set $\mathfrak P_{U,t,h}^\alpha$ can be identified with the direct product $\mathfrak B_{U,t}^\alpha\times \mathfrak D_h$.
\end{definition}

Finally, let us formulate an immediate corollary from Proposition~\ref{Banach_B_UAlphaSRMu}, Proposition~\ref{Banach_B_U_lemma} and Lemma~\ref{D_h_Banach_lemma}:
\begin{lemma}
The space $\mathfrak P_{U,t,h}$ has the structure of a complex Banach manifold. For each $\alpha\in\bbC$, $\alpha\neq 0$, the space $\mathfrak P_{U,t,h}^\alpha$ is a complex Banach submanifold of $\mathfrak P_{U,t,h}$ of complex codimension~$1$.
\end{lemma}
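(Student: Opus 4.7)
The plan is to assemble the lemma directly from the three Banach manifold structures already in hand: $\mathfrak B_{U,t}$ carries an open-subset structure inside $\bbC\times\mathfrak A_U$ by Proposition~\ref{Banach_B_U_lemma}, $\mathfrak B_{U,t}^\alpha$ sits as an open subset of the affine Banach space $\mathfrak A_U$ by Proposition~\ref{Banach_B_UAlphaSRMu}, and $\mathfrak D_h$ is an affine complex Banach manifold modeled on $\mathfrak U_h$ by Lemma~\ref{D_h_Banach_lemma}. Since $\mathfrak P_{U,t,h}=\mathfrak B_{U,t}\times \mathfrak D_h$, the first assertion reduces to the general fact that a cartesian product of two complex Banach manifolds carries a canonical complex Banach manifold structure (with atlas given by products of charts and transition maps that are pairs of analytic maps). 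Applying this to the factors above gives the model space $\bbC\times\mathfrak A_U\times\mathfrak U_h$ and proves the first claim.

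For the codimension-$1$ statement, I would look at the analytic ``first-coordinate'' projection
\[
\pi\colon \mathfrak P_{U,t,h}\longrightarrow\bbC,\qquad \pi(\alpha,\phi,\psi)=\alpha,
\]
which in the product chart from Proposition~\ref{Banach_B_U_lemma} is literally the projection $\bbC\times\mathfrak A_U\times\mathfrak U_h\to\bbC$ onto the first factor. This is an open analytic submersion whose derivative at every point is surjective with a complemented kernel (namely $\{0\}\times\mathfrak A_U\times\mathfrak U_h$). The fiber $\pi^{-1}(\alpha)$ is by definition $\mathfrak P_{U,t,h}^\alpha$, which, in the same chart, is identified with $\mathfrak B_{U,t}^\alpha\times\mathfrak D_h$, itself an open subset of $\mathfrak A_U\times\mathfrak U_h$. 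The Banach manifold implicit function theorem therefore exhibits $\mathfrak P_{U,t,h}^\alpha$ as a complex Banach submanifold of $\mathfrak P_{U,t,h}$ of complex codimension~$1$, with product charts inherited from $\mathfrak A_U\times\mathfrak U_h$.

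There is no real obstacle here; the only point to be careful about is verifying that the product atlas genuinely yields holomorphic transition maps. This follows because the transition maps of $\mathfrak B_{U,t}$ are affine (they are restrictions of the affine transitions on $\bbC\times\mathfrak A_U$) and those of $\mathfrak D_h$ are also affine (Lemma~\ref{D_h_Banach_lemma}), so their products are affine, hence analytic, between open subsets of $\bbC\times\mathfrak A_U\times\mathfrak U_h$. The same observation, restricted to $\alpha$ fixed, gives affine transitions on $\mathfrak A_U\times\mathfrak U_h$ for $\mathfrak P_{U,t,h}^\alpha$, so the submanifold structure is compatible with the ambient one.
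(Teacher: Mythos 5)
Your proof is correct and follows the intended route: the product manifold structure on $\mathfrak B_{U,t}\times\mathfrak D_h$ supplied by Propositions~\ref{Banach_B_UAlphaSRMu},~\ref{Banach_B_U_lemma} and Lemma~\ref{D_h_Banach_lemma}, together with the linear first-coordinate projection, gives both claims. One small remark: invoking the Banach implicit function theorem is heavier than necessary here, since $\mathfrak P_{U,t,h}^\alpha=\mathfrak B_{U,t}^\alpha\times\mathfrak D_h$ is already an open subset of the closed codimension-$1$ affine subspace $\{\alpha\}\times\mathfrak A_U\times\mathfrak U_h$ of $\bbC\times\mathfrak A_U\times\mathfrak U_h$, so the submanifold structure is immediate from the product decomposition alone.
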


\subsection{Generalized critical cylinder maps in $\mathfrak P_{U,t,h}$ and $\mathfrak P_{U,t,h}^\alpha$}

In this subsection we construct a correspondence between elements of $\mathfrak P_{U,t,h}$ and generalized analytic critical cylinder maps, and
 show that this correspondence is an analytic map (c.f. Lemma~\ref{f_tau_analyt_lemma}).

Let $U\subset\bbC$ be a sufficiently large neighborhood of the origin and let $t>0$ be a sufficiently small real number such that the space $\mathfrak B_{U,t}$ is defined and non-empty. For every $(\alpha,\phi)\in\mathfrak B_{U,t}$, we notice that the function
$$
\nu(z)=p_{\alpha+}(\phi(\phi^{-1} (p_{\alpha-}^{-1}(z))+1))
$$
is analytic and univalent in a neighborhood of the curve $p_{\alpha-}(\phi(I_t^-))$. (Here $p_{\alpha-}^{-1}$ denotes the inverse branch, for which $\phi^{-1}\circ p_{\alpha-}^{-1}$ maps $p_{\alpha-}(\phi(I_t^-))$ to $I_t^-$). Because of that the set $Q_{\phi,\alpha,t}\cup p_{\alpha-}(\phi(I_t^-))\cup p_{\alpha+}(\phi(I_t^+))$ factored by the action of the map $\nu$, can be viewed as a Riemann surface
\begin{equation}\label{A_phi_alpha_s_def}
A_{\phi,\alpha,t}=Q_{\phi,\alpha,t}\cup p_{\alpha-}(\phi(I_t^-))\cup p_{\alpha+}(\phi(I_t^+))\slash\nu
\end{equation}
conformally isomorphic to an annulus.

\begin{lemma}\label{pi_phi_lemma}
Let $U\subset\bbC$ and $t>0$ be such that the space $\mathfrak B_{U,t}$ is defined and contains an element $(\alpha_0,\phi_0)\in\mathfrak B_{U,t}$, where $\alpha_0\in\bbR$ and $\phi_0$ is real-symmetric. Then there exists a family of conformal maps $\pi_{\phi,\alpha,t}\colon Q_{\phi,\alpha,t}\to\bbC\slash\bbZ$ parameterized by $(\alpha,\phi)\in\mathfrak B_{U,t}$, such that for every $(\alpha,\phi)\in\mathfrak B_{U,t}$, the map $\pi_{\phi,\alpha,t}$ projects to a conformal map of the annulus $A_{\phi,\alpha,t}$, $\pi_{\phi,\alpha,t}(0)=0$ and $\pi_{\phi,\alpha,t}$ is real-symmetric, if $\alpha\in\bbR$ and $\phi$ is real-symmetric. Moreover, for every $z\in Q_{\phi,\alpha,t}$, the dependence of $\pi_{\phi,\alpha,t}(z)$ on $(\alpha,\phi)$ is analytic.
\end{lemma}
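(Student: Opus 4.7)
My plan is to construct $\pi_{\phi,\alpha,t}$ in three stages. First, I uniformize each annulus $A_{\phi,\alpha,t}$ into $\bbC\slash\bbZ$; second, I pin down the remaining sign ambiguity using real symmetry at a reference parameter; third, I establish analytic dependence on $(\alpha,\phi)$ via the parametric measurable Riemann mapping theorem. For the first stage, observe that $A_{\phi,\alpha,t}$ defined in~(\ref{A_phi_alpha_s_def}) is a topological annulus whose complex structure is inherited from the ambient plane, since the gluing map $\nu$ is holomorphic across the identified curves. By the uniformization theorem, $A_{\phi,\alpha,t}$ admits a conformal embedding into $\bbC\slash\bbZ$, and composing with the quotient projection from $Q_{\phi,\alpha,t}$ gives a candidate $\pi_{\phi,\alpha,t}$. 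This map is unique up to postcomposition with an automorphism $z\mapsto \pm z+c$ of $\bbC\slash\bbZ$; the normalization $\pi_{\phi,\alpha,t}(0)=0$ (note that $0$ lies in the interior of $Q_{\phi,\alpha,t}$ by Definition~\ref{B_def}(iii)) fixes $c$, leaving only the sign ambiguity.

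For the second stage, consider the real-symmetric reference $(\alpha_0,\phi_0)$ whose existence is assumed. Then $Q_{\phi_0,\alpha_0,t}$ and $\nu$ are invariant under complex conjugation, which therefore descends to an antiholomorphic involution of $A_{\phi_0,\alpha_0,t}$ fixing the image of $0$. The map $z\mapsto\overline{\pi_{\phi_0,\alpha_0,t}(\bar z)}$ then satisfies the same normalization as $\pi_{\phi_0,\alpha_0,t}$, hence equals $\pm\pi_{\phi_0,\alpha_0,t}$; I select the branch for which the equality holds with the $+$ sign, producing a map that is real-symmetric on the real slice. For nearby parameters the sign will be determined by continuity, and propagated globally over $\mathfrak B_{U,t}$ by the analytic continuation provided in stage three.

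For the third stage, I would construct, for $(\alpha,\phi)$ in a neighborhood of $(\alpha_0,\phi_0)$, a family of smooth diffeomorphisms $h_{\alpha,\phi}\colon Q_{\phi_0,\alpha_0,t}\to Q_{\phi,\alpha,t}$ such that: (i) they carry the boundary slits of the reference to those of the target via the holomorphic matching maps $p_{\alpha\pm}\circ\phi\circ\phi_0^{-1}\circ p_{\alpha_0\pm}^{-1}$ and conjugate $\nu_0$ to $\nu$ along these slits, so that they descend to diffeomorphisms $A_{\phi_0,\alpha_0,t}\to A_{\phi,\alpha,t}$; (ii) they fix $0$, are real-symmetric on the real slice, and for each fixed $z$ depend holomorphically on $(\alpha,\phi)$. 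Such diffeomorphisms can be built by smoothly interpolating the prescribed boundary data with a parameter-independent partition of unity. The Beltrami coefficient $\mu_{\alpha,\phi}=\overline\partial h_{\alpha,\phi}/\partial h_{\alpha,\phi}$ then depends holomorphically on $(\alpha,\phi)$ with values in the open unit ball of $L^\infty$, and pushing it forward by $\pi_0\equiv\pi_{\phi_0,\alpha_0,t}$ produces a holomorphically varying Beltrami differential on $\bbC\slash\bbZ$ (extended by zero outside the image annulus). The parametric Ahlfors--Bers theorem yields a holomorphically varying family of quasiconformal self-maps $F_{\alpha,\phi}$ of $\bbC\slash\bbZ$ fixing $0$, and the composition $F_{\alpha,\phi}\circ\pi_0\circ h_{\alpha,\phi}^{-1}$ is conformal on $Q_{\phi,\alpha,t}$ and satisfies all required normalizations.

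The main obstacle will be the construction of the interpolating family $h_{\alpha,\phi}$ fulfilling all the required properties simultaneously: descent through the gluing $\nu$ to a diffeomorphism of annuli, fixed point at $0$, real symmetry on the real slice, and holomorphic dependence on $(\alpha,\phi)$ with Beltrami coefficient uniformly bounded away from the unit sphere in $L^\infty$ over a neighborhood of $(\alpha_0,\phi_0)$. Once this family is in place, the application of the parametric measurable Riemann mapping theorem is standard, and the holomorphic dependence of $\pi_{\phi,\alpha,t}$ on the parameters follows from uniqueness of the normalized conformal map.
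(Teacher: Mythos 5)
Your plan is sound and structurally very close to the paper's argument: both proofs ultimately pull back a parameter-dependent conformal structure to $\bbC\slash\bbZ$ and invoke the analytic-in-parameters measurable Riemann mapping theorem. The essential difference is in how the interpolating quasiconformal maps $Q_{\phi_0,\alpha_0,t}\to Q_{\phi,\alpha,t}$ are produced. You propose constructing $h_{\alpha,\phi}$ by smoothly interpolating boundary data with a partition of unity, locally near $(\alpha_0,\phi_0)$, and you correctly flag as the main obstacle the need to verify simultaneously the equivariance with $\nu$, the real symmetry, the holomorphic parameter dependence, and the uniform $L^\infty$ bound on the Beltrami coefficient. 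The paper sidesteps all of these difficulties at once by exhibiting a \emph{holomorphic motion} $\chi_{\alpha,\phi}$ of the closed quasidisk defined by an explicit global formula: on the curved sides of $\partial Q$ it is $p_{\alpha\pm}\circ\phi\circ\phi_0^{-1}\circ p_{\alpha_0\pm}^{-1}$ (which automatically commutes with the gluing maps $\nu$ by a direct computation), on the two straight segments it follows the holomorphically moving endpoints, and the extension into the interior is given radially, which is well-defined precisely because of the star-domain condition (item~(iv) of Definition~\ref{B_def}). The Lambda-lemma then delivers the quasiconformality and the uniform dilatation bound for free, removing the need to estimate $\|\mu_{\alpha,\phi}\|_\infty$ by hand. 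Because the holomorphic motion is given by a global formula over all of $\mathfrak B_{U,t}$ rather than a local deformation near a base point, there is no sign-continuation or connectedness issue to address; the final definition $\pi_{\phi,\alpha,t}=f_{\alpha,\phi}\circ\pi_{\phi_0,\alpha_0,t}\circ\chi_{\alpha,\phi}^{-1}$ is unambiguous everywhere. In short, your approach is correct in outline, but the paper's use of a holomorphic motion together with the radial (star-domain) extension replaces the partition-of-unity interpolation you identify as delicate, yielding a cleaner and automatically global construction.
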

\begin{proof}

For every $(\alpha,\phi)\in\mathfrak B_{U,t}$, two sides of the quasidisk $Q_{\phi,\alpha,t}$ -- the curves $p_{\alpha+}(\phi(I_t^+))$ and $p_{\alpha-}(\phi(I_t^-))$, are injective holomorphic images of fixed intervals $I_t^+$ and $I_t^-$, hence, there exists a holomorphic motion
$$
\chi_{\alpha,\phi}\colon p_{\alpha_0+}(\phi_0(I_t^+))\cup p_{\alpha_0-}(\phi_0(I_t^-))\to p_{\alpha+}(\phi(I_t^+))\cup p_{\alpha-}(\phi(I_t^-))
$$
over $(\alpha,\phi)\in \mathfrak B_{U,t}$, given by the formula
\begin{equation}\label{holo_motion_chi}
\chi_{\alpha,\phi}(z)=\begin{cases}
  (p_{\alpha+}\circ\phi\circ\phi_0^{-1}\circ p_{\alpha_0+}^{-1})(z) & \text{if } z\in 
     p_{\alpha_0+}(\phi_0(I_t^+)) \\
  (p_{\alpha-}\circ\phi\circ\phi_0^{-1}\circ p_{\alpha_0-}^{-1})(z) & \text{if } z\in
     p_{\alpha_0-}(\phi_0(I_t^-)). 
\end{cases}
\end{equation}

Since the other two sides of the quasidisk $Q_{\phi,\alpha,t}$ are straight line segments whose endpoints move holomorphically, the holomorphic motion~(\ref{holo_motion_chi}) extends to a holomorphic motion $\chi_{\alpha,\phi}\colon \partial Q_{\phi_0,\alpha_0,t}\to \partial Q_{\phi,\alpha,t}$. Finally, it follows from property~\ref{B_def_item3} of Definition~\ref{B_def} that zero belongs to $Q_{\phi,\alpha,t}$, and according to property~\ref{B_def_item4} from Definition~\ref{B_def}, the quasidisk $Q_{\phi,\alpha,t}$ is a star domain, hence the holomorphic motion $\chi_{\alpha,\phi}$ of the boundary of the quasidisk extends to a holomorphic motion 
\begin{equation}\label{holo_motion}
\chi_{\alpha,\phi}\colon \overline Q_{\phi_0,\alpha_0,t}\to \overline Q_{\phi,\alpha,t},\quad\text{with}\quad\chi_{\alpha,\phi}(0)=0, \forall \alpha,\phi
\end{equation}
of the closure of the quasidisk over the whole space $\mathfrak B_{U,t}$.

We construct the family of maps $\pi_{\phi,\alpha,t}$ in the following way: for $(\alpha,\phi)=(\alpha_0,\phi_0)$ we put $\pi_{\phi_0,\alpha_0,t}$ to be the unique conformal map with the property that $\pi_{\phi_0,\alpha_0,t}(0)=0$ and $\pi_{\phi_0,\alpha_0,t}$ induces a conformal diffeomorphism between $A_{\phi_0,\alpha_0,t}$ and an annulus $A_0\subset\bbC\slash\bbZ$ whose boundary consists of two circles parallel to the equator.

For an arbitrary $(\alpha,\phi)\in\mathfrak B_{U,t}$ we make the following construction: by the Lambda-lemma \cite{MSS,Lyubich-stability}, for each $(\alpha,\phi)\in\mathfrak B_{U,t}$, the map $\chi_{\alpha,\phi}$ is quasiconformal. Pulling back the standard conformal structure on $Q_{\phi,\alpha,t}$ by the map $\chi_{\alpha,\phi}\circ\pi_{\phi_0,\alpha_0,t}^{-1}$, we obtain a conformal structure $\mu_{\alpha,\phi}$ on $A_0$. We extend the conformal structure $\mu_{\alpha,\phi}$ beyond the annulus $A_0$ by the standard conformal structure. We obtain a conformal structure $\mu_{\alpha,\phi}$ on the whole cylinder $\bbC\slash\bbZ$, and the dependence of $\mu_{\alpha,\phi}$ on $(\alpha,\phi)$ is analytic. By the Measurable Riemann Mapping Theorem, there exists a unique map $f_{\alpha,\phi}\colon\bbC\slash\bbZ\to\bbC\slash\bbZ$ that straightens the conformal structure $\mu_{\alpha,\phi}$ and fixes the point $0$ and the two boundary points at infinity. Moreover, $f_{\alpha,\phi}$ depends analytically on $(\alpha,\phi)\in\mathfrak B_{U,t}$. We define the map $\pi_{\phi,\alpha,t}$ as the composition
$$
\pi_{\phi,\alpha,t}=f_{\alpha,\phi}\circ\pi_{\phi_0,\alpha_0,t}\circ\chi_{\alpha,\phi}^{-1}.
$$

It follows from the construction that if $\alpha\in\bbR$ and $\phi$ is real-symmetric, then $\pi_{\phi,\alpha,t}$ is also real-symmetric. Analytic dependence of $\pi_{\phi,\alpha,t}$ on $\phi$ and $\alpha$ follows from analytic dependence of $f_{\alpha,\phi}$ and $\chi_{\alpha,\phi}$ on the parameters (c.f.  Remark on page 345 of \cite{Lyubich-Hairiness}).
\end{proof}

\begin{definition}\label{U_r_pm_def}
For a positive real number $r>0$, let $U_r^+\subset U_r$ be the surface that consists of all pairs $(z,\gamma)\in U_r$, such that $z\in A_r\cap\{\Re z<0.5\}$. 
Similarly, let $U_r^-$ be the surface that consists of all pairs $(z,\gamma)$, such that $(z+1,\gamma)\in U_r$ and $z+1\in A_r\cap\{\Re z>0.5\}$.
\end{definition}

Let $U\subset\bbC$ be a neighborhood of the interval $[-1/2,1/2]$.
Assume that the analytic map $\phi\colon U\to\bbC$ is such that $\phi(0)=0$ and $\phi'(0)$ is a non-negative and nonzero complex number. Then for $\alpha\in\bbC$, the composition $p_{\alpha+}\circ\phi$ is defined on some interval $(0,\varepsilon)\subset \bbR$ to the right of the origin. Similarly, the composition $p_{\alpha-}\circ\phi$ is defined on some interval $(-\varepsilon,0)\subset \bbR$ to the left of the origin. Let a positive real number $r>0$ be sufficiently small, so that the projections of $U_r^+,U_r^-$ onto the first coordinate are contained in $U$. Then the first composition can be extended to an analytic map on $U_r^+$ and the second composition can be extended to an analytic map on $U_r^-$. Let us denote the first map by $p_{\phi,\alpha+}\colon U_r^+\to\bbC$, and the second map by $p_{\phi,\alpha-}\colon U_r^-\to\bbC$.

Now we fix a neighborhood $U\subset\bbC$ and a real number $t>0$ that satisfy Lemma~\ref{pi_phi_lemma}. We also fix a family of maps $\pi_{\phi,\alpha,t}$ whose existence is guaranteed by Lemma~\ref{pi_phi_lemma}. For a positive real number $h>0$, we consider the space $\mathfrak P_{U,t,h}$, and for an element $\tau=(\alpha,\phi,\psi)\in\mathfrak P_{U,t,h}$ we define the map $g_\tau\colon [-\frac{1}{2},\frac{1}{2}]\to\bbC\slash\bbZ$ according to the formula
\begin{equation}\label{g_tau_def}
g_\tau(z)=
\begin{cases}
  \psi(\pi_{\phi,\alpha,t}(p_{\phi,\alpha-}(z))) 
  & \text{if } z\in [-\frac{1}{2},\,0] \\
  \psi(\pi_{\phi,\alpha,t}(p_{\phi,\alpha+}(z))) 
  & \text{if } z\in [0,\,\frac{1}{2}],
\end{cases}
\end{equation}
provided that the above compositions are defined.

It follows from the construction of the space $\mathfrak P_{U,t,h}$ that the map $g_\tau$ projects to the continuous map of the circle $f_\tau\colon\bbR\slash\bbZ\to\bbC\slash\bbZ$ defined by the relation
\begin{equation}\label{f_tau_def}
f_\tau(\pi(z))=g_\tau(z).
\end{equation}

If for some positive real number $r>0$, the first and the second compositions in~(\ref{g_tau_def}) are defined in the domains $U_r^-$ and $U_r^+$ respectively, then according to Definition~\ref{C_r_def}, this implies that $f_\tau$ is a generalized critical cylinder map from $\mathfrak C_r$. It also follows from~(\ref{g_tau_def}) that the critical exponent of $f_\tau$ at zero is equal to $\alpha$, hence $f_\tau\in\mathfrak C_r^\alpha$.

\begin{lemma}\label{f_tau_analyt_lemma}
Assume that $U\subset\bbC$ and $t>0$ satisfy the conditions of Lemma~\ref{pi_phi_lemma}, and let $\pi_{\phi,\alpha,t}$ be a fixed family of maps from Lemma~\ref{pi_phi_lemma}. Assume that for some $r_0>0$, $h>0$ and $\tau_0\in\mathfrak P_{U,t,h}$, the first and the second compositions in~(\ref{g_tau_def}) are defined in the domains $U_{r_0}^-$ and $U_{r_0}^+$ respectively. Then for all positive real numbers $r<r_0$, there exists a neighborhood $\mathcal U_r\subset\mathfrak P_{U,t,h}$ of $\tau_0$, such that the correspondence $\tau\mapsto f_\tau$ is an analytic map from $\mathcal U_r$ to $\mathfrak C_r$.
\end{lemma}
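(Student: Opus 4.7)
The plan is to express $g_\tau$ as a three-fold composition $g_\tau = \psi \circ \pi_{\phi,\alpha,t} \circ p_{\phi,\alpha\pm}$ on each of the half-intervals $[-\tfrac{1}{2},0]$ and $[0,\tfrac{1}{2}]$, verify that each factor depends analytically on the relevant coordinates of $\tau = (\alpha,\phi,\psi)$, and then invoke the standard fact that composition of analytic Banach-valued maps is analytic whenever the image of the inner factor is compactly contained in the domain of the outer factor.

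First I would check that the compositions in (\ref{g_tau_def}) persist on $U_r^\pm$ for $\tau$ in a suitable neighborhood $\mathcal U_r$ of $\tau_0$. Since $r<r_0$, the closures $\overline{U_r^\pm}$ are compact subsets of $U_{r_0}^\pm$, so by the hypothesis at $\tau_0$ the set $p_{\phi_0,\alpha_0\pm}(\overline{U_r^\pm})$ is a compact subset of $Q_{\phi_0,\alpha_0,t}$, and its image under $\pi_{\phi_0,\alpha_0,t}$ is a compact subset of $V_h$ (the domain of $\psi_0$). Continuous dependence of the three ingredients---the branches $p_{\phi,\alpha\pm}$ in the sup norm on $\overline{U_r^\pm}$, the star-shaped quasidisk $Q_{\phi,\alpha,t}$ in the Hausdorff sense, and the uniformization $\pi_{\phi,\alpha,t}$ from Lemma \ref{pi_phi_lemma}---ensures that these compact containments are preserved after shrinking $\mathcal U_r$. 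This is precisely the input needed for Cauchy estimates on slightly larger domains.

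Next I would check the analyticity of each factor. The map $(\alpha,\phi) \mapsto p_{\phi,\alpha\pm}$, valued in the Banach space of bounded analytic functions on $U_r^\pm$ with the sup norm, is analytic: it is separately holomorphic (entire in $\alpha$ via $z^\alpha = \exp(\alpha\log\phi(z))$, and analytic in $\phi$ via precomposition), and locally bounded on $\overline{U_r^\pm}$, so Hartogs applies. The family $(\alpha,\phi) \mapsto \pi_{\phi,\alpha,t}$ depends analytically on its parameters by Lemma \ref{pi_phi_lemma}. Finally, the operation $\psi \mapsto \psi$ is affine, and post-composition with an input varying in a fixed compact subset of $V_h$ is bounded-linear in $\psi$; combined with Step 1, this makes $\psi \circ \pi_{\phi,\alpha,t} \circ p_{\phi,\alpha\pm}$ analytic as a map into the Banach space of analytic functions on $U_r^\pm$.

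The main technical subtlety, and the one requiring extra care, is that $\pi_{\phi,\alpha,t}$ is defined on the moving quasidisk $Q_{\phi,\alpha,t}$ rather than on a fixed subset of $\bbC$, so it does not a priori belong to a single Banach space of functions. This is resolved as in the proof of Lemma \ref{pi_phi_lemma} by using the holomorphic motion $\chi_{\alpha,\phi}$: the composed family $\pi_{\phi,\alpha,t} \circ \chi_{\alpha,\phi}$ lives on the fixed reference quasidisk $Q_{\phi_0,\alpha_0,t}$ and depends analytically on $(\alpha,\phi)$, while the inverse holomorphic motion can be absorbed into $p_{\phi,\alpha\pm}$ to keep the composition well-typed. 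After this bookkeeping, composition of analytic Banach-valued maps with compactly contained images is analytic by Cauchy estimates, so $\tau \mapsto g_\tau$ is analytic into $\mathfrak A_r$; descent via the covering projection $\pi\colon\bbC\to\bbC/\bbZ$ preserves analyticity and yields the required analytic map $\tau \mapsto f_\tau \in \mathfrak C_r$.
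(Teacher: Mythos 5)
Your proof is correct and fleshes out, with useful extra detail about compact containments, the moving-domain subtlety, and the use of the holomorphic motion $\chi_{\alpha,\phi}$, the same argument the paper states very tersely: continuous dependence of the compositions in (\ref{g_tau_def}) on $\tau$ shows $f_\tau\in\mathfrak C_r$ persists on a neighborhood $\mathcal U_r$, and analyticity of $\tau\mapsto f_\tau$ then follows from the analytic dependence of $\pi_{\phi,\alpha,t}$ on $(\alpha,\phi)$ established in Lemma~\ref{pi_phi_lemma} together with the usual analyticity of composition.
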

\begin{proof}
It follows from continuous dependence of $f_\tau(z)$ on the parameters that since $f_{\tau_0}\in\mathfrak C_{r_0}$, then for all $r<r_0$ there exists a neighborhood of $\tau_0$, such that for every $\tau$ from that neighborhood, the map $f_\tau$ belongs to $\mathfrak C_r$. We denote this neighborhood by $\mathcal U_r$. Analytic dependence of $f_\tau$ on the parameters follows from analytic dependence of the map $\pi_{\phi,\alpha,t}$ on $\alpha$ and $\phi$ (c.f. Lemma~\ref{pi_phi_lemma}).
\end{proof}

\section{Fundamental crescents}
\label{sec:cres}

\subsection{Fundamental crescent of an analytic map}
The following discussion mirrors \cite{Ya3}. Let $\Omega\subset\bbC$ be a domain and let $h\colon\Omega\to\bbC$ be an analytic map.

\begin{definition}
\label{def-crescent1}
A simply connected Jordan domain $C\subset \bbC$ 
is called a \textit{crescent domain} for $h$, if the boundary of $C$ is the union of two piecewise smooth simple curves $l\subset \Omega$ and $h(l)\subset h(\Omega)$,
the endpoints $a$, $b$ of $l$ are two distinct repelling fixed points of $h$, and $l\cap h(l)=\{a,b\}$.
\end{definition}

\begin{definition}\label{Fattening_def}
A domain $C^o\subset\bbC$ is called a \textit{fattening} of a crescent domain $C$ as above, if $C^o$ is the union
$$
C^o=C\cup W_l\cup h(W_l),
$$
where $W_l\Subset\Omega$ is a neighborhood on which the map $h$ is univalent, the arc $l$ is contained in $\overline W_l$ so that $l\cap\partial W_l=\{a,b\}$, and $\overline W_l\cap h(\overline W_l)=\{a,b\}$.
\end{definition}

\noindent
Finally,
\begin{definition}
A crescent domain $C$ as above is called a \textit{fundamental crescent} {for $h$}, if it has a fattening $C^o$ and the quotient of $C^o$ by the action of the map $h$ is a Riemann surface conformally isomorphic to the cylinder $\bbC\slash\bbZ$ (see Figure \ref{fig-crescent}). We note that this Riemann surface is independent of the choice of a fattening $C^o$. 
\end{definition}

\noindent
If $C$ is a fundamental crescent for a map $h$, then the conformal isomorphism between the Riemann surface $C^o/h$ and the cylinder $\bbC\slash\bbZ$ is 
uniquely determined up to the post-composition with a conformal automorphism of the cylinder $\bbC\slash\bbZ$. Hence a lift of this isomorphism to the universal cover of the cylinder $\bbC\slash\bbZ$ is uniquely determined up to the post-composition with translations and multiplication by $-1$. 
This observation motivates the following definition:

\begin{definition}
Let $h\colon\Omega\to\bbC$ be an analytic map with a fundamental crescent $C$, and its fattening $C^o=C\cup W_l\cup h(W_l)$. For a fixed base point $\omega\in C$, we denote by
$$
\pi_{C^o}^\omega\colon C^o \to\bbC
$$
the unique conformal map which conjugates $h$ with the unit shift $z\mapsto z+1$ in $W_l$, and such that $\pi_{C^o}^\omega(\omega)=0$. We also denote by $\tilde\pi_{C^o}^\omega\colon C \to\bbC\slash\bbZ$ the composition
\begin{equation}\label{tilde_pi_to_pi_eq}
\tilde\pi_{C^o}^\omega=\pi\circ\pi_{C^o}^\omega .
\end{equation}
Finally, by $\pi_C^\omega$ and $\tilde{\pi}_C^\omega$ we denote the restrictions of $\pi_{C^o}^\omega$ and $\tilde{\pi}_{C^o}^\omega$ respectively to the fundamental crescent $C$:
$$
\pi_C^\omega\equiv \pi_{C^o}^\omega|_C,\quad\text{ and }\quad \tilde{\pi}_C^\omega \equiv\tilde\pi_{C^o}^\omega|_C.
$$
\end{definition}

We note that 
the maps $\pi_C^\omega$ and $\tilde{\pi}_C^\omega$ depend only on the fundamental crescent $C$, and are independent of the choice of its fattening $C^o$.

\begin{remark}
Whenever the base point is chosen to be equal to zero, $\omega=0$, we will simplify the notation by writing $\pi_{C^o}$, $\tilde{\pi}_{C^o}$, $\pi_C$ and $\tilde{\pi}_C$ instead of $\pi_{C^o}^\omega$, $\tilde{\pi}_{C^o}^\omega$, $\pi_C^\omega$ and $\tilde{\pi}_C^\omega$ respectively.
\end{remark}

For a domain $\Omega\subset\bbC$, we denote by $\mathfrak D(\Omega)$ the Banach space of analytic functions in $\Omega$, continuous up to the boundary of $\Omega$ and equipped with the sup norm. The following key lemma mirrors the statements  proved in \cite{Ya3}.

\begin{lemma}\label{Nearby_crescent_lemma}
Let $C$ be a fundamental crescent of a map $h\in\mathfrak D(\Omega)$ with the boundary consisting of the curves $l$ and $h(l)$ whose common end points are the points $a,b\in\Omega$. Let $C^o=C\cup W_l\cup h(W_l)$ be a fattening of $C$, such that some other fattening $\hat C^o$ of $C$ contains $C^o$ and $\partial C^o\cap\partial\hat C^o=\{a,b\}$, and let $\omega\in C$ be a fixed base point. Then there exists an open neighborhood $\mathcal W(h)\subset\mathfrak D(\Omega)$ of $h$, such that every map $g\in\mathcal W(h)$ has a fundamental crescent $C_g$ and a fattening $C_g^o$ of $C_g$ with the following properties:

(i) For $g=h$, we have $C_g=C$ and $C_g^o=C^o$, and the mappings
$$
g\mapsto C_g\quad \text{ and }\quad g\mapsto C_g^o
$$
are continuous with respect to the Hausdorff metric on the image;

(ii) If the map $h$, the fundamental crescent $C$ and its fattening $C^o$ are real symmetric, then $C_g$ and $C_g^o$ are also real-symmetric for all real-symmetric maps $g\in\mathcal W(h)$;

(iii) For all $g\in\mathcal W(h)$, the base point $\omega$ is contained in $C_g$. Moreover, for every $g_0\in\mathcal W(h)$ and $z\in C_{g_0}^o$, the number $\pi_{C_g^o}^\omega(z)$ is defined for all $g\in\mathcal W(h)$ that are sufficiently close to $g_0$, and the dependence 
$$
g\mapsto\pi_{C_g^o}^\omega(z)
$$
is locally analytic in $g\in\mathfrak D(\Omega)$.
\end{lemma}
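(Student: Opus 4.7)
The plan is to build $C_g$ and $C_g^o$ by a controlled perturbation of $C$ and $C^o$ that tracks the continuation of the two fixed points, and then exploit Koenigs linearization at those fixed points to establish analytic dependence of the uniformizing chart. Since $a$ and $b$ are repelling (hence hyperbolic) fixed points of $h$, the implicit function theorem applied to the analytic map $(g,z)\mapsto g(z)-z$ yields analytic continuations $g\mapsto a_g$, $g\mapsto b_g$ on a neighborhood $\mathcal W(h)\subset\mathfrak D(\Omega)$, each still repelling. The standard Koenigs construction then produces, for each $g\in\mathcal W(h)$, univalent charts $\kappa_g^a$, $\kappa_g^b$ linearizing $g$ in small disks around $a_g$ and $b_g$, and these charts depend analytically on $g$.

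Next I would construct $C_g$ and $C_g^o$. The hypothesis that $C^o$ is strictly contained in a larger fattening $\hat C^o$ with $\partial C^o\cap\partial\hat C^o=\{a,b\}$ provides a buffer region $\hat C^o\setminus\overline{C^o}$ in which the deformation can be performed without disturbing the outside. I would produce a continuous family of homeomorphisms $\Phi_g\colon\overline{\hat C^o}\to\overline{\hat C^o}$ with $\Phi_h=\mathrm{id}$, mapping $(a,b,l,W_l)$ to $(a_g,b_g,l_g,W_{l_g})$ and conjugating $h$ to $g$ on $W_l$. Such $\Phi_g$ is built by first using the Koenigs charts to define an equivariant conjugation in small disks around $a$ and $b$, extending it along $l$ by the functional equation for the dynamics in $W_l$, and finally interpolating to the identity on the buffer $\hat C^o\setminus C^o$ via a smooth cut-off. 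Setting $C_g:=\Phi_g(C)$ with fattening $C_g^o:=C_g\cup W_{l_g}\cup g(W_{l_g})$, the crescent and fattening axioms for $C_g$ are open conditions: the identities $l_g\cap g(l_g)=\{a_g,b_g\}$ and $\overline{W_{l_g}}\cap g(\overline{W_{l_g}})=\{a_g,b_g\}$ persist under small perturbation because $\Phi_g$ intertwines $h$ and $g$ on $W_l$ and $g$ is univalent on $W_{l_g}$. Hausdorff continuity in (i) follows from continuity of $\Phi_g$ in $g$, and (ii) is obtained by making every choice above equivariant under complex conjugation in the real-symmetric setting.

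To prove the analytic statement (iii), fix $g_0\in\mathcal W(h)$ and $z\in C_{g_0}^o$. The uniformizing chart $\pi_{C_g^o}^\omega$ is characterized as the lift to $C_g^o$ of the conformal isomorphism $C_g^o/g\to\bbC/\bbZ$, normalized by $\pi_{C_g^o}^\omega(\omega)=0$ and by conjugating $g$ on $W_{l_g}$ to the unit translation. Analytic dependence on $g$ can be derived in two equivalent ways. One way is to start with the analytic family $\kappa_g^a$ of Koenigs linearizations at $a_g$, form the chart $(\log\kappa_g^a)/\log\lambda_g$, which conjugates $g$ locally to $z\mapsto z+1$ and depends analytically on $g$, and then extend this local chart throughout $C_g^o$ by using the functional equation for $g$ to propagate values; both the extension and the normalization $\pi_{C_g^o}^\omega(\omega)=0$ are analytic operations. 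Alternatively, pull back the standard complex structure on $\bbC/\bbZ$ via $\Phi_g$ to obtain an analytic family of Beltrami coefficients on $C^o/h$ of uniformly bounded dilatation, and apply the Measurable Riemann Mapping Theorem with parameters, exactly as in the proof of Lemma~\ref{pi_phi_lemma}, to conclude that $\pi_{C_g^o}^\omega(z)$ depends analytically on $g$ at each fixed $z$.

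The main obstacle I expect is the construction of $\Phi_g$: namely, simultaneously matching the Koenigs conjugacies near $a_g$ and $b_g$ with a dynamical conjugacy along the interior of $l$, and then interpolating to the identity outside $C^o$, all while remaining within $\hat C^o$ and preserving univalence of $g$ on $W_{l_g}$. The strict containment $C^o\Subset\hat C^o$ away from $\{a,b\}$ provides precisely the room required for a cut-off interpolation; without such a buffer, the perturbed curve $l_g$ and its forward image could fail to bound a Jordan crescent, or the quotient $C_g^o/g$ could fail to be a cylinder.
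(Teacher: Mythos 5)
Your high-level plan is reasonable, but there is a genuine gap that breaks part~(iii). You construct the family $\Phi_g$ by matching Koenigs charts near $a_g,b_g$, extending along $l$ by the functional equation, and then ``interpolating to the identity on the buffer $\hat C^o\setminus C^o$ via a smooth cut-off.'' A smooth cut-off interpolation produces a family that is at best \emph{continuous} in $g$, not holomorphic in $g$. When you then try to run the Measurable Riemann Mapping Theorem with parameters, the Beltrami coefficient you pull back by $\Phi_g$ is therefore only continuous in $g$, not an analytic family; MRMT with parameters gives no analyticity in that case, so the argument for~(iii) does not close. The paper avoids this exactly by constructing a genuine \emph{holomorphic motion} $\chi_g$ of $\overline{W_l\cup h(W_l)}$ satisfying the equivariance $\chi_g\circ h=g\circ\chi_g$ (following Proposition~7.3 of \cite{Ya3}), and then extending it to a holomorphic motion of $C^o$ by the Bers--Royden theorem with the normalization $\chi_g(\omega)=\omega$. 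Holomorphic motions are, by definition, holomorphic in the parameter, and by the Lambda-lemma quasiconformal in $z$; that is precisely the input that makes the MRMT parameter argument deliver analytic dependence of $\pi_{C_g^o}^\omega(z)$ on $g$.

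Your alternate route for~(iii) --- propagating the Koenigs chart near $a_g$ by the functional equation --- is also incomplete as stated. In the linearizing strip coordinate $w=\pi_{C^o}^\omega(z)$, the Koenigs domain near $a$ corresponds to a neighbourhood of $+i\infty$, roughly a half-strip $\{\Im w>M_a\}$; propagation by $z\mapsto z+1$ only enlarges it in the $\Re w$-direction (and $h$ is only defined on $W_l$, so one cannot iterate arbitrarily within $C^o$), so you never reach the part of $C^o$ near $b$, where $\Im w\to-\infty$. You would need to simultaneously propagate from $b_g$ and prove an analytic-in-$g$ gluing of the two charts; the proposal does not address this. In short: replacing your cut-off interpolation by a Bers--Royden extension of an equivariant holomorphic motion is not an optional stylistic choice here; it is the mechanism by which the parameter dependence becomes analytic, and without it the proof of~(iii) has a real hole.
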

\begin{proof}
We construct a neighborhood $\mathcal W(h)$ and the family of fattenings $C_g^o$ that move holomorphically over $g\in\mathcal W(h)$. 
More precisely, we construct a holomorphic motion of a fattening $C^o$ of the fundamental crescent
\begin{equation}\label{holo_motion_of_Co}
\chi_g\colon C^o\mapsto C_g^o
\end{equation}
over a neighborhood $\mathcal W(h)$, so that 
\begin{equation}\label{equivariance_rel}
\chi_g(h(z))=h(\chi_g(z)),
\end{equation}
whenever both sides are defined. Then $C_g$ is defined as 
$$
C_g=\chi_g(C).
$$
The construction is made in two steps.

The first step consists of constructing the holomorphic motion $\chi_g$ of $\overline{W_l\cup h(W_l)}$ over some neighborhood $\mathcal W_1(h)\subset\mathfrak D(\Omega)$, satisfying the equivariance relation~(\ref{equivariance_rel}). This construction is analogous to the construction made in Proposition~7.3 of~\cite{Ya3}. We refer the reader to the proof of that statement and omit further details. In the second step, applying the Theorem of Bers and Royden \cite{BR}, 
we extend the previously constructed holomorphic motion $\chi_g$ to a holomorphic
motion~(\ref{holo_motion_of_Co}) over a possibly smaller open neighborhood $\cW(h)$ with an additional property that
$$
\chi_g(\omega)=\omega.
$$

The holomorphic motion~(\ref{holo_motion_of_Co}) defines the family of fundamental crescents $C_g$ and their fattenings $C_g^o$ that continuously depend on the map $g\in\mathcal W(h)$, and hence satisfy property (i). It also follows from the construction that these families satisfies property~(ii) (c.f. Proposition~7.3 of~\cite{Ya3}).

To prove property (iii) for the constructed family of fundamental crescents, we use a version of the argument given in Proposition~7.5 of~\cite{Ya3}. 
Namely, by the Lambda-lemma~\cite{MSS} all maps $\chi_g$ are quasiconformal on $C^o$. Even though the map $\tilde\pi_{C^o}^\omega$ is not univalent on $C^o$, the equivariance relation~(\ref{equivariance_rel}) implies that the pullback of the standard conformal structure on $C_g^o$ by the map $\chi_g\circ(\tilde\pi_{C^o}^\omega)^{-1}$ is well defined. This pullback provides a conformal structure on the cylinder $\bbC\slash\bbZ$, and the dependence of this conformal structure on $g$ is analytic. By the Measurable Riemann Mapping Theorem, there exists a unique map $f_g\colon\bbC\slash\bbZ\to\bbC\slash\bbZ$ that straightens this conformal structure and fixes the point $0$ and the two boundary points at infinity. The composition
$
f_g\circ\tilde{\pi}_{C^o}^\omega\circ\chi_g^{-1}
$
induces a conformal isomorphism between the Riemann surfaces $C_g^o\slash g$ and $\bbC\slash\bbZ$, and $f_g\circ\tilde{\pi}_{C^o}^\omega\circ\chi_g^{-1}(\omega)=0$. The uniqueness of such an isomorphism implies that
\begin{equation}\label{tile_pi_eq}
\tilde{\pi}_{C_g^o}^\omega=f_g\circ\tilde{\pi}_{C^o}^\omega\circ\chi_g^{-1}.
\end{equation}
Finally, analytic dependence on the parameter in the Measurable Riemann Mapping Theorem implies that $f_g$ depends analytically on $g\in\mathcal W(h)$, hence~(\ref{tile_pi_eq}) implies that for a fixed $z$, the point $\tilde{\pi}_{C_g^o}^\omega(z)$ depends analytically on $g$. Now analytic dependence of $\pi_{C_g^o}^\omega(z)$ on $g$ follows from~(\ref{tilde_pi_to_pi_eq}).
\end{proof}

\subsection{Fundamental crescents of maps in $\kC_r$}
We let $V_r$ be as in (\ref{v-annulus}).

\begin{definition}\label{b_arc_def}
A simple piecewise smooth arc $l\subset V_r$ will be called a \textit{separating arc}, if $l$ is an image of an arc $\tilde l\subset A_r\setminus((-\infty,0)\cup(1,+\infty))$ under the projection $\pi$, and $l$ connects two points strictly above and below the equator.

\end{definition}
Note that since a separating arc $l$ has to intersect the equator at least at one point, its preimage $\tilde l\subset A_r\setminus((-\infty,0)\cup(1,+\infty))$ is uniquely determined.

Let $f\in\mathfrak C_r$ be a generalized cylinder map. We denote by $\tilde f\colon U_r\to\bbC$  some fixed lift of $f$ via
 the relation~(\ref{proj_rel}). If $l\subset V_r$ is a separating arc, then by $f_l$ we denote the analytic continuation of $f$ to a neighborhood of $l$, obtained as the projection of $\tilde f$ restricted to a neighborhood of the curve $\tilde l$. This specifies a branch of $f$ in the case it is multiple valued in $V_r$. In particular, this analytic continuation 
satisfies the relation $\pi\circ\tilde f=f_l\circ\pi$.
If both $l$ and $f_l(l)$ are separating arcs, then in order to simplify the notation, by $f^2_l$ we denote the composition $f_{f_l(l)}\circ f_l$. Similarly, by induction we can define $f^n_l=f_{f_l^{n-1}(l)}\circ f_l^{n-1}$, for any positive integer~$n$, provided that $f_l^{n-1}(l)$ is a separating arc.

\noindent
\begin{definition}
\label{def-crescent2}
Let $f\in\mathfrak C_r$. We say that a domain $C\subset V_r$ is {\it a fundamental crescent of $f$ with period $n$} if the boundary $\partial C$ contains a separating arc $l$ with end points $a$, $b$, the map $f_l^n$ is defined and univalent in a neighborhood of $l$,  $\partial C=l\cup f_l^n(l)$, $l\cap f_l^n(l)=\{a,b\}$, and $C$ is a fundamental crescent for the map $f_l^n$.

In the remaining part of the paper we will always require that $0\in C$, and we set the basepoint $\omega=0$.
\end{definition}

\subsection{Fundamental crescents of holomorphic pairs}
In \cite{Ya3} the existence of fundamental crescents was stated for all maps in the Epstein class. In this paper it will be more convenient for
us to prove the corresponding statement for holomorphic commuting pairs. We start with some notation.
Let $\mathcal H=(\eta,\xi,\nu)$ be a holomorphic pair 
 with domains 
$U$, $V$, $D$ and range $\Delta$, as above.
By Denjoy-Wolff theorem, there exists a unique point $p_\eta^+\in\overline{\bbH\cap U}$, such that the iterates of $\eta^{-1}$ converge to $p_\eta^+$ uniformly on compact subsets of $\bbH\cap \Delta$. We define the point $p_\eta^-\in\bbC$ as $p_\eta^-=\overline{p_\eta^+}$.
If $p_\eta^+\neq p_\eta^-$, then both $p_\eta^+$ and $p_\eta^-$ are repelling fixed points of the map $\eta$. Otherwise, if $p_\eta^+= p_\eta^-$, then $p_\eta^+$ lies on the real line and is a parabolic fixed point of the map $\eta$.

\begin{definition}
In the above notation, we say that a simply connected domain $C_{\mathcal H}\Subset\Delta$ is a fundamental crescent for the holomorphic pair $\mathcal H$, if the domain $C_{\mathcal H}$ is a real-symmetric fundamental crescent for the map $\eta$, such that $p_\eta^+,p_\eta^-\in\partial C_{\mathcal H}$, and $0\in C_{\mathcal H}$ (see Figure \ref{fig-crescent}).
\end{definition}

\begin{figure}[ht]
\centerline{\includegraphics[width=\textwidth]{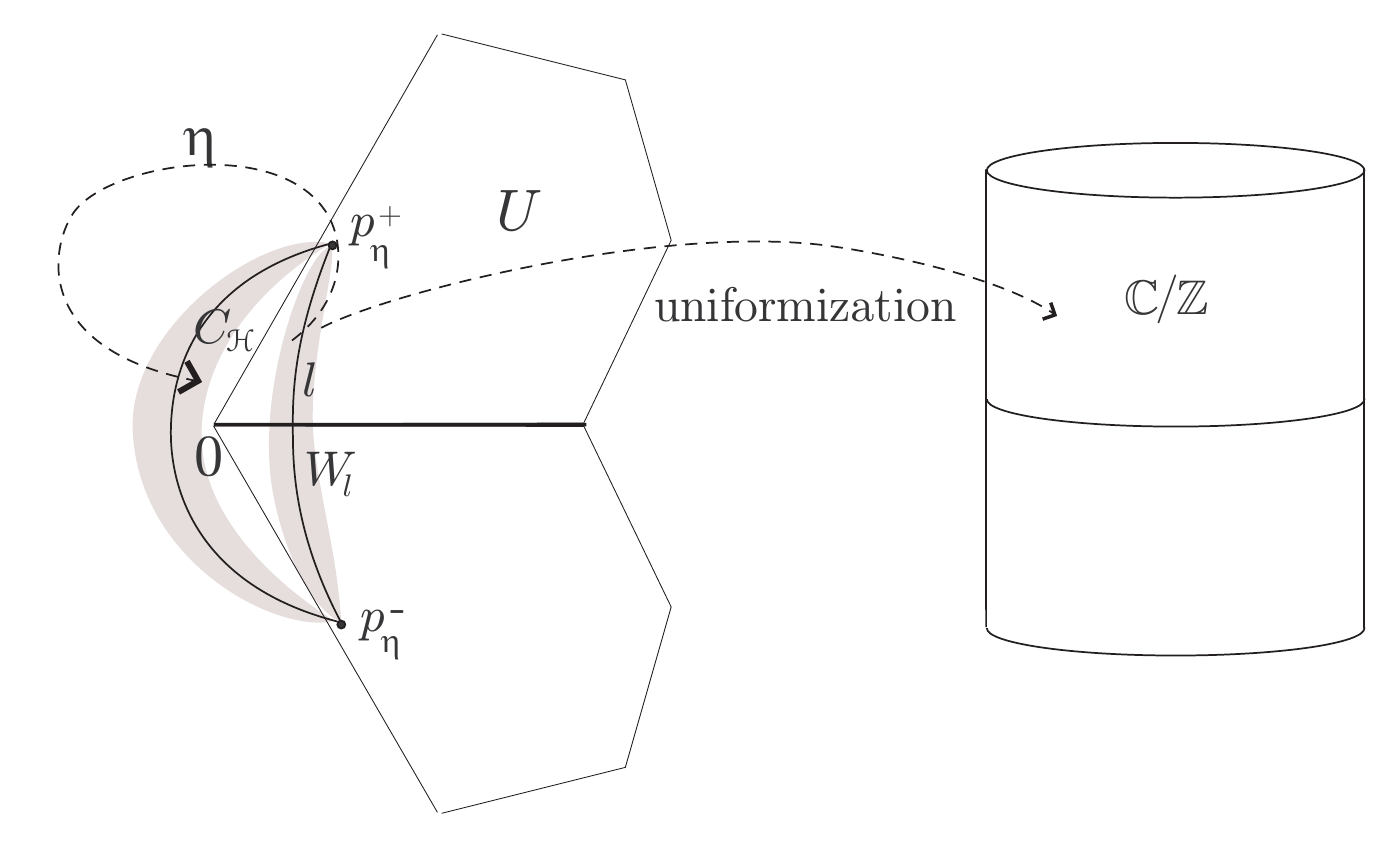}}
\caption{\label{fig-crescent}A fundamental crescent of a holomorphic pair $\mathcal H$.}
\end{figure}

\noindent
For the remaining part of the paper we again set the basepoint $\omega=0$, so that
the projection
$$
\pi_{C_{\mathcal H}^o}\colon C_{\mathcal H}^o \to\bbC
$$
is the unique holomorphic map that conjugates $\eta$ with the unit shift $z\mapsto z+1$, and $\pi_{C_{\mathcal H}^o}(0)=0$. 

We notice that according to our definition of a holomorphic pair, the domains $U\cap\bbH$ and $\Delta\cap\bbH$ have Jordan boundary, hence the map $\eta^{-1}|_{\Delta\cap\bbH}$ continuously extends to the closure $\overline{\Delta\cap\bbH}$. In order to simplify further notation, we will write $\eta^{-1}$ instead of $\eta^{-1}|_{\overline{\Delta\cap\bbH}}$. 

The main result of this subsection is the following lemma:

\begin{lemma}\label{E_crescent_exist_lemma}
\begin{enumerate}[(i)]
 \item \label{exist_part}
Let $\mathcal H=(\eta,\xi,\nu)$ be a holomorphic commuting pair, such that 
the map $\eta^{-1}$ 
does not have fixed points on the real line.
Then there exists a fundamental crescent $C_{\mathcal H}$ for the holomorphic pair $\mathcal H$, 
such that  $\pi_{C_{\mathcal H}}(C_{\mathcal H})$ contains the interval $(-1/2,1/2)\subset\bbR$.

\item \label{domain_part}
For every $s\in (0,1)$, there exist a real number $a_s>0$ and a domain $D_s\subset\bbC$, such that $[-1/2,1/2]\subset D_s$, and if $\mathcal H=(\eta,\xi,\nu)$ is the same as in part~\ref{exist_part} of the Lemma, and $\zeta_\cH\in\mcE_s$, then the fundamental crescent $C_{\mathcal H}$ has a fattening $C_{\mathcal H}^o$, such that $D_s\subset \pi_{C_{\mathcal H}^o}(C_{\mathcal H}^o)$, and $|\pi_{C_{\mathcal H}^o}^{-1}(1/2)-\pi_{C_{\mathcal H}^o}^{-1}(-1/2)|>a_s$.
\end{enumerate}
\end{lemma}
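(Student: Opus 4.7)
My plan is to adapt the fundamental-crescent construction of \cite{Ya3} to holomorphic commuting pairs, using a Koenigs linearization at the repelling fixed points $p_\eta^\pm$ of $\eta$ and then reslicing in the linearizing coordinate to force the image of the normalized projection $\pi$ to contain $(-1/2,1/2)$.

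For part~(\ref{exist_part}), the hypothesis that $\eta^{-1}$ has no real fixed points, combined with Denjoy--Wolff, forces $p_\eta^+\neq p_\eta^-$; thus $p_\eta^\pm$ form a complex-conjugate pair of repelling fixed points of $\eta$. First I would carry out the classical construction of an initial real-symmetric fundamental crescent $\tilde C$ for $\eta$ containing $0$: fix Koenigs linearizing charts near $p_\eta^\pm$, glue in a central arc joining $p_\eta^-$ to $p_\eta^+$ through some real point $q_0\in(\eta^{-1}(0),0)$, and verify that the resulting crescent admits a fattening $\tilde C^o$ on which the normalized linearizing projection $\pi_{\tilde C^o}$ (with $\pi_{\tilde C^o}(0)=0$) is defined and conjugates $\eta$ with $w\mapsto w+1$. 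This step mimics the Epstein-case argument of \cite{Ya3} and only uses the repelling nature of $p_\eta^\pm$ and the univalence of $\eta$ on $U$, both of which are built into any holomorphic commuting pair. Next, to enlarge the crescent so that its $\pi$-image contains $(-1/2,1/2)$, I would extend $\pi^{-1}$ by iterating $\eta^{-1}$ via the functional equation $\pi(\eta(z))=\pi(z)+1$: since $\eta^{-1}$ is a strict contraction on $\Delta\cap\bbH$ toward $p_\eta^+$, any vertical strip $\{|\Re w|<N\}$ in the linearizing coordinate lies in the image of $\pi$ on a sufficiently enlarged fattening. The preimage $l:=\pi^{-1}(\{\Re w=-1/2\})$ is then a simple real-symmetric arc from $p_\eta^-$ to $p_\eta^+$ crossing $\bbR$ at the unique point $q:=\pi^{-1}(-1/2)\in(\eta^{-1}(0),0)$, satisfying $l\cap\eta(l)=\{p_\eta^\pm\}$, and the crescent $C_{\mathcal H}$ bounded by $l$ and $\eta(l)$ furnishes the desired object with $\pi_{C_{\mathcal H}}(C_{\mathcal H})=\{-1/2<\Re w<1/2\}\supset(-1/2,1/2)$.

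For part~(\ref{domain_part}), uniformity follows from compactness. Under Theorem~\ref{complex bounds} and the assumption $\zeta_{\mathcal H}\in\mcE_s$, a rescaling of $\mathcal H$ lies in the sequentially compact class $\hol(\mu)$ for some $\mu=\mu(s)>0$ (Lemma~\ref{bounds compactness}); within this class the fixed points $p_\eta^\pm$, the Koenigs charts, and the extended linearization depend continuously on $\mathcal H$. Restricting further to pairs where $p_\eta^+\neq p_\eta^-$, the separation $|p_\eta^+-p_\eta^-|$ and the multiplier magnitude at $p_\eta^\pm$ are bounded away from $0$ and $1$ respectively, so the construction of part~(\ref{exist_part}) can be performed uniformly: one obtains a common domain $D_s\supset[-1/2,1/2]$ and a common fattening $C_{\mathcal H}^o$ with $D_s\subset\pi_{C_{\mathcal H}^o}(C_{\mathcal H}^o)$, while the $s^{-1}$-commensurability of $|I_\eta|$ with $1$ from Definition~\ref{E_I_s_def} supplies a uniform lower bound $a_s>0$ for $|\pi_{C_{\mathcal H}^o}^{-1}(1/2)-\pi_{C_{\mathcal H}^o}^{-1}(-1/2)|$.

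The main obstacle I anticipate lies in the extension of $\pi$ by iteration of $\eta^{-1}$: one must verify that the preimages of vertical lines $\{\Re w=c\}$ remain simple arcs inside $\Delta$ and converge monotonically to $p_\eta^\pm$. Since $\pi$ is asymptotically logarithmic near each fixed point, careful use of the hyperbolic contraction of $\eta^{-1}$ on $\Delta\cap\bbH$ and of the explicit Koenigs normal form is required to rule out winding of these preimage arcs or premature exit from $\Delta$ before reaching $p_\eta^\pm$.
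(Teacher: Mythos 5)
Your overall plan — linearize $\eta^{-1}$ at its Denjoy--Wolff fixed points $p_\eta^{\pm}$, produce a projection $\pi$ conjugating $\eta$ with the unit shift, and then reslice so that the crescent's $\pi$-image is centered on $(-1/2,1/2)$ — is the same strategy the paper uses, but both parts of your argument contain gaps at precisely the places where the paper does something different.

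For part (\ref{exist_part}), you propose to take the boundary curve to be $l=\pi^{-1}(\{\Re w=-1/2\})$ and you correctly flag as the ``main obstacle'' the verification that this preimage is a simple arc inside $\Delta$ limiting to $p_\eta^{\pm}$. You have not resolved this, and in fact it is delicate: because the multiplier $\lambda$ of $p_\eta^{+}$ is typically non-real, the $\theta_\eta$-preimage of a vertical line spirals logarithmically around $p_\eta^{+}$, and there is no {\it a priori} guarantee that this spiral stays inside $\Delta\cap\bbH$ or that the extension of $\pi^{-1}$ reaches $\Re w=1/2$ on the $\eta$-side (where $\eta$, unlike $\eta^{-1}$, is only defined on $U\cup D$). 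The paper avoids both issues by not slicing along a vertical line at all. Working in the logarithmic coordinate, it passes to the domain $W_\eta$ bounded by the lifted curve $\tilde S_\eta=\{p_\eta^{+}\}\cup\bigcup_{j\ge -1}\eta^{-j}(\overline{I_U})$, constructs via Proposition~\ref{Unique_straightening_chart_lemma} a straightening chart $\tau$ that sends $\tilde S_\eta$ to $\bbR$, and then slices along a curve $\gamma=\tau(l)$ where $l$ is a ray in $W_\eta$ whose $\bbR$-projection is a half-line $(-\infty,a]$ and which is chosen {\it not} parallel to $w_\eta$. This choice is what makes $\gamma$ disjoint from its unit translate and guarantees $\phi_\eta(\tau^{-1}(\gamma))$ is simple with an endpoint at $p_\eta^{+}$; the crescent boundary is then a splice of a short segment through $1/2$ with the unbounded piece of $\gamma$, not a $\pi$-preimage of a vertical line. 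If you want to keep your vertical-line slicing, you must actually carry out the spiraling/containment analysis you deferred; the paper's construction is engineered precisely so that this analysis is unnecessary.

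For part (\ref{domain_part}), your compactness argument has a genuine gap. You claim that after restricting to pairs with $p_\eta^{+}\neq p_\eta^{-}$, the separation $|p_\eta^{+}-p_\eta^{-}|$ and the Koenigs multiplier are uniformly bounded away from $0$ and $1$. That is false: the set of $\cH\in\hol(\mu)$ with $p_\eta^{+}\neq p_\eta^{-}$ is open, not closed, so sequential compactness of $\hol(\mu)$ gives no such uniform bound --- the fixed points can degenerate to a real parabolic as the rotation number approaches a rational, and the conclusion of the lemma is nonetheless required to hold uniformly in $s$ over all such $\cH$. The paper's proof of part (\ref{domain_part}) does not use compactness of the crescent family at all. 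Instead it exploits the Epstein-class structure directly: $\eta\in\mcE_{[0,1],s}$ gives commensurability bounds on $|\eta(0)|$ and $|\eta^{-1}(0)|$ (Step 1--2, via Lemma~\ref{E_s_bounded_distortion_lemma}), one shows $\pi_{C_\cH}$ extends to a fixed complex neighborhood $N_r([r+\eta(0),\eta^{-1}(0)])$ (Step 3), and then the Koebe distortion theorem applied to the univalent map $\pi_{C_\cH}$ on this neighborhood pins down $z_{-1/2}=\pi_{C_\cH}^{-1}(-1/2)$ away from $0$ and forces the image to contain a fixed domain $D_s$ (Steps 4--6). You should replace the compactness appeal with these distortion estimates.
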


The proof of Lemma~\ref{E_crescent_exist_lemma} is based on the following construction:

\begin{proposition}\label{lin_eta_lemma}
Let $\mathcal H=(\eta,\xi,\nu)$ be a holomorphic commuting pair with range $\Delta$, such that 
the map $\eta^{-1}$ 
does not have fixed points on the real line. 
Then there exists a unique conformal change of coordinates $\theta_\eta\colon\Delta\cap\bbH\to\bbC$, such that $\theta_\eta(p_\eta^+)=0$, the map $\theta_\eta$ conjugates $\eta^{-1}$ with the linear map $z\mapsto\lambda z$, where $\lambda$ is the multiplier of the fixed point $p_\eta^+$ for the map $\eta^{-1}$, and $\theta_\eta$ has a continuous extension to the boundary $\partial(\Delta\cap\bbH)$. 
\end{proposition}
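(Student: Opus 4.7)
My plan is to construct $\theta_\eta$ from a local Koenigs linearizer at $p_\eta^+$, globalize by pull-back along $\eta^{-1}$, and then prove continuity on the boundary by combining Carath\'eodory's theorem on the upper arc with a fundamental-domain analysis near the real segment. First I would verify that $p_\eta^+$ is in fact an interior fixed point of $\eta^{-1}$ in $\Delta\cap\bbH$ with multiplier $\lambda$ of modulus strictly less than $1$. Because $\Delta$ is an $\bbR$-symmetric Jordan domain and $\overline U\subset\Delta$, the set $\Delta\cap\bbH$ is itself a Jordan domain whose boundary splits into an upper arc $\partial\Delta\cap\overline\bbH$ and the real segment $\overline\Delta\cap\bbR$. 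Carath\'eodory's boundary extension applied to the conformal isomorphism $\eta^{-1}\colon\Delta\cap\bbH\to U\cap\bbH$ of Jordan domains sends the upper arc into $\overline U\cap\overline\bbH$, which is disjoint from $\partial\Delta$ since $\overline U\subset\Delta$; hence $\eta^{-1}$ has no fixed point on the upper arc, and by hypothesis none on the real segment, so the Denjoy-Wolff point $p_\eta^+$ lies in $U\cap\bbH$. Univalence of $\eta^{-1}$ gives $\lambda\ne 0$, while $\eta^{-1}$ not being an automorphism of $\Delta\cap\bbH$ (its image is a proper subset) forces $|\lambda|<1$ by the Schwarz lemma at the fixed point.

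Koenigs' theorem then produces a univalent local linearizer $\theta_0\colon W_0\to\bbC$ with $\theta_0(p_\eta^+)=0$, $\theta_0'(p_\eta^+)=1$, and $\theta_0\circ\eta^{-1}=\lambda\cdot\theta_0$. By Denjoy-Wolff convergence, for every $z\in\Delta\cap\bbH$ there exists $N=N(z)$ with $\eta^{-n}(z)\in W_0$ for all $n\ge N$, and I set
\[
\theta_\eta(z):=\lambda^{-n}\theta_0(\eta^{-n}(z)),\qquad n\ge N(z).
\]
The local functional equation makes this independent of $n\ge N(z)$; the resulting map is holomorphic on $\Delta\cap\bbH$ and satisfies $\theta_\eta\circ\eta^{-1}=\lambda\cdot\theta_\eta$. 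Univalence follows from injectivity of $\eta^{-n}$ and of $\theta_0$: equality $\theta_\eta(z_1)=\theta_\eta(z_2)$ yields $\theta_0(\eta^{-n}(z_1))=\theta_0(\eta^{-n}(z_2))$ for large $n$, hence $\eta^{-n}(z_1)=\eta^{-n}(z_2)$, hence $z_1=z_2$. Uniqueness, under the standard normalization $\theta_\eta'(p_\eta^+)=1$ implicit in a ``conformal change of coordinates'', follows from the Koenigs uniqueness near $p_\eta^+$ and analytic continuation through the functional equation.

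Finally I would extend $\theta_\eta$ continuously to $\partial(\Delta\cap\bbH)$. On the upper arc the extension is routine: Carath\'eodory's theorem extends $\eta^{-1}$ continuously to $\overline{\Delta\cap\bbH}$ and sends this arc into $\overline U\cap\overline\bbH$, which is compactly contained in $\Delta\cap\overline\bbH$, so a single application of $\eta^{-1}$ lands all but the two real endpoints in a compact subset of $\Delta\cap\bbH$ on which $\theta_\eta$ is already continuous, and $\theta_\eta(z)=\lambda^{-1}\theta_\eta(\eta^{-1}(z))$ delivers the continuous extension. On the real segment the functional equation trick fails because $\eta^{-n}$ preserves $\bbR$, so I would prove local connectedness of the boundary of the image $\theta_\eta(\Delta\cap\bbH)$ and invoke the Carath\'eodory-Torhorst extension theorem. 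The image is invariant under $z\mapsto\lambda z$ and admits the decomposition $\theta_\eta(\Delta\cap\bbH)=\{0\}\cup\bigcup_{n\in\bbZ}\lambda^nF_0$, where
\[
F_0=\theta_\eta\bigl((\Delta\cap\bbH)\setminus\eta^{-1}(\Delta\cap\bbH)\bigr)
\]
is the image of a topological annular fundamental domain for $\eta^{-1}$ bounded by the upper arc and its $\eta$-image. Each copy $\lambda^nF_0$ is a Jordan piece, the pieces fit together along $\eta^{-1}$-iterates of the upper arc, and the only accumulation point of the union of their boundaries is $0$, which is interior to the closure of the image; hence the image has locally connected boundary, completing the proof.

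The hard part will be the continuous extension across the real segment $\overline\Delta\cap\bbR$: unlike on the upper arc, $\eta^{-n}$ does not move real boundary points into the interior, so one must control the global geometry of the image. The spiral fundamental-domain description above, together with a careful verification that the pieces $\lambda^nF_0$ join in a Jordan manner near the accumulation points of the $\eta^{-n}$-orbit of the endpoints of the real segment, is the main technical content and the place where the hypotheses on the holomorphic commuting pair (in particular the Jordan-domain condition on $U\cap\bbH$ and the absence of real fixed points) are essential.
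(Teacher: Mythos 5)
Your construction of $\theta_\eta$ via Koenigs' theorem at $p_\eta^+$ followed by globalization along $\eta^{-1}$ is exactly the paper's approach, and your verification that $p_\eta^+$ lies in the interior of $\Delta\cap\bbH$ with $|\lambda|<1$ is fine. However, your treatment of the boundary extension has a genuine error in the crucial case of the real segment, and you end up replacing a one-line argument with a more elaborate one that is itself flawed.

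You assert that ``on the real segment the functional equation trick fails because $\eta^{-n}$ preserves $\bbR$.'' This is incorrect: you are conflating the dynamical inverse of $\eta$ on $\bbR$ with the Carath\'eodory boundary extension of $\eta^{-1}|_{\Delta\cap\bbH}$. The latter is a homeomorphism of $\overline{\Delta\cap\bbH}$ onto $\overline{U\cap\bbH}$, and it sends $\partial(\Delta\cap\bbH)$ onto $\partial(U\cap\bbH)$. Only the sub-arc $\eta(\overline{I_U})\subset\overline\Delta\cap\bbR$ is mapped back into $\bbR$ (onto $\overline{I_U}$); the remaining portion $(\overline\Delta\cap\bbR)\setminus\eta(I_U)$ is carried into $\partial U\cap\bbH$, i.e.\ \emph{into} the open upper half-plane, and hence into $\Delta\cap\bbH$ since $\overline U\subset\Delta$. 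In other words, the boundary extension of $\eta^{-1}$ does \emph{not} preserve $\bbR$. Now the absence of real fixed points forces every orbit of the real dynamics of $\eta^{-1}$ inside the compact interval $\eta(\overline{I_U})$ to leave it after finitely many steps (a monotone orbit trapped there would converge to a real fixed point), and compactness gives a uniform bound $k$ such that $\eta^{-k}(\overline{\Delta\cap\bbH})\subset\Delta\cap\bbH$. Then $\theta_\eta(z)=\lambda^{-k}\theta_\eta(\eta^{-k}(z))$ extends $\theta_\eta$ continuously to all of $\partial(\Delta\cap\bbH)$ in one stroke, upper arc and real segment alike. This is the paper's argument.

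Because you reject this route, you invoke local connectedness of $\partial\theta_\eta(\Delta\cap\bbH)$ together with Carath\'eodory--Torhorst, but the description of the image that you base it on has errors: writing $\Omega=\theta_\eta(\Delta\cap\bbH)$, you claim $\Omega$ is invariant under $z\mapsto\lambda z$ and decomposes as $\{0\}\cup\bigcup_{n\in\bbZ}\lambda^nF_0$. In fact $\lambda\Omega=\theta_\eta(U\cap\bbH)\subsetneq\Omega$, so $\Omega$ is only forward-invariant, and the union must be over $n\ge 0$, not $n\in\bbZ$; with $n\in\bbZ$ the right-hand side overshoots $\Omega$. The subsequent assertion that ``the only accumulation point of the union of the boundaries is $0$'' is stated without justification and is precisely where the difficulty lies: one would need to rule out bad accumulation of $\theta_\eta$ near the real boundary, which is exactly what the uniform-escape argument dispenses with cleanly. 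As written, your boundary-continuity step does not close, and the correct and much shorter route -- finitely many pullbacks -- was dismissed for a reason that does not hold.
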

\begin{proof}
Since the map $\eta$ does not have fixed points on the real line, the point $p_\eta^+$ is an interior point of $\Delta\cap\bbH$, hence is a globally attracting fixed point for the map $\eta^{-1}$. Then by Koenigs Theorem, there exists a  conformal change of coordinates $\theta_\eta$ in a neighborhood of $p_\eta^+$ which linearizes the map $\eta^{-1}$. Since the map $\eta^{-1}$ is univalent on $\Delta\cap\bbH$, the local linearizing conformal chart around $p_\eta^+$ can be extended to the whole domain $\Delta\cap\bbH$ by pullbacks under the dynamics of $\eta^{-1}$. Thus we obtain the linearizing conformal chart defined on the whole domain~$\Delta\cap\bbH$.

Since $\eta$ does not have fixed points on the real line, this implies that there exists an integer $k>0$, such that for every $z\in\partial(\Delta\cap\bbH)$ the point $\eta^{-k}(z)$ is contained in $\Delta\cap\bbH$. This means that the linearizing chart $\theta_\eta$ extends to the boundary $\partial(\Delta\cap\bbH)$ 
by finitely many pullbacks, hence $\theta_\eta$ has a continuous extension to $\partial(\Delta\cap\bbH)$.
\end{proof}

Assume that $\mathcal H=(\eta|_U,\xi|_V,\nu|_D)$ is the same as in Proposition~\ref{lin_eta_lemma}. 
Since $\eta^{-1}$ does not have fixed points on the real line, the forward iterates of the interval $I_U=U\cap\bbR$ under the map $\eta^{-1}$ converge to the point $p_\eta^+$. Notice that $\eta(I_U)\cup I_U$ is an interval on the real line, hence, the set
$$
S_\eta=\{p_\eta^+\}\bigcup\left(\bigcup_{j=-1}^\infty\eta^{-j}(\overline{I_U})\right)
$$
is a simple curve, and the point $p_\eta^+$ is one of its end points.

\begin{figure}[ht]
\centerline{\includegraphics[width=\textwidth]{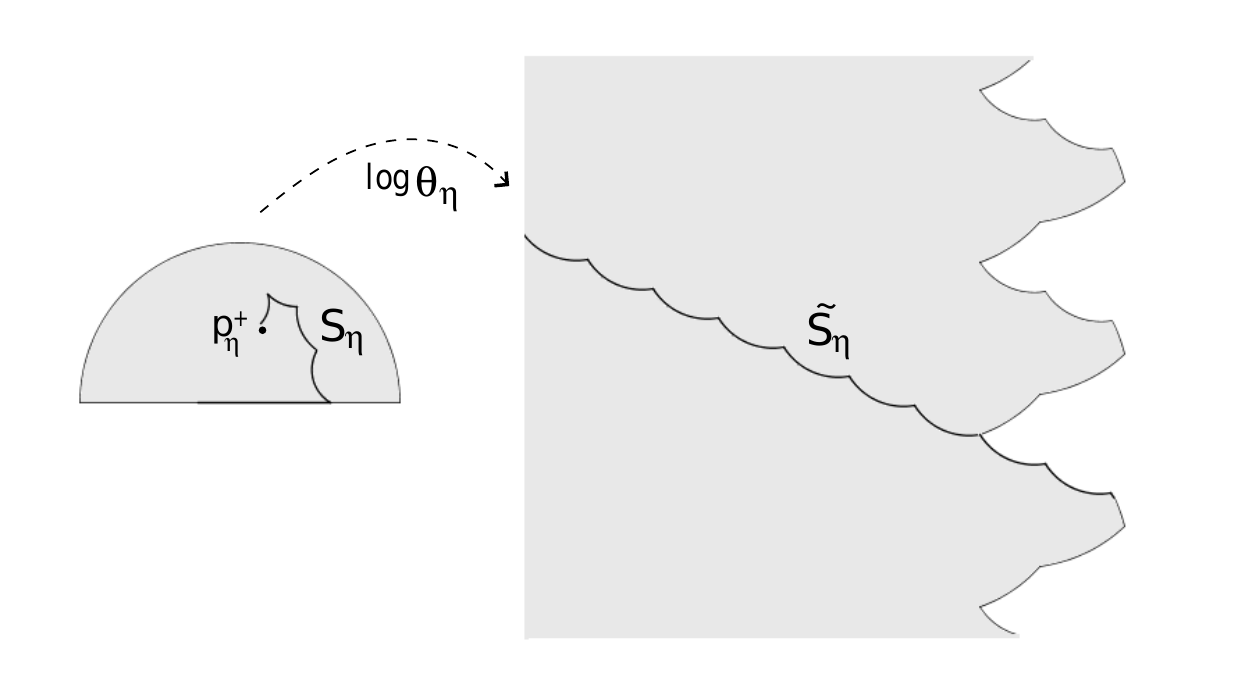}}
\caption{\label{fig-crescent2}The curve $S_\eta$ and its image under the log-linearizing coordinate. }
\end{figure}

Passing to logarithmic coordinates $\log\theta_\eta$ in the image $\theta_\eta(\Delta\cap\bbH)$, we obtain a  domain $W\subset\bbC$ that is invariant under the shifts by $2\pi i$ and contains a halfplane $\{z\in\bbC\mid\Re z<x_0\}$, for some $x_0\in\bbR$ (see Figure \ref{fig-crescent2}). The map $\phi_\eta\colon W\to\Delta\cap\bbH$ defined by $\phi_\eta(z)=\theta_\eta^{-1}(e^z)$ is a covering map from $W$ onto $(\Delta\cap\bbH)\setminus\{p_\eta^+\}$. It follows from Proposition~\ref{lin_eta_lemma} that the map $\phi_\eta$ semi-conjugates the map $\eta^{-1}$ with the shift $z\mapsto z+w_\eta$, where $w_\eta\in\bbC$ is such that $\Re w_\eta<0$ and $-\pi<\Im w_\eta\le\pi$.

Let $\tilde S_\eta\subset\overline{W}$ be an arbitrary fixed lift of the curve $S_\eta$ to $\overline{W}$ under the covering map $\phi_\eta$. In particular, it follows from the previous observation that the simple curve $\tilde S_\eta$ is invariant under the shift by $w_\eta$, and a part of this curve belongs to the boundary of $W$. The curve $\tilde S_\eta$ splits the domain $W$ into two subdomains, and we let $W_{\eta}\subset  W$ be the one of these subdomains, whose boundary contains $\tilde S_\eta$ (see Figure~\ref{fig-crescent2}). Further, let $p_0\in\tilde S_\eta$ be the point that projects to zero under the covering map: $\phi_\eta(p_0)=0$.

\begin{proposition}\label{Unique_straightening_chart_lemma}
Let $\mathcal H=(\eta,\xi,\nu)$ be the same as in Proposition~\ref{lin_eta_lemma}. 
Then there exists a unique conformal map $\tau\colon W_{\eta}\to\bbC$, such that $\tau(z+w_\eta)=\tau(z)-1$, for all $z\in W_{\eta}$, the map $\tau$ continuously extends to the boundary curve $\tilde S_\eta$ so that $\tau(\tilde S_\eta)\subset\bbR$ and $\tau(p_0)=0$, and there exists a simple curve $\gamma\subset\tau(W_{\eta})$ that does not have common points with its shift by $1$ and whose projection onto the imaginary axis is either $\{ix\mid x>0\}$ or $\{ix\mid x<0\}$. Furthermore, the curve $\phi_\eta(\tau^{-1}(\gamma))$ is simple and has an endpoint at $p_\eta^+$.
\end{proposition}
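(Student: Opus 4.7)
The plan is to realize $\tau$ as the universal-cover version of a conformal uniformization of the quotient Riemann surface
$$R := W_\eta/\langle\sigma\rangle,\qquad \sigma(z)=z+w_\eta.$$
First I would check that $W$ is simply connected: the map $\theta_\eta$ is a univalent chart on $\Delta\cap\bbH$, and $\phi_\eta=\theta_\eta^{-1}\circ\exp$ exhibits $W$ as the exponential (universal) cover of the punctured disk $\theta_\eta(\Delta\cap\bbH)\setminus\{0\}$. The simple arc $\tilde S_\eta$ therefore divides $W$ into two simply connected components, one of which is $W_\eta$. Since $\sigma$ preserves both $W$ and $\tilde S_\eta$, it preserves $W_\eta$, and it acts freely and properly discontinuously. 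Thus $R$ is a Riemann surface with a single compact boundary circle $\tilde S_\eta/\langle\sigma\rangle$, topologically an annulus.

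Next, I would show that the remaining end of $R$ (corresponding to $\Re z\to-\infty$ in $W_\eta$, equivalently to $p_\eta^+$ in the original picture) is a puncture, so that $R$ has \emph{infinite} modulus. By the Koenigs linearization at the attracting fixed point $p_\eta^+$ of $\eta^{-1}$, the domain $\theta_\eta(\Delta\cap\bbH)$ contains a full disk around $0$, whose logarithm is a left half-plane $\{\Re z<N_0\}\subset W$. Because $\tilde S_\eta$ is $\sigma$-invariant with $\Re w_\eta<0$, near $-\infty$ the curve $\tilde S_\eta$ is asymptotic to a straight line in the direction of $w_\eta$, so $W_\eta\cap\{\Re z<N_0'\}$ is conformally isomorphic to a $\sigma$-invariant half-plane. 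Its $\sigma$-quotient is a cylinder with one boundary circle and one puncture, giving a punctured-disk neighborhood of the end in $R$. Combined with the boundary circle from $\tilde S_\eta$, this forces $R$ to be conformally equivalent to the closed punctured disk $\{0<|w|\le 1\}$ via a uniformizing map $G$ taking $\tilde S_\eta/\langle\sigma\rangle$ to $\{|w|=1\}$.

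With $G$ in hand, I would define $\tau$ by lifting $\tfrac{1}{2\pi i}\log G\circ q$ (where $q\colon W_\eta\to R$ is the quotient) to a single-valued holomorphic function on the simply connected $W_\eta$. Replacing $G$ by $1/G$ if necessary, this can be arranged so that $\tau(z+w_\eta)=\tau(z)-1$, and a real translation enforces the normalization $\tau(p_0)=0$ (consistent, since $p_0\in\tilde S_\eta$ gives $\tau(p_0)\in\bbR$ automatically). The image $\tau(W_\eta)$ is then a closed half-plane, say $\{\Im z\ge 0\}$; one takes $\gamma=\{iy:y>0\}$, which is simple, disjoint from $\gamma+1$, and projects onto a full imaginary half-axis. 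Continuous extension of $\tau$ to $\tilde S_\eta$ follows from Carath\'eodory's boundary correspondence theorem since $\tilde S_\eta$ is a Jordan arc in $\partial W_\eta$. For the final property, I would observe that $\phi_\eta$ restricts to a \emph{bijection} $W_\eta\to(\Delta\cap\bbH)\setminus S_\eta$, because $\phi_\eta$ is a covering of $(\Delta\cap\bbH)\setminus\{p_\eta^+\}$ and both $W_\eta$ and $(\Delta\cap\bbH)\setminus S_\eta$ are simply connected; therefore $\phi_\eta(\tau^{-1}(\gamma))$ is simple, and as $y\to+\infty$ the preimage $\tau^{-1}(iy)$ runs off to the puncture end of $W_\eta$, so its $\phi_\eta$-image has endpoint $p_\eta^+$.

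For uniqueness, any other $\tau'$ satisfying the listed conditions yields a conformal automorphism $\tau'\circ\tau^{-1}$ of $\{\Im z\ge 0\}$ that commutes with $z\mapsto z-1$, preserves $\bbR$, and fixes the origin; a short M\"obius calculation shows that the only such automorphism is the identity. The main technical obstacle is the conformal-type analysis in the second paragraph: one needs quantitative control on the shape of $W_\eta$ near its attracting end, strong enough to rule out a second closed boundary circle for $R$ (which would force $\tau(W_\eta)$ to be a finite strip and destroy the existence of a $\gamma$ with unbounded imaginary projection). Everything else reduces to standard uniformization, Carath\'eodory boundary extension, and the rigidity of conformal automorphisms of the half-plane.
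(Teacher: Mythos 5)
Your proposal is correct and takes essentially the same route as the paper: in both arguments the key point is that the quotient $W_\eta/\langle z\mapsto z+w_\eta\rangle$ is conformally a punctured disk (which follows from $W$ containing a left half-plane), and $\tau$ is then the lift of the uniformizing coordinate. The paper phrases this by constructing $\tau$ on an explicit fundamental domain between a ray $l$ and its $w_\eta$-translate and then extending by the dynamics, taking $\gamma=\tau(l)$, whereas you uniformize the full quotient at once and take $\gamma$ to be the coordinate half-axis; these differences are cosmetic.
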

\begin{proof}
Since the domain $W$ contains a halfplane $\{z\in\bbC\mid\Re z<x_0\}$, for some $x_0\in\bbR$, this means that there exists an infinite ray $l\subset W_{\eta}$ 
with the end point in $\tilde S_\eta$, such that $l$ is not parallel to the vector $w_\eta$ and the projection of $l$ onto $\bbR$ is an infinite interval of the form $(-\infty,a]$. Let $l'\subset W_{\eta}$ be the shift of $l$ by $w_\eta$. Since $l$ is not parallel to $w_\eta$, the rays $l$ and $l'$ have no common points. Let $C_l\subset W_{\eta}$ be the subdomain of $W_{\eta}$ bounded by the curves $l$, $l'$ and $\tilde S_\eta$. Again, since $l$ is not parallel to $w_\eta$, the set $C_l\cup l\cup l'$ factored by the action of the shift by $w_\eta$, is a Riemann surface conformally isomorphic to a punctured disk. 

Let $p_0'\in\tilde S_\eta$ be the unique point such that $p_0'\in\partial C_{l}$ and $p_0'-p_0=kw_\eta$, where $k\in\bbZ$. Then there exists a unique conformal map $\tau$ defined in the relative neighborhood of $C_l$ in $W_{\eta}$, such that $\tau(z+w_\eta)=\tau(z)-1$, whenever both sides are defined, $\tau$ continuously extends to $\tilde S_\eta\cap \partial W_{\eta}$, $\tau(\tilde S_\eta\cap \partial W_{\eta})\subset\bbR$, and $\tau(p_0')=-k$. Finally, we extend the map $\tau$ to the whole domain $W_{\eta}$ by pullbacks or push forwards under the dynamics.

Since the rays $l$ and $l'$ have an empty intersection, the curve $\tau(l)$ does not have common points with its shift by $1$. Since the set $C_l\cup l\cup l'$ factored by the action of the shift by $w_\eta$, is a Riemann surface conformally isomorphic to a punctured disk, the simple curve $\tau(l)$ has an unbounded projection onto the imaginary axis. Finally, we set $\gamma=\tau(l)$, and we notice that since the projection of $l$ onto $\bbR$ is an interval of the form $(-\infty,a]$, the curve $\phi_\eta(\tau^{-1}(\gamma))=\phi_\eta(l)$ is simple and has an endpoint at $p_\eta^+$.
\end{proof}

\begin{proof}[Proof of Lemma~\ref{E_crescent_exist_lemma}]
\ref{exist_part} Let $\tau\colon W_{\eta}\to\bbC$ be the conformal map provided by Proposition~\ref{Unique_straightening_chart_lemma} and let $\gamma\subset\tau(W_{\eta})$ be the curve from the same Proposition. Then there exists a straight line segment in $\tau(W_{\eta})$ that connects the point $1/2$ with the curve $\gamma$. The point at which this line segment intersects the curve $\gamma$, splits $\gamma$ into two pieces. Let $l\subset\tau(W_{\eta})$ be the curve obtained as the union of the unbounded piece of $\gamma$ and the line segment. We define $l'\subset\tau(W_{\eta})$ as the shift of $l$ by $-1$. It follows from Proposition~\ref{Unique_straightening_chart_lemma} that the image under the map $\phi_\eta\circ\tau^{-1}$ of the subdomain of $\tau(W_{\eta})$ bounded by the curves $l$ and $l'$, is the upper or lower half of a fundamental crescent for $\mathcal H$. The other half of this fundamental crescent is obtained by reflection about the real axis.

\ref{domain_part} We fix the real number $s\in(0,1)$. All constants in the following proof will depend only on the number $s$ and not on a particular commuting pair $\zeta_\cH\in\mcE_s$, unles otherwise stated. We split the proof into the following simple steps:

\textit{Step 1:} For every $\zeta_\cH=(\eta,\xi)\in\mcE_s$, we have $\eta\in\mcE_{[0,1],s}$, which implies that there exist two constants $C_1>c_1>0$, such that $c_1\le|\eta(0)|\le C_1$, (c.f. Definition~\ref{E_I_s_def}).

\textit{Step 2:} Lemma~\ref{E_s_bounded_distortion_lemma} together with Step~1 imply that there exists a constant $c_2>0$, such that $c_2\le|\eta^{-1}(0)|\le 1$.

\textit{Step 3:} There exists a constant $r>0$, such that the map $\pi_{C_{\cH}}$ analytically extends to $N_r([r+\eta(0),\eta^{-1}(0)])$, where $N_r(S)$ is the complex $r$-neighborhood of the set $S\subset\bbC$.

\textit{Proof of Step 3:} It follows from Proposition~\ref{Unique_straightening_chart_lemma} that the map $\pi_{C_{\cH}}$ analytically extends to $\Delta\setminus S_\eta$. Now Lemma~\ref{E_s_bounded_distortion_lemma}, Step~2 and injectivity of $\eta^{-1}$ imply that $\eta^{-3}(\Delta\cap\bbH)$ has an empty intersection with $N_r([r+\eta(0),\eta^{-1}(0)])$, for some fixed $r$. In particular, this means that $S_\eta\cap N_r([r+\eta(0),\eta^{-1}(0)])= (\eta(0),\eta^{-1}(0)+r)$, so the map $\pi_{C_{\cH}}$ analytically extends to $N_r([r+\eta(0),\eta^{-1}(0)])\cap\bbH$. Finally, $\pi_{C_{\cH}}$ extends to the whole neighborhood $N_r([r+\eta(0),\eta^{-1}(0)])$ according to the reflection principle.

\textit{Step 4:} Let $z_{-1/2}\in [0,\eta^{-1}(0)]$ be the point, for which $\pi_{C_{\cH}}(z_{-1/2})=-1/2$. Then there exists a constant $c_3>0$, such that $z_{-1/2}\ge c_3$. 

\textit{Proof of Step 4:} Since the interval $\pi_{C_{\cH}}([0,z_{-1/2}])$ 
has a fixed length equal to $1/2$, Step~2, Step~3 and the Koebe Distortion Theorem imply that if 
the interval $[0, z_{-1/2}]$ is too small, then the interval $\pi_{C_{\cH}}([0,\eta^{-1}(0)])$ is larger than the unit interval, which is a contradiction.

\textit{Step 5:} Consider the point $z_{1/2}=\eta(z_{-1/2})$. It follows from Lemma~\ref{E_s_bounded_distortion_lemma} and Step~4 that there exists a constant $c_4>0$, such that 
$z_{1/2}-\eta(0)>c_4$.

\textit{Step 6:} It follows from Step~1, Step~2 and Step~4 that the interval $[z_{1/2},z_{-1/2}]$ is commensurable with a unit interval. Now since the interval $\pi_{C_{\cH}}([z_{1/2},z_{-1/2}])$ has a fixed (unit) length, Step~5, Step~3 and the Koebe Distortion Theorem imply that $\pi_{C_{\cH}}(N_r([r+\eta(0),\eta^{-1}(0)]))$ contains a fixed domain $D_s$, such that $[-1/2,1/2]\subset D_s$. If $D_s$ is sufficiently small, we can allways choose a fattening $C_{\cH}^o$ of $C_{\cH}$, so that $D_s\subset \pi_{C_{\cH}^o}(C_{\cH}^o)$.

Existence of the constant $a_s$ immediately follows from Step~4. Namely, we can put $a_s=c_3$.
\end{proof}

\subsection{Holomorphic commuting pairs and critical circle maps. Choice of constants.}
For a positive integer $B\in\bbN$, consider the space $\Sigma_B\subset\Sigma$ of all bi-infinite sequences of positive integers that are not greater than $B$.

\begin{definition}
For a positive integer $B>0$, let $\mathcal I_B\subset\mathbf E$ be the set of all commuting pairs from $\mathcal I$ that are images of $\Sigma_B$ under the map $\iota$ (c.f. Theorem~\ref{Comm_pair_attractor_theorem}). 
Let us denote by $\mathcal K\subset\mathcal I$ the union
$$
\mathcal K=\bigcup_{B\in\bbN} \mathcal I_B.
$$
\end{definition}

It follows from Theorem~\ref{Comm_pair_attractor_theorem} that for every integer $B>0$, the set $\mathcal I_B$ is sequentially compact and has a topological structure of a Cantor set.

\begin{lemma}\label{Cont_holo_pair_lemma}
Let $\mu>0$ be the universal constant from Theorem~\ref{complex bounds}. Then there exists a continuous embedding $\zeta\mapsto\cH_\zeta$ from $\mathcal K$ to $\mathbf H(\mu)$, such that for every critical commuting pair $\zeta\in\mathcal K$, the image $\cH_\zeta=\mathcal H=(\eta,\xi,\nu)$ is its holomorphic pair extension, and that the inverse map $\eta^{-1}=\eta^{-1}|_{\overline{\Delta_\cH\cap\bbH}}$ does not have fixed points on the real line.
\end{lemma}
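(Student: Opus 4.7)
The plan is to construct $\mathcal H_\zeta$ as the holomorphic-pair extension provided by the complex \emph{a priori} bounds of Theorem~\ref{complex bounds}, applied not to $\zeta$ itself but to a sufficiently deep backward iterate of $\zeta$ inside the renormalization attractor $\mathcal I$.

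First, by Theorem~\ref{Comm_pair_attractor_theorem} there is a universal $s>0$ with $\mathcal K\subset\mathcal I\subset\mathbf E_s$. By Lemma~\ref{E_s_bounded_distortion_lemma}, every $\zeta'\in\mathbf E_s$ extends analytically to a fixed pair of complex neighborhoods with uniform sup-norm bounds, so $\mathbf E_s$ embeds as a pre-compact subfamily of $\mathcal A_r$ for some universal $r>0$. Applying Theorem~\ref{complex bounds} with $S=\mathbf E_s$ yields an integer $N=N(r,\mathbf E_s)$ such that for every $\zeta'\in\mathbf E_s$ which is $2N$-times renormalizable, $p\mathcal R^N\zeta'$ extends to a holomorphic commuting pair whose appropriate affine rescaling lies in $\hol(\mu)$.

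Next, for $\zeta\in\mathcal I_B\subset\mathcal K$ I define the backward iterate $\zeta^{(N)}:=\iota^{-1}(\sigma^{-N}\iota(\zeta))\in\mathcal I_B\subset\mathbf E_s$ using the conjugacy $\iota$ from Theorem~\ref{Comm_pair_attractor_theorem}. Then $\mathcal R^N\zeta^{(N)}=\zeta$, and the conclusion above applied to $\zeta^{(N)}$ produces a holomorphic-pair extension of $p\mathcal R^N\zeta^{(N)}$ whose affine rescaling is precisely the desired $\mathcal H_\zeta\in\hol(\mu)$ extending $\zeta$. Injectivity of $\zeta\mapsto\mathcal H_\zeta$ is automatic because $\mathcal H_\zeta$ recovers $\zeta$ on the real axis.

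To prove continuity I argue on each sequentially compact slice $\mathcal I_B$ separately. The restriction of $\iota$ to $\mathcal I_B$ is a continuous bijection onto the compact shift space $\Sigma_B$ and therefore a homeomorphism, so $\zeta\mapsto\zeta^{(N)}$ is continuous on $\mathcal I_B$. Continuity of the holomorphic-pair extension itself in $\zeta^{(N)}$ follows by combining the uniformity built into Theorem~\ref{complex bounds} with sequential compactness of $\hol(\mu)$ (Lemma~\ref{bounds compactness}): any Carath\'eodory accumulation point of $\mathcal H_{\zeta_k}$ is a holomorphic pair in $\hol(\mu)$ extending $\lim\zeta_k$ with the normalized affine rescaling, and uniqueness of such an extension forces it to coincide with $\mathcal H_{\lim\zeta_k}$. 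Since any convergent sequence in $\mathcal K$ lies eventually in a single $\mathcal I_B$ (by the bounded-type constraint), continuity extends to all of $\mathcal K$.

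Finally, a real fixed point of $\eta^{-1}|_{\overline{\Delta\cap\bbH}}$ would be a real fixed point of $\eta$ in $\overline{\Delta\cap\bbR}$. Since $\rho(\zeta)$ is irrational, the real trace of $\eta$ has no fixed points on the interval $I_\eta$; by shrinking $\Delta$ slightly along the real direction while preserving the $\mu$-bounds (for instance by replacing $r$ with a slightly smaller value in the application of Theorem~\ref{complex bounds}), one ensures that $\overline{\Delta\cap\bbR}$ contains no extraneous real fixed points of $\eta$. The main technical obstacle I anticipate is the continuity step -- specifically, verifying that the holomorphic extensions produced by Theorem~\ref{complex bounds} can be organized into a genuinely continuous family over $\mathcal K$ rather than merely an existence statement for each $\zeta$; the key to overcoming this is the uniqueness of the normalized holomorphic-pair extension within $\hol(\mu)$, which rigidifies the otherwise implicit construction.
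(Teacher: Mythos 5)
Your overall scaffolding -- apply the complex bounds (Theorem~\ref{complex bounds}) to a deep backward iterate $\zeta^{(N)}$ inside the horseshoe, after observing that $\mcE_s$ is precompact in $\mathcal A_r$ -- coincides with the paper's strategy. But the two subsidiary arguments where you try to close the proof both have genuine gaps.

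\textbf{Continuity.} You lean on ``uniqueness of the normalized holomorphic-pair extension within $\hol(\mu)$'' to identify the Carath\'eodory limit of $\cH_{\zeta_k}$ with $\cH_{\lim\zeta_k}$, and you even flag this as the crucial rigidifying step. There is no such uniqueness. A given critical commuting pair admits a whole family of holomorphic-pair extensions in $\hol(\mu)$: the range $\Delta$ can be any Euclidean disk in an open range of radii, the domains $U,V,D$ are the corresponding preimages, and all of the $\mu$-bounds in Definition~\ref{H_mu_def} are open conditions. Consequently, distinct subsequences of $\{\cH_{\zeta_k}\}$ can converge to distinct extensions of the same limit pair, and your accumulation-point argument breaks down. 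This non-uniqueness is precisely what forces the paper to take a different route: it first observes that at each $\zeta\in\mathcal K$ the correspondence $\zeta\mapsto\cH$ can be continued to a continuous choice on a small open neighborhood (e.g.\ by moving the domains holomorphically), and then exploits that $\mathcal K$ is a countable union of nested Cantor sets, so it can be covered by countably many \emph{disjoint} small open sets on each of which one fixes a continuous branch of the extension. Your side remark that a convergent sequence in $\mathcal K$ lies eventually inside a single $\mathcal I_B$ is also not justified: the weak topology on $\Sigma$ allows $\zeta_k\to\zeta$ with the $k$-th sequence having an arbitrarily large entry far from position $0$, so the $\zeta_k$ need not be of uniformly bounded type.

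\textbf{Absence of real fixed points.} You only verify that $\eta$ has no fixed points on $I_\eta$, and then propose to eliminate possible fixed points on the larger set $\overline{\Delta}\cap\bbR$ by shrinking $\Delta$. That move fails twice over: a fixed point could lie arbitrarily close to $I_\eta$ in $\overline{\Delta}\cap\bbR\setminus I_\eta$, so no fixed amount of shrinking removes it uniformly over $\mathcal K$; and shrinking $\Delta$ degrades the modulus and quasidisk bounds of Definition~\ref{H_mu_def}, so it does not ``preserve the $\mu$-bounds.'' The correct argument -- the one the paper uses -- is structural, not a perturbation: the map $\eta$ in the holomorphic pair $\cH_\zeta$ is by construction a finite composition of the maps of the deeper pair $\zeta_{-k}$, and its extension lives inside the real trace of $\zeta_{-k}$'s dynamics. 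Hence a fixed point of $\eta$ on $\overline{\Delta}\cap\bbR$ would be a periodic point of $\zeta_{-k}$ on the real line, which is impossible because $\zeta_{-k}\in\mathcal K$ has irrational rotation number. No shrinking is needed; there are simply no such fixed points to begin with.
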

\begin{proof}
Let $s>0$ be the same as in Theorem~\ref{Comm_pair_attractor_theorem}. It follows from Theorem~\ref{Comm_pair_attractor_theorem} that for every $\zeta\in\mathcal K$ there exists an infinite sequence of commuting pairs $\zeta_{-1},\zeta_{-2},\ldots\in\mathbf E_s$, such that $\mathcal R^k\zeta_{-k}=\zeta$, for all $k\in\bbN$. It follows from Lemma~\ref{E_s_bounded_distortion_lemma} and Proposition~\ref{E_s_compactness_proposition} that $\mcE_s$ is a compact subset of $\mathcal A_r$, for some $r>0$. Then Theorem~\ref{complex bounds} implies that
$\zeta$ extends to a holomorphic pair $\mathcal H=(\eta,\xi,\nu)$ from $\mathbf H(\mu)$. Theorem~\ref{complex bounds} also implies that if $\eta^{-1}$ has a fixed point on the real line, then this point is a periodic point of a critical commuting pair $\zeta_{-k}\in\mathcal K$, for some $k\in\bbN$. However, the later is not possible, since $\zeta_{-k}$ has an irrational rotation number.

Now, for each $\zeta\in\mathcal K$, the constructed correspondence $\zeta\mapsto\mathcal H$ can be extended to a continuous map from a sufficiently small neighborhood of $\zeta$ in $\mathbf E$ to the space $\mathbf H(\mu)$ preserving the property that the map $\eta^{-1}$ does not have fixed points on $\bbR$. Finally, the set ${\mathcal K}\subset\mcE$ is a countable union of nested topological Cantor sets. Hence, it can be covered by a countable collection of small enough open disjoint neighborhoods in each of which we can choose the holomorphic pair extension  
$\cH_\zeta$ in such a way that it depends continuously on $\zeta$. Continuity of the inverse map $\cH_\zeta\mapsto \zeta$ follows easily.
\end{proof}

\begin{definition}
For every positive integer $B>0$, we denote by $\tilde{\mathcal I}_B\subset \mathbf H(\mu)$ the image of $\mathcal I_B$ under the map
$$
\zeta\mapsto \cH_\zeta.
$$
Similarly, we denote by $\tilde{\mathcal K}\subset\mathbf H(\mu)$ the image of $\mathcal K$ under the same map.
\end{definition}

\begin{lemma}\label{Canon_holo_cres_lemma}\label{Choose_U_B_lemma}
For every holomorphic pair $\mathcal H\in\tilde{\mathcal K}$ there exists a fundamental crescent $C_{\mathcal H}$ with a fattening $C_{\mathcal H}^o$, such that  
the mapping $\mathcal H\mapsto C_{\mathcal H}^o$ is continuous on $\tilde{\mathcal K}$. Moreover, there exist two Jordan domains $\tilde U\Subset U_1\subset\bbC$, such that 
$$[-1/2,1/2]\subset \tilde U,$$
and for every $\mathcal H\in\tilde{\mathcal K}$ and the corresponding fattening $C_{\mathcal H}^o$, the map $\pi_{C_{\mathcal H}^o}^{-1}$ is defined and univalent on the domain $U_1$, and 
\begin{equation}\label{finite_diff_condition}
|\pi_{C_{\mathcal H}^o}^{-1}(1/2)-\pi_{C_{\mathcal H}^o}^{-1}(-1/2)|>a>0,
\end{equation}
for some universal constant $a\in\bbR$.
\end{lemma}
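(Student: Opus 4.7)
The proof will proceed in three stages: a pointwise construction with uniform bounds, a local continuous selection via Lemma \ref{Nearby_crescent_lemma}, and a global patching argument exploiting the Cantor structure of $\tilde{\mathcal K}$.

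For the pointwise stage, fix $\mathcal H \in \tilde{\mathcal K}$. By Theorem \ref{Comm_pair_attractor_theorem}, the underlying commuting pair $\zeta_{\mathcal H}$ lies in $\mathbf E_s$ for the universal constant $s > 0$ fixed there; by Lemma \ref{Cont_holo_pair_lemma}, the inverse $\eta^{-1}$ has no real fixed points. Hence Lemma \ref{E_crescent_exist_lemma}(i) produces a fundamental crescent $C_{\mathcal H}$, and Lemma \ref{E_crescent_exist_lemma}(ii) produces a fattening $C_{\mathcal H}^o$ satisfying $D_s \subset \pi_{C_{\mathcal H}^o}(C_{\mathcal H}^o)$ and $|\pi_{C_{\mathcal H}^o}^{-1}(1/2) - \pi_{C_{\mathcal H}^o}^{-1}(-1/2)| > a_s$. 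Setting $a = a_s$ and selecting once and for all two Jordan domains $\tilde U \Subset U_1 \Subset D_s$ with $[-1/2, 1/2] \subset \tilde U$ will yield the last two assertions of the lemma, since $\pi_{C_{\mathcal H}^o}$ is conformal onto its image and therefore its inverse is automatically defined and univalent on all of $\pi_{C_{\mathcal H}^o}(C_{\mathcal H}^o) \supset U_1$.

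For local continuity at a fixed $\mathcal H_0 \in \tilde{\mathcal K}$, the plan is to apply Lemma \ref{Nearby_crescent_lemma} to the $\eta$-component of $\mathcal H_0$. First I will replace $C_{\mathcal H_0}^o$ by a slightly smaller nested fattening to arrange the properness hypothesis of Lemma \ref{Nearby_crescent_lemma}; this is possible because $D_s$ is contained strictly inside the image $\pi_{C_{\mathcal H_0}^o}(C_{\mathcal H_0}^o)$. Fixing a Jordan subdomain $\Omega \subset U_{\mathcal H_0} \cap \Delta_{\mathcal H_0}$ whose closure contains the (enlarged) original fattening, the $\eta$-component of any $\mathcal H$ sufficiently close to $\mathcal H_0$ in $\mathbf H(\mu)$ restricts to an element of $\mathfrak D(\Omega)$, and this restriction depends continuously on $\mathcal H$ by the Carath\'eodory compactness of $\mathbf H(\mu)$ (Lemma \ref{bounds compactness}) together with the continuity statement of Lemma \ref{Cont_holo_pair_lemma}. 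Lemma \ref{Nearby_crescent_lemma} then yields an open neighborhood $\mathcal W(\mathcal H_0) \subset \mathbf H(\mu)$ on which both $C_{\mathcal H}^o$ and $\pi_{C_{\mathcal H}^o}$ depend continuously (in fact analytically) on $\mathcal H$; after possibly shrinking $\mathcal W(\mathcal H_0)$, the uniform inclusion $D_s \subset \pi_{C_{\mathcal H}^o}(C_{\mathcal H}^o)$ persists by continuity.

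For the global patching, recall that $\tilde{\mathcal K} = \bigcup_{B \in \bbN} \tilde{\mathcal I}_B$ is a countable, nested union of compact Cantor sets. Using total disconnectedness, for each $B$ I will cover the compact set $\tilde{\mathcal I}_B$ by finitely many pairwise-disjoint clopen subsets $V_{B,j} \subset \tilde{\mathcal I}_B$, each contained in some neighborhood $\mathcal W(\mathcal H_{B,j})$ from the previous stage, and then thicken each $V_{B,j}$ to an open set $\mathcal W_{B,j} \subset \mathbf H(\mu)$ whose intersection with $\tilde{\mathcal K}$ is exactly $V_{B,j}$; by shrinking if necessary and proceeding inductively in $B$, the collection $\{\mathcal W_{B,j}\}$ can be made pairwise disjoint and covers $\tilde{\mathcal K}$. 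The local continuous selections of Stage 2 on each $\mathcal W_{B,j}$ then glue without ambiguity to a globally continuous map $\mathcal H \mapsto C_{\mathcal H}^o$ on $\tilde{\mathcal K}$. The main obstacle is precisely this final patching: two overlapping applications of Lemma \ref{Nearby_crescent_lemma} would in general produce non-matching fattenings, so disjointness (enabled by the Cantor structure) is essential. This is exactly the disjoint-covering strategy already used at the end of the proof of Lemma \ref{Cont_holo_pair_lemma}, and it adapts to the present setting.
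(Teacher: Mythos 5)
Your proposal is correct and follows essentially the same route as the paper's proof: pointwise existence with uniform bounds via Lemma~\ref{E_crescent_exist_lemma}, local continuity via Lemma~\ref{Nearby_crescent_lemma}, and globalization over the Cantor structure of $\tilde{\mathcal K}$ by disjoint clopen covering as in Lemma~\ref{Cont_holo_pair_lemma}. Your writeup is somewhat more explicit than the paper's about two details the paper leaves implicit — that $C_{\mathcal H_0}^o$ must be slightly shrunk to meet the properness hypothesis of Lemma~\ref{Nearby_crescent_lemma}, and that choosing $U_1\Subset D_s$ (rather than $U_1 = D_s$) makes the inclusion robust under the perturbation — but these are refinements of the same argument, not a different one.
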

\begin{proof}
First, we notice that $\mathcal K\subset\mcE_s$, for the fixed value of $s>0$ from Theorem~\ref{Comm_pair_attractor_theorem}. Now it follows from Lemma~\ref{Cont_holo_pair_lemma} and Lemma~\ref{E_crescent_exist_lemma} that
for every holomorphic pair $\mathcal H\subset\tilde{\mathcal K}$ we can choose a fundamental crescent $C_{\mathcal H}$ that has a fattening $C_{\mathcal H}^o$, such that a certain fixed domain $U_1\supset [-1/2,1/2]$ is compactly contained in $\pi_{C_{\mathcal H}^o}(C_{\mathcal H}^o)$ and condition~(\ref{finite_diff_condition}) holds. 
According to Lemma~\ref{Nearby_crescent_lemma} 
this fundamental crescent and its fattening may be chosen to depend locally continuously on $\mathcal H\in\mathbf H(\mu)$.
Since $\tilde{\mathcal K}\subset \mathbf H(\mu)$ is a countable union of nested Cantor sets, the global continuous dependence $\mathcal H\mapsto C_{\mathcal H}^o$ is obtained in the same way as in Lemma~\ref{Cont_holo_pair_lemma}.

Finally, we choose the domain $\tilde U$, so that $\tilde U\Subset U_1$ and $[-1/2,1/2]\subset\tilde U$, which completes the proof.
\end{proof}

Let $\mathcal H\in\tilde{\mathcal K}$ be a holomorphic commuting pair with a fundamental crescent $C_{\mathcal H}$ provided by Lemma~\ref{Canon_holo_cres_lemma}. For a point $z\in C_{\mathcal H}$, whose iterates under ${\mathcal H}$ eventually return to $C_{\mathcal H}$, we define $R_{C_{\mathcal H}}(z)$ to be the first return of $z$ to $C_{\mathcal H}$ under the dynamics of ${\mathcal H}$. Then according to Proposition~7.9 from~\cite{Ya3}, there exists an equatorial neighborhood in the cylinder $\bbC\slash\bbZ$, such that for every $\mathcal H\in\tilde{\mathcal K}$, the map
\begin{equation}\label{f_zeta_formula}
f_\cH\equiv \tilde\pi_{C_{\mathcal H}}\circ R_{C_{\mathcal H}}\circ\tilde\pi^{-1}_{C_{\mathcal H}}
\end{equation}
is defined and analytic in that neighborhood.

\begin{lemma}\label{Choose_t_lemma}
There exists a positive real number $\tilde t>0$, such that for every holomorphic pair $\mathcal H\in\tilde{\mathcal K}$, the map $\phi_{\mathcal H}=\pi_{C_{\mathcal H}}'(0)\cdot\pi_{C_{\mathcal H}}^{-1}$ is contained in the space $\mathfrak B_{\tilde U,\tilde t}^3$, where $\tilde U$ is the same as in Lemma~\ref{Choose_U_B_lemma}.
\end{lemma}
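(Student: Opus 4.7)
The plan is to verify the four conditions of Definition~\ref{B_def} for $\phi_\cH$ with a single choice of $\tilde t$ valid for all $\cH\in\tilde{\mathcal K}$, exploiting the fact that the whole family $\{\phi_\cH\}_{\cH\in\tilde{\mathcal K}}$ consists of univalent maps on the common Jordan domain $U_1\Supset\tilde U$ with the common normalization $\phi_\cH(0)=0$, $\phi_\cH'(0)=1$. Real-symmetry of $\phi_\cH$ is inherited from the real-symmetric crescent $C_\cH$ (which forces $\pi_{C_\cH}'(0)\in\bbR_{>0}$), and combined with orientation-preservation on $\bbR$ it yields $\phi_\cH(1/2)>0>\phi_\cH(-1/2)$. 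Conditions (i) and (ii) of Definition~\ref{B_def} are then immediate: the first from the formula $\phi_\cH=\pi_{C_\cH}'(0)\cdot\pi_{C_\cH}^{-1}$ and Lemma~\ref{Choose_U_B_lemma}, and the second as soon as $\tilde t$ is taken small enough that $I_{\tilde t}^+\cup I_{\tilde t}^-\subset\tilde U$ (an upper bound on $\tilde t$ depending only on $\tilde U$).

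For the substantive conditions (iii) and (iv), I would invoke the Koebe distortion theorem on the fixed pair $\tilde U\Subset U_1$. This provides uniform two-sided bounds on $|\phi_\cH'|$ and on $|\phi_\cH(z)/z|$ throughout $\overline{\tilde U}\setminus\{0\}$, with constants depending only on the geometry of $U_1$ and $\tilde U$. In particular, $|\phi_\cH(\pm 1/2)|$ is bounded above and below uniformly in $\cH$. Because $\alpha=3$, the branches $p_{3+}$ and $p_{3-}$ both coincide with the entire cubic map $z\mapsto z^3$ on their respective domains, so all the compositions appearing in Definition~\ref{B_def} are defined globally; the real arcs $p_{3+}(\phi_\cH([0,1/2)))$ and $p_{3-}(\phi_\cH((-1/2,0]))$ lie in $\bbR$ and together form the interval $[\phi_\cH(-1/2)^3,\phi_\cH(1/2)^3]$, whose endpoints are uniformly bounded and of opposite signs.

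For $\tilde t$ chosen sufficiently small (uniformly in $\cH$ by the Koebe bounds of the previous paragraph), the boundary curve $\gamma_{\phi_\cH,3,\tilde t}$ decomposes into two short analytic arcs near the points $\phi_\cH(\pm 1/2)^3$ and two near-horizontal straight segments close to the interval $[\phi_\cH(-1/2)^3,\phi_\cH(1/2)^3]$, with the four pieces meeting at nearly right angles. Proposition~\ref{prop:quasidisk} then guarantees that $Q_{\phi_\cH,3,\tilde t}$ is a genuine quasidisk. The containment of the aforementioned real arcs in $Q_{\phi_\cH,3,\tilde t}$ is clear from this geometry, and the same near-rectangular shape around a real segment through the origin makes $Q_{\phi_\cH,3,\tilde t}$ a star domain with respect to $0$, so every line through the origin meets $\partial Q_{\phi_\cH,3,\tilde t}$ transversally in exactly two points; this verifies (iii) and (iv).

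The main obstacle is making the choice of $\tilde t$ uniform over the non-compact set $\tilde{\mathcal K}$. This is circumvented entirely by Koebe's distortion theorem applied to normalized univalent functions on the fixed domain $U_1$: all quantitative estimates used to choose $\tilde t$ depend only on the pair $\tilde U\Subset U_1$ and the constant $a$ from~(\ref{finite_diff_condition}), never on the individual map $\cH$.
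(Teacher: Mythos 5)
Your proof is correct, but it takes a genuinely different route from the paper's. The paper works with the \emph{unnormalized} family $\{\pi_{C_\cH^o}^{-1}|_{U_1}\}$, deduces normality from the fact that the ranges lie in the uniformly bounded sets $\Delta_\cH$ (condition (v) of Definition~\ref{H_mu_def}), invokes~(\ref{finite_diff_condition}) to rule out constant limits (hence all limits are univalent), and then extracts a uniform $\tilde t$ from sequential compactness of the closure of the family, leaving the verification of the geometric conditions of Definition~\ref{B_def} implicit. You instead work directly with the \emph{normalized} family $\phi_\cH$ and replace the Montel/compactness argument by Koebe growth and distortion on the fixed pair $\tilde U\Subset U_1$; this gives explicit, $\cH$-independent bounds from which you spell out the verification of conditions (iii) and (iv) of Definition~\ref{B_def} for small $\tilde t$ via the near-rectangle picture of $Q_{\phi_\cH,3,\tilde t}$. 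The two arguments are of comparable length, but yours is quantitative and self-contained where the paper defers to ``compactness arguments,'' which is a useful expansion.

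One small imprecision: real-symmetry of $C_\cH$ and of $\pi_{C_\cH}$ forces $\pi_{C_\cH}'(0)\in\bbR\setminus\{0\}$, but does not by itself pin down the sign. This is harmless for your argument, since $\phi_\cH'(0)=\pi_{C_\cH}'(0)\cdot(\pi_{C_\cH}^{-1})'(0)=1$ regardless of that sign, and $\phi_\cH(1/2)>0>\phi_\cH(-1/2)$ then follows from univalence, real-symmetry, and $\phi_\cH'(0)=1>0$ alone. You may simply drop the parenthetical claim about positivity of $\pi_{C_\cH}'(0)$.
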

\begin{proof}
According to Lemma~\ref{Canon_holo_cres_lemma}, the family of maps 
$$
\{\left.\pi_{C_{\mathcal H}^o}^{-1}\right|_{U_1}\colon \mathcal H\in\tilde{\mathcal K}\}
$$
is defined, and since by Definition~\ref{H_mu_def}, the image $\pi_{C_{\mathcal H}^o}^{-1}(U_1)\subset\Delta_\cH$ is uniformly bounded, this family is normal. According to~(\ref{finite_diff_condition}), every map $\pi^{-1}$ from the closure of this family is non-constant, hence univalent, and there exists a positive number $t>0$, such that $\pi'(0)\cdot\pi^{-1}\in \mathfrak B_{\tilde U, t}^3$. Now, the existence of $\tilde t>0$, satisfying the conditions of Lemma~\ref{Choose_t_lemma}, follows from compactness arguments.
\end{proof}

\begin{lemma}\label{Choose_r_h_lemma}
Let $\tilde U$ and $\tilde t$ be the same as in Lemma~\ref{Choose_U_B_lemma} and Lemma~\ref{Choose_t_lemma}, and fix a family of maps $\pi_{\phi,\alpha,\tilde t}\colon Q_{\phi,\alpha,\tilde t}\to\bbC\slash\bbZ$, $(\alpha,\phi)\in\mathfrak B_{\tilde U,\tilde t}$ provided by Lemma~\ref{pi_phi_lemma}. Then there exist positive real numbers $r_1>0$ and $h_1>0$, such that for every $\mathcal H\in\tilde{\mathcal K}$ and the corresponding map $\phi_\cH$ from Lemma~\ref{Choose_t_lemma}, there exists a unique real-symmetric map $\psi_{\mathcal H}\in\mathfrak D_{h_1}$ with the property that the composition 
\begin{equation}\label{f_tau_composition}
z\mapsto \psi_{\mathcal H}(\pi_{\phi_{\mathcal H},\alpha,\tilde t}((\phi_{\mathcal H}(z))^3))
\end{equation}
is defined in the domain $\{z\in\bbC\mid |\Re z|<1/2; |\Im z|<r_1\}$ and coincides with the map
$$
z\mapsto (\tilde\pi_{C_{\mathcal H}}\circ R_{C_{\mathcal H}}\circ\pi^{-1}_{C_{\mathcal H}})(z)
$$
on that domain. 
\end{lemma}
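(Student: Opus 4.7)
The plan is to define $\psi_{\cH}$ first on the equator $\bbT\subset\bbC/\bbZ$ by the functional equation and then to extend it analytically into a cylinder neighborhood. Set
\[
\mathcal G_{\cH}(z):=\pi_{\phi_{\cH},3,\tilde t}\bigl((\phi_{\cH}(z))^3\bigr)\quad\text{and}\quad\tilde f_{\cH}(z):=\tilde\pi_{C_{\cH}}\bigl(R_{C_{\cH}}(\pi_{C_{\cH}}^{-1}(z))\bigr),
\]
so that the target identity reads $\psi_{\cH}\circ\mathcal G_{\cH}=\tilde f_{\cH}$ on the strip $\{|\Re z|<1/2,\,|\Im z|<r_1\}$. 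Using property~\ref{B_def_item3} of Definition~\ref{B_def}, Lemma~\ref{pi_phi_lemma}, and Lemma~\ref{Choose_U_B_lemma}, both maps are real-analytic in a neighborhood of $[-1/2,1/2]$, and $\mathcal G_{\cH}$ restricts to a real-analytic degree-one homeomorphism from $[-1/2,1/2]/\{\pm 1/2\sim\}$ onto $\bbT$, the endpoint identification being supplied by the gluing $\nu$ from~(\ref{A_phi_alpha_s_def}). Both $\mathcal G_{\cH}$ and $\tilde f_{\cH}$ have a unique cubic critical point at $z=0$ and are locally univalent elsewhere on $[-1/2,1/2]$.

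The equation $\psi_{\cH}\circ\mathcal G_{\cH}=\tilde f_{\cH}$ uniquely defines a continuous degree-one function $\psi_{\cH}\colon\bbT\to\bbT$. I would then check that $\psi_{\cH}|_{\bbT}$ is real-analytic. Away from $0\in\bbT$ this is immediate, since $\mathcal G_{\cH}$ is a local real-analytic diffeomorphism there, so $\psi_{\cH}=\tilde f_{\cH}\circ\mathcal G_{\cH}^{-1}$ is real-analytic via single-valued local inverse branches. At $0\in\bbT$, the cube-root singularity of $\mathcal G_{\cH}^{-1}$ cancels the cubic vanishing of $\tilde f_{\cH}-\tilde f_{\cH}(0)$, with the specific choice $\phi_{\cH}=\pi_{C_{\cH}}'(0)\cdot\pi_{C_{\cH}}^{-1}$ aligning the local cubic charts on the source side so that $\tilde f_{\cH}\circ\mathcal G_{\cH}^{-1}$ is real-analytic at $w=0$ as well. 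By uniqueness of the holomorphic extension, $\psi_{\cH}$ extends to an analytic map on a cylinder neighborhood $V_{h_1}$ of $\bbT$, and by the identity theorem $\psi_{\cH}\circ\mathcal G_{\cH}=\tilde f_{\cH}$ extends to the strip, for any $r_1$ small enough that $\mathcal G_{\cH}$ maps the strip into $V_{h_1}$.

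Verifying $\psi_{\cH}\in\mathfrak D_{h_1}$ reduces to checking the three conditions of Definition~\ref{D_h_def}: the restriction $\psi_{\cH}|_{\bbT}$ is a degree-one homeomorphism of $\bbT$, which gives the homotopy condition and univalence in a neighborhood, and continuous extension to $\partial V_{h_1}$ is automatic by construction. Real-symmetry of $\psi_{\cH}$ is inherited from the real-symmetric data $\phi_{\cH}$, $\pi_{\phi_{\cH},3,\tilde t}$, $\pi_{C_{\cH}}$, and $R_{C_{\cH}}$, and uniqueness follows because $\psi_{\cH}|_{\bbT}$ is determined by the functional equation and then uniquely extended holomorphically. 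Uniformity of $r_1$ and $h_1$ over $\cH\in\tilde{\mathcal K}$ is obtained by a normal-family/compactness argument parallel to those in Lemma~\ref{Choose_t_lemma}: the families $\{\phi_{\cH}\}$, $\{\pi_{\phi_{\cH},3,\tilde t}\}$, and $\{R_{C_{\cH}}\}$ are normal because $\tilde{\mathcal K}\subset\mathbf H(\mu)$ is sequentially pre-compact by Lemma~\ref{bounds compactness}.

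The main obstacle is verifying the removability at the cubic critical value $0\in\bbT$: although $\mathcal G_{\cH}$ is three-to-one in any complex neighborhood of $0$, one must show that $\tilde f_{\cH}$ takes the same value on all three sheets, i.e.\ that the full Taylor jet of $\tilde f_{\cH}$ at $z=0$ matches that of a composition $\psi_{\cH}\circ q_3\circ\phi_{\cH}$ for some analytic $\psi_{\cH}$; equivalently, that no fractional powers of $w$ survive in the Puiseux expansion of $\tilde f_{\cH}\circ\mathcal G_{\cH}^{-1}$ about $w=0$. Once this cancellation is established, the remaining assertions follow routinely from continuity, analytic continuation, and sequential compactness of $\mathbf H(\mu)$.
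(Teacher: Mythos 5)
Your outline is the right shape, but you have left the decisive step open. You correctly identify the crux: $\mathcal G_{\cH}$ is three-to-one near $0\in\bbT$, so to solve $\psi_{\cH}\circ\mathcal G_{\cH}=\tilde f_{\cH}$ one must check that $\tilde f_{\cH}$ takes equal values on all three local sheets, i.e.\ that $\tilde f_{\cH}\circ\mathcal G_{\cH}^{-1}$ has no fractional powers in its Puiseux expansion at $w=0$. But you state this as ``the main obstacle'' and then conclude ``Once this cancellation is established, the remaining assertions follow routinely'' --- you never establish it. That is precisely the content of the lemma, and nothing in your argument (the normalization $\phi_{\cH}'(0)=1$, the choice $\phi_{\cH}=\pi_{C_{\cH}}'(0)\cdot\pi_{C_{\cH}}^{-1}$) by itself forces the cancellation; it only aligns the leading cubic term, not the full jet.

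The ingredient the paper uses, and which is absent from your proposal, is $\cR$-invariance of $\mathcal K$: since every $\zeta\in\mathcal K$ equals $\cR^n\zeta'$ for some $\zeta'\in\mathcal K$ and every $n$, each of the maps $\eta,\xi,\nu$ of the holomorphic pair $\cH_\zeta$ factors as $z\mapsto g(z^3)$ with $g$ \emph{conformal} in a neighborhood of $0$. Because the first-return map $R_{C_{\cH}}$ is a composition of such maps which passes through the critical point exactly once, it too factors as $R_{C_{\cH}}=g\circ q_3$ with $g$ conformal. Plugging this into the left-hand side of the target identity and into the right-hand side via $\phi_{\cH}(z)^3=\pi_{C_{\cH}}'(0)^3\cdot\pi_{C_{\cH}}^{-1}(z)^3$, the cubic parts cancel \emph{algebraically}, leaving an equation $\tilde\pi_{C_{\cH}}\circ g=\psi_{\cH}\circ\pi_{\phi_{\cH},\alpha,\tilde t}\circ(\pi_{C_{\cH}}'(0)^3\cdot\,\cdot\,)$ in which every map is conformal. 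This solves for $\psi_{\cH}$ uniquely with no removability issue to check, and the uniform $r_1,h_1$ then come from the compactness of $\tilde{\mathcal K}$ together with the fixed equatorial domain of $f_{\cH}$ from~(\ref{f_zeta_formula}), as you anticipated. Without this factorization of $R_{C_{\cH}}$, your proposal reduces the statement to an assertion it never proves.
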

\begin{proof}
Since for every $n\in\bbN$, every $\zeta\in\mathcal K$ has a preimage $\cR^{-n}\zeta\in\mathcal K$, it follows from Theorem~\ref{Comm_pair_attractor_theorem} that each map $\eta$, $\xi$, $\nu$ from the holomorphic pair $\cH_\zeta=(\eta,\xi,\nu)$ can be represented in the form
$$
z\mapsto g(z^3), 
$$
where $g$ is a conformal map. Since $R_{C_\cH}$ is a composition of such maps and is a first return map to the neighborhood of the origin, it can also be represented in the same form. Thus, we get the following identity:
$$
(\tilde\pi_{C_{\mathcal H}}\circ R_{C_{\mathcal H}}\circ\pi^{-1}_{C_{\mathcal H}})(z)= \tilde\pi_{C_{\mathcal H}}(g((\pi^{-1}_{C_{\mathcal H}}(z))^3)),
$$
where $g$ is a conformal map that depends on $R_{C_\cH}$. On the other hand, we have 
$$
\psi_{\mathcal H}(\pi_{\phi_{\mathcal H},\alpha,\tilde t}((\phi_{\mathcal H}(z))^3))= \psi_{\mathcal H}(\pi_{\phi_{\mathcal H},\alpha,\tilde t}((\pi_{C_\cH}'(0))^3\cdot(\pi^{-1}_{C_{\mathcal H}}(z))^3)).
$$
Now, canceling the identical cubic parts, we get the following equation that should be satisfied by $\psi_\cH$:
\begin{equation}\label{psi_H_eq}
\tilde\pi_{C_{\mathcal H}}(g(z))= \psi_{\mathcal H}(\pi_{\phi_{\mathcal H},\alpha,\tilde t}((\pi_{C_\cH}'(0))^3\cdot z)).
\end{equation}
Since all maps in this equation are conformal, the equation uniquely determines the map $\psi_\cH$ on the domain, where appropriate compositions are defined.

Finally, since the map $f_\cH$ from~(\ref{f_zeta_formula}) is also defined in a fixed equatorial neighborhood, independent of $\cH\in\tilde{\mathcal K}$, the left hand side of~(\ref{psi_H_eq}) is defined in some fixed neighborhood. Now the existence of positive numbers $r_1$ and $h_1$ is proved using a compactness argument, similar to the one, used in the proof of Lemma~\ref{Choose_t_lemma}.
\end{proof}

\begin{remark}\label{r_1_r_remark}
In particular, Lemma~\ref{Choose_r_h_lemma} implies that for every $\cH\in\tilde{\mathcal K}$, we have the inclusion
$$
\tau=(3,\phi_\cH, \psi_\cH)\in\mathfrak P_{\tilde U,\tilde t, h_1}, 
$$
and the map $f_\tau$ defined in~(\ref{f_tau_def}) belongs to the class $\mathfrak C_{r_1}$, and coincides with the map $f_\cH$ from~(\ref{f_zeta_formula}).
\end{remark}

\begin{definition}\label{hat_K_def}
For a positive integer $B>0$, by $\hat{\mathcal I}_B\subset\mathfrak C_{r_1}^3$ we denote the image of the set $\tilde{\mathcal I}_B\subset\mathbf H(\mu)$ under the map $\mathcal H\mapsto f_{\mathcal H}$ given by~(\ref{f_zeta_formula}). Similarly, by $\hat{\mathcal K}\subset\mathfrak C_{r_1}^3$ we denote the image of the set $\tilde{\mathcal K}\subset\mathbf H(\mu)$ under the same map.
\end{definition}

\begin{definition}\label{I_check_def}
Let the constants $\tilde t$, $h_1$ and the domain $\tilde U$ be the same as in Lemma~\ref{Choose_t_lemma}, Lemma~\ref{Choose_r_h_lemma} and Lemma~\ref{Choose_U_B_lemma}. For a positive integer $B>0$, by $\check{\mathcal I}_B\subset\mathfrak P_{\tilde U,\tilde t,h_1}$ we denote the image of $\tilde{\mathcal I}_B$ under the map 
$$
\mathcal H\mapsto (3,\phi_{\mathcal H},\psi_{\mathcal H}),
$$
where $\phi_{\mathcal H}$ and $\psi_{\mathcal H}$ are the same as in Lemma~\ref{Choose_t_lemma} and Lemma~\ref{Choose_r_h_lemma} respectively.
Similarly, by $\check{\mathcal K}\subset\mathfrak P_{\tilde U,\tilde t,h_1}$ we denote the image of the set $\tilde{\mathcal K}\subset\mathbf H(\mu)$ under the same map.
\end{definition}

We recall that if positive numbers $r$ and $h$ are such that $r<r_1$ and $h<h_1$, then there are natural inclusions $\mathfrak C_{r_1}\subset\mathfrak C_r$ and $\mathfrak P_{\tl U,\tl t, h_1}\subset \mathfrak P_{\tl U,\tl t, h}$, hence we can view the sets $\hat{\mathcal K}$ and $\check{\mathcal K}$ as subsets of $\mathfrak C_r$ and $\mathfrak P_{\tl U,\tl t,h}$ respectively.

\begin{lemma}\label{Choose_hr_tilde_lemma}
There exist positive constants $\tl h$, $\tl r$ and an open set $\check{\mathcal U}\subset\mathfrak P_{\tl U,\tl t,\tl h}$, such that $0<\tl h<h_1$, $0<\tl r<r_1$, $\check{\mathcal K}\Subset\check{\mathcal U}$ and the mapping
$$
\tau\mapsto f_\tau
$$
is an analytic map from $\check{\mathcal U}\subset\mathfrak P_{\tl U,\tl t,\tl h}$ to $\mathfrak C_{\tl r}$.
\end{lemma}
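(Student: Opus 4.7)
My plan is to bootstrap Lemma~\ref{f_tau_analyt_lemma} from a pointwise statement to an open-neighborhood statement, using the uniform domains $U_{r_1}^\pm$ and $V_{h_1}$ provided by Lemma~\ref{Choose_r_h_lemma} together with the compactness of each Cantor piece $\check{\mathcal I}_B$ of $\check{\mathcal K}$. Fix $\tl r\in(0,r_1)$ and $\tl h\in(0,h_1)$. By Remark~\ref{r_1_r_remark}, every $\tau_0\in\check{\mathcal K}$ is an element of $\mathfrak P_{\tl U,\tl t,h_1}$ with the two compositions in~(\ref{g_tau_def}) defined on $U_{r_1}^-$ and $U_{r_1}^+$ respectively. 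Viewing $\tau_0$ as an element of $\mathfrak P_{\tl U,\tl t,\tl h}$ via restriction of its $\psi$-component to $V_{\tl h}$, I would apply Lemma~\ref{f_tau_analyt_lemma} with some $r_0\in(\tl r,r_1)$ to obtain an open neighborhood $\mathcal U_{\tau_0}\subset\mathfrak P_{\tl U,\tl t,\tl h}$ of $\tau_0$ on which $\tau\mapsto f_\tau$ is analytic as a map into $\mathfrak C_{\tl r}$.

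To upgrade this to a neighborhood of all of $\check{\mathcal K}$, recall that $\check{\mathcal K}=\bigcup_{B\in\bbN}\check{\mathcal I}_B$, and each $\check{\mathcal I}_B$ is the continuous image of the compact Cantor set $\tilde{\mathcal I}_B$ under the map $\cH\mapsto(3,\phi_\cH,\psi_\cH)$ of Definition~\ref{I_check_def}, hence compact. By compactness, finitely many of the neighborhoods $\mathcal U_{\tau_0}$ cover $\check{\mathcal I}_B$; let $\mathcal W_B$ be their union, so that $\check{\mathcal I}_B\Subset\mathcal W_B\subset\mathfrak P_{\tl U,\tl t,\tl h}$. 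The set $\check{\mathcal U}:=\bigcup_B\mathcal W_B$ is then open, contains $\check{\mathcal K}$ with each compact piece compactly contained, and supports a well-defined analytic map $\tau\mapsto f_\tau$ into $\mathfrak C_{\tl r}$ (global well-definedness on overlaps of the $\mathcal U_{\tau_0}$'s following from uniqueness of analytic continuation).

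The delicate point is the first paragraph's claim that Lemma~\ref{f_tau_analyt_lemma} can be invoked in the ambient $\mathfrak P_{\tl U,\tl t,\tl h}$ rather than in the smaller $\mathfrak P_{\tl U,\tl t,h_1}$ where $\check{\mathcal K}$ naturally lives. For this, one needs that the inner composition $\pi_{\phi_\cH,3,\tl t}\circ p_{\phi_\cH,3\pm}$ maps $U_{r_0}^\pm$ into $V_{\tl h}$, so that evaluation against $\psi\in\mathfrak D_{\tl h}$ makes sense and depends analytically (in fact linearly) on $\psi$ in the sup-norm on $V_{\tl h}$. By continuity of this inner composition in all parameters (Lemma~\ref{pi_phi_lemma}) and compactness of each $\check{\mathcal I}_B$, a uniform such $r_0>\tl r$ exists on each $\check{\mathcal I}_B$; choosing $\tl h$ close to $h_1$ and $\tl r$ correspondingly small ensures the construction goes through globally, using that the base constants $r_1,h_1$ of Lemma~\ref{Choose_r_h_lemma} are themselves uniform over all of $\tilde{\mathcal K}$.
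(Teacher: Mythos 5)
Your proposal has the right skeleton — invoke Lemma~\ref{f_tau_analyt_lemma} at each point of $\check{\mathcal K}$, use compactness of each $\check{\mathcal I}_B$ to get finite covers, and union over $B$ — and this is essentially the paper's strategy. But the third paragraph, which you correctly flag as the delicate point, contains a genuine gap in how you obtain the uniform $r_0$.

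The difficulty is that the constant you extract from ``continuity of the inner composition plus compactness of each $\check{\mathcal I}_B$'' is a priori a number $r_0(B)$ depending on $B$, and since $\check{\mathcal K}=\bigcup_{B}\check{\mathcal I}_B$ is not itself compact, nothing in your argument prevents $r_0(B)\to 0$ as $B\to\infty$. Were that to happen there would be no admissible global choice of $\tl r$. You gesture at the resolution by remarking that the constants $r_1,h_1$ of Lemma~\ref{Choose_r_h_lemma} are uniform over all of $\tilde{\mathcal K}$, but you never close the loop: you need a mechanism that converts the uniform bound ``$\pi_{\phi_\cH,3,\tl t}\circ p_{\phi_\cH,3\pm}$ maps $U_{r_1}^\pm$ into $\overline{V_{h_1}}$'' into a uniform bound ``it maps $U_{\hat r}^\pm$ into $V_{\tl h}$ for some $B$-independent $\hat r>0$.'' The paper supplies precisely this connecting step by appealing to the Koebe Distortion Theorem for the univalent ingredients $\pi_{\phi_\cH,3,\tl t}$ and $\phi_\cH$, which form a normal family by the compactness built into the construction of $\tl t$, $r_1$, $h_1$ over the closure of $\check{\mathcal K}$ (not just over each $\check{\mathcal I}_B$). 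The resulting uniform distortion estimate gives a single $\hat r$, and then $\tl r=\hat r/2$ works globally. Without an argument of this kind (Koebe distortion, or a Schwarz/Borel--Carath\'eodory estimate for the harmonic function $\Im$ of the inner composition, or normal-families compactness over the closure of the whole set $\check{\mathcal K}$), the passage from per-$B$ compactness to a global $\tl r$ is unjustified, and this is the key technical content of the lemma.

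A secondary remark: you set $\tl r$ and $\tl h$ at the outset of paragraph one, yet in paragraph three you say to choose them ``correspondingly''; the order should be reversed, since the admissible $\tl r$ is determined by the Koebe-type estimate after $\tl h<h_1$ is fixed. Once you insert the missing uniform distortion step, the compactness-and-finite-cover argument of paragraph two (which is fine as written, and makes explicit what the paper leaves terse) completes the proof.
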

\begin{proof}
Fix a constant $\tl h<h_1$. According to Lemma~\ref{Choose_r_h_lemma}, for every $\tau=(3,\phi_\cH,\psi_\cH)$ from the closure of $\check{\mathcal K}$, the map $\psi_\cH$ is defined on the annulus $V_{h_1}$. If we view $\psi_\cH$ as the same map but restricted to a smaller annulus $V_{\tl h}$, then according to Lemma~\ref{Choose_r_h_lemma} and Koebe Distortion Theorem, the composition~(\ref{f_tau_composition}) is guaranteed to be defined in the set $\{z\in\bbC\mid |\Re z|<1/2; |\Im z|<\hat r\}$, where $\hat r$ is some constant, such that $0<\hat r\le r_1$. Finally, we set $\tl r=\hat r/2$. Then Lemma~\ref{f_tau_analyt_lemma} implies the existence of an open set $\check{\mathcal U}\subset\mathfrak P_{\tl U,\tl t,\tl h}$ with required properties.
\end{proof}

For future reference we formulate the following lemma: 

\begin{lemma}\label{Choose_K_lemma}
There exists a positive real number $r>0$, such that every critical commuting pair  $\zeta=(\eta|_{I_\eta},\xi|_{I_\xi})\in\mathcal K$ belongs to the class $\mathcal A_r$ from Definition~\ref{A_r_com_pair_def}, and if $\cH=\cH_\zeta\in\tilde{\mathcal K}$, then 
$$
\tilde{\pi}_{C_\cH^o}(C_\cH^o\cap (N_r(I_\eta)\cup N_{r|\eta(0)|}(I_\xi)))\subset V_{\tl r}.
$$
Furthermore, there exists $K\in\bbN$ with the property that for every positive integer $k\ge K$, and for every $\zeta\in\mathcal K$, 
the critical commuting pair $p\cR^k\zeta$ restricts (in the sense of Theorem~\ref{complex bounds}) to a holomorphic commuting pair $\cH_k\colon\Omega_k\to\Delta_k$, such that $\Delta_k\Subset C_\cH\cap (N_r(I_\eta)\cup N_{r|\eta(0)|}(I_\xi))$, and $\cH_k$ is an affine rescaling of a holomorphic pair from $\tilde{\mathcal K}$.
\end{lemma}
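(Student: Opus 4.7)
The plan is to derive all three assertions from the uniform geometric control that sequential compactness of $\mathbf H(\mu)$ (Lemma~\ref{bounds compactness}) imparts on $\tilde{\mathcal K}\subset\mathbf H(\mu)$, combined with real a priori bounds (Lemma~\ref{E_s_bounded_size_lemma}) and complex bounds (Theorem~\ref{complex bounds}).

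To produce $r$ with $\mathcal K\subset\mathcal A_r$, I would use that each $\zeta\in\mathcal K$ admits a holomorphic commuting pair extension $\cH_\zeta=(\eta,\xi,\nu)\in\mathbf H(\mu)$ by Lemma~\ref{Cont_holo_pair_lemma}. The normalizations $|I_\eta|=1$ and $|I_\xi|\ge\mu$ in Definition~\ref{H_mu_def}, together with the lower bounds on $\dist(\xi(0),\partial U)/\diam U$ and $\dist(\eta(0),\partial V)/\diam V$, the quasidisk control on boundaries, and the joint analyticity of $\hat\eta$, $\hat\xi$, $\nu$ on $U\cup D$, $V\cup D$, and $D$, yield, by sequential compactness of $\mathbf H(\mu)$, a single $r>0$ for which $N_r([0,1])\subset U\cup D\cup\{0\}$ and $N_{r|\eta(0)|}([0,\eta(0)])\subset V\cup D\cup\{0\}$ uniformly in $\cH_\zeta\in\tilde{\mathcal K}$. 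This is precisely the $\mathcal A_r$ condition.

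Next, for the inclusion $\tilde\pi_{C_\cH^o}(C_\cH^o\cap(N_r(I_\eta)\cup N_{r|\eta(0)|}(I_\xi)))\subset V_{\tl r}$, I would invoke Lemma~\ref{Canon_holo_cres_lemma}: the family $\{\pi_{C_\cH^o}^{-1}\}_{\cH\in\tilde{\mathcal K}}$ consists of real-symmetric univalent maps on the fixed Jordan domain $U_1\Supset\tl U\supset[-1/2,1/2]$, and by compactness of the ambient $\mathbf H(\mu)$ these maps have uniformly bounded distortion on every compact subdomain of $U_1$. Real-symmetry forces each $\pi_{C_\cH^o}$ to send the real slice of its domain to $\bbR$, and Koebe distortion on $\tl U$ guarantees that, after possibly shrinking $r$ uniformly in $\cH$, every $\bbC$-neighborhood of $I_\eta\cup I_\xi$ of radius at most $r$ (respectively $r|\eta(0)|$) is contained in $\pi_{C_\cH^o}^{-1}(\tl U)$ and is mapped into an arbitrarily thin horizontal strip around $\bbR\cap\tl U$, which descends on the cylinder to $V_{\tl r}$.

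Finally, for the existence of $K$, I would first note that $\mathcal K=\bigcup_B\mathcal I_B$ is $\cR$-invariant, since each $\mathcal I_B$ corresponds under $\iota$ to the $\sigma$-invariant set $\Sigma_B\subset\Sigma$; hence $\cR^k\zeta\in\mathcal K\subset\mcE_s$ for every $k$, and Lemma~\ref{E_s_bounded_size_lemma} delivers geometric shrinking of the intervals $I_\zeta^k$. Since $\tilde{\mathcal K}\subset\mathbf H(\mu)$ is relatively compact and the restriction map $\cH\mapsto\zeta_\cH$ into $\mathcal A_r$ is continuous, $\mathcal K$ is pre-compact in $\mathcal A_r$; Theorem~\ref{complex bounds} then supplies a uniform $K$ such that for every $k\ge K$ the pre-renormalization $p\cR^k\zeta$ restricts to a holomorphic pair $\cH_k\colon\Omega_k\to\Delta_k$ with $\Delta_k$ a Euclidean disk whose affine rescaling lies in $\mathbf H(\mu)$. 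Because $\diam\Delta_k$ is comparable to $|I_\zeta^k|$ and hence shrinks geometrically to zero while $0\in C_\cH$ is an interior point, enlarging $K$ if necessary places $\Delta_k\Subset C_\cH\cap(N_r(I_\eta)\cup N_{r|\eta(0)|}(I_\xi))$. The affine rescaling of $\cH_k$ is then a holomorphic extension of $\cR^k\zeta\in\mathcal K$ lying in $\mathbf H(\mu)$, and with the coherent choice of extensions made in Lemma~\ref{Cont_holo_pair_lemma} it coincides with $\cH_{\cR^k\zeta}\in\tilde{\mathcal K}$. The main difficulty throughout is the uniformity over $\mathcal K$, which contains pairs of arbitrarily high combinatorial type; this is resolved by working exclusively inside the sequentially compact ambient space $\mathbf H(\mu)$.
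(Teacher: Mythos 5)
Your treatment of the first two assertions (existence of $r$ with $\mathcal K\subset\mathcal A_r$, and the inclusion into $V_{\tl r}$) matches the paper's approach: both rest on compactness of $\hol(\mu)$, the uniform geometry guaranteed by Lemma~\ref{Canon_holo_cres_lemma}, and the Koebe distortion theorem. This part is fine.

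There is, however, a genuine gap in your argument for the existence of $K$. You obtain $\cH_k$ from Theorem~\ref{complex bounds}, which gives a holomorphic pair extension of $p\cR^k\zeta$ with a \emph{Euclidean disk} as range, and then you assert that after affine rescaling ``it coincides with $\cH_{\cR^k\zeta}\in\tilde{\mathcal K}$.'' This is unjustified and generically false: nothing in the construction of Lemma~\ref{Cont_holo_pair_lemma} forces $\cH_{\cR^k\zeta}$ to have a Euclidean disk as range, so the two extensions are, as holomorphic pair structures, different objects even though they agree as analytic maps on the overlap of their domains. The lemma you are proving requires a \emph{single} $\cH_k$ that simultaneously (a) restricts $p\cR^k\zeta$ in the sense of Theorem~\ref{complex bounds}, (b) has range compactly contained in $C_\cH\cap(N_r(I_\eta)\cup N_{r|\eta(0)|}(I_\xi))$, and (c) is an affine rescaling of an element of $\tilde{\mathcal K}$. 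Your argument establishes (a) and (b) for the Theorem~\ref{complex bounds} extension and (c) for a potentially different extension, then conflates the two.

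The paper closes this gap with an intermediate object $\mathcal G_\zeta$: it first fixes $N$ so that $p\cR^N\zeta$ restricts (in the sense of Theorem~\ref{complex bounds}) to $\mathcal G_\zeta$ with range in the desired set, and then shows, using Lemma~\ref{E_s_bounded_size_lemma} and properties \ref{hol_mu_2},\ref{hol_mu_5} of Definition~\ref{H_mu_def}, that for $k\geq L$ \emph{any} holomorphic pair extension of $p\cR^{k+N}\zeta$ whose rescaling lies in $\hol(\mu)$ is a restriction of $p\cR^k(\mathcal G_\zeta)$ --- in particular the rescaling of $\cH_{\cR^{k+N}\zeta}$ is. This simultaneously certifies that the range of the $\tilde{\mathcal K}$-extension is compactly contained in the target set \emph{and} that the iterates defining it stay within $N_r(I_\eta)\cup N_{r|\eta(0)|}(I_\xi)$, which is exactly what ``restricts in the sense of Theorem~\ref{complex bounds}'' demands. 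Your proof is missing this nesting argument.
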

\begin{proof}
Existence of the number $r$ that satisfies the first part of the lemma, follows easily from Lemma~\ref{E_s_bounded_distortion_lemma}, Lemma~\ref{Choose_U_B_lemma} and the Koebe Distortion Theorem.

Let $s$ and $\mu$ be the same as in Theorem~\ref{Comm_pair_attractor_theorem} and Theorem~\ref{complex bounds} respectively. Since $\mathcal K\subset\mcE_s$ and $\mathcal K\subset\mathcal A_r$, the set $\mathcal K$ is pre-compact in $\mathcal A_r$, hence by Theorem~\ref{complex bounds}, there exists $N=N(r,{\mathcal K})$, such that for every $\zeta\in\mathcal K$, the prerenormalization $p\cR^N\zeta$ restricts (in the sense of Theorem~\ref{complex bounds}) to some holomorphic pair $\mathcal G_\zeta$ whose appropriate affine rescaling lies in $\mathbf H(\mu)$ and the range of $\mathcal G_\zeta$ is compactly contained in $C_\cH\cap (N_r(I_\eta)\cup N_{r|\eta(0)|}(I_\xi))$. Then we note that the range of holomorphic pairs $\cR^k(\mathcal G_\zeta)$ is the same for all $k=0,1,2,\dots$, hence Lemma~\ref{E_s_bounded_size_lemma} and properties~\ref{hol_mu_2},~\ref{hol_mu_5} of Definition~\ref{H_mu_def} imply that there exists a positive integer $L\in\bbN$ with the property that for every $k\ge L$ and $\zeta\in \mathcal K$, if $\mathcal G$ is a holomorphic pair extension of $p\cR^{k+N}\zeta$, such that an appropriate affine rescaling of $\mathcal G$ lies in $\hol(\mu)$, 
then $\mathcal G$ is a restriction of the holomorphic pair $p\cR^k(\mathcal G_\zeta)$. This implies that the range of $\mathcal G$ is compactly contained in $C_\cH\cap (N_r(I_\eta)\cup N_{r|\eta(0)|}(I_\xi))$, and $\mathcal G$ is a restriction of $p\cR^{k+N}\zeta$ in the sense of Theorem~\ref{complex bounds}.

We finish the proof by putting $K=N+L$. Now the choice of holomorphic pairs $\cH_k$ is possible since $\cR(\mathcal K)=\mathcal K$.
\end{proof}

\section{Renormalization in $\mathfrak C_r$}
\label{sec:renorm1}

\subsection{Renormalization with respect to a fundamental crescent}
We let $V_r$ be as in (\ref{v-annulus}). 
Assume, $f\in\mathfrak C_r$, for some real number $r\in\bbR$, $0<r<0.5$. Then we can restrict $f$ to an analytic (single-valued) map $\breve f\colon V_r\setminus\{\Re z=0\}\to \bbC\slash\bbZ$. 
\begin{definition}
	Let $f\in\mathfrak C_r$ be a generalized critical cylinder map and let $C_f\subset V_r$ be its fundamental crescent of period $n$. For a point $z\in C_f$, such that $\{\breve f^j(f(z))\}_{j\in\bbN}\cap C_f\neq\varnothing$, we define $R_{C_f}(z)$ to be the first return of $f(z)$ to $C_f$ under the map $\breve f$. In general, $R_{C_f}$ is a multiple-valued map since $f$ is also multiple-valued.
\end{definition}

\begin{definition}\label{Renorm_cres_gen_cyl_def}
	Given a generalized critical cylinder map $f\in\mathfrak C_r$ and its fundamental crescent $C_f\subset V_r$ of period $n$, let us say that $f$ is renormalizable with respect to the fundamental crescent $C_f$, if there exists a positive real number $r'>0$, such that 
	 the composition
	\begin{equation*}\label{Renorm_real_eq}
	\hat f= \tilde\pi_{C_f}\circ R_{C_f}\circ\tilde\pi_{C_f}^{-1}
	\end{equation*}
	belongs to  $\mathfrak C_{r'}$.

	We will say that the  generalized critical cylinder map $\hat f\in\mathfrak C_{r'}$ is the {\it renormalization of $f$ with respect to the fundamental crescent $C_f$} and we will denote it by $$\hat f=\mathcal R_{C_f} f.$$
\end{definition}

The following lemma is easy to verify:
\begin{lemma}\label{alpha_preserve_lemma}
	If $f\in\mathfrak C_r^\alpha$, for some $\alpha\in\bbC$, and $f$ is renormalizable with respect to a fundamental crescent $C_f$, then $\cR_{C_f}f\in\mathfrak C_{r'}^\alpha$. In other words, renormalization does not change the critical exponent at zero.
\end{lemma}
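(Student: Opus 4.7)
The plan is to track the critical exponent through each of the three pieces of the composition $\hat f = \tilde\pi_{C_f}\circ R_{C_f}\circ \tilde\pi_{C_f}^{-1}$ at the point $0$. First I would observe that since the base point was chosen to be $\omega=0\in C_f$, we have $\pi_{C_f}(0)=0$ and hence $\tilde\pi_{C_f}(0)=\pi(0)=0\in\bbT$. Moreover, $\pi_{C_f}$ is a restriction of the conformal uniformizing map $\pi_{C_f^o}$ constructed in \S\ref{sec:cres}, so near the origin $\tilde\pi_{C_f}$ is a single-valued biholomorphism onto a neighborhood of $0\in\bbC\slash\bbZ$. In particular, $\tilde\pi_{C_f}^{-1}$ is well-defined and locally conformal near $0$.

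Next I would invoke the elementary fact that the critical exponent at a point is invariant under pre- and post-composition with local biholomorphisms. Indeed, if $g(w)=\psi((\phi(w))^\alpha)$ near $w_0$ with $\phi,\psi$ locally conformal and $\phi(w_0)=0$, and if $h_1,h_2$ are local biholomorphisms with $h_1(z_0)=w_0$ and $h_2$ defined at $g(w_0)$, then
$$
(h_2\circ g\circ h_1)(z)=(h_2\circ\psi)\big((\phi\circ h_1)(z)^\alpha\big),
$$
and both $h_2\circ\psi$ and $\phi\circ h_1$ are locally conformal, with the latter vanishing at $z_0$. Applied to $h_1=\tilde\pi_{C_f}^{-1}$ and $h_2=\tilde\pi_{C_f}$, this reduces the lemma to showing that $R_{C_f}$ has critical exponent $\alpha$ at $z=0$.

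To compute the critical exponent of the first return map at $0$, I would write $R_{C_f}(z)=\breve f^{\,k-1}(f(z))$, where $k=k(z)$ is the first-return time. At $z=0$, the initial application of $f$ contributes exactly the critical exponent $\alpha$, by the assumption $f\in\mathfrak C_r^\alpha$. I must then argue that the subsequent $k-1$ applications of $\breve f$ are local biholomorphisms along the orbit of $f(0)$. This is where the minimality of the return time enters: the iterates $\breve f^{\,j}(f(0))$ for $j=0,\ldots,k-2$ lie outside $C_f$ by definition of first return, and since $0\in C_f$, none of these iterates can coincide with $0$. On $V_r\setminus\{0\}$ the map $\breve f$ is a single-valued analytic local diffeomorphism (the critical exponent condition of $\mathfrak C_r^\alpha$ localizes the critical behavior to $0$), so each $\breve f$ in the composition is a local biholomorphism at the relevant point. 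A finite composition of such local biholomorphisms is itself a local biholomorphism, so $\breve f^{\,k-1}$ is locally conformal at $f(0)$, and the composition $\breve f^{\,k-1}\circ f$ has critical exponent $\alpha$ at $0$, as required.

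The only potential obstacle, and the step I would verify most carefully, is the claim that $\breve f$ has no other critical points in $V_r$ beyond $0$; this is essentially built into the definition of a generalized critical cylinder map in $\mathfrak C_r^\alpha$, since the explicit local form forces the critical set to be $\{0\}\subset\bbT$ (equivalently, $\breve f'$ is nonvanishing on $V_r\setminus\{0\}$ for $r$ small enough), but it should be stated explicitly in the write-up. Once this is noted, the three-line composition argument above yields the lemma with $r'$ equal to the radius supplied by the renormalizability hypothesis of Definition~\ref{Renorm_cres_gen_cyl_def}.
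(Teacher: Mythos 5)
Your approach is correct, and since the paper states this lemma without proof (``easy to verify''), what you wrote is essentially the intended verification: write $\hat f = \tilde\pi_{C_f}\circ R_{C_f}\circ\tilde\pi_{C_f}^{-1}$, observe that the conjugating maps are local biholomorphisms (at $0$ and at $R_{C_f}(0)\in C_f$ respectively, both interior points of the crescent's fattening), and factor the return map as $R_{C_f}=\breve f^{\,k-1}\circ f$ with $f$ supplying the $\alpha$-singularity and $\breve f^{\,k-1}$ a local biholomorphism at $f(0)$. The use of the first-return property to see that the intermediate orbit points avoid $0$ is exactly right.

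The one step where your justification is off is the claim that the absence of further critical points of $\breve f$ is ``essentially built into the definition'' of $\mathfrak C_r^\alpha$. It is not: the definition of $\mathfrak C_r^\alpha$ (and Definition~\ref{C_r_def}) constrains $\tilde f$ only near $0$ and $1$ via the local form $\psi\circ p_\alpha\circ\phi$, and says nothing about critical points of $\breve f$ elsewhere in $V_r$. This matters, because if $(\breve f^{\,k-1})'(f(0))$ vanished to some order $m$ then $\hat f$ would have exponent $m\alpha$, and the renormalizability hypothesis $\hat f\in\mathfrak C_{r'}$ by itself does not exclude that. The correct justification is dynamical rather than definitional: for the maps to which the lemma is applied, $f|_\bbT$ is a circle homeomorphism whose only critical point is $0$, so $f'\neq 0$ on $\bbT\setminus\{0\}$; the orbit $f(0),\breve f(f(0)),\ldots,\breve f^{\,k-2}(f(0))$ lies in $\bbT\setminus\{0\}$, hence $\breve f'\neq 0$ there, and nonvanishing at a finite set of points is an open condition, so it persists under small perturbations inside $\mathfrak C_r$. (Your parenthetical alternative --- that $\breve f'$ is bounded away from zero on all of $V_r\setminus\{\Re z=0\}$ for $r$ small --- is also true for such maps, by the explicit local form near $0$ together with compactness away from it, but again this follows from $f|_\bbT$ being a homeomorphism, not from the definition of the space.) Note also that $\breve f$ is defined on $V_r\setminus\{\Re z=0\}$, not $V_r\setminus\{0\}$ as you wrote; for real maps this is harmless since the relevant orbit lies in $\bbT\setminus\{0\}$, which misses the vertical slit, but it is worth stating precisely.
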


\subsection{Cylinder renormalization operator $\cren$}\label{Cyl_ren_sec}
Let  $\mu$ be as in Theorem \ref{complex bounds}, and let $\hat{\mathcal K}\subset\mathfrak C_{\tilde r}^3$ be the same as in Definition~\ref{hat_K_def}. We also fix $r_1>\tilde r$ as in Lemma~\ref{Choose_r_h_lemma} and Lemma~\ref{Choose_hr_tilde_lemma}, and let $\tilde U$ be as in Lemma~\ref{Choose_U_B_lemma}. 

\begin{remark}\label{M_real_bounds_remark}
It follows from real a priori bounds (c.f. Lemma~\ref{E_s_bounded_size_lemma}) and the Koebe Distortion Theorem that there exists a positive integer $L>0$ with the property that for every $f\in\hat{\mathcal K}$ and for every real-symmetric conformal map $h\colon\tl U\to\bbC$, such that $h([-1/2,1/2])$ is contained in the domain of the commuting pair $p\cR^Lf$, we have 
$$
|h'(0)|<1/3.
$$
\end{remark}
Set $N$ to be the smallest even integer greater than $\max(K,L)$, where $K$ is the same as in Lemma~\ref{Choose_K_lemma} and $L$ is the same as in Remark~\ref{M_real_bounds_remark}.

\begin{remark}
The fact that the number $N$ is even, will be important in Section~\ref{sec:renorm2}.
\end{remark}

\begin{lemma}\label{U_lemma}
There exists a positive real number $r_2>\tilde r$, an open neighborhood $U_2\subset\bbC$ with $\tilde U\Subset U_2$, and an open set $\mathcal U\subset\mathfrak C_{\tilde r}$, such that $\hat{\mathcal K}\subset\mathcal U$, and for each $f\in\mathcal U$ there exists a choice of a fundamental crescent $C_f\subset V_{\tilde r}$ and its fattening $C_f^o\subset V_{\tilde r}$, such that both $C_f$ and $C_f^o$ depend continuously on $f\subset\mathcal U$ and the following holds:

(i) $C_f$ has period $q_{N-1}$, where $q_{N-1}$ is the denominator of the $(N-1)$-st convergent of $\rho(f)$, written in the irreducible form;

(ii) the dependence $f\mapsto \pi_{C_f^o}(z)$ is analytic for any fixed $z\in C_f^o$;

(iii) 
$$
\mathcal R_{C_f}f\in\mathfrak C_{r_2},
$$
the map $\pi_{C_f^o}^{-1}$ is defined and univalent on $U_2$, and 
$|(\pi_{C_f^o}^{-1})'(0)|<1/2$.

(iv) for $\zeta\in\mathcal K$, $\cH=\cH_\zeta\in\tilde{\mathcal K}$ and $f=f_\cH\in\hat{\mathcal K}$, the fundamental crescent $C_f$ and its fattening $C_f^o$ satisfy the relations
$$
C_f=\tilde{\pi}_{C_\cH}(C_{\cH_1}),\qquad C_f^o=\tilde{\pi}_{C_\cH}(C_{\cH_1}^o),
$$
where $\cH_1$ is a linear rescaling of the holomorphic pair $\cH_{\cR^N\zeta}$, and $C_{\cH_1}$ and $C_{\cH_1}^o$ are linear rescalings of $C_{\cH_{\cR^N\zeta}}$ and $C_{\cH_{\cR^N\zeta}}^o$ respectively.
\end{lemma}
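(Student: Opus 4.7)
\textbf{Approach and construction on $\hat{\mathcal K}$.} The plan is to first construct $C_f$ and $C_f^o$ for $f$ in the skeleton $\hat{\mathcal K}$ by transporting a canonical fundamental crescent from the ``next'' holomorphic pair on the attractor via the identification $\tilde\pi_{C_\cH}$, and then to propagate the construction to an open neighborhood using Lemma~\ref{Nearby_crescent_lemma}. The choice $N>\max(K,L)$ (even) is precisely what is needed to feed Lemma~\ref{Choose_K_lemma} and Remark~\ref{M_real_bounds_remark}. Write $f=f_\cH$ with $\cH=\cH_\zeta$, $\zeta\in\mathcal K$. Since $N\ge K$, Lemma~\ref{Choose_K_lemma} supplies a holomorphic pair $\cH_1$ that restricts $p\cR^N\zeta$, is an affine rescaling of $\cH_{\cR^N\zeta}\in\tilde{\mathcal K}$, and whose range is compactly contained in $C_\cH$. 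Lemma~\ref{Canon_holo_cres_lemma} equips $\cH_{\cR^N\zeta}$ with a canonical fundamental crescent and fattening; rescaling back produces $C_{\cH_1}$ and $C_{\cH_1}^o$ inside the range of $\cH_1\subset C_\cH$. Define
$$C_f:=\tilde\pi_{C_\cH}(C_{\cH_1}),\qquad C_f^o:=\tilde\pi_{C_\cH}(C_{\cH_1}^o).$$
The inclusion $C_f,C_f^o\subset V_{\tilde r}$ is the last conclusion of Lemma~\ref{Choose_K_lemma}, and property~(iv) is immediate.

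\textbf{Verification of (i)--(iii) on $\hat{\mathcal K}$.} For (i), $C_{\cH_1}$ is a period-$1$ crescent for the $\eta$-map $\eta_1$ of $\cH_1$; using $f=\tilde\pi_{C_\cH}\circ R_{C_\cH}\circ\tilde\pi_{C_\cH}^{-1}$ from~\eqref{f_zeta_formula} and the Gauss-map identity $\rho(f)=G(\rho(\zeta))$ of Proposition~\ref{attr-injective}, a combinatorial argument identifies $\eta_1$, after conjugation by $\tilde\pi_{C_\cH}$, with the iterate $f^{q_{N-1}}$ along the boundary separating arc; evenness of $N$ enters in matching the boundary combinatorics. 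For (iii), by construction $\cR_{C_f}f=\tilde\pi_{C_f}\circ R_{C_f}\circ\tilde\pi_{C_f}^{-1}=f_{\cH_{\cR^N\zeta}}\in\hat{\mathcal K}\subset\mathfrak C_{r_1}$ (Definition~\ref{hat_K_def}, Lemma~\ref{Choose_r_h_lemma}), so any $r_2\in(\tilde r,r_1)$ works. Univalence of $\pi_{C_f^o}^{-1}$ on a Jordan domain $U_2$ with $\tilde U\Subset U_2\subset U_1$ follows by composing the univalent $\pi_{C_{\cH_1}^o}^{-1}\colon U_1\to\Delta_{\cH_1}$ from Lemma~\ref{Choose_U_B_lemma} with the conformal chart $\tilde\pi_{C_\cH}$. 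Finally, for $|(\pi_{C_f^o}^{-1})'(0)|<1/2$: the restriction $h:=\pi_{C_f^o}^{-1}|_{\tilde U}$ is a real-symmetric conformal map whose image $h([-1/2,1/2])=C_f^o\cap\bbR$ coincides, via the cylinder/pair dictionary, with the domain of $p\cR^{N-1}f$, which is contained in the domain of $p\cR^L f$ since $N-1\ge L$; Remark~\ref{M_real_bounds_remark} then yields $|h'(0)|<1/3<1/2$.

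\textbf{Extension to $\mathcal U$; main obstacle.} For each $f\in\hat{\mathcal K}$, apply Lemma~\ref{Nearby_crescent_lemma} to the specific branch of the lifted iterate $f_l^{q_{N-1}}$ (viewed as an element of $\mathfrak D(\Omega)$ on a suitable lifted domain), where $l$ is the separating arc bounding $C_f$. This produces a neighborhood $\mathcal W(f)\subset\mathfrak C_{\tilde r}$ on which fundamental crescents $C_g$ and fattenings $C_g^o$ move holomorphically, are continuous in $g$, are real-symmetric for real-symmetric $g$, and satisfy the analytic dependence $g\mapsto\pi_{C_g^o}(z)$ required by~(ii). Set $\mathcal U:=\bigcup_{f\in\hat{\mathcal K}}\mathcal W(f)$; after possibly shrinking each $\mathcal W(f)$, all properties (i)--(iv) and the univalence and derivative bounds in~(iii) extend from $\hat{\mathcal K}$ to $\mathcal U$ by continuity, the Koebe and univalence bounds being open conditions. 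The main technical obstacle is the period identification in~(i): one must carefully track how $R_{C_\cH}$ decomposes into the commuting-pair dynamics of $\cH$ and how, under the Gauss shift, the $\eta$-map of $\cH_1$ corresponds precisely to the $q_{N-1}$-th iterate of $f$. The evenness of $N$, together with the attractor structure of $\mathcal K$ (in particular, uniform real \textit{a priori} bounds from Lemma~\ref{E_s_bounded_size_lemma}), is what makes this identification yield the indices as stated.
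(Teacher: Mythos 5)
Your proof follows the same route as the paper's: transport the canonical crescent of $\cH_{\cR^N\zeta}$ into the cylinder via $\tilde\pi_{C_\cH}$ (using Lemma~\ref{Choose_K_lemma}), verify (i)--(iv) on $\hat{\mathcal K}$, and then propagate by Lemma~\ref{Nearby_crescent_lemma} and the Cantor-set structure of $\hat{\mathcal K}$. One inaccuracy in the commentary: the evenness of $N$ plays no role in obtaining period $q_{N-1}$ in part~(i) (the period identification is purely combinatorial, via the observation that $\pi_{C_\cH}$ conjugates $\cH_1$ with a holomorphic pair extension of $p\cR^{N-1}f$); evenness is only used later, in Lemma~\ref{composition_lemma}, to guarantee $\pi_{C_{f_\tau}}'(0)>0$. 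Also, your final union $\mathcal U:=\bigcup_f\mathcal W(f)$ requires, as in Lemma~\ref{Cont_holo_pair_lemma}, that the neighborhoods be chosen pairwise disjoint across the nested Cantor levels so that the local choices of $C_f^o$ glue into a single continuous family; you gesture at this with ``after possibly shrinking,'' which should be made explicit.
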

\begin{proof}
For an arbitrary $f=f_\mathcal H\in\hat{\mathcal K}$ with the corresponding holomorphic commuting pair $\mathcal H\in\tilde{\mathcal K}$ and the underlying critical commuting pair $\zeta\in\mathcal K$, the renormalization $\mathcal R^N\zeta$ belongs to $\mathcal K$, hence it extends to a holomorphic commuting pair $\cH_{\mathcal R^N\zeta}\in\mathbf H(\mu)$. Let $\cH_1$ be the holomorphic pair extension of the pre-renormalization $p\cR^N\zeta$, such that $\cH_1$ is a linear rescaling of $\cH_{\mathcal R^N\zeta}$. Let $C_{\cH}$ be the fundamental crescent of $\cH$ from Lemma~\ref{Canon_holo_cres_lemma}.
From Lemma~\ref{Choose_K_lemma} 
we can see that $\Delta_{\cH_1}\Subset C_\cH$, and the projection $\pi_{C_\cH}$ conjugates $\cH_1$ with a holomorphic pair extension $\cH_2$ of $p\cR^{N-1}(f)$, such that $\Delta_{\cH_2}\Subset V_{\tilde r}$ and the correspondence $f\mapsto \cH_2$ extends to a continuous map from a neighborhood of $f\in\hat{\mathfrak C}^3_r$ to $\hol$.

Let $C_{\cH_1}$ and $C_{\cH_1}^o$ be the fundamental crescent and its fattening that are linear rescalings of the fundamental crescent $C_{\cH_{\cR^N\zeta}}$ and the corresponding fattening $C_{\cH_{\cR^N\zeta}}^o$, provided by Lemma~\ref{Canon_holo_cres_lemma}. Then we set
$$
C_{f}=\tilde{\pi}_{C_\mathcal H}(C_{\mathcal H_1})\qquad\text{and }\qquad C_{f}^o=\tilde{\pi}_{C_\mathcal H}(C_{\mathcal H_1}^o).
$$
We notice that our construction satisfies property (iv) of the lemma.
Now it follows from the above argument that $C_f$ is a fundamental crescent for $\cH_2$, hence according to Definition~\ref{def-crescent2}, it is also a fundamental crescent for $f\in\hat{\mathcal K}$ of period $q_{N-1}$, and $C_f^o$ is a fattening of $C_f$. Moreover, since $\pi_{C_\cH}$ is a conjugacy between $\cH_1$ and $\cH_2$, we have $\cR_{C_f}f=f_{\cH_1}\in\mathfrak C_{r_1}$, where $r_1$ is the same as in Lemma~\ref{Choose_r_h_lemma}.

Now Lemma~\ref{Nearby_crescent_lemma} implies that the correspondence $f\mapsto C_f^o$ extends to a continuous map on some neighborhood of $f\in\mathfrak C_{\tilde r}$, satisfying property~(ii). Choose $r_2\in\bbR$ and $U_2\subset\bbC$ so that $\tilde r<r_2<r_1$ and $\tilde U\Subset U_2\Subset U_1$. Then according to Lemma~\ref{Choose_U_B_lemma}, Remark~\ref{r_1_r_remark}, Remark~\ref{M_real_bounds_remark} and continuity argument, by possibly shrinking the neighborhood of $f$, we can ensure that property~(iii) also holds. Finally, in order to construct the open set ${\mathcal U}$, we use the fact that the set $\hat{\mathcal K}$ is a countable union of nested Cantor sets, and the rest of the proof goes in the same way as the proof of Lemma~\ref{Cont_holo_pair_lemma}.
\end{proof}

\begin{definition}\label{cren_def}
Let $\mathcal U\subset\mathfrak C_{\tilde r}$ be the open set constructed in Lemma~\ref{U_lemma}. Then we define the {\it cylinder renormalization operator} $\cren\colon\mathcal U\to\mathfrak C_{\tilde r}$ as
$$
\cren(f)=\mathcal R_{C_f}f,
$$
where $C_f$ is the canonical fundamental crescent for $f\in\mathcal U$, constructed in Lemma~\ref{U_lemma}.
\end{definition}
Our definition extends the definition given by the second author in \cite{Ya3} to a wider class of analytic maps. 

\begin{definition}
By $\hren$ we denote the restriction of the cylinder renormalization operator $\cren$ to the subset $\mathcal U\cap\hat{\mathfrak C}_{\tilde r}^3$, where $\hat{\mathfrak C}_{\tilde r}^3\subset\mathfrak C_{\tilde r}^3$ is the set from Definition~\ref{C_hat3_def}.

\end{definition}

In the following proposition we summarize some basic properties of the cylinder renormalization operator:

\begin{proposition}\label{cren_basic_proposition}
\begin{itemize}
\item[(i)] The cylinder renormalization operator $\cren$ is a real-symmetric analytic operator $\cU\to\kC_{\tilde r}$.

\item[(ii)] For every critical commuting pair $\zeta\in\mathcal K$, we have
$$
\mathcal \cren f_{\cH_\zeta}=f_{\cH_{(\mathcal R^N\zeta)}},\text{ where }f_{\cH_{(\cR^N\zeta)}}\text{ is as in (\ref{f_zeta_formula})}.
$$

\item[(iii)] $\cren(\hat{\mathcal I}_B)=\hat{\mathcal I}_B$, for every $B\in\bbN$.

\item[(iv)] For every complex number $\alpha\in\bbC$ sufficiently close to $3$, we have
$$
\mathcal \cren(\mathcal U\cap\mathfrak C_{\tilde r}^\alpha)\subset\mathfrak C_{\tilde r}^\alpha.
$$

\item[(v)] $\hren(\mathcal U\cap\hat{\mathfrak C}_{\tilde r}^3)\subset\hat{\mathfrak C}_{\tilde r}^3$.

\item[(vi)] At every point 
$f\in \mathcal U\cap\hat{\mathfrak C}_{\tilde r}^3$ 
the differential $D_f\hren\colon T_f{\hat\kC}_{\tl r}^3\to T_{\cren f}{\hat\kC}_{\tl r}^3$ is a compact operator. 
\end{itemize}
\end{proposition}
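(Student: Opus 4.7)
The plan is to address the six items in order, since several later items build on the construction already summarized in Lemma~\ref{U_lemma}.

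For (i), real-symmetry of $\cren$ is inherited from the real-symmetry of the canonical fundamental crescent $C_f$ (Lemma~\ref{Nearby_crescent_lemma}(ii), which propagates through the construction in Lemma~\ref{U_lemma}) together with the real-symmetry of $\tilde\pi_{C_f}$ and $R_{C_f}$. Analyticity reduces to analytic dependence of the three ingredients of $\cren(f) = \tilde\pi_{C_f}\circ R_{C_f}\circ\tilde\pi_{C_f}^{-1}$ on $f$: the return map $R_{C_f}$ depends analytically on $f$ as a finite iterate of $f$ in a region where $f$ is univalent; the projection $\pi_{C_f^o}$ and its inverse depend analytically on $f$ by Lemma~\ref{U_lemma}(ii) combined with property (iii) of Lemma~\ref{U_lemma}, which locates the inverse on a fixed domain $U_2 \supset \tilde U$. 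One then uses Lemma~\ref{f_tau_analyt_lemma} style compactness/Koebe arguments to see that the composition lies in $\mathfrak C_{\tilde r}$ with analytic dependence on $f$.

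For (ii), the identity $\cren f_{\cH_\zeta} = f_{\cH_{\cR^N\zeta}}$ is essentially built into Lemma~\ref{U_lemma}(iv): the canonical crescent $C_f$ is defined so that $\pi_{C_\cH}$ conjugates the rescaled holomorphic pair $\cH_1$ (an affine rescaling of $\cH_{\cR^N\zeta}$) with a holomorphic pair extension of $p\cR^{N-1}(f)$. Composing with the appropriate projection identifies $\cR_{C_f} f$ with the map $f_{\cH_1}$ associated with the rescaled pair, and rescaling by the normalization $|\eta(0)|=1$ implicit in the construction of $f_{\cH}$ in (\ref{f_zeta_formula}) yields $f_{\cH_{\cR^N\zeta}}$. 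Item (iii) is then immediate from (ii): by Theorem~\ref{Comm_pair_attractor_theorem}, $\cR$ acts as the shift on $\Sigma_B$, so $\mathcal I_B$ is $\cR^N$-invariant and $\cR^N|_{\mathcal I_B}$ is a bijection; applying the continuous embedding $\zeta \mapsto \cH_\zeta$ of Lemma~\ref{Cont_holo_pair_lemma} and then $\cH \mapsto f_\cH$ of Definition~\ref{hat_K_def}, the map $\cren$ becomes conjugate to $\cR^N|_{\mathcal I_B}$ on $\hat{\mathcal I}_B$, hence bijective on $\hat{\mathcal I}_B$.

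For (iv), observe that $\cren = \cR_{C_f}$ on $\mathcal U$ and apply Lemma~\ref{alpha_preserve_lemma} directly: the crescent renormalization is a conjugation of a finite iterate of $f$ by the biholomorphism $\tilde\pi_{C_f}$, so the critical exponent at the origin is unchanged. Item (v) requires noting that if $f \in \hat{\mathfrak C}_{\tilde r}^3$ then $f$ is single-valued in $V_{\tilde r}$, so iterates of $f$ are single-valued wherever defined and $R_{C_f}$ is single-valued in a neighborhood of $0$; composing with the univalent $\tilde\pi_{C_f}^{-1}$ preserves this property, and the single-valued lift extends analytically across the critical point since it is bounded and locally of the form $\psi\circ q_3 \circ \phi$ with $\phi,\psi$ conformal and $q_3$ a polynomial.

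Finally, for (vi), the key observation is that by Lemma~\ref{U_lemma}(iii) the operator $\hren$ actually takes values in the smaller space $\hat{\mathfrak C}_{r_2}^3$ with $r_2 > \tilde r$. The tangent space inclusion $\mathfrak A_{r_2} \hookrightarrow \mathfrak A_{\tilde r}$ is a compact operator: any bounded sequence of analytic functions on $U_{r_2}$, continuous up to $\overline U_{r_2}$, forms a normal family on the compactly contained domain $U_{\tilde r}$ by Montel's theorem, and hence has a uniformly convergent subsequence on $\overline U_{\tilde r}$. Since $\hren$ factors as an analytic map from $\mathcal U \cap \hat{\mathfrak C}_{\tilde r}^3$ into $\hat{\mathfrak C}_{r_2}^3$ followed by this compact inclusion, the differential $D_f \hren$ at every point factors through a compact operator and is therefore compact. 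The main obstacle is the bookkeeping in item (ii) to verify that the chosen crescent really does produce the correctly normalized image $f_{\cH_{\cR^N\zeta}}$; everything else is a direct consequence of Lemma~\ref{U_lemma} together with standard compactness/normality arguments for analytic functions.
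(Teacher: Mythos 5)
Your proof is correct and follows essentially the same route as the paper's: items (i), (iv) and (vi) use precisely the same ingredients (Lemma~\ref{U_lemma}, Lemma~\ref{alpha_preserve_lemma}, and the Montel-plus-compact-inclusion argument based on $r_2 > \tilde r$ from Lemma~\ref{U_lemma}(iii)), and the one place where you elaborate more -- items (ii), (iii) and (v), which the paper dispatches with a citation to~\cite{Ya3} -- your self-contained derivations from Lemma~\ref{U_lemma}(iv), the conjugacy of $\cR^N|_{\mathcal I_B}$ to a bijective shift, and the invariance of $f_\cH$ under affine rescaling of $\cH$ are in fact the content of the cited arguments.
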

\begin{proof}
Analyticity part of the statement (i) follows from Lemma \ref{U_lemma}. 
Real symmetry is evident from 
the construction. Properties (ii),(iii), and (v) were established in \cite{Ya3}. Finally, (iv) follows from Lemma~\ref{U_lemma} and Lemma~\ref{alpha_preserve_lemma}.

By Montel's theorem, a bounded set in the tangent space $T_{\hren f} \hat{\kC}_{r_2}^3$ is normal, hence pre-compact in $T_{\hren f} {\hat\kC}_{\tl r}^3$. 
Since differential is a bounded operator, property (vi) follows from this and property (iii) of Lemma~\ref{U_lemma} (c.f. Proposition~9.1 from \cite{Ya3}).
\end{proof}

\section{Renormalization in $\mathfrak P_{U,t,h}$}
\label{sec:renorm2}

Let the maps $p_{\phi,\alpha+}$ and $p_{\phi,\alpha-}$ be defined in the same way as in Section~\ref{sec:spaces} after Definition~\ref{U_r_pm_def}. 
The following technical statement is straightforward to verify:
\begin{proposition}\label{commut_prop}
Let the maps $p_{\phi,\alpha+}$ and $p_{\phi,\alpha-}$ be as above, and let $c\in\bbC$ be a non-zero complex number, such that $\Re c>0$. Assume that $\Re \phi'(0)>0$. Then
$$
p_{\phi,\alpha+} \equiv \frac{1}{p_{\alpha+}(c)}p_{c\phi,\alpha+}\qquad\text{and}\qquad
p_{\phi,\alpha-} \equiv \frac{1}{p_{\alpha+}(c)}p_{c\phi,\alpha-}.
$$
\end{proposition}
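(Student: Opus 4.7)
The plan is to verify both identities first pointwise on a small real interval adjacent to the origin (a positive one for the $+$ claim and a negative one for the $-$ claim), where one can compute explicitly with principal branches of $\log$, and then invoke uniqueness of analytic continuation to propagate the identity to all of $U_r^+$ (respectively $U_r^-$). Both sides of each asserted equality are, by construction, analytic functions on the relevant surface $U_r^\pm$, and they agree on an accumulation set as soon as they agree on a real interval next to $0$, so the real-line identity is the only thing that really needs a proof.

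For the first identity, I would argue as follows. Choose $\varepsilon>0$ small enough that $\phi(x)$, $c\phi(x)$ and $c$ all lie in $\bbC\setminus\bbR^-$ for $x\in(0,\varepsilon)$; this is possible because $\phi(x)/x\to\phi'(0)$ as $x\to 0^+$, and both $\arg c$ and $\arg\phi'(0)$ lie in $(-\pi/2,\pi/2)$ by the hypotheses $\Re c>0$ and $\Re\phi'(0)>0$. On this interval $p_{\phi,\alpha+}(x)=p_{\alpha+}(\phi(x))=\exp(\alpha\log\phi(x))$ and $p_{c\phi,\alpha+}(x)=p_{\alpha+}(c\phi(x))=\exp(\alpha\log(c\phi(x)))$, using the principal branch of $\log$ in each case. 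Since $\arg c+\arg\phi(x)\in(-\pi,\pi)$ for $x\in(0,\varepsilon)$ (possibly after shrinking $\varepsilon$), we have the additive identity $\log(c\phi(x))=\log c+\log\phi(x)$, whence $p_{\alpha+}(c\phi(x))=p_{\alpha+}(c)\,p_{\alpha+}(\phi(x))$. Dividing by $p_{\alpha+}(c)$ gives the first identity on $(0,\varepsilon)$, and analytic continuation finishes the argument.

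For the second identity I would reduce to the first by the elementary relation $p_{\alpha-}(z)=-p_{\alpha+}(-z)$, valid for $z\notin\bbR^+$. For small negative $x$, $\Re\phi(x)<0$, so $-\phi(x)$ lies in the right half-plane; by the same argument as above, $\arg c+\arg(-\phi(x))\in(-\pi,\pi)$, and therefore
$$
p_{\alpha+}(-c\phi(x))=p_{\alpha+}(c)\,p_{\alpha+}(-\phi(x)).
$$
Multiplying both sides by $-1/p_{\alpha+}(c)$ and rewriting $-p_{\alpha+}(-w)=p_{\alpha-}(w)$ for $w=\phi(x)$ and $w=c\phi(x)$ yields the second identity on a small negative interval, which extends to $U_r^-$ by analytic continuation.

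The only delicate point is the branch-tracking in Step~2: one has to guarantee that $\arg c+\arg\phi(x)$ (respectively $\arg c+\arg(-\phi(x))$) never reaches $\pm\pi$, which is exactly what the hypotheses $\Re c>0$ and $\Re\phi'(0)>0$ secure for $x$ sufficiently close to $0$. Everything else is routine. It is also worth remarking that the $c$-scaled map $c\phi$ automatically satisfies the standing assumptions of the construction of $p_{\cdot,\alpha\pm}$, since $(c\phi)'(0)=c\phi'(0)\in\bbC\setminus\bbR^-$, so both sides of each asserted equality are legitimately defined objects.
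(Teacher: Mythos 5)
The paper offers no proof here---it simply labels the statement ``straightforward to verify''---so there is no official argument to compare against. Your verification is correct and is exactly the natural one: check the multiplicative identity $p_{\alpha+}(c w)=p_{\alpha+}(c)\,p_{\alpha+}(w)$ on a real interval near $0$, where the hypotheses $\Re c>0$ and $\Re\phi'(0)>0$ keep $\arg c+\arg\phi(x)$ (resp.\ $\arg c+\arg(-\phi(x))$) inside $(-\pi,\pi)$ so the principal-branch additivity of $\log$ applies, then propagate to all of $U_r^\pm$ by uniqueness of analytic continuation, with the minus case reduced to the plus case via $p_{\alpha-}(z)=-p_{\alpha+}(-z)$. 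Your closing remark that $(c\phi)'(0)=c\phi'(0)\notin\bbR^-$ (so $p_{c\phi,\alpha\pm}$ is a legitimately defined object under the paper's standing assumptions) is a worthwhile detail that the paper leaves implicit.
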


\begin{lemma}\label{composition_lemma}
Let $r_2$ and $U_2$ be the same as in Lemma~\ref{U_lemma}. 
There exists a positive real number $h_2>\tilde h$, 
and an open set $\tilde{\mathcal U}\subset\mathfrak P_{\tilde U,\tilde t,\tilde h}$, such that $\check{\mathcal K}\subset\tilde{\mathcal U}$, and for every $\tau=(\alpha,\phi,\psi)\in\tilde{\mathcal U}$, the following properties hold:

\begin{enumerate}[(i)]
\item the generalized critical cylinder map $f_\tau$ from~(\ref{f_tau_def}) is contained in the open set $\mathcal U$ from Definition~\ref{cren_def}, and hence, has the canonical fundamental crescent $C_{f_\tau}$ with the fattening $C_{f_\tau}^o$, constructed in Lemma~\ref{U_lemma};
\item $\Re \pi_{C_{f_\tau}^o}'(0)>0$; 
\item\label{phi_tilde_condit} the map $\tilde{\phi}=\pi_{C_{f_\tau}}'(0)\cdot\phi\circ\pi_{C_{f_\tau}^o}^{-1}$ is defined in $U_2$;
\item $\tilde\phi\in\mathfrak B_{U_3,\tilde t}^\alpha$, for every domain $U_3$, such that $\tl U\subseteq U_3\Subset U_2$;
\item\label{psi_tilde_condit} there exists a unique map $\tilde{\psi}\in\mathfrak D_{h_2}$, such that the following identity holds in the domain $\{z\in\bbC\mid |\Re z|<1/2; |\Im z|<r_2\}$: 
$$
\tilde{\pi}_{C_{f_\tau}}\circ R_{C_{f_\tau}}\circ\pi_{C_{f_\tau}}^{-1} \equiv \tilde{\psi}\circ\pi_{\tilde{\phi},\alpha,\tl t}\circ p_{\tilde{\phi},\alpha\pm},
$$
where $\{\pi_{\phi,\alpha,\tl t}\mid (\alpha,\phi)\in\mathfrak B_{\tilde U,\tilde t}\}$ is the same family of maps as in Lemma~\ref{Choose_r_h_lemma}, and both sides of the identity are multiple-valued functions.
\end{enumerate}
\end{lemma}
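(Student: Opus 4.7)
The plan is to verify properties (i)--(v) first on the compact set $\check{\mathcal K}$ by explicit computation, and then propagate to an open neighborhood $\tl{\mathcal U}$ via continuity, following the countable nested Cantor set scheme already used in the proofs of Lemma~\ref{Cont_holo_pair_lemma} and Lemma~\ref{U_lemma}.

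For the base case, fix $\tau = (3, \phi_\cH, \psi_\cH) \in \check{\mathcal K}$ with associated holomorphic pair $\cH \in \tl{\mathcal K}$ and underlying commuting pair $\zeta \in \mathcal K$. Property~(i) is immediate from Remark~\ref{r_1_r_remark} (which identifies $f_\tau = f_\cH \in \hat{\mathcal K}$) combined with Lemma~\ref{U_lemma}(iv). Property~(ii) follows because $\pi_{C_{f_\tau}^o}$ is a real-symmetric, orientation-preserving conformal change of coordinates, so $\pi_{C_{f_\tau}^o}'(0)$ is a positive real; continuous dependence of $\pi_{C_{f_\tau}^o}$ on $\tau$ (Lemma~\ref{Nearby_crescent_lemma}(iii)) upgrades this to the open condition $\Re \pi_{C_{f_\tau}^o}'(0) > 0$ on a neighborhood. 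For (iii)--(iv), set $\tl\phi = \pi_{C_{f_\tau}}'(0)\cdot\phi\circ\pi_{C_{f_\tau}^o}^{-1}$; the chain rule gives $\tl\phi(0)=0$, $\tl\phi'(0)=1$. Lemma~\ref{U_lemma}(iii) guarantees that $\pi_{C_{f_\tau}^o}^{-1}$ is univalent on $U_2$ with $|(\pi_{C_{f_\tau}^o}^{-1})'(0)| < 1/2$; Koebe distortion then ensures $\pi_{C_{f_\tau}^o}^{-1}(U_3) \subset \tl U$ for any $U_3 \Subset U_2$, so $\tl\phi$ is well-defined on $U_3$. The remaining conditions in Definition~\ref{B_def} cutting out $\mathfrak B_{U_3,\tl t}^\alpha$ are checked on $\check{\mathcal K}$ by a compactness argument analogous to the one used in Lemma~\ref{Choose_t_lemma}, and are open.

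The heart of the proof is property~(v). The key structural fact is that near $w=0$ the first-return map decomposes as $R_{C_{f_\tau}}(w) = G(f_\tau(w))$, where $G$ is the composition of the remaining $q_{N-1}-1$ iterates of $\breve f_\tau$ and is analytic and univalent in a neighborhood of $f_\tau(0)$, since the critical orbit revisits $C_{f_\tau}$ only at the last step of the period. Substituting the explicit formula $f_\tau(w) = \psi(\pi_{\phi,\alpha,\tl t}(p_{\phi,\alpha\pm}(w)))$ from~(\ref{g_tau_def}), setting $w = \pi_{C_{f_\tau}^o}^{-1}(z)$, and applying Proposition~\ref{commut_prop} with $c = \pi_{C_{f_\tau}}'(0)$, I rewrite
\[
p_{\phi,\alpha\pm}(\pi_{C_{f_\tau}^o}^{-1}(z)) = p_{\alpha+}(c)^{-1}\cdot p_{\tl\phi,\alpha\pm}(z),
\]
which exhibits the singularity at $0$ in terms of $\tl\phi$ rather than $\phi$. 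The residual conformal factors --- the scaling by $p_{\alpha+}(c)^{-1}$ together with the discrepancy between the two uniformizations $\pi_{\phi,\alpha,\tl t}$ and $\pi_{\tl\phi,\alpha,\tl t}$ of the respective annuli --- combined with the post-composition $\tl\pi_{C_{f_\tau}}\circ G\circ\psi$, assemble into a single conformal map that is forced by the required identity to equal $\tl\psi$. Uniqueness of $\tl\psi$ is then immediate from the identity principle.

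The main obstacle is extracting the strict inequality $h_2 > \tl h$ for the annulus of definition of $\tl\psi$. By Lemma~\ref{Choose_K_lemma}, the conformal tail $G$ and the projection $\tl\pi_{C_{f_\tau}}$ extend to a fixed equatorial neighborhood that is uniform over $\tau \in \check{\mathcal K}$; and the contraction $|(\pi_{C_{f_\tau}^o}^{-1})'(0)| < 1/2$, which rests on the choice $N \ge L$ dictated by Remark~\ref{M_real_bounds_remark}, forces the composite defining $\tl\psi$ to extend analytically to a strictly wider annulus $V_{h_2}$ with $h_2 > \tl h$, a gain quantified by a Koebe distortion bound. Once all relevant constants are uniform on $\check{\mathcal K}$, the open neighborhood $\tl{\mathcal U} \subset \mathfrak P_{\tl U,\tl t,\tl h}$ is assembled by covering the countable union of nested Cantor sets $\check{\mathcal K}$ by small open neighborhoods, exactly as in the proofs of Lemma~\ref{Cont_holo_pair_lemma} and Lemma~\ref{U_lemma}.
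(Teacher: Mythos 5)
Your overall strategy --- verify on $\check{\mathcal K}$ by explicit computation and then propagate by continuity and the nested-Cantor-set covering argument --- is the same as the paper's, and your treatment of property~(v), using Proposition~\ref{commut_prop} with $c=\pi'_{C_{f_\tau}}(0)$ to absorb the rescaling into $p_{\tilde\phi,\alpha\pm}$ and then identifying the residual conformal factor with $\tilde\psi$, matches the paper's computation. However, there is a genuine gap in your argument for property~(ii). You assert that $\pi_{C_{f_\tau}^o}'(0)>0$ for $\tau\in\check{\mathcal K}$ ``because $\pi_{C_{f_\tau}^o}$ is a real-symmetric, orientation-preserving conformal change of coordinates.'' This reasoning is circular: \emph{every} holomorphic map is orientation-preserving as a map of the plane, yet a real-symmetric conformal map fixing $0$ can perfectly well have negative derivative there (e.g.\ $z\mapsto -z$). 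What the statement actually requires is that the restriction of $\pi_{C_{f_\tau}^o}$ to $\bbR$ is increasing near $0$, and this is \emph{not} automatic. It depends on whether $f_\tau^{q_{N-1}}$ moves points near $0$ in the direction of the positive unit shift, which in turn hinges on the parity of $N$: the closest returns $f^{q_m}(0)$ alternate sides of $0$, so the renormalization rescalings alternate sign. This is precisely why the paper explicitly chooses $N$ \emph{even} and flags that choice as important for this section. Your proof never invokes the parity of $N$, so the positivity of $\pi_{C_{f_\tau}}'(0)$ on $\check{\mathcal K}$ is left unestablished.

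A smaller point of comparison: for property~(iv) the paper does not run a generic compactness-and-openness argument on the conditions of Definition~\ref{B_def}; instead it uses the identity (labeled (\ref{phi_computation}) in the paper)
$$
\pi_{C_{f_\tau}}'(0)\cdot\phi_\cH\circ\pi_{C_{f_\tau}^o}^{-1}=\pi_{C_{\cH_1}}'(0)\cdot\pi_{C_{\cH_1}^o}^{-1},
$$
where $\cH_1$ is a linear rescaling of a holomorphic pair from $\tilde{\mathcal K}$, to recognize $\tilde\phi$ on $\check{\mathcal K}$ as an object already covered by Lemma~\ref{Choose_t_lemma}, hence in $\mathfrak B_{\tilde U,\tilde t}^3$, and then appeals to openness. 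Your compactness argument is plausible but less structural; more to the point, this same identity is exactly what the paper also uses to see that $\pi_{C_{f_\tau}}'(0)$ is \emph{positive} on $\check{\mathcal K}$, so supplying it would simultaneously close the gap in~(ii) and tighten~(iv).

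Finally, for the extraction of $h_2>\tilde h$ you invoke a Koebe-distortion gain, whereas the paper's route is more direct: $\tilde h<h_1$ was fixed in Lemma~\ref{Choose_hr_tilde_lemma}, $\tilde\psi\in\mathfrak D_{h_1}$ for $\tau\in\check{\mathcal K}$ by Lemma~\ref{Choose_r_h_lemma}, so any choice $\tilde h<h_2<h_1$ works on $\check{\mathcal K}$ and persists on a neighborhood by compactness and continuity. Your mechanism is not wrong, but it is unnecessary --- the strict inequality was already built in at the stage where $\tilde h$ was chosen.
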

\begin{proof}
Let $h_1>\tilde h$ be the same as in Lemma~\ref{Choose_r_h_lemma} and Lemma~\ref{Choose_hr_tilde_lemma}, and Choose $h_2\in\bbR$ so that $\tilde h<h_2<h_1$.
According to Lemma~\ref{Choose_hr_tilde_lemma}, we can choose $\tilde{\mathcal U}$ so that $\check{\mathcal K}\subset\tilde{\mathcal U}$, and (i) holds. For $\tau\in\tilde{\mathcal U}$, let $C_{f_\tau}$ and $C_{f_\tau}^o$ be the canonical fundamental crescent and its fattening, constructed in Lemma~\ref{U_lemma}. 
Then it follows from Lemma~\ref{U_lemma} that for all $\tau\in\tilde{\mathcal U}$, the map $\tl\phi=\pi_{C_{f_\tau}}'(0)\cdot\phi\circ\pi_{C_{f_\tau}^o}^{-1}$ is defined in the domain $U_2$, which proves property~(iii). 

For $\tau=(3,\phi_\cH,\psi_\cH)\in\check{\mathcal K}$, where $\phi_\cH$ and $\psi_\cH$ are the same as in Lemma~\ref{Choose_t_lemma} and Lemma~\ref{Choose_r_h_lemma}, 
it follows from part (iv) of Lemma~\ref{U_lemma} that
\begin{equation}\label{phi_computation}
\pi_{C_{f_\tau}}'(0)\cdot\phi_\cH\circ\pi_{C_{f_\tau}^o}^{-1}=\pi_{C_{\cH_1}}'(0)\cdot\pi_{C_{\cH_1}^o}^{-1},
\end{equation}
where $\cH_1$ is a linear rescaling of a holomorphic pair from $\tilde{\mathcal K}$. Now 
Lemma~\ref{Choose_t_lemma} implies that $\pi_{C_{\cH_1}}'(0)\cdot\pi_{C_{\cH_1}^o}^{-1}\in\mathfrak B_{\tilde U,\tilde t}^3$, and because of property~(iii), by continuity considerations, shrinking $\tilde{\mathcal U}$, if necessary, we can make property (iv) hold for all $\tau\in\tilde{\mathcal U}$.

Since the constant $N$ is even, this implies that for $\tau\in\check{\mathcal K}$ the number $\pi_{C_{f_\tau}}'(0)$ 
is positive. Again, by continuity, shrinking $\tilde{\mathcal U}$, if necessary, we guarantee that property (ii) holds.

Now the composition $\tilde{\pi}_{C_{f_\tau}}\circ R_{C_{f_\tau}}\circ\pi_{C_{f_\tau}}^{-1}$ can be naturally represented as
$$
\tilde{\pi}_{C_{f_\tau}}\circ R_{C_{f_\tau}}\circ\pi_{C_{f_\tau}}^{-1} = g\circ p_{\phi\circ\pi_{C_{f_\tau}}^{-1},\alpha\pm},
$$
where $g$ is a conformal map on some sufficiently large domain. By Proposition~\ref{commut_prop} and property~(ii), we have 
$$
p_{\phi\circ\pi_{C_{f_\tau}}^{-1},\alpha\pm}= \frac{1}{p_{\alpha+}(\pi'_{C_{f_\tau}}(0))}\cdot p_{\tilde{\phi},\alpha\pm},
$$
hence
$$
\tilde{\pi}_{C_{f_\tau}}\circ R_{C_{f_\tau}}\circ\pi_{C_{f_\tau}}^{-1} = g_1\circ p_{\tilde{\phi},\alpha\pm},
$$
for some conformal map $g_1$. Now, if $f_\tau\in\hat{\mathcal K}$, then by Lemma~\ref{Choose_r_h_lemma}, we know that both sides of the above identity are defined in the domain $\{z\in\bbC\mid |\Re z|<1/2; |\Im z|<r_1\}$, and the map $g_1$ can be represented as
$$
g_1=\tilde{\psi}\circ\pi_{\tilde{\phi},\alpha,\tilde t},
$$
where $\tilde{\psi}\in\mathcal D_{h_1}$, for $h_1$ from Lemma~\ref{Choose_r_h_lemma}. Using continuity argument and possibly shrinking the neighborhood $\tilde{\mathcal U}$ again, we guarantee that the above representation of the map $g_1$ with $\tilde{\psi}\in\mathcal D_{h_2}$ holds for every $\tau\in\tilde{\mathcal U}$.
\end{proof}

\begin{definition}
We define the renormalization operator $\mathcal R_{\mathfrak P}\colon\tilde{\mathcal U}\to\mathfrak P_{\tilde U,\tilde t,\tilde h}$ by the relation 
$$\mathcal R_{\mathfrak P}(\alpha,\phi,\psi)=(\alpha,\tilde{\phi},\tilde{\psi})$$
where $\tilde{\phi}$ and $\tilde{\psi}$ are the same as in properties~\ref{phi_tilde_condit} and~\ref{psi_tilde_condit} of Lemma~\ref{composition_lemma}.
\end{definition}

In the following proposition we summarize some basic properties of the renormalization operator $\mathcal R_{\mathfrak P}$:

\begin{proposition}\label{basic_R_P_prop}
\begin{enumerate}[(i)]
\item The operator $\mathcal R_{\mathfrak P}$ is real-symmetric and analytic.

\item\label{Psi_map_property2} For every $\tau\in\tilde{\mathcal U}$, we have
$$
\mathcal \cren f_\tau=f_{\mathcal R_{\mathfrak P}\tau}, 
$$

\item\label{Psi_map_property}
For every $B\in\bbN$, let the map $\Psi\colon\mathcal I_B\to \check{\mathcal I}_B$ be defined by the relation
$$
\Psi(\zeta)=(3,\phi_{\cH_\zeta},\psi_{\cH_\zeta}),
$$
where $\phi_{\cH_\zeta}$ and $\psi_{\cH_\zeta}$ are the same as in Lemma~\ref{Choose_t_lemma} and Lemma~\ref{Choose_r_h_lemma} respectively. Then $\Psi$ is a homeomorphism between $\mathcal I_B$ and $\check{\mathcal I}_B$ that conjugates $\cR^N|_{\mathcal I_B}$ and $\mathcal R_{\mathfrak P}|_{\check{\mathcal I}_B}$. In particular, $\check{\mathcal I}_B$ is an invariant horseshoe for the operator $\cR_{\mathfrak P}$.

\item For every $\tau\in\tl\cU$ the differential $D_\tau {\cR}_{\bP}$ is a compact operator.
\end{enumerate}
\end{proposition}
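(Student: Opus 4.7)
The plan is to deduce all four properties from the explicit factorization of $\cR_{\mathfrak P}$ provided by Lemma~\ref{composition_lemma}, combined with the properties of $\cren$ collected in Proposition~\ref{cren_basic_proposition}. Each item except (iv) is essentially bookkeeping; the substantive step is (iv), where a genuine smoothing effect must be exhibited.

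For (i), analyticity follows from a chain of analytic dependencies: by Lemma~\ref{f_tau_analyt_lemma} the map $\tau\mapsto f_\tau$ is analytic, by Lemma~\ref{U_lemma}(ii) the value $\pi_{C_{f_\tau}^o}(z)$ depends analytically on $\tau$ for fixed $z$, and by Lemma~\ref{pi_phi_lemma} the map $\pi_{\phi,\alpha,t}(z)$ depends analytically on $(\alpha,\phi)$. The defining identity in Lemma~\ref{composition_lemma}(v) determines $\tilde\psi$ uniquely as a conformal map assembled from these data, so $\tilde\psi$ inherits analytic dependence on $\tau$. Real-symmetry is propagated throughout: for real-symmetric $\tau$ the canonical fundamental crescent $C_{f_\tau}$ and its fattening can be chosen real-symmetric (Lemma~\ref{Nearby_crescent_lemma}(ii) together with Lemma~\ref{U_lemma}), and the family $\pi_{\phi,\alpha,t}$ is real-symmetric for real-symmetric arguments (Lemma~\ref{pi_phi_lemma}).

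Item (ii) is an essentially tautological restatement of Lemma~\ref{composition_lemma}(v): the left-hand side of the identity there is, by Definitions~\ref{Renorm_cres_gen_cyl_def} and~\ref{cren_def}, a local expression for $\cren f_\tau$, while the right-hand side is exactly the formula~(\ref{g_tau_def}) defining the lift $g_{\cR_{\mathfrak P}\tau}$ of $f_{\cR_{\mathfrak P}\tau}$. For (iii), continuity and injectivity of $\Psi$ on $\mathcal I_B$ follow by composing Lemma~\ref{Cont_holo_pair_lemma} with Lemmas~\ref{Choose_t_lemma} and~\ref{Choose_r_h_lemma}; compactness of $\mathcal I_B=\iota^{-1}(\Sigma_B)$, inherited from $\Sigma_B$ via the conjugacy of Theorem~\ref{Comm_pair_attractor_theorem}, upgrades this to a homeomorphism $\Psi\colon\mathcal I_B\to\check{\mathcal I}_B$. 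The intertwining $\cR_{\mathfrak P}\circ\Psi=\Psi\circ\cR^N$ on $\mathcal I_B$ is verified by combining part~(ii) with Proposition~\ref{cren_basic_proposition}(ii): for $\zeta\in\mathcal I_B$ one has $f_{\cR_{\mathfrak P}\Psi(\zeta)}=\cren f_{\Psi(\zeta)}=\cren f_{\cH_\zeta}=f_{\cH_{\cR^N\zeta}}=f_{\Psi(\cR^N\zeta)}$, and the identity~(\ref{phi_computation}) together with the uniqueness clause of Lemma~\ref{composition_lemma}(v) force both triples to arise from the same holomorphic pair $\cH_1$ of Lemma~\ref{U_lemma}(iv).

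The main step is (iv). The key observation is that Lemma~\ref{composition_lemma} already produces $\tilde\phi$ and $\tilde\psi$ on strictly larger domains than those required by the ambient space $\mathfrak P_{\tl U,\tl t,\tl h}$: namely $\tilde\phi$ is defined on $U_2\Supset\tl U$ and $\tilde\psi\in\mathfrak D_{h_2}$ with $h_2>\tl h$. Consequently $\cR_{\mathfrak P}$ factors as
\begin{equation*}
\tl{\mathcal U}\xrightarrow{\;\hat{\cR}_{\mathfrak P}\;}\mathfrak P_{U_2,\tl t,h_2}\xrightarrow{\;\iota\;}\mathfrak P_{\tl U,\tl t,\tl h},
\end{equation*}
where $\hat{\cR}_{\mathfrak P}$ is analytic by the analysis of (i), and $\iota$ is the restriction map. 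By Montel's theorem, bounded subsets of $\mathfrak A_{U_2}$ (respectively $\mathfrak U_{h_2}$) are normal families whose restrictions to $\tl U$ (respectively $V_{\tl h}$) form a relatively compact subset of $\mathfrak A_{\tl U}$ (respectively $\mathfrak U_{\tl h}$), so $\iota$ is a compact linear map on the underlying Banach model spaces. Therefore $D_\tau\cR_{\mathfrak P}=\iota\circ D_\tau\hat{\cR}_{\mathfrak P}$ is compact, exactly as in the proof of Proposition~\ref{cren_basic_proposition}(vi). The main obstacle here is simply confirming that Lemma~\ref{composition_lemma} actually supplies the strictly wider target domain $\mathfrak P_{U_2,\tl t,h_2}$; once this is in hand, compactness is automatic from Montel.
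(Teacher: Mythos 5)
Your arguments for (i), (ii), (iv), and the intertwining part of (iii) are correct and essentially the paper's. For (ii), unwinding Lemma~\ref{composition_lemma}(v) against Definitions~\ref{Renorm_cres_gen_cyl_def}, \ref{cren_def} and formula~(\ref{g_tau_def}) is exactly what the paper leaves as ``evident from the construction.'' For (iv), the paper's proof is precisely the observation that $\cR_{\mathfrak P}(\tl\cU)$ lands in $\mathfrak P_{U_3,\tl t,h_2}$ with $\tl U\Subset U_3$ and $\tl h<h_2$, after which Montel gives compactness of the restriction; your factorization through a restriction map $\iota$ is the same argument dressed up slightly.

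The gap is in your proof that $\Psi$ is injective. You assert that injectivity ``follows by composing Lemma~\ref{Cont_holo_pair_lemma} with Lemmas~\ref{Choose_t_lemma} and~\ref{Choose_r_h_lemma}.'' Lemma~\ref{Cont_holo_pair_lemma} does give that $\zeta\mapsto\cH_\zeta$ is an embedding, but Lemmas~\ref{Choose_t_lemma} and~\ref{Choose_r_h_lemma} only construct the correspondences $\cH\mapsto\phi_\cH$ and $\cH\mapsto\psi_\cH$; neither asserts injectivity of the combined assignment $\cH\mapsto(\phi_\cH,\psi_\cH)$, and a priori this assignment could identify distinct holomorphic pairs (it forgets the linear scale of $\pi_{C_\cH}^{-1}$, and the return map $R_{C_\cH}$ only records certain compositions of $\eta$ and $\xi$). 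The paper closes this gap with a specific observation: the pair $(\phi_{\cH_\zeta},f_\tau)$, read off from $\Psi(\zeta)$, determines the renormalized pair $\cR\zeta$ (not $\zeta$ itself), and since $\cR$ is bijective on $\mathcal I_B$ by Theorem~\ref{Comm_pair_attractor_theorem}, this recovers $\zeta$ and hence yields both injectivity and the continuity of $\Psi^{-1}$. Without some version of this step your claim that $\Psi$ is a bijection onto $\check{\mathcal I}_B$ is unsupported; the compactness upgrade to a homeomorphism that you invoke afterwards is fine, but it cannot substitute for the missing injectivity argument.
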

\begin{proof}
Properties (i) and (ii) are evident from the construction of the operator $\cR_{\mathfrak P}$. 

In property (iii) we will first prove that $\Psi$ is a homeomorphism. Indeed, if $\zeta\in\mathcal I_B$ and $\tau=(3,\phi_{\cH_\zeta},\psi_{\cH_\zeta})=\Psi(\zeta)$, then $f_\tau$ together with the local coordinate $\phi_{\cH_\zeta}$ around the origin, completely determines the critical commuting pair $\cR\zeta\in\mathcal I_B$. Since according to Theorem~\ref{Comm_pair_attractor_theorem}, the operator $\cR$ is bijective on $\mathcal I_B$, the inverse map $\Psi^{-1}$ is defined. Continuity of $\Psi$ and its inverse follows from the definition of $\Psi$. Now, computation~(\ref{phi_computation}) and the fact that $\cH_1$ is a linear rescaling of a holomorphic pair from $\tilde{\mathcal K}$ (c.f. Lemma~\ref{U_lemma}, part~(iv)) yields the conjugacy statement of property~(iii). 

The proof of property~(iv) is analogous to the proof of (vi) from Proposition~\ref{cren_basic_proposition} and is based on the fact that according to Lemma~\ref{composition_lemma}, $\cR_{\mathfrak P}(\tilde{\mathcal U})\subset\mathfrak P_{U_3,\tilde t,h_2}$, for some domain $U_3\Supset\tl U$, and by 
Montel's theorem,
a bounded set in $T_{\cR_{\mathfrak P}\tau}\mathfrak P_{U_2,\tilde t,h_2}$ is pre-compact in $T_{\cR_{\mathfrak P}\tau}\mathfrak P_{\tilde U,\tilde t,\tilde h}$.
\end{proof}

Now we are ready to prove the hyperbolicity results for the renormalization operator $\mathcal R_{\mathfrak P}$.

\begin{definition}
For a complex number $\alpha\in\bbC$ close to $3$, we denote by $\mathcal R_{\mathfrak P,\alpha}$ the restriction of the operator $\mathcal R_{\mathfrak P}$ on the set $\tilde{\mathcal U}\cap\mathfrak P_{\tilde U,\tilde t,\tilde h}^\alpha$.
\end{definition}

\section{Hyperbolicity of renormalization for odd integer $\alpha$}
\label{sec:hyperb}

The goal of this section is to establish the following theorem:
\begin{theorem}\label{hyperb_R_P_theorem}
For every $B\in\bbN$, the set $\check{\mathcal I}_B$ is a uniformly hyperbolic invariant set for the operator $\mathcal R_{\mathfrak P,3}$ with a complex one-dimensional unstable direction. Moreover, for every $\tau\in\check{\mathcal I}_B$ and the corresponding local stable manifold $W^s(\tau)$, the set $W^s(\tau)\cap (\mathfrak P_{\tilde U,\tilde t,\tilde h}^3)^\bbR$ consists of all $\omega\in (\mathfrak P_{\tilde U,\tilde t,\tilde h}^3)^\bbR$ that are sufficiently close to $\tau$ and such that $\rho(f_\tau)=\rho(f_\omega)$.
\end{theorem}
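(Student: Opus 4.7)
The plan is to transfer the known hyperbolicity of $\hat{\mathcal I}_B$ for $\cren$ (obtained from Theorem~\ref{cren_hyperb_theorem} together with Proposition~\ref{cren_basic_proposition}(ii), so that $\hat{\mathcal I}_B\subset\hat{\mathcal I}$ inherits uniform hyperbolicity with one-dimensional unstable direction) to $\check{\mathcal I}_B$ for $\cR_{\mathfrak P,3}$, via the analytic semi-conjugacy $F\colon\tau\mapsto f_\tau$. By Lemma~\ref{f_tau_analyt_lemma}, $F$ is analytic on a neighborhood of $\check{\mathcal I}_B$ in $\mathfrak P_{\tilde U,\tilde t,\tilde h}^3$; Proposition~\ref{basic_R_P_prop}(ii) gives the intertwining $F\circ\cR_{\mathfrak P,3}=\cren\circ F$; and Proposition~\ref{basic_R_P_prop}(iii) says $F$ restricts to a homeomorphism $\check{\mathcal I}_B\to\hat{\mathcal I}_B$. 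The task then reduces to producing a $D\cR_{\mathfrak P,3}$-invariant splitting of the tangent bundle over the history space $\hat{\check{\mathcal I}}_B$ satisfying the uniform estimates of Definition~\ref{hyperb_def}.

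For the splitting I would define the stable bundle by pullback: for $\tau\in\check{\mathcal I}_B$, set
\[
E^s_\tau := (DF(\tau))^{-1}\bigl(E^s_{F(\tau)}\bigr)\subset T_\tau\mathfrak P_{\tilde U,\tilde t,\tilde h}^3,
\]
where $E^s_{F(\tau)}$ is the stable subspace for $\cren$ at $F(\tau)$ (Theorem~\ref{hyperb_1_theorem}). This bundle is $D\cR_{\mathfrak P,3}$-invariant by differentiating the semi-conjugacy, and it contains $\ker DF(\tau)$. For the unstable direction over a history $(\tau_i)\in\hat{\check{\mathcal I}}_B$, I would use a backward-iteration/limit procedure: pick vectors $v_{-n}\in T_{\tau_{-n}}\mathfrak P_{\tilde U,\tilde t,\tilde h}^3$ with $DF(\tau_{-n})v_{-n}$ a unit vector in $E^u_{F(\tau_{-n})}$, push forward by $D\cR_{\mathfrak P,3}^n$ and renormalize. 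Strict expansion of $D\cren$ on $E^u$, combined with compactness of $D\cR_{\mathfrak P,3}$ (Proposition~\ref{basic_R_P_prop}(iv)), produces subsequential limits in a one-dimensional subspace $E^u_{(\tau_i)}\subset T_{\tau_0}\mathfrak P_{\tilde U,\tilde t,\tilde h}^3$ that is independent of the choices, on which $D\cR_{\mathfrak P,3}$ is uniformly expanding with rate inherited from $D\cren$ through the isomorphism $DF|_{E^u_{(\tau_i)}}$. In particular $E^s_\tau$ has codimension one.

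The crux is uniform contraction of $D\cR_{\mathfrak P,3}^n$ on $\ker DF(\tau)$, since contraction on the complementary line inside $E^s_\tau$ is inherited directly from $\cren$. The key geometric input is $|(\pi_{C_{f_\tau}^o}^{-1})'(0)|<1/2$ from Lemma~\ref{U_lemma}(iii): the $\phi$-component of $D\cR_{\mathfrak P,3}$ sends a variation $h$ of $\phi$ (satisfying $h(0)=h'(0)=0$ by the normalization in Definition~\ref{B_def}(i)) to $\pi'_{C_{f_\tau}}(0)\cdot h\circ\pi_{C_{f_\tau}^o}^{-1}$, and with $\mu:=(\pi_{C_{f_\tau}^o}^{-1})'(0)$ a direct Taylor-coefficient computation shows that this operator is lower-triangular in the $\{z^k\}_{k\ge 2}$-basis at $0$ with diagonal entries $\mu^{k-1}$, hence its non-zero spectrum is contained in $\overline{\bbD_{1/2}}$. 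An analogous analysis, together with the extension of $\cR_{\mathfrak P,3}\tau$ to the larger domain $U_3\Supset\tilde U$ (Lemma~\ref{composition_lemma}) and compactness from Proposition~\ref{basic_R_P_prop}(iv), handles the $\psi$-component. Uniformity over $\tau\in\check{\mathcal I}_B$ follows from compactness of $\check{\mathcal I}_B$ as a homeomorphic image of the Cantor set $\mathcal I_B$. Making this spectral argument rigorous in the infinite-dimensional Banach setting --- especially computing and controlling the spectrum of the $\ker DF$-restriction uniformly in $\tau$ --- is the main technical obstacle.

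For the real stable-manifold characterization, local $W^s(\tau)\subset\mathfrak P_{\tilde U,\tilde t,\tilde h}^3$ exist by the standard Hadamard--Perron/graph-transform construction in the real-symmetric analytic setting with compact differential. Real-symmetry of $F$ and the semi-conjugacy give $F(W^s(\tau)\cap(\mathfrak P_{\tilde U,\tilde t,\tilde h}^3)^\bbR)\subset W^s(F(\tau))\cap\mathbf W^\bbR$, which by Theorem~\ref{hyperb_1_theorem} consists of critical circle maps $g$ near $F(\tau)$ with $\rho(g)=\rho(F(\tau))$. Conversely, if $\omega\in(\mathfrak P_{\tilde U,\tilde t,\tilde h}^3)^\bbR$ is close to $\tau$ with $\rho(f_\omega)=\rho(f_\tau)$, then $f_\omega\in W^s(F(\tau))$ by Theorem~\ref{hyperb_1_theorem}, so $F(\cR_{\mathfrak P,3}^n\omega)\to F(\cR_{\mathfrak P,3}^n\tau)$ exponentially; decomposing $\omega-\tau$ as $v_s+v_u\in E^s_\tau\oplus E^u_\tau$, any nonzero $v_u$ would be expanded by $D\cR_{\mathfrak P,3}^n$ and, since $DF$ is an isomorphism $E^u_\tau\to E^u_{F(\tau)}$, would force $F(\cR_{\mathfrak P,3}^n\omega)-F(\cR_{\mathfrak P,3}^n\tau)$ to diverge, a contradiction. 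Hence $\omega-\tau\in E^s_\tau$ and $\omega\in W^s(\tau)$, giving the desired characterization.
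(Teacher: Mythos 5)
Your high-level strategy --- transferring the known hyperbolicity of $\hat{\mathcal I}_B$ for $\hren$ to $\check{\mathcal I}_B$ for $\cR_{\mathfrak P,3}$ --- is the right one, but the route you take is genuinely different from the paper's, and the step you yourself flag as the ``main technical obstacle'' is a real gap that your argument as stated does not close.

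You work with the non-injective semi-conjugacy $F\colon\tau\mapsto f_\tau$, pull back the stable bundle by $(DF)^{-1}$, and then try to show that $D\cR_{\mathfrak P,3}^n$ contracts uniformly on $\ker DF(\tau)$ by a spectral argument: the linear operator $h\mapsto\pi'_{C_{f_\tau}}(0)\,h\circ\pi_{C_{f_\tau}^o}^{-1}$ is lower-triangular with diagonal entries $\mu^{k-1}$, so its spectrum lies in $\overline{\bbD}_{1/2}$. Even granting that the composition operator is compact (so that the spectral radius really is $|\mu|$), this only controls autonomous powers $A_\tau^n$ of a \emph{single} operator. What Definition~\ref{hyperb_def} requires is uniform contraction of the non-autonomous products $D\cR_{\mathfrak P,3}^n$ along renormalization orbits, i.e.\ compositions $A_{\tau_n}\circ\cdots\circ A_{\tau_1}$ with different $\tau_j$. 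Spectral radii $<1/2$ of the individual factors do \emph{not} bound the operator norm of such products; this is a standard failure of spectral-radius control for families of non-commuting operators (joint spectral radius can be much larger). Invoking compactness of $\check{\mathcal I}_B$ does not repair this, because the quantity that needs a uniform bound is the norm of the composition, not the spectral radius of each factor.

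The paper resolves exactly this point by two moves. First, rather than the non-injective $F$, it uses the \emph{injective} analytic map $\Phi\colon\tau=(3,\phi,\psi)\mapsto(\phi,f_\tau)$ into $\mathbf L\times\hat{\mathfrak C}_{\tl r}^3$, under which $\cR_{\mathfrak P,3}$ becomes the skew product $\tl\cR(\phi,g)=(A_g(\phi),\hren\,g)$ of Definition~\ref{R1_def}; the direction you call $\ker DF$ is then the explicit $\mathbf L$-factor, so there is no implicit-function-theorem bookkeeping to do. Second, Lemma~\ref{Koebe_appl_lemma} establishes the needed contraction of the fibered dynamics directly: using that the $m$-fold composition along an orbit corresponds to a \emph{single} nested fundamental crescent $C_m$, one has $A_g^m(\phi)=\pi'_{C_m}(0)\,\phi\circ\pi_{C_m^o}^{-1}$, and the Koebe distortion theorem together with $|(\pi_{C_m}^{-1})'(0)|<1/17$ (from chaining Lemma~\ref{U_lemma}(iii)) yields $\|A_g^m(\phi_1)-A_g^m(\phi_2)\|\le\tfrac{16}{17}\|\phi_1-\phi_2\|$ uniformly for $m\ge M$. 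This is a bound on the norm of the non-autonomous product, not a spectral statement, and it is what is actually needed. Hyperbolicity of $\mathcal J_B=\Phi(\check{\mathcal I}_B)$ for $\tl\cR$ (Lemma~\ref{tlR_hyperb_lemma}) and hence of $\check{\mathcal I}_B$ then follows by combining this with Theorem~\ref{hyperb_special_case_theorem}, and the stable-manifold characterization falls out of the identity $W^s((\phi,f))=\mathbf L\times W^s(f)$ together with injectivity of $\Phi$ --- this is cleaner than your argument via decomposing $\omega-\tau$ into stable and unstable components. In short: your geometric input ($|(\pi_{C_{f_\tau}^o}^{-1})'(0)|<1/2$) is the right one, but it must be used via Koebe distortion on the nested-crescent composition, not via spectral radius of a single fiber operator.
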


Before giving a proof of Theorem~\ref{hyperb_R_P_theorem}, we formulate its immediate corollary:

\begin{corollary}\label{main3_extended}
 For every positive integer $B>0$, there exists an open interval $I=I(B)\subset\bbR$, such that $3\in I$ and for every $\alpha\in I$, the operator $\mathcal R_{\mathfrak P,\alpha}\colon \tilde{\mathcal U}\cap\mathfrak P_{\tilde U,\tilde t,\tilde h}^\alpha \to \mathfrak P_{\tilde U,\tilde t,\tilde h}^\alpha$ has a hyperbolic horseshoe attractor $\check{\mathcal I}_B^\alpha\subset (\mathfrak P_{\tilde U,\tilde t,\tilde h}^\alpha)^\bbR$ of type bounded by $B$. 
 The action of $\mathcal R_{\mathfrak P,\alpha}$ on $\check{\mathcal I}_B^\alpha$ is topologically conjugate to the shift $\sigma^N\colon\Sigma_B\to\Sigma_B$:
 $$
 \kappa_\alpha\circ\mathcal R_{\mathfrak P,\alpha}\circ \kappa_\alpha^{-1}=\sigma^N,
 $$
 and if 
 $$
 \tau=\kappa^{-1}_\alpha(\dots,r_{-k},\dots,r_{-1},r_0,r_1,\dots,r_k,\dots),
 $$
 then 
 $$
 \rho(f_\tau)=[r_0,r_1,\dots,r_k,\dots].
 $$
 Moreover, for every $\tau\in\check{\mathcal I}_B^\alpha$, the corresponding local stable manifold $W^s(\tau)$ has complex codimension~$1$, and if $\omega\in W^s(\tau)\cap (\mathfrak P_{\tilde U,\tilde t,\tilde h}^\alpha)^\bbR$, then $\rho(f_\tau)=\rho(f_\omega)$.
\end{corollary}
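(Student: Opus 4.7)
The plan is to deduce Corollary~\ref{main3_extended} from Theorem~\ref{hyperb_R_P_theorem} by structural stability of uniformly hyperbolic invariant sets, applied to the analytic one-parameter family of operators $\alpha \mapsto \mathcal R_{\mathfrak P,\alpha}$. The required ingredients are in place: the operator $\mathcal R_{\mathfrak P}$ is real-symmetric and jointly analytic on $\tilde{\mathcal U}$ by Proposition~\ref{basic_R_P_prop}(i), fixes the first coordinate (so preserves each slice $\mathfrak P^\alpha_{\tilde U,\tilde t,\tilde h}$), has compact differential by Proposition~\ref{basic_R_P_prop}(iv), and at $\alpha = 3$ possesses, by Theorem~\ref{hyperb_R_P_theorem}, a compact uniformly hyperbolic invariant set $\check{\mathcal I}_B$ with complex one-dimensional unstable direction. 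Consequently the family $\{\mathcal R_{\mathfrak P,\alpha}\}$ is an analytic perturbation of $\mathcal R_{\mathfrak P,3}$ in the uniform operator topology on precompact subsets.

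First I would invoke a persistence theorem for compact uniformly hyperbolic invariant sets of $C^1$-smooth Banach endomorphisms with compact differential---the inverse-limit version of the Hirsch--Pugh--Shub theorem adapted to the non-invertible setting of Definition~\ref{hyperb_def}, exactly as used in \cite{Ya3,Ya4}. Applied to the present family, it produces an open interval $I = I(B) \ni 3$ and, for each $\alpha \in I$, a compact uniformly hyperbolic invariant set $\check{\mathcal I}^\alpha_B \subset \mathfrak P^\alpha_{\tilde U,\tilde t,\tilde h}$ for $\mathcal R_{\mathfrak P,\alpha}$ with one-dimensional unstable direction, together with a homeomorphism $\chi_\alpha\colon \check{\mathcal I}_B \to \check{\mathcal I}^\alpha_B$ conjugating $\mathcal R_{\mathfrak P,3}|_{\check{\mathcal I}_B}$ to $\mathcal R_{\mathfrak P,\alpha}|_{\check{\mathcal I}^\alpha_B}$ and depending continuously on $\alpha$. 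Real-symmetry of $\mathcal R_{\mathfrak P,\alpha}$ for $\alpha \in \bbR$ forces $\check{\mathcal I}^\alpha_B \subset (\mathfrak P^\alpha_{\tilde U,\tilde t,\tilde h})^\bbR$; composing $\chi_\alpha$ with the symbolic conjugacy $\kappa\colon\check{\mathcal I}_B \to \Sigma_B$ furnished by Proposition~\ref{basic_R_P_prop}(iii) produces the desired $\kappa_\alpha = \kappa \circ \chi_\alpha^{-1}$. The local stable manifolds $W^s(\tau)$ arise from the persistence theorem as complex codimension-one Banach submanifolds through each $\tau \in \check{\mathcal I}^\alpha_B$.

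Next I would verify the rotation-number assignment. By Proposition~\ref{basic_R_P_prop}(ii), $f_{\mathcal R_{\mathfrak P,\alpha}\tau} = \cren f_\tau$, and cylinder renormalization advances the rotation number by $N$ applications of the Gauss map---since the canonical fundamental crescent from Lemma~\ref{U_lemma} has period $q_{N-1}$, and $N$ successive returns realize $N$ renormalizations in the commuting-pair sense. The map $\tau \mapsto \rho(f_\tau)$ is continuous on the real slice, so it intertwines $\mathcal R_{\mathfrak P,\alpha}|_{\check{\mathcal I}^\alpha_B}$ with $\sigma^N$. Comparing with the symbolic conjugacy then forces $\rho(f_\tau) = [r_0, r_1, \dots]$ whenever $\tau = \kappa_\alpha^{-1}(\dots, r_0, r_1, \dots)$. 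Finally, exponential contraction of $\mathcal R_{\mathfrak P,\alpha}$ on $W^s(\tau)$ combined with continuity of $\rho$ yields $\rho(f_\omega) = \rho(f_\tau)$ for every $\omega \in W^s(\tau) \cap (\mathfrak P^\alpha_{\tilde U,\tilde t,\tilde h})^\bbR$.

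The main obstacle is justifying the persistence step in our infinite-dimensional non-invertible setting with uniformity in $\alpha$. Compactness of the differential (Proposition~\ref{basic_R_P_prop}(iv)) ensures that in the history tangent bundle of Definition~\ref{hyperb_def} the stable subbundle is closed of codimension one and the unstable subbundle is one-dimensional, so that the standard cone-field and graph-transform arguments carry over essentially without change from the treatment of $\hren$ at $\alpha = 3$ in \cite{Ya4}. The remaining technical work is to verify that hyperbolicity constants and trapping neighborhoods can be chosen uniformly as $\alpha$ varies over a small interval around $3$; this should be routine given the joint analyticity of $\mathcal R_{\mathfrak P}$.
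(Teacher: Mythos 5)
Your proposal is correct and follows essentially the same route as the paper: identify each slice $\mathfrak P^\alpha_{\tilde U,\tilde t,\tilde h}$ with $\mathfrak P^3_{\tilde U,\tilde t,\tilde h}$ via the $(\phi,\psi)$-coordinates so that the family $\{\mathcal R_{\mathfrak P,\alpha}\}$ becomes an analytic perturbation of $\mathcal R_{\mathfrak P,3}$ on a fixed Banach manifold, invoke persistence of the compact uniformly hyperbolic horseshoe from Theorem~\ref{hyperb_R_P_theorem}, and recover the symbolic coding and rotation-number assignment from Proposition~\ref{basic_R_P_prop}. One small inaccuracy: in the final step, continuity of $\rho$ alone does not force $\rho(f_\omega)=\rho(f_\tau)$; the correct mechanism (used in the paper) is that a point $\omega\in W^s(\tau)$ has all its iterates under $\mathcal R_{\mathfrak P,\alpha}$ trapped near the horseshoe, and by Proposition~\ref{basic_R_P_prop}(ii) this forces the continued-fraction entries of $\rho(f_\omega)$ to agree with those of $\rho(f_\tau)$ at every level, so the two irrational rotation numbers coincide.
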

\begin{remark}
In Theorem~\ref{Global_attractor_theorem_expanded} we will strengthen Corollary~\ref{main3_extended} by showing that for every $\tau\in\check{\mathcal I}_B^\alpha$, the set $W^s(\tau)\cap (\mathfrak P_{\tilde U,\tilde t,\tilde h}^\alpha)^\bbR$ consists of all $\omega\in (\mathfrak P_{\tilde U,\tilde t,\tilde h}^\alpha)^\bbR$ that are sufficiently close to $\tau$ and such that $\rho(f_\tau)=\rho(f_\omega)$.
\end{remark}

\begin{proof}[Proof of Corollary~\ref{main3_extended}]
For all $\alpha$ sufficiently close to $3$, the operator $\mathcal R_{\mathfrak P,\alpha}$ can be thought of as an operator acting on a neighborhood of $\mathfrak P_{\tilde U,\tilde t,\tilde h}^3$ by the correspondence $(3,\phi,\psi)\mapsto(3,\tl\phi,\tl\psi)$, where $\tl\phi$ and $\tl\psi$ are such that $\mathcal R_{\mathfrak P,\alpha}(\alpha,\phi,\psi)=(\alpha,\tl\phi,\tl\psi)$. Then this operator is a real-symmetric analytic perturbation of $\mathcal R_{\mathfrak P,3}$ in a small neighborhood of the attractor $\check{\mathcal I}_B$ of type bounded by $B$. Since this is a hyperbolic horseshoe attractor, it survives for $\alpha$ close to $3$.

For every $\tau\in\check{\mathcal I}_B^\alpha$, the corresponding local stable manifold $W^s(\tau)$ must have complex codimension~$1$, and if $\omega\in (\mathfrak P_{\tilde U,\tilde t,\tilde h}^\alpha)^\bbR$ is such that the combinatorics of $f_\tau$ and $f_\omega$ are asymptotically different, then because of part~\ref{Psi_map_property2} of Proposition~\ref{basic_R_P_prop} the point $\omega$ does not belong to $W^s(\tau)$. 
\end{proof}

We will prove Theorem~\ref{hyperb_R_P_theorem} in the end of this section. Now let us start by formulating a theorem which is a special case of Theorem~\ref{cren_hyperb_theorem} and Theorem~\ref{hyperb_1_theorem}:

\begin{theorem}\label{hyperb_special_case_theorem} 
For any positive integer $B>0$, the set $\hat{\mathcal I}_B\subset\hat{\mathfrak C}_r^3$ is a uniformly hyperbolic invariant set for the operator $\hren$ with a one-dimensional unstable direction. Moreover, 
for any $f\in\hat{\mathcal I}$ and the corresponding local stable manifold $W^s(f)\subset \hat{\mathfrak C}_r^3$, the intersection $W^s(f)\cap(\hat{\mathfrak C}_r^3)^\bbR$ consists of all critical circle maps $g$ that are sufficiently close to $f$ and such that $\rho(g)=\rho(f)$.
\end{theorem}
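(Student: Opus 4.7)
The plan is to deduce Theorem~\ref{hyperb_special_case_theorem} directly from the two cited theorems by (a) identifying the relevant portion of the Banach manifold $\hat{\mathfrak C}_{\tilde r}^3$ with the corresponding portion of the Banach manifold $\ccm$ built in \cite{Ya3}, (b) recognising that the operator $\hren$ of the present paper agrees on the renormalization horseshoe with an iterate of the classical cylinder renormalization of \cite{Ya3}, and (c) restricting the general hyperbolicity and stable-manifold statements to the compact invariant sub-horseshoe $\hat{\mathcal I}_B$.

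For step (a), I would observe that by Definition~\ref{C_hat3_def} every element of $\hat{\mathfrak C}_{\tilde r}^3$ is an analytic map $V_{\tilde r}\to\bbC/\bbZ$ with a single cubic critical point at $0$ and the correct boundary normalization, which is exactly the kind of analytic critical cylinder map constituting $\ccm$. The affine chart $g\mapsto \tilde g-\tilde f$ from Proposition~\ref{C_r_Banach_prop} coincides, up to an obvious change of domain, with the chart used to equip $\ccm$ with its Banach manifold structure in \cite{Ya3}; hence $\hat{\mathfrak C}_{\tilde r}^3$ may be viewed as an open real-symmetric submanifold of $\ccm$ (or, for sufficiently small $\tilde r$, identified with $\ccm$ outright). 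In particular $(\hat{\mathfrak C}_{\tilde r}^3)^\bbR\subset \ccm^\bbR$.

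For step (b), by Lemma~\ref{U_lemma}(i) the canonical fundamental crescent used to define $\hren$ has period $q_{N-1}$, where the even integer $N$ was fixed in Section~\ref{sec:renorm1}; together with Proposition~\ref{cren_basic_proposition}(ii)--(iii) this says that on $\hat{\mathcal I}$ the operator $\hren$ coincides with the $N/k$-th iterate of the cylinder renormalization operator of \cite{Ya3} (where $k$ is the integer from the semiconjugacy $\pi_{\ccm}\circ\cR^k=\hren_{\text{cl}}\circ\pi_{\ccm}$). Theorem~\ref{cren_hyperb_theorem} therefore yields uniform hyperbolicity of $\hat{\mathcal I}$ for $\hren$ with a complex one-dimensional unstable direction, since the iterate of a hyperbolic map on a compact invariant set is hyperbolic with the same splitting and rates raised to an appropriate power.

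For step (c), I would use Theorem~\ref{Comm_pair_attractor_theorem} to note that $\mathcal I_B=\iota^{-1}(\Sigma_B)$ is compact and shift-invariant, hence $\hat{\mathcal I}_B\subset\hat{\mathcal I}$ is a compact $\hren$-invariant subset, so the uniform hyperbolicity obtained in step (b) restricts to $\hat{\mathcal I}_B$ verbatim. Finally, the stable-manifold characterization comes from Theorem~\ref{hyperb_1_theorem}: for $f\in\hat{\mathcal I}_B\subset\hat{\mathcal I}$, the intersection $W^s(f)\cap\ccm^\bbR$ consists of analytic critical circle maps close to $f$ with $\rho(g)=\rho(f)$, and intersecting with the submanifold $(\hat{\mathfrak C}_{\tilde r}^3)^\bbR\subset\ccm^\bbR$ yields the claimed description of $W^s(f)\cap(\hat{\mathfrak C}_{\tilde r}^3)^\bbR$. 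The main obstacle I anticipate in writing this out is step (a): verifying carefully that the hyperbolic splitting produced by \cite{Ya3,Ya4} in $T\ccm$ lives tangent to the submanifold $\hat{\mathfrak C}_{\tilde r}^3$ (so that restriction of the splitting is well defined); this will follow from the invariance property $\hren(\mathcal U\cap\hat{\mathfrak C}_{\tilde r}^3)\subset\hat{\mathfrak C}_{\tilde r}^3$ in Proposition~\ref{cren_basic_proposition}(v), because the unstable fiber is determined by forward iteration and hence remains in $T\hat{\mathfrak C}_{\tilde r}^3$, while the stable fiber is automatically tangent to the codimension captured by the characterization in Theorem~\ref{hyperb_1_theorem}.
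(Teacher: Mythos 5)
Your proposal is correct and takes essentially the same route as the paper, which in fact offers no separate argument: it simply states Theorem~\ref{hyperb_special_case_theorem} as a special case of Theorem~\ref{cren_hyperb_theorem} and Theorem~\ref{hyperb_1_theorem}, i.e.\ exactly the reduction you spell out (identification of $\hat{\mathfrak C}_{\tilde r}^3$ with the manifold of critical cylinder maps of \cite{Ya3}, agreement of the operators on the horseshoe, restriction to the invariant subset $\hat{\mathcal I}_B$, and the stable-manifold description from Theorem~\ref{hyperb_1_theorem}). The only small imprecision is your phrase about the ``$N/k$-th iterate'': rather than iterating (which would need $k$ to divide $N$), one invokes the results of \cite{Ya3,Ya4} with the period of the cylinder renormalization taken equal to $N$ itself, since the integer $k$ in the semiconjugacy $\pi_{\mathbf W}\circ\cR^{k}=\hren\circ\pi_{\mathbf W}$ is at our disposal.
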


We proceed with the following construction: according to part (iii) of Lemma~\ref{U_lemma}, there exists a real number $R>0$, such that for every $g\in\mathcal U$, the map $\pi_{C_g^o}^{-1}$ is defined in the disk $\bbD_{2R}$, and 
\begin{equation}\label{D_R_inclusion_eq}
\pi_{C_g^o}^{-1}(\bbD_R)\subset\bbD_{2R/3}.
\end{equation}

We denote by $\kL$ the space that consists of all bounded analytic functions $\phi\colon\bbD_{R}\to\bbC$ that are continuous up to the boundary, and satisfy the properties $\phi(0)=0$, and $\phi'(0)=1$. The space $\kL$ equipped with the sup-norm forms an affine complex Banach space ($\kL-\text{Id}$ is a complex Banach space).

\begin{definition}
For each cylinder renormalizable map $g\in\mathcal U$, we define an affine operator $A_g\colon\kL\to\kL$ by the formula
$$
A_g(\phi)=\pi_{C_g}'(0)\cdot\phi\circ\pi_{C_g^o}^{-1},
$$
where $C_g$ is the canonical fundamental crescent provided by Lemma~\ref{U_lemma}.
\end{definition}
Condition~(\ref{D_R_inclusion_eq}) implies that the operator $A_g$ is well-defined for every $g\in\mathcal U$, and it follows from Lemma \ref{U_lemma} that $A_g$ analytically depends on the parameter $g$.

\begin{definition}\label{R1_def}
We set $\cV=\cU\cap\hat{\mathfrak C}_{\tilde r}^3$ and we define the operator ${\tl\cR}\colon\kL\times\cV\to\kL\times\hat{\mathfrak C}_{\tilde r}^3$ as the skew product
$$
{\tl\cR}(\phi,g)=(A_g(\phi),\hren\, g).
$$
\end{definition}

The following lemma is an immediately corollary from~(\ref{D_R_inclusion_eq}) and Proposition~\ref{cren_basic_proposition} part (vi): 
\begin{lemma}
For every $\kappa\in \kL\times\cV$ the differential $D_\kappa \tl\cR$ is a compact operator.
\end{lemma}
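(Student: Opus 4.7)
The plan is to exploit the skew-product structure of $\tl\cR$ and reduce compactness of the differential to compactness of three separate block operators, each of which is either compact by Montel's theorem or already known to be compact.

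First I will write the differential explicitly. At $\kappa=(\phi,g)$ and in the direction $(\dot\phi,\dot g)\in\kL\times T_g\cV$,
\[
D_\kappa\tl\cR(\dot\phi,\dot g)=\bigl(D_\phi A_g(\dot\phi)+(\partial_g A_\cdot(\phi))(\dot g),\; D_g\hren(\dot g)\bigr),
\]
which is block triangular. Its lower-right entry $D_g\hren$ is compact by part~(vi) of Proposition~\ref{cren_basic_proposition}, and the lower-left entry is zero, so only the two terms involving $A_g$ remain to be controlled.

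The key observation is that the inclusion~(\ref{D_R_inclusion_eq}) is strict: $\pi_{C_g^o}^{-1}(\bbD_R)$ is compactly contained in $\bbD_R$. Combined with continuity of $g\mapsto\pi_{C_g^o}^{-1}$ on $\bbD_{2R}$ (property~(ii) of Lemma~\ref{U_lemma}), this yields, for any fixed $g_0\in\cV$, an $\varepsilon>0$ and a small neighborhood of $g_0$ in $\cV$ such that for every $g$ in that neighborhood the map $\pi_{C_g^o}^{-1}$ sends $\bbD_{R+\varepsilon}$ into $\bbD_R$. Consequently the formula $A_g(\phi)(w)=\pi_{C_g}'(0)\cdot\phi(\pi_{C_g^o}^{-1}(w))$ defines an analytic function on the strictly larger disk $\bbD_{R+\varepsilon}$, satisfying a uniform bound $\|A_g(\phi)\|_{\bbD_{R+\varepsilon}}\le M\|\phi\|_{\kL}$ with $M$ independent of $g$ and $\phi$ in the given neighborhood.

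With this extension in hand, I will factor each of the two remaining differentials as a bounded linear map into the Banach space $\mathfrak D(\bbD_{R+\varepsilon})$ of bounded analytic functions on the larger disk, followed by the restriction operator $\mathfrak D(\bbD_{R+\varepsilon})\to\kL$. By Montel's theorem (equivalently, by Cauchy estimates with a slightly smaller contour), the restriction map is compact. Since the composition of a bounded operator with a compact one is compact, both $D_\phi A_g$ and $\partial_g A_\cdot(\phi)$ are compact. A block triangular operator whose nonzero entries are all compact is itself compact, and the lemma follows. The only step requiring any care is the construction of the locally uniform holomorphic extension to $\bbD_{R+\varepsilon}$; this is a direct consequence of continuity of $g\mapsto\pi_{C_g^o}^{-1}$ and the strict inclusion~(\ref{D_R_inclusion_eq}), so I do not anticipate any substantive obstacle.
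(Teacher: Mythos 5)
Your proof is correct and follows the same route the paper sketches: the paper states the lemma as an immediate consequence of the inclusion~(\ref{D_R_inclusion_eq}) and Proposition~\ref{cren_basic_proposition}(vi), and you have simply filled in the details (block-triangular decomposition of the differential, Montel/restriction compactness on the larger disk via the strict inclusion, and compactness of $D_g\hren$).
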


We denote by $\mathcal J\subset \kL\times\cV$ the image of the set $\tilde{\mathcal K}$ under the map
\begin{equation}\label{include_eq}
\mathcal H\mapsto (\pi_{C_{\mathcal H}}'(0)\cdot\pi_{C_{\mathcal H}^o}^{-1}, f_{\mathcal H}).
\end{equation}
Note that the projection of the set $\mathcal J$ onto the second coordinate is the set $\hat{\mathcal K}$. Similarly, for every positive integer $B>0$, we denote by $\mathcal J_B\subset \kL\times\cV$ the image of $\tilde{\mathcal I}_B$ under the map~(\ref{include_eq}). 

\noindent
\begin{lemma}
For every $B\in\bbN$, the set $\mathcal J_B$ is a forward invariant set for the operator $\tl\cR$.
\end{lemma}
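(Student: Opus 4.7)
My plan is to unwind the definition of $\mathcal J_B$ and then combine the two statements already established for the two coordinates of $\tl\cR$. Concretely, let $(\phi,g) \in \mathcal J_B$, so there exists $\cH \in \tilde{\mathcal I}_B$ with underlying critical commuting pair $\zeta = \zeta_\cH \in \mathcal I_B$ such that
\[
\phi = \pi_{C_\cH}'(0)\cdot \pi_{C_\cH^o}^{-1}, \qquad g = f_\cH.
\]
I want to exhibit $\cH' \in \tilde{\mathcal I}_B$ so that $\tl\cR(\phi,g)$ is the image of $\cH'$ under the map \eqref{include_eq}. The natural candidate is $\cH' = \cH_{\cR^N\zeta}$, which lies in $\tilde{\mathcal I}_B$ since $\mathcal I_B$ is forward (indeed bi-)invariant under $\cR$ by Theorem~\ref{Comm_pair_attractor_theorem}.

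The second coordinate is handled immediately by Proposition~\ref{cren_basic_proposition}(ii): $\hren\, g = \hren\, f_\cH = f_{\cH_{(\cR^N\zeta)}} = f_{\cH'}$, which is exactly the second coordinate of the image of $\cH'$ under \eqref{include_eq}.

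For the first coordinate $A_g(\phi)$, I will reuse the computation already carried out inside the proof of Lemma~\ref{composition_lemma} (equation~\eqref{phi_computation}), which yields
\[
A_g(\phi) \;=\; \pi_{C_g}'(0)\cdot \phi\circ\pi_{C_g^o}^{-1} \;=\; \pi_{C_{\cH_1}}'(0)\cdot \pi_{C_{\cH_1}^o}^{-1},
\]
where $\cH_1$ is the affine rescaling of $\cH_{\cR^N\zeta} = \cH'$ appearing in part~(iv) of Lemma~\ref{U_lemma}. Thus the only remaining point is a short covariance check: if $\cH_1(z) = \lambda\, \cH'(z/\lambda)$ with $\lambda > 0$, then $C_{\cH_1}^o = \lambda\, C_{\cH'}^o$, and the defining property of $\pi_{C^o}$ (unique holomorphic conjugacy to the unit translation with $\pi(0)=0$) forces $\pi_{C_{\cH_1}^o}(\lambda w) = \pi_{C_{\cH'}^o}(w)$. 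Consequently $\pi_{C_{\cH_1}^o}^{-1} = \lambda\,\pi_{C_{\cH'}^o}^{-1}$ and $\pi_{C_{\cH_1}}'(0) = \pi_{C_{\cH'}}'(0)/\lambda$, so the two factors of $\lambda$ cancel and
\[
A_g(\phi) \;=\; \pi_{C_{\cH'}}'(0)\cdot \pi_{C_{\cH'}^o}^{-1}.
\]
Combining this with the second coordinate computation, $\tl\cR(\phi,g)$ is indeed the image of $\cH' \in \tilde{\mathcal I}_B$ under \eqref{include_eq}, hence belongs to $\mathcal J_B$.

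The only mildly non-routine step is the covariance of $\pi_{C^o}$ under affine rescaling of a holomorphic pair; everything else is bookkeeping on top of Proposition~\ref{cren_basic_proposition}, Lemma~\ref{U_lemma}(iv), and the invariance of $\mathcal I_B$ under $\cR$.
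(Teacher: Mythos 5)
Your proof is correct and follows essentially the same route as the paper: identify $\cH' = \cH_{\cR^N\zeta}\in\tilde{\mathcal I}_B$, use forward invariance of $\mathcal I_B$ under $\cR$ and Proposition~\ref{cren_basic_proposition}(ii) for the second coordinate, and use equation~(\ref{phi_computation}) (i.e.\ Lemma~\ref{U_lemma}(iv)) for the first. The one place you go beyond the paper is the explicit covariance check $\pi_{C_{\cH_1}}'(0)\cdot \pi_{C_{\cH_1}^o}^{-1} = \pi_{C_{\cH'}}'(0)\cdot \pi_{C_{\cH'}^o}^{-1}$ under affine rescaling; the paper silently conflates the $\cH_1=\cH_{\cR^N\zeta}$ it introduces with the linearly rescaled $\cH_1$ appearing in~(\ref{phi_computation}), which is harmless precisely because of the invariance you verify, but your treatment makes this transparent rather than leaving the reader to notice it.
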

\begin{proof}
For $\zeta\in\mathcal I_B$ and the corresponding $\cH=\cH_\zeta\in\tl{\mathcal I}_B$, let $\cH_1\subset\tilde{\mathcal I}_B$ be the holomorphic pair $\cH_1=\cH_{\cR^N\zeta}$. Then for $(\pi_{C_{\mathcal H}}'(0)\cdot\pi_{C_{\mathcal H}^o}^{-1}, f_{\mathcal H})\in\mathcal J_B$, we have 
$$
\tl\cR(\pi_{C_{\mathcal H}}'(0)\cdot\pi_{C_{\mathcal H}^o}^{-1}, f_{\mathcal H}) = (A_{f_\cH}(\pi_{C_{\mathcal H}}'(0)\cdot\pi_{C_{\mathcal H}^o}^{-1}),f_{\cH_1}) = 
$$
$$
=(\pi_{C_{\mathcal H_1}}'(0)\cdot\pi_{C_{\mathcal H_1}^o}^{-1}, f_{\mathcal H_1})\subset\mathcal J_B.
$$
(We notice that the last equality follows from~(\ref{phi_computation}).) 
\end{proof}

For maps $\phi_0\in\kL$ and $g\in\cV$, one can consider a sequence of iterates $\{\tl\cR^k(\phi_0,g)\}$. Restricting to the first coordinate, this corresponds to the sequence of functions $\phi_0$, $\phi_1=A_g(\phi_0)$, $\phi_2=A_{\hren g}(\phi_1)$, $\phi_3= A_{\hren^2 g}(\phi_2),\dots, \phi_{k+1}= A_{\hren^k g}(\phi_k),\ldots$.
For simplicity of notation we will write that $\phi_k=A_g^k(\phi_0)$, provided that $\phi_k$ is defined.

\begin{lemma}\label{Koebe_appl_lemma}
There exists a positive real number $\nu<1$ 
and a positive integer $M>0$, such that for any $m\ge M$ times cylinder renormalizable $g\in\cV$ and $\phi_1,\phi_2\in\kL$,  we have $\|A_g^m(\phi_1)-A_g^m(\phi_2)\|\le\nu\|\phi_1-\phi_2\|$, where $\|\phi\|=\sup_{z\in\bbD_R}|\phi(z)|$. 
\end{lemma}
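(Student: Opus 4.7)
The plan is to reduce to the linearization of $A_g$, exploit the compact inclusion $h_{\hren^k g}(\bbD_R)\Subset\bbD_{2R/3}\Subset\bbD_R$ via Schwarz's lemma, and combine this with the double zero of $\phi_1-\phi_2$ at the origin. Set $h_k := \pi_{C_{\hren^k g}^o}^{-1}$ and $c_k := \pi'_{C_{\hren^k g}^o}(0) = 1/h_k'(0)$. Since $A_g(\phi) = c_g\cdot\phi\circ h_g$ is affine on $\kL$, the difference $v := \phi_1-\phi_2$ satisfies $v(0)=0$ and $v'(0)=0$, and a direct induction using $c_0c_1\cdots c_{m-1} = 1/H_m'(0)$ gives
\[
A_g^m(\phi_1)-A_g^m(\phi_2) \;=\; \frac{v\circ \tilde H_m}{H_m'(0)}, \qquad \tilde H_m := h_0\circ h_1\circ\cdots\circ h_{m-1}.
\]

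By Lemma~\ref{U_lemma}(iii) together with (\ref{D_R_inclusion_eq}), each $h_k$ is a univalent self-map of $\bbD_R$ fixing the origin whose image lies in $\bbD_{2R/3}$. Applying Schwarz's lemma (after rescaling to the unit disk) yields $|h_k(z)|\le(2/3)|z|$ on $\bbD_R$, and iterating gives $|\tilde H_m(z)|\le(2/3)^m|z|$ on $\bbD_R$. On the other hand, writing $v(w)=w^2\tilde v(w)$ and applying the maximum modulus principle to the analytic function $\tilde v$ on $\bbD_R$ shows $\|\tilde v\|_{\bbD_R}\le\|v\|/R^2$, whence $|v(w)|\le\|v\|\,|w|^2/R^2$ for $w\in\bbD_R$. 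Combining these two estimates,
\[
\bigl\|A_g^m(\phi_1)-A_g^m(\phi_2)\bigr\|_{\bbD_R}\;\le\;\frac{(4/9)^m}{|H_m'(0)|}\,\|\phi_1-\phi_2\|.
\]

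The main obstacle is now to convert the prefactor $(4/9)^m/|H_m'(0)|$ into a uniform geometric contraction: one needs $|H_m'(0)|\ge c\cdot\tau^m$ for some constants $c>0$ and $\tau>4/9$, uniformly over all $m$-times renormalizable $g\in\cV$. This is exactly where Koebe's distortion theorem (alluded to in the name of the lemma) enters: by Lemma~\ref{U_lemma}(iii) each $h_k$ is univalent on the strictly larger domain $U_2\Supset\overline{\bbD_{2R}}$, so Koebe distortion on $U_2$ combined with the inclusion $h_k(\bbD_R)\subset\bbD_{2R/3}$ (possibly after shrinking $R$ at the outset to sharpen the constants) yields the required uniform lower bound $|h_k'(0)|\ge\ell$ with $\ell>4/9$. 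Then $|H_m'(0)|\ge\ell^m$, so $(4/9)^m/|H_m'(0)|\le(4/(9\ell))^m$, and taking $M$ with $(4/(9\ell))^M\le\nu<1$ completes the proof.
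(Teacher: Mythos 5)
Your reduction is the same one the paper makes: you telescope $A_g^m(\phi)=\frac{1}{\tilde H_m'(0)}\,\phi\circ\tilde H_m$ with $\tilde H_m=h_0\circ\cdots\circ h_{m-1}$, use that $v=\phi_1-\phi_2$ vanishes to order two at the origin so $|v(w)|\le\|v\|\,|w|^2/R^2$, and seek a bound on the prefactor. But at the decisive step you take the wrong tool. Applying Schwarz to each $h_k\colon\bbD_R\to\bbD_{2R/3}$ separately gives $|\tilde H_m(z)|\le(2/3)^m|z|$, which is \emph{decoupled} from $|\tilde H_m'(0)|$. After squaring (double zero) and dividing by $|\tilde H_m'(0)|$ you are left with $(4/9)^m/|\tilde H_m'(0)|$, and there is no reason for this to contract. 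Indeed, Lemma~\ref{U_lemma}(iii) only gives $|h_k'(0)|<1/2$ (and Remark~\ref{M_real_bounds_remark} together with the choice of $N$ in \S\ref{Cyl_ren_sec} gives the sharper $|h_k'(0)|<1/3$), so $|\tilde H_m'(0)|$ decays at least as fast as $3^{-m}$, making $(4/9)^m/|\tilde H_m'(0)|\ge(4/3)^m\to\infty$. The patch you propose cannot save this: you want a uniform lower bound $|h_k'(0)|\ge\ell$ with $\ell>4/9$, but (a) no such lower bound is established in the paper, and the paper's upper bound $|h_k'(0)|<1/3$ is \emph{incompatible} with it; and (b) even as a matter of technique, Koebe distortion applied to a univalent map $h_k$ whose image of $\bbD_R$ lies \emph{inside} $\bbD_{2R/3}$ gives only an upper bound on $|h_k'(0)|$, not a lower bound — you would need control of the image from below, which you do not have.

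The paper's proof avoids this entirely by applying Koebe distortion once to the single univalent map $\tilde H_m=\pi_{C_m^o}^{-1}$ (it is defined and univalent on $\bbD_{2R}$, since it is the projection of a single nested fundamental crescent $C_m$ of $g$), obtaining $|\tilde H_m(z)|\le 4\,|\tilde H_m'(0)|\,|z|$ for $z\in\bbD_R$. Plugging this into the double-zero estimate yields
\[
\bigl\|A_g^m(\phi_1)-A_g^m(\phi_2)\bigr\|\le\frac{16\,|\tilde H_m'(0)|^2}{|\tilde H_m'(0)|}\,\|v\|=16\,|\tilde H_m'(0)|\,\|v\|,
\]
so the quadratic gain from the double zero \emph{does} cancel against the division by $|\tilde H_m'(0)|$, leaving a net factor $16\,|\tilde H_m'(0)|<16\cdot 2^{-m}$, which is $<1$ for $m\ge 5$ (the paper takes $|\tilde H_m'(0)|<1/17$, giving $\nu=16/17$). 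The moral: the estimate on $|\tilde H_m(z)|$ must be \emph{proportional} to $|\tilde H_m'(0)|$, which is exactly what Koebe provides and iterated Schwarz does not.
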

\begin{proof}
Assume that $\|\phi_1-\phi_2\|=cR^2$, for some positive real number $c>0$. Then for all $z\in\bbD_{R}$ we have the inequality
$$
|\phi_1(z)-\phi_2(z)|\le c|z|^2.
$$

\noindent
We notice that if $g\in\cV$ is $n$-times cylinder renormalizable, then there exists a sequence $C_n\Subset\ldots\Subset C_1\subset V_{\tl r}$ of $n$ nested fundamental crescents for $g$, such that $\cren^m g= \cR_{C_m}g$, and
$$
A_g^m(\phi)=\pi_{C_m}'(0)\cdot\phi\circ\pi_{C_m^o}^{-1},
$$
for $m\le n$. By part (iii) of Lemma~\ref{U_lemma} there exists a positive integer $M>0$, such that for all $m\ge M$, if $g\in\cV$ is $m$-times cylinder renormalizable, then for the corresponding fundamental crescent $C_m$ we have $|(\pi_{C_m}^{-1})'(0)|<1/17$.

Since $\pi_{C_m}^{-1}$ is defined in the disk $\bbD_{2R}$, then again from Koebe Distortion Theorem it follows that
$$
|\pi_{C_m^o}^{-1}(z)|\le 4|(\pi_{C_m}^{-1})'(0)|\cdot|z|,
$$
for all $z\in\bbD_{R}$. Combining the above estimates, for all $z\in\bbD_{R}$ we obtain

$$
|[A_g^m(\phi_1)](z)-[A_g^m(\phi_2)](z)|\le |\pi_{C_m}'(0)|\cdot|\phi_1(\pi_{C_m^o}^{-1}(z))- \phi_2(\pi_{C_m^o}^{-1}(z))|
$$
$$
\le |\pi_{C_m}'(0)|\cdot c\cdot 16 |(\pi_{C_m}^{-1})'(0)|^2\cdot|z|^2\le \frac{16}{17}c(R)^2.
$$
Thus,
$$
\|A_g^m(\phi_1)-A_g^m(\phi_2)\|\le\frac{16}{17}\|\phi_1-\phi_2\|.
$$
\end{proof}

\begin{lemma}\label{tlR_hyperb_lemma}
For every $B\in\bbN$, the set $\mathcal J_B$ is a uniformly hyperbolic invariant set for the operator $\tl\cR$ with a one-dimensional unstable direction. Moreover, the stable manifold of every point $(\phi,f)\in\mathcal J_B$ is $\mathbf L\times W^s(f)$, where $W^s(f)$ is the stable manifold of $f$ for the cylinder renormalization $\hren$.
\end{lemma}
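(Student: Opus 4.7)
My plan is to exploit the skew-product structure
$$\tl\cR(\phi, g) = (A_g(\phi),\; \hren g),$$
combining the hyperbolicity of $\hren$ on $\hat{\mathcal I}_B$ from Theorem \ref{hyperb_special_case_theorem} with the uniform fiberwise contraction of Lemma \ref{Koebe_appl_lemma}. Throughout, hyperbolicity is understood in the sense of Definition \ref{hyperb_def}, and the requisite histories for points of $\mathcal J_B$ descend from histories in $\hat{\mathcal I}_B$ via the projection $\mathcal J_B \to \hat{\mathcal I}_B$ onto the second coordinate, which conjugates $\tl\cR|_{\mathcal J_B}$ with $\hren|_{\hat{\mathcal I}_B}$.

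First, I would observe that since $A_g$ is affine in $\phi$ with linear part $A_g$ itself, the differential at $(\phi, f) \in \mathcal J_B$ has the triangular form
$$D\tl\cR_{(\phi, f)}(v, w) = \bigl(A_f\,v + B_f\,w,\; D\hren_f\,w\bigr), \qquad B_f := (\partial_g A_g\phi)\big|_{g=f}.$$
Using the $D\hren$-invariant splitting $T_f\hat{\mathfrak C}_{\tl r}^3 = E^s_f \oplus E^u_f$ from Theorem \ref{hyperb_special_case_theorem}, with $\dim E^u_f = 1$, I take the stable bundle to be $\tl E^s_{(\phi, f)} = T_\phi\kL \oplus E^s_f$, which is $D\tl\cR$-invariant by the triangular form. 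Uniform exponential contraction on $\tl E^s$ follows by combining the $E^s_f$-contraction with the fiberwise bound $\|A_{\hren^{m-1} f} \circ \cdots \circ A_f\| \to 0$ from Lemma \ref{Koebe_appl_lemma}; any cross terms generated by $B_f\,w$ with $w$ in the $E^s_f$-part are geometrically summable. For the one-dimensional unstable bundle, I would seek it as a graph $\tl E^u_{(\phi, f)} = \{(\sigma_{(\phi, f)}\,w, w) : w \in E^u_f\}$ over $E^u_f$, where the section $\sigma$ solves the cohomological equation
$$\sigma_{\tl\cR(\phi, f)}(D\hren_f\,w) \;=\; A_f\,\sigma_{(\phi, f)}(w) + B_f\,w$$
imposed by $D\tl\cR$-invariance. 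Contractivity of $A_f$ together with expansion of $D\hren_f|_{E^u_f}$ render this a standard graph-transform equation, whose unique continuous bounded solution over the compact $\mathcal J_B$ is obtained by Banach's fixed-point theorem in the space of sections; expansion of $D\tl\cR$ on $\tl E^u$ is then inherited from expansion of $D\hren$ on $E^u_f$.

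Finally, to identify the stable manifold, I note that $\kL \times W^s(f)$ is forward $\tl\cR$-invariant since $\tl\cR(\kL \times W^s(f)) \subseteq \kL \times W^s(\hren f)$; its tangent space at $(\phi, f)$ equals $T_\phi\kL \oplus E^s_f = \tl E^s_{(\phi, f)}$; and its codimension in $\kL \times \cV$ equals one, matching the dimension of $\tl E^u$. By uniqueness of the local stable manifold at a hyperbolic point with prescribed tangent space, $W^s((\phi, f)) = \kL \times W^s(f)$ in a neighborhood of $(\phi, f)$. The main technical obstacle I anticipate is ensuring the graph-transform fixed-point argument for $\sigma$ is rigorous in the infinite-dimensional setting; however, compactness of $\mathcal J_B$ (inherited from $\tilde{\mathcal I}_B$), continuous dependence of $A_g$ and $D\hren_g$ on $g$, and one-dimensionality of $E^u_f$ together reduce the argument to an essentially scalar computation with uniform estimates.
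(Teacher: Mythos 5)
Your proof is correct, and in fact on the one genuinely delicate point you are more careful than the paper's own argument. Both you and the paper identify the triangular (skew-product) structure of $D\tl\cR$, take the stable bundle to be $\mathbf K\times E^s$ (your $T_\phi\kL\oplus E^s_f$), and obtain uniform contraction there by combining $D\hren$-contraction on $E^s$, the eventual contraction of the fibre cocycle from Lemma~\ref{Koebe_appl_lemma}, and boundedness of the off-diagonal term $B_f=\partial_g A_g\phi$ -- this part of your argument and the paper's agree essentially verbatim. The difference is in the unstable bundle. The paper asserts that $\tl E^u=\mathbf 0\times E^u$, but this subspace is \emph{not} $D\tl\cR$-invariant whenever $B_f\neq 0$, since $(0,w)$ maps to $(B_f w,\,D\hren_f w)$, and $B_f w$ need not vanish; the assertion as written does not satisfy the invariance requirement $D\hat f(E^u)=E^u$ of Definition~\ref{hyperb_def}. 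Your cohomological-equation/graph-transform construction is the correct way to produce the invariant one-dimensional unstable bundle: the equation $\sigma_{\tl\cR(\phi,f)}\circ D\hren_f = A_f\,\sigma_{(\phi,f)}+B_f$ rewrites as a fixed-point problem for the bounded section $\sigma$ over histories, and the contraction you need is supplied by $\|(D\hren_f)^{-1}|_{E^u}\|<1$ together with the (eventual) contraction of $A_f$ from Lemma~\ref{Koebe_appl_lemma} -- so the Banach fixed-point argument goes through uniformly on the compact horseshoe. The identification of the stable manifold as $\kL\times W^s(f)$ via forward invariance, matching tangent spaces, and uniqueness of the local stable manifold is the standard argument and is fine. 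In short: your proof reaches the same conclusion as the paper, follows the same strategy for the stable side, but replaces the paper's flawed description of $\tl E^u$ with the actual graph construction that the skew-product structure forces.
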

\begin{proof}

Since $\mathbf L$ is an affine Banach manifold, the tangent space to every its point can be naturally identified with the same Banach space, which we denote by $\mathbf K$.
Let $E^s\subset T_{\hat{\mathcal I}_B}\hat{\mathfrak C}_{\tl r}^3$ be the stable bundle of the hyperbolic set $\hat{\mathcal I}_B$ for the operator $D\hren$. Then the set 
$$
\tl E^s=\mathbf K\times E^s
$$ 
is an invariant subbundle of the tangent bundle $T_{\mathcal J_B}(\mathbf L\times\hat{\mathfrak C}_{\tl r}^3)$ for the operator $D\tl\cR$. It follows from Definition~\ref{R1_def} that the action of $D\tl\cR$ on the second factor of $\tl E^s$ is independent from the first factor. Hence, the differential $D\tl\cR$ uniformly contracts in the second factor of $\tl E^s$.

Now by compactness of $\mathcal J_B$, the derivative of $A_g(\phi)$ with respect to $g$ is uniformly bounded over all $(\phi,g)\in\mathcal J_B$, and by Lemma~\ref{Koebe_appl_lemma}, the derivative of $A_g(\phi)$ with respect to $\phi$ uniformly contracts. Together with uniform contraction in the second factor, this implies that the differential $D\tl\cR$ contracts in the first factor of $\tl E^s$ as well. Thus, $\tl E^s$ is the stable subbundle for the differential $D\tl\cR$.

In a similar way one shows that if $E^u$ 
is the unstable subbundle (in the sense of Definition~\ref{hyperb_def}) of the hyperbolic set $\hat{\mathcal I}_B$ for the operator $D\hren$, then $\tl E^u=\mathbf 0\times E^u$ 
is the unstable bundle for the operator $D\tl\cR$. Since according to Theorem~\ref{hyperb_special_case_theorem}, the fibers of $E^u$ are one-dimensional, so are the fibers of $\tl E^u$. Finally, by dimension count we see that there are no neutral directions, hence the set $\mathcal J_B$ is uniformly hyperbolic.
\end{proof}

\begin{proof}[Proof of Theorem~\ref{hyperb_R_P_theorem}]
The analytic map $\Phi\colon\mathfrak P_{\tl U,\tl t,\tl h}^3 \to \mathbf L\times\hat{\mathfrak C}_{\tl r}^3$ defined by the relation
$$
\Phi\colon\tau=(3,\phi,\psi)\mapsto (\phi, f_\tau),
$$
is injective, since the maps $\phi$ and $f_\tau$ determine the map $\psi$ (c.f. Lemma~\ref{Choose_r_h_lemma}).
Now we notice that the map $\Phi$ 
maps $\check{\mathcal I}_B$ to ${\mathcal J}_B$ and conjugates the operator $\mathcal R_{\mathfrak P,3}$ with the operator ${\tl\cR}|_{\Phi(\tl{\mathcal U})}$ on the neighborhood of the invariant set $\check{\mathcal I}_B$. Then Lemma~\ref{tlR_hyperb_lemma} and Theorem~\ref{hyperb_special_case_theorem} imply the statement of the theorem.
\end{proof}

\section{Proofs of Theorems \ref{main1}-\ref{main3}}
\label{sec:main_proofs}

Now we are ready to prove Theorems~\ref{main1},~\ref{main2} and~\ref{main3}. As mentioned in the Introduction, we prove these theorems with the parameter $k$ set to $k=1$. For $k>1$ the proofs are identical.

\begin{proof}[Proof of Theorem~\ref{main1}]
We set $\mathfrak N^{3}=\mathfrak C_{\tilde r}$ and define $\cren$ as in Definition~\ref{cren_def}. 
Then $\mathfrak C^\alpha=\mathfrak C_{\tilde r}^\alpha$, for all complex $\alpha$ in a suitably small neighborhood of $3$, and Theorem~\ref{main1} follows from Proposition~\ref{cren_basic_proposition}.
\end{proof}

\begin{proof}[Proof of Theorem~\ref{main2}]
We set $\mathfrak M^3$ to be an open subset of $\mathfrak P_{\tilde U,\tilde t,\tilde h}^3\cap\check{\mathcal U}$, where $\check{\mathcal U}$ is the same as in Lemma~\ref{Choose_hr_tilde_lemma}, such that the closure of $\mathfrak M^3$ in $\mathfrak P_{\tilde U,\tilde t,\tilde h}^3$ is contained in $\check{\mathcal U}$ and $\check{\mathcal K}\subset\mathfrak M^3$. 
Then for all  $\alpha\in \CC$ which lie in a small open neighborhood of $3$, the map $i_\alpha$ can be defined by
$$
i_\alpha\colon(\phi,\psi)\mapsto f_\tau,
$$
where $\tau=(\alpha,\phi,\psi)\in\mathfrak P_{\tilde U,\tilde t,\tilde h}^\alpha$. It follows from Lemma~\ref{Choose_hr_tilde_lemma} that the map $i_\alpha$ is analytic and analytically depends on $\alpha$. Finally, for all $\alpha$ close to $3$, we define $\cR_\alpha(\phi,\psi)=(\tilde{\phi},\tilde{\psi})$, where $\tilde{\phi}$ and $\tilde{\psi}$ are such that $\cR_{\mathfrak P}(\alpha,\phi,\psi)=(\alpha,\tilde{\phi},\tilde{\psi})$, whenever $\cR_{\mathfrak P}(\alpha,\phi,\psi)$ is defined.

The required properties of the operator $\cR_\alpha$ follow from Proposition~\ref{basic_R_P_prop} and Theorem~\ref{hyperb_R_P_theorem}. Analyticity of the map $i_\alpha$ follows from Lemma~\ref{f_tau_analyt_lemma}.
\end{proof}

\begin{proof}[Proof of Theorem~\ref{main3}] 
The proof immediately follows from Corollary~\ref{main3_extended}.
\end{proof}

\section{Global attractor, universality and rigidity}
\label{sec:global}

\subsection{Generalized critical commuting pairs}
We start by generalizing the notion of a critical commuting pair to the case of an arbitrary real critical exponent. In order to make this generalization, we need to substitute the commutation relation with a more general condition. The following definition generalizes the class $\mathcal A_r$ from Definition~\ref{A_r_com_pair_def}.

\begin{definition}\label{A_r_alpha_def}
For positive real numbers $r,\alpha>0$, the class $\mathcal A_r^\alpha$ consists of all pairs of real-symmetric analytic maps $\zeta=(\eta,\xi)$, such that the following holds:

\begin{enumerate}[(i)]
\item the maps $\eta$ and $\xi$ are analytic on the sets $N_r([0,1])\setminus(-\infty,0]$ and $N_{r|\eta(0)|}(I_\xi)\setminus[0,+\infty)$ respectively, where 
$I_\xi=[b,0]\subset(-\infty,0]$, and $N_r(S)\subset\bbC$ stands for the $r$-neighborhood of a set $S$ in $\bbC$; furthermore, $\eta$ and $\xi$ extend continuously to the boundary of their corresponding domains, (here we distinguish the upper and lower boundaries of slits);

\item the maps $\eta$ and $\xi$ can be locally represented as
\begin{equation*}\label{eta_xi_decomposition_eq2}
\eta=\psi_\eta\circ p_{\alpha+}\circ\phi_\zeta\quad\text{ and }\quad \xi=\psi_\xi\circ p_{\alpha-}\circ\phi_\zeta,
\end{equation*}
where $\psi_\eta$, $\psi_\xi$ and $\phi_\zeta$ are real-symmetric conformal maps in a neighborhood of the origin and $\phi_\zeta(0)=0$, $\phi_\zeta'(0)=1$;

\item\label{Generalized_cr_com_p_property_rho2} 
the compositions $\nu^+\equiv\xi\circ\eta$ and $\nu^-\equiv\eta\circ\xi$ are defined in some neighborhood of the origin and can be represented in it as 
$$
\nu^+=\rho_\zeta\circ p_{\alpha+}\circ\phi_\zeta\quad\text{ and }\quad \nu^-=\rho_\zeta\circ p_{\alpha-}\circ\phi_\zeta,
$$
where $\rho_\zeta$ is a real-symmetric conformal map in a neighborhood of the origin;

\item for every $z\in(0,1]$, we have $\eta'(z)>0$ and for every $z\in I_\xi\setminus\{0\}$, we have $\xi'(z)>0$; furthermore, $\eta(0)=b<0$, $\xi(0)=1$ and $\eta(\xi(0))=\xi(\eta(0))\in I_\eta$.
\end{enumerate}
\end{definition}

 If a pair of maps $\zeta=(\eta,\xi)$ belongs to $\mathcal A_r^\alpha$, for some $r,\alpha>0$, then we say that $\zeta$ is a \textit{critical commuting pair with critical exponent} $\alpha$.
 We also note that $\mathcal A_r\subset\mathcal A_r^3$. 
 
\begin{definition}
For a positive real $r>0$ and a set $J\subset (0,+\infty)$, we define
$$
\mathcal A_r^J=\bigcup_{\alpha\in J}\mathcal A_r^\alpha.
$$
\end{definition}

Similarly to the case with the class $\mathcal A_r$, the map~(\ref{norm_inclusion_eq}) induces a metric on $\mathcal A_r^J$ from the sup-norm on $$\mathfrak D(N_r([0,1])\setminus(-\infty,0]))\times \mathfrak D(N_r([0,1])\setminus(-\infty,0])).$$ We will denote this metric by $\dist_r(\cdot,\cdot)$, since it coincides with the previously defined metric on $\mathcal A_r$.

Renormalization of pairs from $\mathcal A_r^{J}$ is defined in exactly the same way as renormalization of ordinary critical commuting pairs (c.f. Definition~\ref{renormalization_def}) and is determined by the dynamics on the real line. In particular, the $n$-th prerenormalization $p\cR^n\zeta$ of $\zeta=(\eta,\xi)\in\mathcal A_r^{J}$ can be viewed as a pair whose elements are certain compositions of $\eta$ and $\xi$. We will say that the domains of the maps that constitute $p\cR^n\zeta$, are the maximal domains in $\bbC$, where the corresponding compositions are defined.

\begin{definition}\label{A_alpha_def}
We let $\mathcal A^\alpha$ (or $\mathcal A^J$) denote the set of all real-analytic pairs $(\eta,\xi)$, such that an analytic extension of $(\eta,\xi)$ belongs to $\mathcal A_r^\alpha$ (or $\mathcal A_r^J$), for some $r>0$. 
\end{definition}
We think of critical commuting pairs $\zeta$ from $\mathcal A^\alpha$ or $\mathcal A^J$ and their renormalizations $\cR\zeta$ as pairs of maps defined only on the real line, though having analytic extensions to some domains in $\bbC$.

It is convenient for our purposes to introduce a smaller class of critical commuting pairs:
\begin{definition}\label{Gccp_def}
For positive real numbers $r,\alpha>0$, the class $\mathcal B_r^\alpha\subset \mathcal A_r^\alpha$ consists of all $\zeta=(\eta,\xi)\in\mathcal A_r^\alpha$, such that the following holds:
\begin{enumerate}[(i)]
\item the maps $\eta$ and $\xi$ can be represented as
\begin{equation*} 
\eta=\psi_\eta\circ p_{\alpha+}\quad\text{ and }\quad \xi=\psi_\xi\circ p_{\alpha-},
\end{equation*}
where $\psi_\eta$ and $\psi_\xi$ are conformal diffeomorphisms of $p_{\alpha+}(N_r([0,1])\setminus(-\infty,0))$ and $p_{\alpha-}(N_{r|\eta(0)|}(I_\xi)\setminus(0,+\infty))$ respectively;

\item\label{Generalized_cr_com_p_property_rho} the compositions $\nu^+\equiv\xi\circ\eta$ and $\nu^-\equiv\eta\circ\xi$ are defined in the domains $\bbD_{\hat r}\setminus(-\infty,0)$ and $\bbD_{\hat r}\setminus(0,+\infty)$ respectively, where $\hat r=\min(r,r|\eta(0)|)$, and there exists a conformal diffeomorphism $\rho_\zeta\colon\bbD_{\hat r^\alpha}\to\bbC$, such that $\nu^+=\rho_\zeta\circ p_{\alpha+}$ and $\nu^-=\rho_\zeta\circ p_{\alpha-}$;
\end{enumerate}
\end{definition}

\begin{definition}\label{B_alpha_def}
We let $\mathcal B^\alpha$ (or $\mathcal B^J$) denote the set of all real-analytic pairs $(\eta,\xi)$, such that an analytic extension of $(\eta,\xi)$ belongs to $\mathcal B_r^\alpha$ (or $\mathcal B_r^J$), for some $r>0$. 
\end{definition}

The following theorem is an expanded version of Theorem~\ref{main4}. As before, we define $\Sigma_B=\{1,\dots,B\}^\bbZ$.

\begin{theorem}[{\bf Global renormalization attractor}]\label{Global_attractor_theorem_expanded}
For every $k,B\in\bbN$, there exists an open interval $J=J(k,B)\subset\bbR$ and a positive real number $r=r(k)$, such that $2k+1\in J$, and for every $\alpha\in J$, 
there exists an $\cR$-invariant set $\mathcal I_B^\alpha\subset\mathcal B_r^\alpha$ with the following properties. 
\begin{enumerate}[(i)]
\item The action of $\cR$ on $\mathcal I_B^\alpha$ is topologically conjugate to the shift $\sigma\colon\Sigma_B\to\Sigma_B$:
$$
\iota_\alpha\circ\mathcal R\circ \iota_\alpha^{-1}=\sigma,
$$
and if 
$$
\zeta=\iota^{-1}_\alpha(\dots,r_{-k},\dots,r_{-1},r_0,r_1,\dots,r_k,\dots),
$$
then 
$$
\rho(\zeta)=[r_0,r_1,\dots,r_k,\dots].
$$

\item For every $\zeta\in\mathcal A^\alpha$ with an irrational rotation number of type bounded by $B$, there exists $M\in\bbN$, such that for all $m\ge M$ the renormalizations $\cR^m\zeta$ belong to $\mathcal A_r^\alpha$ and for every $\zeta'\in\mathcal I_B^\alpha$ with $\rho(\zeta)=\rho(\zeta')$, we have
\begin{equation}\label{pair_exp_convergence_eq}
\dist_r(\cR^m\zeta,\cR^m\zeta')\le C\lambda^m,
\end{equation}
for some constants $C>0$, $\lambda\in(0,1)$ that depend only on $B$ and $\alpha$.
\end{enumerate}
\end{theorem}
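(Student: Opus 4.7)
The plan is to transport the hyperbolic horseshoe $\check{\mathcal I}_B^\alpha\subset(\mathfrak P_{\tl U,\tl t,\tl h}^\alpha)^\bbR$ provided by Corollary~\ref{main3_extended} back to the commuting-pair side. To every $\tau=(\alpha,\phi,\psi)\in\check{\mathcal I}_B^\alpha$ I associate a real-analytic commuting pair $\zeta_\tau$ by extracting, from the underlying generalized cylinder map $f_\tau$ of~(\ref{f_tau_def}), the closest-return pair $(\eta,\xi)$ and affinely rescaling so that $\xi(0)=1$; for $\alpha=2k+1$ this inverts the homeomorphism $\Psi$ of Proposition~\ref{basic_R_P_prop}(iii). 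The factored form $f_\tau=\psi\circ\pi_{\phi,\alpha,\tl t}\circ p_{\phi,\alpha\pm}$ from~(\ref{g_tau_def}) carries through these iterates to produce exactly the decomposition structure of Definition~\ref{Gccp_def}, so $\zeta_\tau\in\mathcal B_r^\alpha$ with $r=r(k)$ depending only on the fixed data $(\tl U,\tl t,\tl h)$. I then set $\mathcal I_B^\alpha=\bigcup_{j=0}^{N-1}\cR^j\{\zeta_\tau:\tau\in\check{\mathcal I}_B^\alpha\}$; invariance is automatic thanks to the intrinsic semiconjugacy $\cR^N\zeta_\tau=\zeta_{\cR_{\mathfrak P,\alpha}\tau}$, built into the choice of fundamental crescent of period $q_{N-1}$ in Lemma~\ref{U_lemma}(i). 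Statement~(i) then follows by combinatorially refining the $\sigma^N$-conjugacy given by Corollary~\ref{main3_extended} to a full $\sigma$-conjugacy on $\mathcal I_B^\alpha$.

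For statement~(ii), I first observe that any $\zeta\in\mathcal A^\alpha$ with $\rho(\zeta)$ irrational of type bounded by $B$ eventually lands in $\mathcal B_r^\alpha$. The proofs of complex a priori bounds in \cite{Ya1,FM2} use only the real a priori bounds together with analyticity of iterates away from the critical point, handling the critical point itself through a factored form $\psi\circ q\circ\phi$ with $q$ a real-symmetric power branch. Substituting $q=p_{\alpha\pm}$ is cosmetic, so the conclusion of Theorem~\ref{complex bounds} still holds: for some $m_0$ every prerenormalization $p\cR^m\zeta$ with $m\ge m_0$ extends to a holomorphic commuting pair of modulus bounded below by the universal constant $\mu$. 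Bounded geometry together with Koebe distortion applied to the univalent factors from Definition~\ref{A_r_alpha_def} then upgrades this to $\cR^m\zeta\in\mathcal B_r^\alpha$ for all sufficiently large $m$. A fundamental-crescent construction mirroring Lemma~\ref{Choose_t_lemma} and Lemma~\ref{Choose_r_h_lemma}, with $p_\alpha$ in place of the cube, lifts each such $\cR^m\zeta$ to a point $\tau_m\in(\mathfrak P_{\tl U,\tl t,\tl h}^\alpha)^\bbR$ satisfying $\zeta_{\tau_m}=\cR^m\zeta$ and lying in a fixed precompact subset of the real slice.

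By Corollary~\ref{main3_extended}, $\check{\mathcal I}_B^\alpha$ is a uniformly hyperbolic set with one-dimensional unstable direction, so its local stable leaves have real codimension one and foliate a full neighborhood in the real slice. If $\tau^*\in\check{\mathcal I}_B^\alpha$ is the point whose $\sigma^N$-itinerary matches the tail of the continued fraction of $\rho(\zeta)$, one expects $\tau_m\in W^s(\tau^*)$, and exponential contraction along $W^s(\tau^*)$ then gives $\|\cR_{\mathfrak P,\alpha}^n\tau_m-\cR_{\mathfrak P,\alpha}^n\tau^*\|\le C\lambda^n$, which pushes forward under the Lipschitz map $\tau\mapsto\zeta_\tau$ to yield~(\ref{pair_exp_convergence_eq}). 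The principal obstacle is precisely the inclusion $\tau_m\in W^s(\tau^*)$ when $\alpha\neq 2k+1$: for integer $\alpha$ it is Theorem~\ref{hyperb_R_P_theorem}, whose proof relies on Proposition~\ref{attr-injective} identifying stable leaves with conformal-conjugacy classes, a tool unavailable for non-integer exponents. My plan is to combine an extension of Yoccoz's quasisymmetric rigidity theorem to non-integer critical exponents (using only the real and complex bounds just established) with continuity of the stable lamination under perturbation of $\alpha$; since both $W^s(\tau^*)\cap(\mathfrak P^\alpha)^\bbR$ and the level set $\{\omega:\rho(f_\omega)=\rho(f_{\tau^*})\}$ have real codimension at least one, with the former contained in the latter by continuity of $\rho$, a dimension argument forces the two to coincide locally.
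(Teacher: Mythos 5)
Your high-level plan --- transport the hyperbolic horseshoe $\check{\mathcal I}_B^\alpha$ from the $\mathfrak P$-side back to commuting pairs, use complex bounds to trap orbits, and leverage hyperbolicity for the exponential rate --- matches the paper's strategy. But two steps contain gaps that the paper's argument is specifically designed to avoid.

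First, the claim that ``invariance is automatic'' is not correct. The semiconjugacy $\cR^N\zeta_\tau=\zeta_{\cR_{\mathfrak P,\alpha}\tau}$ only gives $\cR^N$-invariance of $\{\zeta_\tau\}$. Taking $\mathcal I_B^\alpha=\bigcup_{j=0}^{N-1}\cR^j\{\zeta_\tau\}$ does produce an $\cR$-invariant set, but its $\cR$-dynamics is then, a priori, an $N$-fold suspension of $\sigma^N$ over $\mathbb Z/N$, which is not topologically conjugate to the full shift $\sigma$ unless one additionally knows $\cR\{\zeta_\tau\}=\{\zeta_\tau\}$. That equality is not formal; the paper establishes it by \emph{first} proving part (ii) for $\zeta\in\mathcal B^\alpha$, then applying (\ref{pair_exp_convergence_eq}) to show $\cR(\Phi(\check{\mathcal I}_B^\alpha))\subset\Phi(\check{\mathcal I}_B^\alpha)$, and only then deducing part (i) --- additionally using continuity of $\iota_\alpha\circ\cR\circ\iota_\alpha^{-1}$ in $\alpha$ together with total disconnectedness of $\Sigma_B$ to pin the conjugacy down to $\sigma$. ``Combinatorially refining the $\sigma^N$-conjugacy'' is exactly the nontrivial content being proved, not a formal consequence.

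Second, in part (ii) you correctly identify that the crux is placing $\tau_m$ on the local stable manifold $W^s(\tau^*)$, and correctly observe that the $\alpha=3$ characterization of stable leaves (Theorem~\ref{hyperb_R_P_theorem}, resting on Proposition~\ref{attr-injective}) is not directly available for non-integer $\alpha$. But your proposed workaround via a rigidity extension and a dimension count is not sound as stated: having both $W^s(\tau^*)\cap(\mathfrak P^\alpha)^\bbR$ and $\{\omega:\rho(f_\omega)=\rho(f_{\tau^*})\}$ of ``codimension at least one'' together with the inclusion of the former in the latter does not force local coincidence (the larger set need not be a manifold and could well be strictly larger), and establishing that the rotation-number level set has codimension exactly one near the attractor is precisely the transversality you would need --- it cannot be assumed. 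The paper sidesteps rigidity entirely: Lemma~\ref{Phi_norm_bound_lemma}(iii) records the capture property of the hyperbolic horseshoe (an orbit of $\cR_{\mathfrak P}$ that never leaves the neighborhood $\tilde{\mathcal U}_{B,J}$ converges to $\check{\mathcal I}_B^\alpha$), the generalized complex bounds (Theorem~\ref{complex bounds_extended}) plus Proposition~\ref{iterate_into_W_B_proposition} guarantee the orbit is trapped, and the hyperbolic structure with local product structure (standard shadowing for horseshoes) identifies the limit orbit by its itinerary, which equals that of $\tau^*$ since rotation numbers match. The quantitative Lipschitz control between $\dist_r$ and $\dist_{\mathfrak P}$ (Lemmas~\ref{Phi_norm_bound_lemma} and~\ref{Psi_lemma}) then pushes the exponential rate over to commuting pairs. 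No rigidity theorem is invoked. You also do not address the passage from $\mathcal B^\alpha$ to the larger class $\mathcal A^\alpha$, which the paper handles via the coordinate change $\phi_n$ of~(\ref{phi_n_recurrent_eq}) and real a priori bounds.
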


We give a proof of Theorem~\ref{Global_attractor_theorem_expanded} in Section~\ref{Glob_Attr_proof_sec}. We note that Theorem~\ref{main4} is an immediate corollary of Theorem~\ref{Global_attractor_theorem_expanded}.

\subsection{Generalized holomorphic commuting pairs and complex bounds}

\begin{definition}\label{Holo_pairs_new_def}
Let $\zeta=(\eta,\xi)$ be either an element of some class $\mathcal B^{\alpha}$ or a linear rescaling of such an element. 
We say that $\zeta$ extends to a \textit{holomorphic commuting pair with critical exponent} $\alpha$, if there exist domains $U, V, D,\Delta\subset\bbC$ that satisfy the same conditions as the corresponding domains in the definition of a holomorphic commuting pair (c.f. Section~\ref{holo_pairs_sec}), and the following properties hold:

\begin{enumerate}[(i)]
\item\label{holo_pair_prop_1} $\eta$ and $\xi$ extend to analytic maps $\hat{\eta}\colon (U\cup D)\setminus(-\infty,0]\to\bbC$ and $\hat{\xi}\colon (V\cup D)\setminus[0,+\infty)\to\bbC$ respectively and 
\begin{equation*} 
\hat\eta=\psi_\eta\circ p_{\alpha+}\quad\text{ and }\quad \hat\xi=\psi_\xi\circ p_{\alpha-},
\end{equation*}
where $\psi_\eta$ and $\psi_\xi$ are real-symmetric conformal diffeomorphisms of $p_{\alpha+}(U\cup D\setminus(-\infty,0))$ and $p_{\alpha-}(V\cup D\setminus(0,+\infty))$ respectively;

\item $\hat{\eta}|_U$ is a conformal diffeomorphism of $U$ onto $(\Delta\setminus\bbR)\cup\eta(I_U)$, where $I_U=U\cap\bbR$, and $\hat{\xi}|_V$ is a conformal diffeomorphism of $V$ onto $(\Delta\setminus\bbR)\cup\eta(I_V)$, where $I_V=V\cap\bbR$;

\item the compositions $\nu^+=\hat\xi\circ\hat\eta$ and $\nu^-=\hat\eta\circ\hat\xi$ are defined on the sets $D\setminus(-\infty,0)$ and $D\setminus(0,+\infty)$ respectively and can be represented in these sets as $\nu^+=\psi\circ p_{\alpha+}$ and $\nu^-=\psi\circ p_{\alpha-}$, where $\psi$ is some conformal real-symmetric map; furthermore, 
we have 
$\bbH\cap\nu^+(D\setminus(-\infty,0))=\bbH\cap\nu^-(D\setminus(0,+\infty)) = \bbH\cap\Delta$;
\end{enumerate}
\end{definition}

We let $\mathbf H^\alpha$ denote the space of all holomorphic commuting pairs with critical exponent $\alpha$. It follows from the above definition that $\mathbf H^3\subset\mathbf H$, where $\mathbf H$ is the same as in Section~\ref{holo_pairs_sec}. 

For a subset $J\subset (2,+\infty)$,  
we let $\mathbf H^{J}$ be the space of all holomorphic commuting pairs with critical exponent $\alpha$ in the set $J$. This space can be equipped with the Carath\'eodory convergence in exactly the same way as the space of ordinary holomorphic pairs $\mathbf H$, namely, by viewing an element of $\mathbf H^J$ as three triples $(U,\xi(0),\eta), (V,\eta(0),\xi), (D,0,\nu^+)$.
Similarly, we define the modulus of a holomorphic pair $\cH=(\eta,\xi)\in \mathbf H^{J}$ in the same way as for an ordinary holomorphic pair from $\mathbf H$. We denote the modulus of $\cH$ by $\mod(\cH)$, and we say that the domain $\Delta$ in the above definition is the range of a holomorphic pair $\cH$, and $\zeta_\cH\in\mathcal B^J$ is the commuting pair underlying $\cH$. Finally, we define the prerenormalization $p\cR(\cH)$ and the renormalization $\cR(\cH)$ of a holomorphic pair $\cH$ with critical exponent $\alpha$ in the same way as we defined renormalization of ordinary holomorphic pairs. In particular, we note that the ranges of $\cH$ and $p\cR(\cH)$ are the same.

\begin{definition}
For $\mu\in (0,1)$ and a closed set $J\subset (2,+\infty)$, 
we let $\mathbf H^{J}(\mu)\subset \mathbf H^{J}$ be the space of all holomorphic commuting pairs $\cH=(\eta,\xi)\in \mathbf H^{J}$ with corresponding domains $D_\cH, U_\cH, V_\cH, \Delta_\cH$, such that all properties (i)-(v) from Definition~\ref{H_mu_def} hold.
\end{definition}

\begin{lemma}\label{H_mu_compactness_lemma_extended}
For each $\mu\in(0,1)$ and a closed set $J\subset (2,+\infty)$, 
the space $\hol^{J}(\mu)$ is sequentially compact. Furthermore, if a sequence of holomorphic pairs $\{\cH_k\mid k\in\bbN\}\subset\hol^J(\mu)$ with corresponding critical exponents $\alpha_k$ converges to a holomorphic pair with critical exponent $\alpha$, then $\lim_{k\to\infty}\alpha_k=\alpha$.
\end{lemma}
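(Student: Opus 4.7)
The plan is to follow the template of Lemma~\ref{bounds compactness} while separately controlling the critical exponents $\alpha_k$. The compactness assertion and the continuity assertion will be proved simultaneously: having extracted a convergent subsequence of pairs, the exponent of the limit will automatically equal the limit of the exponents.

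\textbf{Step 1 (domains and quasidisks).} For a sequence $\cH_k=(\eta_k,\xi_k)\in\hol^J(\mu)$ with domains $U_k,V_k,D_k,\Delta_k$ and critical exponents $\alpha_k\in J$, properties~\ref{hol_mu_2} and~\ref{hol_mu_5} of Definition~\ref{H_mu_def} fix $\eta_k(0)=-1$, $\xi_k(0)=1$ and place every $\Delta_k$ in a common bounded region of $\bbC$. Property~\ref{hol_mu_4} then makes $\Delta_k\cap\bbH$, $U_k\cap\bbH$, $V_k\cap\bbH$ and $D_k\cap\bbH$ into $(1/\mu)$-quasidisks of uniformly bounded diameter, and the standard compactness theorem for normalized families of quasidisks with a fixed dilatation bound allows one to pass to a subsequence along which each of these domains, together with their real-symmetric doubles, converges in the Carath\'eodory sense to a $(1/\mu)$-quasidisk of the same type.

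\textbf{Step 2 (bound on $\alpha_k$).} The bound $|\eta_k^{-1}(0)|\ge\mu$ from property~\ref{hol_mu_2} yields $x_k\in[\mu,1]$ with $\psi_{\eta_k}(x_k^{\alpha_k})=0$, while $\psi_{\eta_k}(0)=-1$ and $\psi_{\eta_k}(1)=\eta_k(1)\in\Delta_k$. Since $\psi_{\eta_k}$ is a conformal map of $p_{\alpha_k+}(U_k\setminus(-\infty,0])$ into a fixed bounded region and $\Delta_k$ has diameter at most $1/\mu$, a Koebe distortion argument forces the point $x_k^{\alpha_k}=\psi_{\eta_k}^{-1}(0)$ to be bounded away from $0$; because $x_k\le 1-c$ (the return point $x_k$ is separated from the fixed point of $\eta_k$ by real a priori bounds), this translates into an upper bound on $\alpha_k$. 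Since $J$ is closed in $(2,+\infty)$ and $\alpha_k$ is bounded away from both $2$ and $+\infty$, a further subsequence has $\alpha_k\to\alpha\in J$.

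\textbf{Step 3 (extraction of the limiting pair).} Because $\alpha_k\to\alpha$, the maps $p_{\alpha_k\pm}$ converge to $p_{\alpha\pm}$ uniformly on compact subsets of their slit domains of definition. The domains $p_{\alpha_k+}(U_k\setminus(-\infty,0])$ converge in the Carath\'eodory sense (by Step~1 applied after composition with $p_{\alpha_k+}$), and the maps $\psi_{\eta_k}$ are conformal maps of these domains into a uniformly bounded region; Montel's theorem yields a subsequence along which $\psi_{\eta_k}$ converges uniformly on compacta to a conformal limit $\psi_\eta$. The same argument applies to $\psi_{\xi_k}$ and to the conformal factors $\psi_k$ of $\nu^\pm_k$. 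Setting $\eta=\psi_\eta\circ p_{\alpha+}$, $\xi=\psi_\xi\circ p_{\alpha-}$ and $\nu^\pm=\psi\circ p_{\alpha\pm}$ on the limiting domains produces a holomorphic commuting pair with critical exponent $\alpha$; the conditions of Definitions~\ref{Holo_pairs_new_def} and~\ref{H_mu_def} are closed under Carath\'eodory convergence, so the limit lies in $\hol^J(\mu)$. This proves sequential compactness.

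\textbf{Step 4 (continuity of the exponent).} If $\cH_k\to\cH\in\hol^J$ with $\cH$ of exponent $\alpha$, suppose $\alpha_k\not\to\alpha$. Extracting a subsequence with $\alpha_k\to\alpha'\ne\alpha$ and applying Steps~1--3, one would obtain a limit $\cH'\in\hol^J(\mu)$ of exponent $\alpha'$ which, by uniqueness of Carath\'eodory limits, must coincide with $\cH$ as a pair of maps. However, the real-analytic asymptotics $\eta(x)+1\sim \psi_\eta'(0)\,x^{\alpha}$ as $x\to 0^+$ read off the exponent $\alpha$ uniquely from the germ of $\eta$ at $0$, producing a contradiction. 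The main obstacle will be Step~2: the domains $p_{\alpha_k+}(U_k)$ themselves depend on $\alpha_k$ and can become highly eccentric as $\alpha_k$ grows, so the Koebe estimate must be set up carefully using the quasidisk bounds from property~\ref{hol_mu_4} and the universal real a priori bounds to locate the point $x_k$ away from the fixed point of $\eta_k$.
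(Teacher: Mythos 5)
Your overall plan (quasidisk compactness plus separate control of the critical exponents) is sound, and your Step~4 — recovering the exponent from the one-sided real asymptotics $\eta(x)-\eta(0)\sim\psi_\eta'(0)x^\alpha$ at $0^+$ and then invoking uniqueness of the Carath\'eodory limit — is a valid alternative to the paper's argument. The paper, however, controls the exponent in a much more direct geometric way. It observes that the hypothesis $\hat\eta=\psi_\eta\circ p_{\alpha+}$ with $\psi_\eta$ conformal forces the part of $\partial U_k$ incident to $0$ in the upper half-plane to be (near $0$) a \emph{straight line segment of definite length} making angle $\pi/\alpha_k$ with the ray $[0,+\infty)$ — it is exactly the preimage of the slit of $\Delta_k$ emanating from $\eta_k(0)$. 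The quasidisk condition in $\hol^{J}(\mu)$ makes Carath\'eodory convergence of the $U_k$ equivalent to Hausdorff convergence of $\overline{U_k\cap\HH}$, and Hausdorff convergence of these domains with a definite boundary segment at $0$ immediately forces the angles $\pi/\alpha_k$, and hence the exponents, to converge. This argument also takes care of boundedness of $\alpha_k$ for free: a cusp of opening angle $\pi/\alpha_k\to 0$ at $0\in\partial(U_k\cap\HH)$ would violate the uniform $(1/\mu)$-quasidisk bound in Definition~\ref{H_mu_def}.

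The genuine gap in your proposal is exactly the one you flag yourself, namely Step~2. Two points there do not hold up as written. First, you invoke Koebe distortion for $\psi_{\eta_k}$ on ``a fixed bounded region''; but the domain of $\psi_{\eta_k}$ is $p_{\alpha_k+}\bigl((U_k\cup D_k)\setminus(-\infty,0)\bigr)$, whose shape and local multiplicity near $0$ depend on $\alpha_k$ — as $\alpha_k$ grows this set becomes a many-sheeted cone and is not literally a planar domain containing a disk of definite radius around $0$, so the quoted Koebe estimate does not apply without substantial extra work. Second, you want $x_k=\eta_k^{-1}(0)\le 1-c$ and justify it by ``real a priori bounds''; but the real bounds are statements about high renormalizations of critical circle maps with irrational rotation number, whereas $\hol^{J}(\mu)$ is defined by purely geometric conditions and contains pairs to which those bounds have not been applied. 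The separation $\xi_k(\eta_k(0))\ge c>0$ needs to be extracted from the conditions of Definition~\ref{H_mu_def} itself, which you do not do. Since both of these enter your upper bound on $\alpha_k$, which Steps~3 and~4 rely on to extract a subsequential limit, the argument is incomplete as stated. The paper's approach sidesteps both difficulties by bounding the angle $\pi/\alpha_k$ with the quasidisk constant rather than with Koebe, so if you want to repair Step~2 the cleanest route is to switch to that geometric bound.
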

\begin{proof}
Exactly the same argument as in the proof of Lemma~\ref{bounds compactness} (c.f. Lemma 2.17 from~\cite{Yamp-towers}) shows that every sequence of holomorphic pairs from $\hol^J(\mu)$ has a subsequence that converges to a holomorphic pair that satisfies properties (i)-(v) from Definition~\ref{H_mu_def}. In order to complete the proof, we need to show the convergence of critical exponents.

Consider a sequence $\cH_1,\cH_2,\dots\in\hol^J(\mu)$ of holomorphic pairs with corresponding critical exponents $\alpha_1,\alpha_2,\dots$ and let $U_k,V_k,D_k$ denote the corresponding domains of the maps that constitute the holomorphic pair $\cH_k$. We note that because of property~\ref{hol_mu_4} from Definition~\ref{H_mu_def}, Carath\'eodory convergence of these domains is equivalent to Hausforff convergence of $\overline{U_k\cap\bbH}$, $\overline{V_k\cap\bbH}$ and $\overline{D_k\cap\bbH}$. Furthermore, properties~\ref{hol_mu_2} and~\ref{hol_mu_4} from Definition~\ref{H_mu_def} imply that all domains $D_k$ contain a disk of some fixed positive radius, centered at zero. Then it follows from property~\ref{holo_pair_prop_1} of Definition~\ref{Holo_pairs_new_def} that all boundaries $\partial U_k$ contain a straight line segment of fixed positive length with one endpoint at $0$, forming angle $\pi/\alpha_k$ with the ray $[0,+\infty)$. Now convergence of the domains $U_k$ implies convergence of the critical exponents $\alpha_k$.
\end{proof}

The following Theorem was proved by the second author in \cite{Ya1} for critical commuting pairs in the Epstein class with $\alpha=3$, the proof was later generalized in \cite{FM2} to remove the Epstein class condition. The proof extends to the case of a general $\alpha>2$ {\it mutatis mutandis}.

\begin{theorem}[{\bf Complex bounds}]
\label{complex bounds_extended}
For every bounded set $J\subset[2,+\infty)$, there exists a constant $\mu=\mu(J)>0$  such that
the following holds. For every positive real number $r>0$ and every pre-compact family $S\subset\mathcal B_r^{J}$ of critical commuting pairs, there exists $K=K(r, S)\in\bbN$ such that if $\zeta\in S$ is
a $2n$ times renormalizable commuting pair, where $n\geq K$, then
$p\cR^n\zeta$ restricts to a holomorphic commuting pair $\cH_n$ with range $\Delta_n\subset N_r(I_\eta)\cup N_r(I_\xi)$. 
Furthermore, the range $\Delta_n$ is a Euclidean disk,  and the appropriate affine rescaling of $\cH_n$ is in $\hol^J(\mu)$.
\end{theorem}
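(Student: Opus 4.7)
The plan is to run the de~Faria--de~Melo argument from \cite{FM2} verbatim at each fixed $\alpha$ and then patch these proofs together via the compactness of $J$ and Lemma~\ref{H_mu_compactness_lemma_extended}. The point is that the decomposition
$$\eta=\psi_\eta\circ p_{\alpha+},\quad \xi=\psi_\xi\circ p_{\alpha-},\quad \nu^\pm=\rho_\zeta\circ p_{\alpha\pm}$$
built into Definition~\ref{Gccp_def} plays exactly the role that the Epstein decomposition $g=Q_c\circ h$ played in the $\alpha=3$ case: it isolates the only source of non-univalence (the branch point at the origin) inside the rigid map $p_{\alpha\pm}$, while the conformal factors $\psi_\eta,\psi_\xi,\rho_\zeta$ accumulate the entire dynamical information and can be controlled by the Koebe distortion theorem.

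First I would establish uniform real \emph{a~priori} bounds for the family $\{\cR^n\zeta : \zeta\in S,\ n\ge 0\}$, uniformly over $\alpha\in J$. The classical proof proceeds via cross-ratio/nonlinearity estimates which apply to any fixed $\alpha>1$; the uniformity over $\alpha\in J$ is obtained by noting that all of the relevant constants depend continuously on $\alpha$ and that $J$ is compact. This gives a definite modulus of geometry for the returns. Next, given a deeply renormalizable $\zeta\in S$, the branches $\eta^{r}\circ\xi$ etc.\ that constitute $p\cR^n\zeta$ can be decomposed as long compositions built from the $\psi_\bullet$'s and a single application of $p_{\alpha\pm}$ near the critical point. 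Their inverse branches, when pulled back along the orbit, are compositions of univalent maps on domains whose geometry is controlled by the real bounds. Applying Koebe to these inverse branches yields univalent extensions to a definite complex neighborhood of the real intervals of definition, with bounded distortion; pushing this forward produces the desired complex extension to a Euclidean disk $\Delta_n$ contained in $N_r(I_\eta)\cup N_r(I_\xi)$. At this step one also verifies the holomorphic pair axioms of Definition~\ref{Holo_pairs_new_def}, in particular that $\hat\eta\circ\hat\xi$ and $\hat\xi\circ\hat\eta$ share the image $\Delta_n\cap\bbH$, by using the identity $\nu^\pm=\rho_\zeta\circ p_{\alpha\pm}$ which replaces the usual commutation relation.

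Finally, to obtain a uniform $\mu=\mu(J)>0$, I would argue by contradiction. Suppose no such $\mu$ exists; then for each $\mu_k\to 0$ there is $\alpha_k\in J$, a family $S_k$, and a renormalization $\cH_{n_k}$ whose rescaling violates one of the properties of $\hol^J(\mu_k)$. Using the real bounds together with the distortion control above, one extracts a Carath\'eodory-convergent subsequence of these rescaled holomorphic pairs; by Lemma~\ref{H_mu_compactness_lemma_extended} the limit is a holomorphic commuting pair with some critical exponent $\alpha_\infty\in J$ and positive modulus, contradicting the assumed degeneration. The main obstacle I expect is the uniform control over $\alpha\in J$: the branch rays $(-\infty,0]$ and $[0,+\infty)$ that appear in the domains of $\eta,\xi$ (rather than being removable singularities as in the odd-integer case) force one to work with slit neighborhoods throughout. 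Keeping track of how these slits interact under composition, and verifying that the limiting object in the compactness argument does not collapse along these slits, is the most delicate point. This is precisely where the rigid factorization through $p_{\alpha\pm}$ together with continuous dependence on $\alpha$ (established in the proof of Lemma~\ref{H_mu_compactness_lemma_extended}) is used.
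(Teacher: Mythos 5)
The paper's entire proof of this theorem is the single remark that the $\alpha=3$ complex bounds of \cite{Ya1} (Epstein case) and \cite{FM2} (general case) extend ``\emph{mutatis mutandis}'' to $\alpha>2$, and your sketch---running the de~Faria--de~Melo argument with the rigid factorization through $p_{\alpha\pm}$ standing in for the cubic/Epstein decomposition, with uniformity of the distortion constants over the compact set $J$---is precisely the natural way to flesh that out, so you are on the same route as the paper. One minor correction: your closing compactness-by-contradiction step is both redundant and a slight misuse of Lemma~\ref{H_mu_compactness_lemma_extended}, since that lemma gives sequential compactness of $\hol^J(\mu)$ only for a \emph{fixed} $\mu>0$ and cannot be invoked for a degenerating sequence with $\mu_k\to 0$; once the real bounds are uniform over $J$ and the Koebe step is carried out, they already furnish a definite lower bound $\mu(J)>0$ directly, with no extraction argument needed.
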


\subsection{The maps between $\mathcal B_r^\alpha$ and $\mathfrak P_{\tl U,\tl t,\tl h}^\alpha$}

Let $\tilde{\mathcal U}_0$ denote the set of all $\tau\in\tilde{\mathcal U}\cup (\mathfrak P_{\tilde U,\tilde t,\tilde h})^\bbR$ for which $\cR_{\mathfrak P}^2(\tau)$ is defined. Then this implies that the rotation number of the generalized critical circle map $f_\tau$ has at least $2N$ elements in its continued fraction expansion. Let $p\cR^m f_\tau=(f_\tau^{q_{m+1}},f_\tau^{q_m})$ be the $m$-th prerenormalization of the circle map $f_\tau$ considered in the sense of commuting pairs (c.f. Definition~\ref{renorm_f_def}). Then one can consider a map $\Phi\colon \tilde{\mathcal U}_0 \to\mathcal B^{(0,+\infty)}$ defined by
$$
\Phi\colon \tau=(\alpha,\phi,\psi)\mapsto (h\circ\phi\circ f_\tau^{q_{2N}}\circ\phi^{-1}\circ h^{-1},h\circ\phi\circ f_\tau^{q_{2N-1}}\circ\phi^{-1}\circ h^{-1}),
$$
where $h(x)=x/\phi(f_\tau^{q_{2N-1}}(\phi^{-1}(0)))$. In other words, $\Phi(\tau)$ is the $(2N-1)$-st prerenormalization $p\cR^{2N-1}f_\tau$, taken in $\phi$-coordinate and then rescaled so that the first map in the pair acts on the interval $[0,1]$.

For $\tau_1=(\alpha_1,\phi_1,\psi_1),\tau_2=(\alpha_2,\phi_2,\psi_2)\in\mathfrak P_{\tl U,\tl t,\tl h}$, we define 
\begin{multline*}
\dist_{\mathfrak P}(\tau_1,\tau_2)=|\alpha_1-\alpha_2|+\sup_{z\in\tl U}|\phi_1(z)-\phi_2(z)| \\
+ \inf_{\tl\psi_1,\tl\psi_2}\left(\sup_{|\Im z|<\tl h}|\tl\psi_1(z)-\tl\psi_2(z)|\right),
\end{multline*}
where the infinum is taken over all $\tl\psi_1,\tl\psi_2$ that are lifts of $\psi_1$ and $\psi_2$ respectively via the projection~(\ref{proj_rel}). 
We note that $\dist_{\mathfrak P}(\cdot,\cdot)$ is a metric on the Banach manifold $\mathfrak P_{\tl U,\tl t,\tl h}$, induced by the norm in the Banach space that $\mathfrak P_{\tl U,\tl t,\tl h}$ is modeled on.

The following two lemmas establish a relation between the two metrics $\dist_r(\cdot,\cdot)$ and $\dist_{\mathfrak P}(\cdot,\cdot)$ in a neighborhood of the attractors $\mathcal I_B$ and $\check{\mathcal I}_B$ respectively.

\begin{lemma}\label{Phi_norm_bound_lemma}
For any $B\in\bbN$ and any closed interval $J\subset I(B)$, where $I(B)$ is the interval from Corollary~\ref{main3_extended}, there exist positive constants $C=C(B,J)>0$, $r=r(B,J)>0$ and an open set $\tilde{\mathcal U}_{B,J}\subset\tilde{\mathcal U}_0$, such that the following holds:
\begin{enumerate}[(i)]
\item 
$$
\bigcup_{\alpha\in J}\check{\mathcal I}_B^\alpha\subset\tilde{\mathcal U}_{B,J};
$$
\item\label{dist_ineq_Phi_cond} 
for every $\tau\in\tilde{\mathcal U}_{B,J}$, the critical commuting pair $\Phi(\tau)$ analytically extends to a pair from $\mathcal B_{r}^J$ and for any $\tau_1,\tau_2\in\tilde{\mathcal U}_{B,J}$ we have $\dist_{\mathfrak P}(\tau_1,\tau_2)\ge C\dist_{r}(\Phi(\tau_1),\Phi(\tau_2))$.
\item\label{capture_region_cond}
for every $\tau=(\alpha,\phi,\psi)\in\tilde{\mathcal U}_{B,J}$ with $\alpha\in J$, the sequence of iterates 
$$\cR_{\mathfrak P}(\tau),\cR_{\mathfrak P}^2(\tau),\cR_{\mathfrak P}^3(\tau),\dots$$
either eventually leaves the set $\tilde{\mathcal U}_{B,J}$, or stays in it forever and converges to the attractor $\check{\mathcal I}_B^\alpha$.
\end{enumerate}
\end{lemma}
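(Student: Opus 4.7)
The plan is to build $\tilde{\mathcal U}_{B,J}$ as a small uniform neighborhood of the compact set $K_{B,J}=\bigcup_{\alpha\in J}\check{\mathcal I}_B^\alpha$ and then verify (i)--(iii) by combining compactness, complex bounds, and hyperbolicity. First I would observe that by Corollary~\ref{main3_extended}, for each $\alpha\in J\subset I(B)$ the set $\check{\mathcal I}_B^\alpha$ is a hyperbolic horseshoe conjugate to $\Sigma_B$, and $\check{\mathcal I}_B^\alpha$ depends continuously on $\alpha$ as the hyperbolic continuation of $\check{\mathcal I}_B=\check{\mathcal I}_B^3$ under the real-symmetric analytic perturbation $\mathcal R_{\mathfrak P,\alpha}$; since $J$ is compact, $K_{B,J}$ is compact in $\mathfrak P_{\tilde U,\tilde t,\tilde h}$.

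Next I would show that $\Phi$ extends to an analytic map on some open neighborhood $\cW\supset K_{B,J}$ with values in $\mathcal B_r^J$ for a uniform $r>0$. For any $\tau\in K_{B,J}$, applying $\Psi^{-1}$ from Proposition~\ref{basic_R_P_prop}(iii) produces a critical commuting pair whose $(2N-1)$-st prerenormalization lies in $\mathcal B_{r_0}^J$ for some $r_0$ and is in the pre-compact family to which Theorem~\ref{complex bounds_extended} applies. Passing to $\phi$-coordinates and rescaling (which are well controlled since $\phi$ stays close to $\mathrm{id}$ and $\phi(f_\tau^{q_{2N-1}}(\phi^{-1}(0)))$ is bounded away from $0$ by real \emph{a priori} bounds), we get $\Phi(\tau)\in\mathcal B_r^J$ for some possibly smaller uniform $r$. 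By continuity of iteration, the same extension persists on an open neighborhood of $K_{B,J}$; shrink if necessary to ensure $\Phi(\tilde{\mathcal U}_{B,J})$ is uniformly bounded in $\dist_r$. This gives (i) and the extension part of (ii).

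The Lipschitz estimate in (ii) then follows because $\tau\mapsto\Phi(\tau)$ is an analytic map from an open subset of the Banach manifold $\mathfrak P_{\tilde U,\tilde t,\tilde h}$ into $(\mathcal B_r^J,\dist_r)$: any such analytic map is locally Lipschitz on pre-compact sets by Cauchy estimates on its differential, so after possibly shrinking $\tilde{\mathcal U}_{B,J}$ once more we get a uniform $C=C(B,J)>0$ with $\dist_r(\Phi(\tau_1),\Phi(\tau_2))\le C^{-1}\dist_{\mathfrak P}(\tau_1,\tau_2)$ throughout $\tilde{\mathcal U}_{B,J}$.

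The main obstacle is (iii), the trichotomy for orbits, and this is where hyperbolicity must be used with care. Since $\mathcal R_{\mathfrak P}$ preserves the fibers $\mathfrak P^\alpha_{\tilde U,\tilde t,\tilde h}$, the orbit of $\tau=(\alpha,\phi,\psi)$ stays in $\mathfrak P^\alpha$ and acts by $\mathcal R_{\mathfrak P,\alpha}$. By Corollary~\ref{main3_extended} together with compactness of $J$ and the real-analytic dependence of $\mathcal R_{\mathfrak P,\alpha}$ on $\alpha$, one gets uniform hyperbolic data (stable/unstable cone fields, contraction/expansion constants, sizes of local stable manifolds) for the family $\{\mathcal R_{\mathfrak P,\alpha}\mid\alpha\in J\}$ on a common neighborhood of $K_{B,J}$. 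A standard cone-field argument for a continuous family of uniformly hyperbolic sets with one-dimensional unstable direction then produces a uniform trapping neighborhood $W\supset K_{B,J}$ with the property that for every $\alpha\in J$, any $\tau\in W\cap\mathfrak P^\alpha$ whose entire forward $\mathcal R_{\mathfrak P,\alpha}$-orbit stays in $W$ must lie on the local stable manifold of some point in $\check{\mathcal I}_B^\alpha$, hence converges exponentially to $\check{\mathcal I}_B^\alpha$. Taking $\tilde{\mathcal U}_{B,J}:=W\cap\tilde{\mathcal U}_0$ (possibly shrunk further so that (i)--(ii) still hold) yields (iii). The delicate point is ensuring the uniformity over $\alpha\in J$ of both the trapping region and the hyperbolic constants; this is what forces the closedness of $J$ in the hypothesis.
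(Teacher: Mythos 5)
Your proposal follows the same three-step structure as the paper: use compactness of $\bigcup_{\alpha\in J}\check{\mathcal I}_B^\alpha$ to obtain a uniform $r$ and an open neighborhood on which $\Phi$ lands in $\mathcal B_r^J$; derive the Lipschitz estimate in (ii) from analyticity of $\Phi$ and local boundedness of its differential; and obtain (iii) from the hyperbolic-horseshoe structure guaranteed by Corollary~\ref{main3_extended}. The paper in fact disposes of (iii) in a single sentence (``it follows from Corollary~\ref{main3_extended}\ldots''), so your cone-field/stable-manifold unpacking is more explicit than what appears there, but it is making the same point: a hyperbolic horseshoe attractor admits an isolating trapping neighborhood, and a forward orbit confined to it must converge to the attractor.

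One inaccuracy to flag in your step establishing the extension to $\mathcal B_r^J$: you invoke ``$\Psi^{-1}$ from Proposition~\ref{basic_R_P_prop}(iii),'' but that $\Psi$ is the homeomorphism $\mathcal I_B\to\check{\mathcal I}_B$ defined only at $\alpha=3$, so it does not apply to $\tau\in\check{\mathcal I}_B^\alpha$ with $\alpha\neq 3$. It is also not needed: $\Phi(\tau)$ is by construction the $(2N-1)$-st prerenormalization of $f_\tau$ taken in $\phi$-coordinates and rescaled, so it lies in $\mathcal B^J$ directly, and uniform $r$ is obtained purely from compactness of $\bigcup_{\alpha\in J}\check{\mathcal I}_B^\alpha$ and continuity of $\tau\mapsto\Phi(\tau)$ — this is precisely how the paper argues. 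Similarly, Theorem~\ref{complex bounds_extended} plays no role in this lemma. These are detours rather than gaps: the compactness-and-continuity argument you also invoke does the job, and your conclusions for (i)--(iii) are correct.
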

\begin{proof}

The set $\cup_{\alpha\in J}\check{\mathcal I}_B^\alpha$ is compact, hence there exists a positive real number $r>0$, such that for every $\tau\in\cup_{\alpha\in J}\check{\mathcal I}_B^\alpha$, the critical commuting pair $\Phi(\tau)$ analytically extends to a pair from $\mathcal B_{2r}^J$, and we can choose an open set $\mathcal V_{B,J}\subset \tilde{\mathcal U}_0$, so that $\cup_{\alpha\in J}\check{\mathcal I}_B^\alpha\subset\mathcal V_{B,J}$, and for any $\tau\in\mathcal V_{B,J}$, the pair $\Phi(\tau)$ analytically extends to a pair from $\mathcal B_{r}^J$. 

Let $i$ be the map defined in~(\ref{norm_inclusion_eq}).
Since according to Lemma~\ref{Choose_hr_tilde_lemma}, the map $f_\tau$ analytically depends on $\tau$, 
it follows from the definition of the map $\Phi$ that the composition $i\circ\Phi$ is an analytic map from the neighborhood $\mathcal V_{B,J}\subset\mathfrak P_{\tl U,\tl t,\tl h}$ to the Banach space $\mathfrak D(N_r([0,1])\setminus(-\infty,0])\times \mathfrak D(N_r([0,1])\setminus(-\infty,0])$.

The derivative of an analytic operator is locally bounded, which means that there exists a constant $c>0$ and an open set $\tilde{\mathcal V}_{B,J}\subset\mathcal V_{B,J}$, such that $\cup_{\alpha\in J}\check{\mathcal I}_B^\alpha\subset\tilde{\mathcal V}_{B,J}$, and $\|D_\tau(i\circ\Phi)\|<c$, for all $\tau\in\tilde{\mathcal V}_{B,J}$.

Since the set $\cup_{\alpha\in J}\check{\mathcal I}_B^\alpha$ is compact, the set $\Phi(\cup_{\alpha\in J}\check{\mathcal I}_B^\alpha)$ is also compact, hence it has a finite diameter with respect to the Banach norm. Finally, we choose an open set $\tilde{\mathcal U}_{B,J}\subset\tilde{\mathcal V}_{B,J}$ so that $\cup_{\alpha\in J}\check{\mathcal I}_B^\alpha\subset\tilde{\mathcal U}_{B,J}$, the set $\Phi(\tilde{\mathcal U}_{B,J})$ has a finite diameter $D>0$, and if $\tau_1,\tau_2\in\tilde{\mathcal U}_{B,J}$, $\dist_{\mathfrak P}(\tau_1,\tau_2)<\varepsilon$, for some sufficiently small $\varepsilon>0$, then $\tau_1$ and $\tau_2$ can be connected by the shortest geodesic contained in $\tilde{\mathcal V}_{B,J}$. Now it is easy to check that for any $\tau_1,\tau_2\in\tilde{\mathcal U}_{B,J}$ and $C=\min(c, \varepsilon/D)$, the inequality
$$
\dist_{\mathfrak P}(\tau_1,\tau_2)\ge C\|i(\Phi(\tau_1))-i(\Phi(\tau_2))\|
$$
holds, which immediately implies property~\ref{dist_ineq_Phi_cond} of the lemma.

Finally, it follows from Corollary~\ref{main3_extended} that shrinking the open set $\tilde{\mathcal U}_{B,J}$ if necessary, we can guarantee that property~\ref{capture_region_cond} of the lemma also holds.
\end{proof}

\begin{lemma}\label{Psi_lemma}
Fix $B\in\bbN$ and $\mu=\mu(I)$, where $I=I(B)$ is the interval from Corollary~\ref{main3_extended} and $\mu(I)$ is the same as in Theorem~\ref{complex bounds_extended}. 
Then there exists a set $\mathcal W_B\subset \mathcal B^I$ and a map $\Psi\colon{\mathcal W_B}\to \mathfrak P_{\tilde U,\tilde t,\tilde h}$, such that the following properties hold:

\begin{enumerate}[(i)]
\item\label{norm_lemma_prop_1} $\mathcal W_B$ is the projection of an open neighborhood $\tilde{\mathcal W}_B\subset\hol^I(\mu)$ under the map $\cH\mapsto\zeta_\cH$, and $\tilde{\mathcal I}_B\subset\tilde{\mathcal W}_B$;
\item\label{norm_lemma_prop_2} for every integer $m\ge 0$, the following holds whenever both sides are defined:
$$
\Psi(\cR^{mN}(\Phi(\tau)))=\cR_{\mathfrak P}^{m+2}\tau;
$$
\item\label{norm_lemma_prop_3} for every integer $m\ge 0$, the following holds whenever both sides are defined:
$$
\Phi(\cR_{\mathfrak P}^m(\Psi(\zeta)))= \cR^{N(m+2)}\zeta;
$$
\item\label{norm_lemma_prop_4} there exists a closed interval $J\subset I$, such that $3\in J$ lies in the interior of $J$ and 
$$\Psi(\mathcal W_B)\subset\tilde{\mathcal U}_{B,J},$$
where $\tilde{\mathcal U}_{B,J}$ is the set from Lemma~\ref{Phi_norm_bound_lemma};

\item\label{norm_lemma_prop_5} there exist two real constants $s>0$ and $C=C(B,\alpha)>0$, such that every $\zeta\in\mathcal W_B$ analytically extends to a pair from $\mathcal B^I_s$, and for any $\zeta_1,\zeta_2\in\mathcal W_B$ with critical exponent $\alpha$, we have $\dist_s(\zeta_1,\zeta_2)\ge C\dist_{\mathfrak P}(\Psi(\zeta_1),\Psi(\zeta_2))$.

\end{enumerate}
\end{lemma}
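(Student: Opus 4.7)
The plan is to define $\Psi$ by mimicking the construction of the $\Psi$ of Proposition~\ref{basic_R_P_prop}(iii), but with the critical exponent $\alpha$ allowed to vary in $I$. Given $\zeta\in\mathcal B^I$, I first want to upgrade it to a holomorphic commuting pair $\cH$ via Theorem~\ref{complex bounds_extended}. Since that theorem only produces such an extension after passing to a sufficiently deep prerenormalization, I instead take $\tilde{\mathcal W}_B\subset\hol^I(\mu)$ to be a small open neighborhood of the invariant set $\tilde{\mathcal I}_B$ (of holomorphic extensions of pairs in $\mathcal I_B$, produced by the obvious extension of Lemma~\ref{Cont_holo_pair_lemma} to non-integer $\alpha\in I$). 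On $\tilde{\mathcal W}_B$ the construction of a fundamental crescent $C_\cH$ with fattening $C_\cH^o$ from Lemma~\ref{Canon_holo_cres_lemma} and Lemma~\ref{E_crescent_exist_lemma} goes through verbatim (nothing in those proofs used $\alpha=3$), yielding a continuously varying pair $(C_\cH,C_\cH^o)$ with $[-1/2,1/2]\subset\tilde U\Subset\pi_{C_\cH^o}(C_\cH^o)$. Define $\phi_\cH:=\pi_{C_\cH}'(0)\cdot\pi_{C_\cH^o}^{-1}$ as in Lemma~\ref{Choose_t_lemma}, and define $\psi_\cH\in\mathfrak D_{h_1}$ as the unique real-symmetric map (given by the compactness argument of Lemma~\ref{Choose_r_h_lemma}) such that
\begin{equation*}
\tilde\pi_{C_\cH}\circ R_{C_\cH}\circ\pi_{C_\cH}^{-1}=\psi_\cH\circ\pi_{\phi_\cH,\alpha,\tilde t}\circ p_{\phi_\cH,\alpha\pm}.
\end{equation*}
Let $\mathcal W_B$ be the image of $\tilde{\mathcal W}_B$ under $\cH\mapsto\zeta_\cH$, and set $\Psi(\zeta):=(\alpha,\phi_\cH,\psi_\cH)\in\mathfrak P_{\tilde U,\tilde t,\tilde h}^\alpha$. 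Property (i) is then built into the definition of $\mathcal W_B$.

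Properties (ii) and (iii) will be verified by unfolding the definitions of $\Phi$, $\Psi$, $\cR$ and $\cR_{\mathfrak P}$. The operator $\cR_{\mathfrak P}$ was designed so that $f_{\cR_{\mathfrak P}\tau}=\cren f_\tau$ (Proposition~\ref{basic_R_P_prop}(ii)), and $\cren$ itself corresponds to $N$ commuting-pair renormalizations applied to $f_\tau$ (Lemma~\ref{U_lemma}(i) together with Lemma~\ref{Choose_K_lemma}). Thus $f_{\cR_{\mathfrak P}^{k}\tau}$ is, up to rescaling in the $\phi$-chart, the first-return map of the holomorphic pair underlying $p\cR^{kN}f_\tau$. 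On the one hand, $\Phi(\cR_{\mathfrak P}^m\Psi(\zeta))$ is, by definition, the rescaled $(2N-1)$st commuting-pair prerenormalization of $f_{\cR_{\mathfrak P}^m\Psi(\zeta)}=\cren^m f_{\Psi(\zeta)}=\cren^m f_\cH$, and the previous identity identifies this with $\cR^{N(m+2)}\zeta$; this gives (iii). On the other hand, $\Psi(\cR^{mN}\Phi(\tau))$ reconstructs, through the uniqueness clause in the identity defining $\psi_\cH$, the same triple $(\alpha,\phi,\psi)$ encoded in $\cR_{\mathfrak P}^{m+2}\tau$; this gives (ii).

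Property (iv) follows from continuity of $\Psi$ on $\mathcal W_B$ together with the fact that $\Psi(\mathcal I_B^\alpha)=\check{\mathcal I}_B^\alpha$ (the case $m=0$ of (ii) applied to any $\tau\in\check{\mathcal I}_B^\alpha$). Since $\bigcup_{\alpha\in J}\check{\mathcal I}_B^\alpha$ is compact and contained in the open set $\tilde{\mathcal U}_{B,J}$, after possibly shrinking $\tilde{\mathcal W}_B$ and choosing a closed $J\subset I$ with $3$ in the interior, one obtains $\Psi(\mathcal W_B)\subset\tilde{\mathcal U}_{B,J}$. The main obstacle is property (v). For this, first note that by Lemma~\ref{H_mu_compactness_lemma_extended} the space $\hol^J(\mu)$ is sequentially compact; combined with the holomorphic-motions construction of Lemma~\ref{Nearby_crescent_lemma} and the parameter dependence in the Measurable Riemann Mapping Theorem, the assignment $\cH\mapsto(\phi_\cH,\psi_\cH)$ is locally Lipschitz in the sup-norms, with a Lipschitz constant that is uniform in $\cH$ by compactness. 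The missing link is to relate the metric $\dist_s$ on commuting pairs to sup-norm distances on the associated holomorphic pairs: applied to a pre-compact subfamily of $\mathcal B_s^J$ for suitably small $s$, Theorem~\ref{complex bounds_extended} produces the extension $\zeta\mapsto\cH$ by a finite composition of $\eta$ and $\xi$ on an explicit common complex domain, and Koebe--Schwarz compactness arguments then show that this assignment is itself locally Lipschitz in the appropriate norms. Composing the two Lipschitz estimates yields $\dist_s(\zeta_1,\zeta_2)\ge C(B,\alpha)\,\dist_{\mathfrak P}(\Psi(\zeta_1),\Psi(\zeta_2))$, with the dependence on $\alpha$ entering only through the uniform compactness constants over the closed interval $J$.
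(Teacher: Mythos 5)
Your overall plan matches the paper's: extend the $\tilde\Psi=\cH\mapsto(\alpha,\phi_\cH,\psi_\cH)$ construction from $\tilde{\mathcal I}_B$ to an open neighborhood $\tilde{\mathcal W}_B\subset\hol^I(\mu)$, take $\mathcal W_B$ to be its projection and define $\Psi$ via a (continuous) lift, then read off (ii)--(iv) from the construction. Properties (ii) and (iii) indeed follow essentially by bookkeeping, with the understanding that $\Psi$ encodes the data $(\phi_\cH,\psi_\cH)$ in the crescent chart and $\cR_{\mathfrak P}$ was built so that $f_{\cR_{\mathfrak P}\tau}=\cren f_\tau$. One small thing you gloss over is that $\cH\mapsto\zeta_\cH$ need not be injective on $\tilde{\mathcal W}_B$, so $\Psi(\zeta)$ needs a continuous choice of lift $h\colon\mathcal W_B\to\tilde{\mathcal W}_B$ (the paper says this explicitly); but this is a minor point.

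The genuine gap is in your argument for (v). You propose to factor the Lipschitz bound through $\zeta\mapsto\cH$ and then $\cH\mapsto(\phi_\cH,\psi_\cH)$, and to handle the first factor via Theorem~\ref{complex bounds_extended}. This doesn't work for two reasons. First, Theorem~\ref{complex bounds_extended} does not produce a holomorphic-pair extension of $\zeta$ itself: it only gives one for $p\cR^n\zeta$ after sufficiently many renormalizations, and the statement gives no control on how that extension depends on $\zeta$ in a metric sense. Second, and more substantively, the holomorphic pair $\cH$ in the definition of $\Psi$ is already chosen via the lift $h$; what you actually need is Lipschitz dependence of $(\phi_\cH,\psi_\cH)$ on $\zeta$ when $\zeta$ is measured in $\dist_s$, which only sees $\eta,\xi$ on the small neighborhood $N_s([0,1])$ of the real line, whereas the crescent construction requires $\eta$ on the larger domain $\hat U_\cH\Subset U_\cH$. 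The paper's proof does not use complex bounds at this point; instead it (a) observes that $\phi_\cH$ is determined by $\eta$ and that $\eta\mapsto\phi_\cH$ is an analytic map $\mathbf D(\hat U_\cH)\to\mathbf D(\tl U)$ by Lemma~\ref{Nearby_crescent_lemma}, hence locally Lipschitz, (b) uses the Koebe distortion theorem to establish the commensurability $\sup_{N_s([0,1])}|\eta_0|\ge c_1\sup_{\hat U_\cH}|\eta_0|$ on a neighborhood of $\cH$, which converts the $\mathbf D(\hat U_\cH)$-Lipschitz bound into a $\dist_s$-Lipschitz bound, and (c) runs an analogous argument for $\psi_\cH$ using the explicit decomposition of $R_{C_\cH}$ into the map $\rho_{\zeta_\cH}$ from Definition~\ref{Gccp_def} and finitely many iterates of $\eta$. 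Your ``Koebe--Schwarz compactness arguments'' gesture in the right direction but do not identify this chain of dependencies, and the theorem you cite is not the one doing the work.
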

\begin{proof}
Lemmas~\ref{Choose_U_B_lemma},~\ref{Choose_t_lemma} and~\ref{Choose_r_h_lemma} provide a continuous mapping 
$$
\tilde\Psi\colon\cH\mapsto (3,\phi_\cH,\psi_\cH)
$$
from $\tilde{\mathcal K}\subset\mathbf H^3(\mu)$ to $\mathfrak P_{\tilde U,\tilde t,\tilde h}$. According to Lemma~\ref{Nearby_crescent_lemma}, the fundamental crescent $C_\cH$ of a holomorphic pair $\cH$ and its fattening $C_\cH^o$ can be chosen to depend locally continuously with respect to $\cH\in\mathbf H^I(\mu)$. Then since $\tilde{\mathcal K}$ is a countable union of nested topological Cantor sets, the standard argument from the proofs of above mentioned lemmas shows that the map $\tilde\Psi$ continuously extends to some open set $\tilde{\mathcal W}\subset\mathbf H^I(\mu)$, such that $\tilde{\mathcal K}\subset\tilde{\mathcal W}$, and for every $\cH\in\tilde{\mathcal W}$ with $\tilde\Psi(\cH)=(\alpha,\phi_\cH,\psi_\cH)$ the identity 
\begin{equation}\label{R_cH_pm_eq}
 \tilde{\pi}_{C_{\cH}}\circ R_{C_{\cH}}^\pm\circ\pi_{C_{\cH}}^{-1} \equiv {\psi_\cH}\circ\pi_{{\phi_\cH},\alpha,\tl t}\circ p_{{\phi_\cH},\alpha\pm}
\end{equation}
holds in the domain $\{z\in\bbC\mid |\Re z|<1/2; |\Im z|<r_2\}$, 
where $r_2>0$ is the same as in Lemma~\ref{U_lemma}, the family of maps $\{\pi_{\phi,\alpha,\tl t}\mid (\alpha,\phi)\in\mathfrak B_{\tilde U,\tilde t}\}$ is the same as in Lemma~\ref{Choose_r_h_lemma}, and $R_{C_{\cH}}^+(z)$, $R_{C_{\cH}}^-(z)$ are the first returns of $\hat\eta(z)$ and $\hat\xi(z)$ respectively to the fundamental crescent $C_\cH$ under the dynamics of $\cH$. (Here $\hat\eta$ and $\hat\xi$ are the same as in Definition~\ref{Holo_pairs_new_def}.)

We note that if $\mathcal W\subset\mathcal B^I$ is the projection of the set $\tilde{\mathcal W}$ under the map $\cH\mapsto\zeta_\cH$ and $h\colon\mathcal W\to\tilde{\mathcal W}$ is some continuous lift of $\mathcal W$ with respect to this projection, then the map 
$$
\Psi=\tilde\Psi\circ h
$$
defined on the set $\mathcal W$, satisfies properties~\ref{norm_lemma_prop_2} and~\ref{norm_lemma_prop_3} of the lemma. Furthermore, for every $B\in\bbN$ we can choose an open subset $\tilde{\mathcal W}_B\subset\tilde{\mathcal W}$, such that $\tilde{\mathcal I}_B\subset\tilde{\mathcal W}_B$, and define $\mathcal W_B\subset\mathcal W$ as the projection of this set under the map $\cH\mapsto\zeta_\cH$, so that the restriction of $\Psi$ to $\mathcal W_B$ satisfies property~\ref{norm_lemma_prop_4}.

Now we will show, that possibly, after shrinking the set $\tilde{\mathcal W}_B$ and correspondingly its projection $\mathcal W_B$, we can establish property~\ref{norm_lemma_prop_5}.

First of all, since every holomorphic pair $\cH\subset\tilde{\mathcal K}$ restricts to a pair from $\mathcal B_{2s}^3$, for some constant $s>0$ independent of $\cH$, by shrinking the open set $\tilde{\mathcal W}_B$ if necessary, we can guarantee that every $\cH\subset\tilde{\mathcal W}_B$ restricts to a pair from $\mathcal B_{s}^I$.

Now we note that for every $\cH=(\eta,\xi)\in\tilde{\mathcal W}_B$ and the corresponding triple $(\alpha,\phi_\cH,\psi_\cH)=\tilde\Psi(\cH)$, the map $\phi_\cH$ is completely determined by the map $\eta$. Furthermore, according to Lemma~\ref{Nearby_crescent_lemma}, at every such $\eta\colon U_\cH\to\Delta_\cH$, the correspondence $\eta\mapsto\phi_\cH$ extends to a locally analytic map from a neighborhood in $\mathbf D(\hat U_\cH)$ to $\mathbf D(\tl U)$, where $\hat U_\cH$ is some open set, compactly contained in $U_\cH$. Analyticity of this map implies local boundedness of its derivative, and the Koebe distortion theorem implies that for all $\cH_0=(\eta_0,\xi_0)$ from some neighborhood of $\cH$ in $\tilde{\mathcal W}_B$, we have 
$$
\sup_{z\in N_s([0,1])}|\eta_0(z)|\ge c_1 \sup_{z\in \hat U_\cH}|\eta_0(z)|, 
$$
for some universal constant $c_1>0$.
Now, arguing in the same way as in the end of the proof of Lemma~\ref{Phi_norm_bound_lemma}, using the above inequality and analyticity of the map $\eta\mapsto\phi_\cH$ from $\mathbf D(\hat U_\cH)$ to $\mathbf D(\tl U)$,
we conclude that possibly after shrinking the neighborhood $\tilde{\mathcal W}_B$, the following conditions are satisfied: $\tilde{\mathcal I}_B\subset\tilde{\mathcal W}_B$ and for evrey $\cH_1,\cH_2\in\tilde{\mathcal W}_B$, the inequality 
$$
\dist_s(\cH_1,\cH_2)\ge c_2 \sup_{z\in\tl U}|\phi_{\cH_1}(z)-\phi_{\cH_2}(z)|
$$
holds for some constant $c_2>0$, independent from $\cH_1$ and $\cH_2$.

Similarly to how it is done in the proof of Lemma~\ref{Choose_r_h_lemma}, we can represent the left-hand side of~(\ref{R_cH_pm_eq}) as 
$$
(\tilde\pi_{C_{\mathcal H}}\circ R_{C_{\mathcal H}}^\pm\circ\pi^{-1}_{C_{\mathcal H}})(z)= \tilde\pi_{C_{\mathcal H}}(g( p_{\phi_\cH,\alpha\pm}(z) )),
$$
where $g$ is a conformal map that is a composition of a linear rescaling by $p_{\alpha-}(1/\pi_{C_\cH}'(0))$, the map $\rho_{\zeta_\cH}$ from property~\ref{Generalized_cr_com_p_property_rho} of Definition~\ref{Gccp_def} and a finite number of the maps $\eta$. Combining this with~(\ref{R_cH_pm_eq}), we get that 
$$
{\psi_\cH}\equiv \tilde\pi_{C_{\mathcal H}}\circ g \circ\pi_{{\phi_\cH},\alpha,\tl t}^{-1},
$$
so $\psi_\cH$ analytically depends on $\phi_\cH\in \mathfrak B_{\tilde U,\tilde t}^\alpha$, $\rho_{\zeta_\cH}\in\mathbf D(\bbD_{\hat r^\alpha})$ and $\eta\in\mathbf D(N_s([0,1])\setminus(-\infty,0])$, hence has a locally bounded derivative with respect to each of them.
Finally, applying the argument from the end of the proof of Lemma~\ref{Phi_norm_bound_lemma}, shrinking the open set $\tilde{\mathcal W}_B$ and its projection $\mathcal W_B$ again if necessary, we obtain property~\ref{norm_lemma_prop_5} of the lemma while property~\ref{norm_lemma_prop_1} remains true.
\end{proof}

\subsection{Global attractor}\label{Glob_Attr_proof_sec}
Now we are ready to give a proof of Theorem~\ref{Global_attractor_theorem_expanded}.

\begin{proof}[Proof of Theorem~\ref{Global_attractor_theorem_expanded}]
We will give a proof for the case $k=1$. For other $k\in\bbN$ the proof is identical.

Let $I\subset\bbR$ be the interval from Corollary~\ref{main3_extended}. Then for every $\alpha\in I$, we define 
$$
\mathcal I_B^\alpha=\Phi(\check{\mathcal I}_B^\alpha).
$$
It follows from Lemma~\ref{Choose_K_lemma} and the choice of the constant $N$, made in Section~\ref{Cyl_ren_sec}, that for all $\alpha$ from some open interval $J_1\subset\bbR$ with $3\in J_1$, every element of $\Phi(\check{\mathcal I}_B^\alpha)$ extends to a holomorphic commuting pair from $\tilde{\mathcal W}_B$, where $\tilde{\mathcal W}_B$ is the same as in property~\ref{norm_lemma_prop_1} of Lemma~\ref{Psi_lemma}. Thus, for all $\alpha\in J_1$, we have
$$
\mathcal I_B^\alpha\subset\mathcal W_B.
$$
Furthermore, since according to Corollary~\ref{main3_extended}, the operator $\cR_{\mathfrak P}$ is bijective on $\check{\mathcal I}_B^\alpha$, property~\ref{norm_lemma_prop_2} of Lemma~\ref{Psi_lemma} implies that the map $\Phi$ provides a bijection, hence a homeomorphism, between $\check{\mathcal I}_B^\alpha$ and $\mathcal I_B^\alpha$. This homeomorphism, composed with the map $\kappa_\alpha$ from Corollary~\ref{main3_extended}, induces a homeomorphism 
$$\iota_\alpha\colon\mathcal I_B^\alpha\to\Sigma_B.$$

First, we will prove part (ii) of Theorem~\ref{Global_attractor_theorem_expanded} only for $\zeta\in\mathcal B^\alpha$. We start with a proposition:

\begin{proposition}\label{iterate_into_W_B_proposition}
Let $\mu=\mu(\overline I)$ be the same as in Theorem~\ref{complex bounds_extended}. Then there exists an open interval $J_2\subset I$ and a positive integer $L\in\bbN$, such that $3\in J_2$ and if a commuting pair $\zeta\in\mathcal B^{J_2}$ with an irrational rotation number of type bounded by $B$ extends to a holomorphic commuting pair from $\mathbf H^{J_2}(\mu)$, then $\cR^L\zeta\in\mathcal W_B$.
\end{proposition}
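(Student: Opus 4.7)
The plan is an indirect compactness argument. I would fix a compact interval $I_0\subset I$ with $3$ in its interior, let $\mathcal X_B\subset\hol^{I_0}(\mu)$ denote the closed subset of holomorphic commuting pairs whose rotation number is irrational of type bounded by $B$, and introduce the stopping time
$$
L(\cH):=\min\{k\in\bbN:\cR^k\cH\in\tilde{\mathcal W}_B\}\in\bbN\cup\{+\infty\}.
$$
The key idea is to prove that $L$ is uniformly bounded on a neighborhood of the slice $\mathcal X_B\cap\hol^3(\mu)$ inside $\mathcal X_B$, and then take the $L$ promised by the proposition to be that uniform bound and $J_2$ the interval of critical exponents over which the bound holds. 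Since $\cR^L\cH\in\tilde{\mathcal W}_B$ entails $\cR^L\zeta_\cH=\zeta_{\cR^L\cH}\in\mathcal W_B$, this suffices.

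First I would verify finiteness on the slice: if $\cH\in\mathcal X_B\cap\hol^3(\mu)$, then Theorem~\ref{Comm_pair_attractor_theorem} together with Remark~\ref{global_attractor_remark} yields that $\cR^m\zeta_\cH\to\overline{\mathcal I}$ in Carath\'eodory sense, and bounded type forces every accumulation point of this orbit to lie in $\mathcal I_B$. Applying Theorem~\ref{complex bounds_extended}, the iterates admit holomorphic extensions $\cR^m\cH$ with uniform complex bounds, whose Carath\'eodory accumulation points are therefore contained in $\tilde{\mathcal I}_B\subset\tilde{\mathcal W}_B$; openness of $\tilde{\mathcal W}_B$ then yields $L(\cH)<\infty$. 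Next, upper semicontinuity of $L$ on $\mathcal X_B$ follows from Carath\'eodory continuity of the iterated renormalization together with openness of $\tilde{\mathcal W}_B$: if $L(\cH_0)=L_0$, any $\cH$ close enough to $\cH_0$ satisfies $\cR^{L_0}\cH\in\tilde{\mathcal W}_B$, hence $L(\cH)\leq L_0$. The slice $\mathcal X_B\cap\hol^3(\mu)$ is sequentially compact (it is a closed subset of the compact space $\hol^3(\mu)$ produced by Lemma~\ref{H_mu_compactness_lemma_extended}), so a finite subcover of the open sets $\{L\leq n\}$ yields $L^*:=\sup_{\cH\in\mathcal X_B\cap\hol^3(\mu)}L(\cH)<\infty$.

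Finally I would promote this bound to an open neighborhood of the slice in $\hol^{I_0}(\mu)$ by contradiction. Suppose the conclusion fails: then along shrinking intervals $J^{(n)}\ni 3$ there exist $\cH_n\in\mathcal X_B\cap\hol^{J^{(n)}}(\mu)$ with $L(\cH_n)>L^*$. By Lemma~\ref{H_mu_compactness_lemma_extended} I extract a Carath\'eodory-convergent subsequence $\cH_n\to\cH_\infty$; the second clause of that lemma forces the critical exponent of $\cH_\infty$ to equal~$3$, and continuity of rotation number under Carath\'eodory convergence together with compactness of the space of continued fractions with entries in $\{1,\dots,B\}$ (plus a diagonal extraction) gives $\cH_\infty\in\mathcal X_B\cap\hol^3(\mu)$. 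Therefore $L(\cH_\infty)\leq L^*$, and upper semicontinuity of $L$ at $\cH_\infty$ forces $L(\cH_n)\leq L^*$ for all sufficiently large $n$, contradicting $L(\cH_n)>L^*$. Taking $L=L^*$ and $J_2$ the interval just produced completes the argument.

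The principal technical obstacle is the finiteness step: transporting the commuting-pair convergence of Theorem~\ref{Comm_pair_attractor_theorem} into Carath\'eodory convergence of the holomorphic pair extensions onto $\tilde{\mathcal I}_B\subset\hol^I(\mu)$. The key input is the complex \emph{a priori} bounds of Theorem~\ref{complex bounds_extended}, which eventually place all tail iterates in a compact family $\hol^{I_0}(\mu')$ for some $\mu'>0$; the Carath\'eodory limit of the extensions is then forced by the topological classification of the attractor to lie in $\tilde{\mathcal I}_B$. A secondary care point is that one may need $\tilde{\mathcal W}_B$ to be an open neighborhood in $\hol^I(\min(\mu,\mu'))$ rather than in $\hol^I(\mu)$, which is harmless upon further shrinking.
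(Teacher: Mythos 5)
Your overall strategy — use the global attractor result on the $\alpha=3$ slice, a compactness argument on $\hol^3(\mu)$ to produce a uniform iteration count $L$, and then sequential compactness of $\hol^{\overline I}(\mu)$ (Lemma~\ref{H_mu_compactness_lemma_extended}) to perturb the conclusion to an interval $J_2\ni 3$ — is exactly the route the paper takes, just repackaged as a stopping-time argument.

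However, there is a concrete technical error in the way you set up the stopping time and the finiteness step, and it is precisely the point that the paper's first paragraph (the ``disk of radius $2/\mu$'' step) is designed to handle. You define $L(\cH)=\min\{k:\cR^k\cH\in\tilde{\mathcal W}_B\}$ and then argue that ``the iterates admit holomorphic extensions $\cR^m\cH$ with uniform complex bounds'' which accumulate in $\tilde{\mathcal I}_B$. This is not correct as stated: the renormalization $\cR^m(\cH)$ of a holomorphic pair keeps the \emph{same} range $\Delta_\cH$ and then rescales it by $1/|\eta^{(m)}(0)|$, so $\diam(\Delta_{\cR^m\cH})$ grows geometrically in $m$ by the real a priori bounds, and hence $\cR^m\cH$ violates property (v) of Definition~\ref{H_mu_def} for $m$ large. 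Consequently the raw iterates $\cR^m\cH$ eventually leave $\hol(\mu)$ and can never land in $\tilde{\mathcal W}_B$, so the stopping time as you have defined it is generically $+\infty$, and the claimed Carath\'eodory accumulation of $\cR^m\cH$ on the bounded family $\tilde{\mathcal I}_B$ cannot occur. What Theorem~\ref{complex bounds_extended} actually gives is that $p\cR^n\zeta_\cH$ \emph{restricts}, after choosing a much smaller Euclidean-disk range, to a holomorphic pair in $\hol(\mu)$; it is these restrictions, not the raw $\cR^m\cH$, that accumulate in the attractor. You should either redefine $L(\cH)=\min\{k:\cR^k\zeta_\cH\in\mathcal W_B\}$ (i.e., demand that the commuting pair admit \emph{some} extension in $\tilde{\mathcal W}_B$, which is what the proposition actually asserts), or define it via ``$\cR^k\cH$ restricts to a pair in $\tilde{\mathcal W}_B$.'' Once this is fixed, the rest of your argument — finiteness on the slice by the de Faria–de Melo global convergence (Remark~\ref{global_attractor_remark}), upper semicontinuity of $L$ from openness of $\tilde{\mathcal W}_B$ in $\hol$, compactness of $\mathcal X_B\cap\hol^3(\mu)$, and the diagonal contradiction using Lemma~\ref{H_mu_compactness_lemma_extended} — goes through and recovers the paper's proof. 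Your ``secondary care point'' about $\mu$ versus $\mu'$ is a red herring; the issue is raw iteration versus restriction, not the value of the modulus constant. You also tacitly use that $\mathcal X_B$ is closed in $\hol^{\overline I}(\mu)$, i.e., that bounded type by $B$ passes to Carath\'eodory limits; this is true but deserves a line — the heights of all prerenormalizations of a bounded-type pair are pinned away from degeneration by the real bounds, so they are preserved in the limit.
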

\begin{proof}
It follows from compactness of $\mathbf H^3(\mu)$ and real {\it a priori bounds} \cite{Herm,YocDen} that there exists a positive integer $K>0$, such that for every $\cH\in\mathbf H^{3}(\mu)$ with $\rho(\zeta_\cH)$ of type bounded by $B$ and every $k\ge K$, the range of the renormalization $\cR^k(\cH)$ contains a disk of radius $2/\mu$.
Then, since the set $\mathcal I_B^3$ is a global attractor of type bounded by $B$ (c.f. Remark~\ref{global_attractor_remark}), and the class $\mathbf H^3(\mu)$ is sequentially compact, there exists a positive integer $L>K$, such that for every $\cH\in\mathbf H^3(\mu)$ with $\rho(\zeta_\cH)$ of type bounded by $B$, the renormalization $\cR^{L}(\cH)$ restricts to a holomorphic pair $\mathcal G\in\tilde{\mathcal W}_B$, where $\tilde{\mathcal W}_B\subset\mathbf H^I(\mu)$ is an open set from property~\ref{norm_lemma_prop_1} of Lemma~\ref{Psi_lemma}.

Finally, by continuity and sequential compactness of $\mathbf H^{\overline I}(\mu)$ we conclude the existence of an open interval $J_2$ with $3\in J_2$, such that for every $\cH\in\mathbf H^{J_2}(\mu)$ with $\rho(\zeta_\cH)$ of type bounded by $B$, the renormalization $\cR^{L}(\cH)$ restricts to a holomorphic pair from $\tilde{\mathcal W}_B$. This completes the proof of the proposition.
\end{proof}

We define $J=J_1\cap J_2$ and we set $\hat r=\min\{s, r(B,\overline J)\}$, where $s$ is the same as in property~\ref{norm_lemma_prop_5} of Lemma~\ref{Psi_lemma} and $r(B,\overline J)$ is the same as in Lemma~\ref{Phi_norm_bound_lemma}. Assume that $\zeta\in\mathcal B^\alpha$, for some $\alpha\in J$, and $\rho(\zeta)$ is of type bounded by $B$. Then, according to the complex bounds (Theorem~\ref{complex bounds_extended}), there exists an integer $K>0$, such that for all $k\ge K$, the renormalization $\cR^k\zeta$ extends to a holomorphic commuting pair from $\mathbf H^J(\mu)$. Then Proposition~\ref{iterate_into_W_B_proposition} implies that $\cR^{k+L}\zeta\in\mathcal W_B$. Now it follows from property~\ref{norm_lemma_prop_5} of Lemma~\ref{Psi_lemma} that $\cR^{k+L}\zeta$ extends to a commuting pair from $\mathcal B_{\hat r}^\alpha$, and, finally, exponential convergence~(\ref{pair_exp_convergence_eq}) for any $r\le\hat r$ follows from properties~\ref{norm_lemma_prop_3}-\ref{norm_lemma_prop_5} of Lemma~\ref{Psi_lemma} together with Lemma~\ref{Phi_norm_bound_lemma} and Corollary~\ref{main3_extended}.

Now we will prove part (ii) of Theorem~\ref{Global_attractor_theorem_expanded} for all $\zeta\in\mathcal A^\alpha$. Assume, $\zeta\in\mathcal A^\alpha$ has an irrational rotation number of type bounded by $B$. Then according to the real a priori bounds \cite{Herm,YocDen}, there exists a positive integer $K>0$, such that the dynamical intervals of $p\cR^K\zeta$ are contained in the domain of the map $\phi_\zeta$ from Definition~\ref{A_r_alpha_def}. Thus, there exists a conformal map $\phi_0$ with $\phi_0(0)=0$ and $\phi_0'(0)=1$, such that $\cR^K\zeta=(\phi_0^{-1}\circ\eta_0\circ\phi_0, \phi_0^{-1}\circ\xi_0\circ\phi_0)$, where $\zeta_0=(\eta_0,\xi_0)\in\mathcal B^\alpha$.

Now it is easy to check that for every $n\in\bbN$, we have
$$
\cR^{K+n}\zeta= (\phi_n^{-1}\circ\eta_n\circ\phi_n, \phi_n^{-1}\circ\xi_n\circ\phi_n),
$$
where $\zeta_n=(\eta_n,\xi_n)=\cR^n\zeta_0$, and $\phi_n$ can be defined recurrently by
\begin{equation}\label{phi_n_recurrent_eq}
\phi_n(z)=\frac{1}{\eta_{n-1}(0)}\phi_{n-1}(\eta_{n-1}(0)z).
\end{equation}

Since $\zeta_0\in\mathcal B^\alpha$, the above argument implies that all sufficiently high renormalizations of $\zeta_0$ lie in $\mathcal B_{\hat r}^\alpha$, and exponential convergence~(\ref{pair_exp_convergence_eq}) with $r\le\hat r$ holds for these renormalizations. At the same time, real bounds and~(\ref{phi_n_recurrent_eq}) imply that the domains of the maps $\phi_n$ increase exponentially, and the maps $\phi_n$ converge to the identity map exponentially fast in sup-norm on any compact set. This implies that all sufficiently high renormalizations of $\cR^K\zeta$ belong to the class $\mathcal A_{\hat r/2}^\alpha$, and satisfy the exponential convergence condition~(\ref{pair_exp_convergence_eq}) for all $r\le \hat r/2$. This completes the proof of part (ii) of Theorem~\ref{Global_attractor_theorem_expanded}.

Finally, we give a prove of part (i) of Theorem~\ref{Global_attractor_theorem_expanded}. It follows from the construction of the map $\Phi$, that this map conjugates the operators $\cR_{\mathfrak P,\alpha}$ and $\cR^N$ on the sets $\check{\mathcal I}_B^\alpha$ and $\mathcal I_B^\alpha$ respectively. This implies that the set $\cR(\mathcal I_B^\alpha)$ is an invariant set for the operator $\cR^N$. Then it follows from part (ii) of Theorem~\ref{Global_attractor_theorem_expanded} that $\cR(\mathcal I_B^\alpha)\subset\mathcal I_B^\alpha$. 
This means that the composition $\iota_\alpha\circ\mathcal R\circ \iota_\alpha^{-1}$ is defined for all $\alpha\in J$ and depends continuously on $\alpha$. Then since $\Sigma_B$ is a totally disconnected space, this composition must be independent from $\alpha$. Now part (i) of Theorem~\ref{Global_attractor_theorem_expanded} follows from the fact that for $\alpha=3$, this composition is the shift $\sigma$.
\end{proof}

\subsection{Proof of Universality (Theorem \ref{main5})}
Let  $B$, $N$ be as before. Denote $\theta\in \cI^\alpha_B$ the periodic orbit of $\cR$ given by
$$\cR^{lp}f_0\to \theta$$ (Theorem~\ref{Global_attractor_theorem_expanded}). 
Let $l$ be sufficiently large, so that for some real number $t_1>0$ and for all $t\in[-t_1,t_1]$, the domain of definition of the prerenormalization $p\cR^lf_t\equiv(\tilde{\eta}_t,\tilde{\xi}_t)$ is contained in $V_2$. It follows from complex bounds (Theorem~\ref{complex bounds_extended}) that possibly, after increasing $l$ and decreasing $t_1>0$, the commuting pairs 
$$
\zeta_t\equiv(h_t\circ\phi_t\circ\tilde{\eta}_t\circ\phi_t^{-1}\circ h_t^{-1},\,\, h_t\circ\phi_t\circ\tilde{\xi}_t\circ\phi_t^{-1}\circ h_t^{-1}),
$$
where $h_t(z)=z/\phi_t(\tilde{\xi}_t(\phi_t^{-1}(0)))$, belong to $\mathcal B^\alpha$ and extend to holomorphic pairs from $\mathbf H^\alpha(\mu)$, for all $t\in[-t_1,t_1]$. Then according to Proposition~\ref{iterate_into_W_B_proposition}, after further increasing $l$ and decreasing $t_1>0$, we can make sure that $\zeta_t\in\mathcal W_B$, for all $t\in[-t_1,t_1]$. Then $\tau_t\equiv\Psi(\zeta_t)$ is defined for all $t\in[-t_1,t_1]$ and $\tau_0$ belongs to the local stable manifold of $\Psi(\theta)$.

It was shown in~\cite{Ya3} (Lemma~9.3) that the condition $$\inf_{x\in\bbT}\frac{\partial}{\partial t}f_t(x)>0$$ implies that the tangent vector to the family $\{f_{\tau_t}\}$ at $t=0$ lies inside an invariant cone field $\mathcal C$ defined on the tangent bundle to the set of all infinitely cylinder renormalizable $f\in \mathfrak C_{\tl r}^\alpha$. Furthermore, it was shown in~\cite{Ya3} that this cone field is expanded by $D\cren$. Pulling back $\mathcal C$ by the analytic mapping $\tau\mapsto f_\tau$, gives a cone field $\mathcal C'$ defined on a subset of the tangent bundle of $\mathfrak P_{\tl U,\tl t,\tl h}^\alpha$. It follows from Proposition~\ref{basic_R_P_prop} that $\mathcal C'$ is invariant and expanding under the map $D\cR_{\mathfrak P,\alpha}$. Since the tangent vector to the family $\{\tau_t\}$ at $t=0$ lies in $\mathcal C'$, the expanding property of $\mathcal C'$ implies that the family $\{\tau_t\}$ intersects the local stable manifold $W_{\text{loc}}^s(\Psi(\theta))$ transversely. Now the statement of Theorem~\ref{main5} follows immediately, with $\delta$ being equal to the absolute value of the unstable eigenvalue of the linearization of  $\cR_{{\mathfrak P},\alpha}^p$ at $\Psi(\theta)$.

\subsection{Proof of $C^{1+\beta}$-rigidity (Theorem \ref{main6})}
The proof repeats identically the argument from~\cite{FM1}. We show that exponential convergence of renormalizations in the case of bounded combinatorics implies $C^{1+\beta}$-rigidity.

For a critical circle map $f\colon\bbR\slash\bbZ\to\bbR\slash\bbZ$ with an irrational rotation number, let $I_m$ be as in~(\ref{I_m_def}), and define $I_m^j=f^j(I_m)$. The dynamical partition of level $n$, associated to $f$, is defined as
$$
\mathcal P_n(f)=\{I_{n-1},I_{n-1}^1, \dots,I_{n-1}^{q_n-1}\}\cup \{I_n,I_n^1,\dots,I_N^{q_{n-1}-1}\}.
$$
The intervals from $\mathcal P_n(f)$ partition the circle modulo the endpoints, and $\mathcal P_{n+1}(f)$ is a refinement of the partition $\mathcal P_n(f)$. 

The following proposition is a reformulated version of Proposition~4.3 from~\cite{FM1}.
\begin{proposition}\label{dFdM43_prop}
Let $f$ be a critical circle map with an irrational rotation number of type bounded by $B$, and let $h\colon\bbR\slash\bbZ\to\bbR\slash\bbZ$ be a homeomorphism. If there exist constants $C>0$ and $0<\lambda<1$ such that
$$
\left|\frac{|I|}{|J|}-\frac{|h(I)|}{|h(J)|}\right|\le C\lambda^n,
$$
for each pair of adjacent atoms $I,J\in\mathcal P_n(f)$, for all $n\ge 0$, then $h$ is a $C^{1+\beta}$-diffeomorphism for some $\beta>0$ that depends only on $\lambda$ and $B$.
\end{proposition}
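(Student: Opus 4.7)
The plan is to deduce $C^{1+\beta}$-regularity from the hypothesis by combining it with the \emph{real a priori bounds} for critical circle maps of type bounded by $B$ (Herman--\'Swi\k{a}tek, as refined in de Faria--de Melo). Two consequences of those bounds will be used throughout: \textit{(bounded geometry)} any two adjacent atoms of $\mathcal{P}_n(f)$ are $K$-commensurable for some $K=K(B)$; and \textit{(exponential decay)} $|I|\le C_0\mu^n$ for every $I\in\mathcal{P}_n(f)$ and some $\mu=\mu(B)\in(0,1)$. Set $r(I):=|h(I)|/|I|$ for each atom. By bounded geometry and the hypothesis, for adjacent atoms $I,J\in\mathcal{P}_n(f)$ one has $r(I)/r(J)=1+O(\lambda^n)$; since $\mathcal{P}_n$ is connected, the numbers $r(I)$ at a fixed level $n$ are pinched between two positive constants independent of~$n$.

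Next I would establish pointwise convergence of $r(I_n(x))$, where $I_n(x)$ denotes the atom of $\mathcal{P}_n(f)$ containing $x$. Since $\mathcal{P}_{n+1}$ refines $\mathcal{P}_n$, each $I_n(x)$ decomposes into at most $B+1$ atoms of $\mathcal{P}_{n+1}$. The ratio $r(I_n(x))$ is the weighted average of these sub-atom ratios, and chaining the hypothesis across the bounded number of siblings yields
\[
|r(I_{n+1}(x))-r(I_n(x))|\le C\lambda^n,
\]
so the limit $D(x):=\lim_{n\to\infty}r(I_n(x))$ exists and $|D(x)-r(I_n(x))|\le C\lambda^n/(1-\lambda)$.

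Third, I would identify $D(x)$ with $h'(x)$. For $y$ near $x$, pick $n=n(x,y)$ so that $|I_n(x)|\asymp|x-y|$; by exponential decay $n\sim |\log|x-y||/|\log\mu|$ up to a bounded constant. Bounded geometry ensures $x$ and $y$ lie in a uniformly bounded number of adjacent atoms of $\mathcal{P}_n(f)$, and summing $|h(J)|$ over this collection relates $(h(y)-h(x))/(y-x)$ to $r(I_n(x))$ up to an $O(\lambda^n)$ error, giving $h'(x)=D(x)>0$. Finally, for H\"older regularity I would use the same $n=n(x,y)$:
\[
|h'(x)-h'(y)|\le |D(x)-r(I_n(x))|+|r(I_n(x))-r(I_n(y))|+|r(I_n(y))-D(y)|\le C'\lambda^n,
\]
and since $|x-y|\ge c_0\mu^n$, this is bounded by $C''|x-y|^\beta$ with $\beta=\log(1/\lambda)/\log(1/\mu)>0$, depending only on $B$ and $\lambda$.

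The main obstacle is making the chaining argument in the second step quantitatively tight: the error estimate across each level must remain $O(\lambda^n)$, which relies crucially on the a priori bound of $B+1$ for the number of level-$(n+1)$ sub-atoms of a given level-$n$ atom, so that the geometric series in $\lambda$ dominates the combinatorial multiplicities. A secondary subtlety is that the dynamical partition $\mathcal{P}_n$ comes in two "sizes" ($I_{n-1}$-atoms and $I_n$-atoms), so the passage from level $n$ to $n+1$ must traverse both types; the $K$-commensurability provided by the real bounds takes care of this uniformly in $n$.
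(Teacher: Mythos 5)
The paper does not actually prove this statement: it cites it as a ``reformulated version of Proposition~4.3 from \cite{FM1}'' and leaves the proof to de Faria--de Melo. So your sketch is to be measured against the standard argument for that result, which it does follow in spirit (define $r(I)=|h(I)|/|I|$, show these ratios stabilize exponentially fast, identify the limit with the derivative, conclude H\"older continuity). There are, however, two concrete gaps.

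First, the claim that ``since $\mathcal P_n$ is connected, the numbers $r(I)$ at a fixed level $n$ are pinched between two positive constants'' does not follow from the given argument. Chaining $r(I)/r(J)=1+O(\lambda^n)$ across adjacent atoms \emph{at a fixed level} $n$ would traverse $q_n+q_{n-1}\gtrsim \varphi^n$ atoms, accumulating a multiplicative factor $(1+O(\lambda^n))^{q_n}$ which is unbounded whenever $\lambda\varphi\ge 1$; nothing in the hypothesis forces $\lambda<1/\varphi$. The correct route to the uniform bound is the \emph{vertical} one you use next: first show the multiplicative estimate $r(I_{n+1}(x))/r(I_n(x))=1+O(\lambda^{n})$ (chaining over at most $B+1$ siblings), then bound $r(I_n(x))/r(I_0(x))\le\prod_m(1+O(\lambda^m))<\infty$, and only then convert to the additive bound $|r(I_{n+1}(x))-r(I_n(x))|\le C\lambda^n$. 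As written, your second paragraph quietly uses the pinching established (incorrectly) in the first, so the order of the estimates needs to be fixed.

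Second, the identification $D(x)=h'(x)$ is not justified by ``summing $|h(J)|$ over the atoms meeting $[x,y]$.'' If $n$ is chosen with $|I_n(x)|\asymp|x-y|$, then $[x,y]$ is covered by $O(1)$ atoms of $\mathcal P_n$, but $x$ and $y$ sit in the \emph{interiors} of the extreme atoms, and the partial contributions $h(a_1)-h(x)$ and $h(y)-h(a_{m-1})$ are of the same order as the whole sum; you have no control over how $h$ distributes mass inside a single atom at level $n$. The standard fix is a telescoping decomposition of $[x,y]$ into whole atoms at levels $n, n+1, n+2,\dots$, with the error at level $n+k$ of order $\lambda^{n+k}$ weighted by a geometrically small measure, so that the total error is still $O(\lambda^n)$; this is what actually yields $|(h(y)-h(x))/(y-x)-D(x)|\lesssim\lambda^{n(x,y)}$ and hence both differentiability and the H\"older bound in one stroke. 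A lesser point: the H\"older step requires a \emph{lower} bound $|I_n(x)|\ge c_0\kappa^n$ (which does follow from bounded geometry) rather than the upper bound $|I|\le C_0\mu^n$ you cite; that only affects the numerical value of $\beta$, not its existence.
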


\begin{proof}[Proof of Theorem~\ref{main6}]
According to the real a priori bounds~\cite{Herm,YocDen}, the length of a maximal interval in $\mathcal P_n(f)$ goes to zero as $n\to\infty$. This implies that, given two critical circle maps $f_1,f_2$ with the same irrational rotation number, there exists a unique homeomorphism $h\colon\bbR\slash\bbZ\to\bbR\slash\bbZ$ that maps  $0\mapsto 0$ and conjugates $f_1$ with $f_2$. Now if the rotation number of $f_1$ and $f_2$ is of combinatorial type bounded by $B$, and the critical exponent of $f_1$ and $f_2$ lies in the union $\cup_{k\in\bbN} J(k,B)$, then Theorem~\ref{Global_attractor_theorem_expanded} together with the Koebe Distortion Theorem imply that the homeomorphism $h$ satisfies the conditions of Proposition~\ref{dFdM43_prop}, hence is a $C^{1+\beta}$-diffeomorphism.
\end{proof}

\bibliographystyle{amsalpha}
\bibliography{biblio}
\end{document}